\definecolor{blue_links}{RGB}{13,0,180} 
\numberwithin{equation}{section}
\definecolor{lmagenta!30}{rgb}{0.7,0,0.6}
\theoremstyle{plain}
\theoremstyle{plain}
\newtheorem{theorem}{Theorem}[section]
\newtheorem{corollary}[theorem]{Corollary}
\newtheorem{proposition}[theorem]{Proposition}
\newtheorem{lemma}[theorem]{Lemma}
\theoremstyle{definition}
\newtheorem{definition}[theorem]{Definition}
\newtheorem{example}[theorem]{Example}
\theoremstyle{remark}
\newtheorem{remark}[theorem]{Remark}
\theoremstyle{definition}
\theoremstyle{remark}
\mathchardef\emptyset="001F
\definecolor{dred}{rgb}{1,0.1,0.5}
\definecolor{ddmagenta}{rgb}{0.7,0,0.9}
\definecolor{purple}{rgb}{0.4,0,0.9}
\definecolor{dmagenta}{rgb}{0.8,0,0.6}
\definecolor{ddcyan}{rgb}{0,0.2,1.0}
\definecolor{Orchid}{rgb}{0.3,0.2,0.9}
\definecolor{Turk}{rgb}{0,0.5,0.6}
\DeclareFontFamily{U}{mathb}{\hyphenchar\font45}
\DeclareFontShape{U}{mathb}{m}{n}{
<-6> mathb5 <6-7> mathb6 <7-8> mathb7
<8-9> mathb8 <9-10> mathb9
<10-12> mathb10 <12-> mathb12
}{}
\DeclareSymbolFont{mathb}{U}{mathb}{m}{n}
\DeclareMathSymbol{\llcurly}{\mathrel}{mathb}{"CE}
\DeclareMathSymbol{\ggcurly}{\mathrel}{mathb}{"CF}
\newcommand{\R}{\mathbb{R}}
\DeclareMathOperator{\dive}{div}
\newcommand{\supp}{\spt}
\newcommand{\G}{\mathcal{G}}
\newcommand{\T}{\mathcal{T}}
\newcommand{\di}{\mathrm{d}}
\newcommand{\sP}{\mathcal{P}}
\newcommand{\spt}{\mathrm{spt}}
\newcommand{\ttheta}{\boldsymbol{\theta}}
\newcommand{\rrho}{\boldsymbol{\rho}}
\newcommand{\onu}{\tilde{\boldsymbol{\nu}}}
\newcommand{\III}{{\boldsymbol{\mathrm  I}}}
\newcommand{\sfyy}{{\boldsymbol{\mathsf y}}}
\newcommand{\sfzz}{{\boldsymbol{\mathsf z}}}
\newcommand{\ARBV}{\mathsf{A}\!\mathsf{B}\!\mathsf{V}}
\newcommand{\ABV}{\mathsf{A}\!\mathsf{B}\!\mathsf{V}}
\newcommand{\sfe}{\mathsf{e}}
\newcommand\mres{\mathbin{\vrule height 1.6ex depth 0pt width
0.13ex\vrule height 0.13ex depth 0pt width 1.3ex}}
\newcommand{\QED}{\mbox{}\hfill\rule{5pt}{5pt}\medskip\par}
\newcommand{\N}{\mathbb{N}}
\newcommand{\coloneq }{\hspace{1pt}\raisebox{0.74pt}{\scalebox{0.8}{:}}\hspace{-2.2pt}=}
\newcommand{\norm}[1]{\lVert #1 \rVert}
\newcommand{\dd}{\mathrm{d}}
\newcommand{\loc}{\mathrm{loc}}
\newcommand{\Cc}{\mathrm{C}_{\mathrm{c}}}
\newcommand{\Cb}{\mathrm{C}_{\mathrm{b}}}
\newcommand{\rmC}{\mathrm{C}}
\newcommand{\rmD}{\mathrm{D}}
\newcommand{\rmV}{\mathrm{V}}
\newcommand{\rmv}{\mathrm{v}}
\newcommand{\GG}{\mathcal{G}}
\newcommand{\sfs}{\mathsf{s}}
\newcommand{\RRR}{\color{black}}
\newcommand{\PERME}?
\newcommand{\STE}{\color{black}}
\newcommand{\eps}{\varepsilon}
\newcommand{\down}{\downarrow}
\newcommand{\BV}{\mathrm{BV}}
\newcommand{\BVloc}{\mathrm{BV}\kern-1pt_{\rm loc}}
\newcommand{\AC}{\mathrm{AC}}
\newcommand{\calMp}{\mathcal{M}_{\rm loc}^+}
\newcommand{\calMb}{\mathcal{M}} 
\newcommand{\calMbp}{\mathcal{M}^+}   
\newcommand{\Puno}{\mathcal{P}_1}
\newcommand{\scrP}{\mathcal{P}}
\newcommand{\foraa}{\text{for a.a.}}
\newcommand{\Var}{\mathrm{Var}}
\newcommand{\vertiii}[1]{{\left\vert\kern-0.25ex\left\vert\kern-0.25ex\left\vert #1 
    \right\vert\kern-0.25ex\right\vert\kern-0.25ex\right\vert}}
\newcommand{\sft}{\mathsf{t}}
\newcommand{\sfr}{\mathsf{r}}
\newcommand{\sfxx}{\boldsymbol{\mathsf{x}}}
\newcommand{\ppi}{\boldsymbol{\pi}}
\newcommand{\bvv}{\boldsymbol{v}}
\newcommand{\bww}{\boldsymbol{w}}
\newcommand{\nnu}{\boldsymbol{\nu}}
\newcommand{\mmu}{\boldsymbol{\mu}}
\newcommand{\llambda}{\boldsymbol{\lambda}}
\newcommand{\zzeta}{\boldsymbol{\zeta}}
\newcommand{\eeta}{\boldsymbol{\eta}}
\newcommand{\ssigma}{\boldsymbol{\sigma}}
\newcommand{\bvarphi}{\boldsymbol{\varphi}}
\newcommand{\weaksto}{\rightharpoonup^*}
\newcommand{\calB}{\mathcal{B}}
\newcommand{\calM}{\mathcal{M}_{\rm loc}}
\newcommand{\pinew}{\ppi}
\newcommand{\Prob}{\mathcal{P}}
\newcommand{\Lip}{\mathrm{Lip}}
\newcommand{\Rep}{\mathfrak{R}}
\newcommand{\btheta}{\boldsymbol{\theta}}
\newcommand{\mderw}[1]{|#1|_{\kern-1pt{W_1}}}
\newcommand{\sigmasel}{\mathfrak{s}}
\newcommand{\selbis}{\mathfrak{l}}
\newcommand{\selteta}{\mathfrak{r}}
\newcommand{\cZ}{\mathcal{Z}}
\newcommand{\llp}[3]{\ell_{#1}^{#2,#3}}
\newcommand{\FCOMM}{\color{black}}
\newcommand{\nc}{\normalcolor}
\newcommand{\NC}{\nc}
\newcommand{\neweta}{\widehat{\eeta}}
\definecolor{pink}{rgb}{0.7,0,0.6}
\definecolor{lmagenta}{rgb}{0.8,0.4,0.6}
\newcommand{\phisc}{\varphi_0}
\newcommand{\phive}{\boldsymbol{\varphi}}
\newcommand{\tetatrue}[1]{\vartheta_{#1}}
\newcommand{\bvc}{\mathsf{u}}
\newcommand{\bvcgr}{\mathfrak{G}}
\newcommand{\Spx}{\mathbf{X}}
\newcommand{\Traj}{\boldsymbol{\Xi}}
\newcommand{\traj}{\boldsymbol{\xi}}
\newcommand{\proxyteta}[1]{\uplambda_{#1}}
\newcommand{\Proxyteta}[1]{\Uplambda^{#1}}
\newcommand{\Proxysigma}[1]{\Upupsilon^{#1}}
\newcommand{\proxym}{\mathsf{m}}
\newcommand{\Vfield}{\boldsymbol{w}}
\newcommand{\DST}{{\mathbb R_+^{d+1}}}  
\newcommand{\DSTT}[1]{{[0,#1]\times \R^d}}  
 \newcommand{\Rdpu}{\R^{d+1}}
\newcommand{\DSTpu}{{\R_+^{d+2}}}
\newcommand{\DSTpuS}{{[0,S]\times \DST}}
\newcommand{\Domain}[1]{\mathrm E(#1)}
\newcommand{\Lipplus}[3]{\Lip^{\uparrow}_{#1}(#2;#3)}
\newcommand{\ALip}[2]{\mathrm{ArcLip}(#1;#2)}
\newcommand{\Lipplusname}[1]{\Lip^{\uparrow}_{#1}}
\newcommand{\Cplus}{\rmC^{\uparrow}(\III;\R^{d+1})}
\newcommand{\sfu}[0]{\mathsf u}
\newcommand{\fre}{\mathfrak e}
\newcommand{\fra}{\mathfrak a}
\newcommand{\frt}[0]{\mathfrak t}
\newcommand{\fry}[0]{\mathfrak y}
\newcommand{\frr}[0]{\mathfrak r}
\newcommand{\frxx}[0]{\boldsymbol{\mathfrak x}}
\newcommand{\teeta}[0]{\eeta_{\mathcal{L}}}
\newcommand{\pushm}{\mathsf{p}}
\newcommand{\etaflat}{\eeta_{\flat}}
\newcommand{\rflat}{\rho_{\flat}}
\newcommand{\BVS}{\mathfrak{V}}
\newcommand{\BVSm}{\mathfrak{v}}
\newcommand{\STEP}[1]{\noindent\underline{\emph{Step #1}}}
\newsavebox\MBox
\newtcolorbox{myboxpurple}{colback=purple!5!white,colframe=purple!75!black}
\newtcolorbox{myboxdred}{colback=red!5!white,colframe=red!75!black}
\newtcolorbox{myboxdmagenta}{colback=dmagenta!5!white,colframe=dmagenta!75!black}
\begin{document}

\title[]{The superposition principle for the continuity equation with singular flux}

\author[Stefano Almi]{Stefano Almi}
\address[Stefano Almi]{Dipartimento di Matematica ed Applicazioni ``R. Caccioppoli'', Universit\`a di Napoli Federico II, Via Cintia,  Monte S.~Angelo, 80126 Napoli, Italy}
\email[Stefano Almi]{stefano.almi@unina.it}
\author[Riccarda Rossi]{Riccarda Rossi}
\address[Riccarda Rossi]{DIMI, Universit\`a degli studi di  Brescia, Via Branze 38, 25133, Brescia, Italy}
\email[Riccarda Rossi]{riccarda.rossi@unibs.it}
\author[Giuseppe Savar\'e]{Giuseppe Savar\'e}
\address[Giuseppe Savar\'e]{Department of Decision Sciences and BIDSA, Universit\`a Bocconi, Via Roentgen 1, 20136, Milano, Italy}
\email[Giuseppe Savar\'e]{giuseppe.savare@unibocconi.it}

\maketitle


\begin{abstract}
Representation results for  absolutely continuous curves $\mu
\colon [0,T]\to \mathcal P_p(\R^d)$, $p>1$, with values in
the Wasserstein space $(\mathcal{P}_p(\R^d),W_p)$
of Borel probability measures in $\R^d$ with finite $p$-moment,
provide a crucial tool to study evolutionary PDEs in a measure-theoretic setting.
They are strictly related to the superposition principle for measure-valued solutions
to the continuity equation.
\par
This paper addresses the extension of these results to the case $p=1$, \emph{and} to curves
$\mu\colon   [0,+\infty)  \to \mathcal{P}_1(\R^d)$
that are only of \emph{bounded variation} in time:  in the corresponding continuity equation, the flux measure  $\nnu \in \mathcal{M}_{\rm loc} ([0, +\infty) \times \R^{d}; \R^{d})$  thus possesses a non-trivial singular part w.r.t.\ $\mu$ in addition to the 
absolutely continuous part featuring the velocity field.
\par
Firstly, we   carefully address the relation between curves in $ 
\BV_{\rm loc}  ([0, + \infty);\mathcal{P}_1(\R^d))$ and solutions  to  the associated continuity equation, among which we select those with \emph{minimal} singular (contribution to the) flux  $\nnu$.  We  show that, with those distinguished solutions it is possible to associate an `auxiliary' continuity equation, in an \emph{augmented} phase space, solely driven by its velocity field. For that continuity equation, a standard version of the superposition principle can be thus obtained. In this way, we derive a first probabilistic representation of the  pair $(\mu, \nnu)$ solutions by projection over the time and space marginals. 
 This representation involves Lipschitz trajectories in the  augmented  phase space, reparametrized in time and solving the characteristic system of ODEs. 
Finally, for  the same  pair $(\mu, \nnu)$  
we also prove a superposition principle in terms of $\BV$ curves on the \emph{actual} time interval,  providing   a fine  description of their behaviour at  jump points. 
\smallskip

\noindent \textbf{Keywords:} Continuity equation, $\BV$ curves, superposition principle.
\end{abstract} 

\tableofcontents

\section{Introduction}

 Representation results for Lipschitz (or even absolutely continuous) curves $\mu \colon [0,T]\to \mathcal P_p(\R^d)$, $p>1$, with values in
the Wasserstein space $(\mathcal P_p(\R^d),W_p)$ 
of Borel probability measures in $\R^d$ with finite $p$-moment,  metrized by the $L^p$-Kantorovich-Rubinstein-Wasserstein distance
\begin{equation}
    \label{eq:G1}
    W_p(\mu_0,\mu_1):=
    \min\Big\{\int \|x{-}y\|^p\,\dd \gamma (x,y):
        \gamma \in \mathcal P(\R^d\times \R^d),\ 
        \pi^1_\sharp\gamma=\mu_0,\ \pi^2_\sharp\gamma=\mu_1\Big\},
\end{equation}
provide a crucial tool to study evolutionary PDEs and geometric problems in a measure-theoretic setting. 

Such  results are strictly related to the corresponding representation formulae for measure-valued solutions 
to the continuity equation
\begin{equation}
\label{continuity-equation-AC}
\partial_t  \mu +   \dive \nnu = 0
 \qquad \text{ in }   \mathscr D'((0,T)\times \R^d),\quad
 \nnu=\bvv \mu\ll \mu,
\end{equation}
as a superposition of absolutely continuous
curves $\gamma\colon [0,T]\to \R^d$ solving the 
differential equation
\begin{equation}
\label{eq:G0}   
    \dot\gamma(t)=\bvv(t,\gamma(t))\quad\text{a.e.~in }(0,T)
\end{equation}
in an integral sense.
In fact, a curve $(\mu_t)_{t\in [0,T]}$ in $\mathcal P_p(\R^d)$ 
satisfies the $p$-absolute continuity property
\begin{equation}
\label{eq:G00}
    W_p(\mu_s,\mu_t)\le \int_s^t L(r)\,\dd r\quad\text{for every }0\le s<t\le T,\ \text{and some }L\in L^p(0,T),
\end{equation}
if and only  if 
\cite[Sec. 8.1]{AGS08}  the space-time measure $\mu=\mathscr L^1{\otimes} \mu_t=
\int_0^T \delta_t\otimes \mu_t\,\dd t$ 
solves the continuity equation 
\eqref{continuity-equation-AC} for a vector measure
$\nnu\ll \mu$ whose density $\bvv=\frac{\dd \nnu}{\dd \mu}$
satisfies
\begin{equation}
    \label{eq:G2}
    \int_{\R^d}\|\bvv_t(x)\|^p\,\dd \mu_t(x)
    \le L^p(t)\quad\text{for a.a.\ } t \in (0,T).
\end{equation}
The measure
$\nnu$ and its density $\bvv$ 
comply  with the minimality condition
\begin{equation}
    \label{eq:G3}
    \Big(\int_{\R^d}\|\bvv_t(x)\|^p\,\dd \mu_t(x)\Big)^{1/p}
    =\lim_{h\to0}\frac{W_p(\mu_t,\mu_{t+h})}{|h|} \quad\text{for a.a.\ } t \in (0,T),
\end{equation}
and are also uniquely determined by \eqref{eq:G3} if the norm $\|\cdot\|$
is strictly convex.
\nc
On the other hand, if 
$(\mu_t)_{t\in [0,T]}$ is a continuous family of probability measures in $\R^d$ and $(\mu,\nnu)$ is a solution  to  the  \eqref{continuity-equation-AC}
satisfying \eqref{eq:G2} for some $L\in L^p(0,T)$, 
then  by \cite[Sec.~8.2]{AGS08}
there exists a Radon probability measure $\eeta$ in $\rmC([0,T];\R^d)$,
concentrated on the subset 
of absolutely continuous curves satisfying 
\eqref{eq:G0},
such that 
$\mu_t=(\mathsf e_t)_\sharp \eeta$ for every $t\in [0,T]$, where
$\mathsf e_t \colon \gamma\mapsto\gamma(t)$ is the evaluation map. Equivalently,
\begin{equation}
    \label{eq:G5}
    \mu_t(A)=\eeta\Big(\{\gamma\in \rmC([0,T];\R^d):\gamma(t)\in A\}\Big)
    \quad\text{for every }t\in [0,T],\ A\text{ Borel subset of }\R^d.
\end{equation}
\par
  The characterization of $\mathrm{AC}^p$ curves in $\mathcal P_p(\R^d)$ and the lifting result  from \cite{AGS08} have been extended
  to the case in which $\R^d $ is endowed with a non-flat 
Riemannian distance \cite{Lisini09}, or replaced by a separable complete metric space $(X,d)$ \cite{Lisini07}; cf.\ also  \cite{Lisini16} for the extension to spaces endowed with Wasserstein-Orlicz distances. 
\par
A first typical application of the above results concerns curves of measures in $\mathcal P_p(\R^d)$ arising from a suitable approximation method, when
a priori estimates provide the bound \eqref{eq:G00}
(e.g.~in gradient flows, see \cite{AGS08}, or in geometric problems, see \cite{Ambrosio-Gigli-Savare14}).
In order to identify the PDE satisfied by the limit curve,
one can start from the continuity equation 
\eqref{continuity-equation-AC}
and then try to characterize the velocity field
$\bvv$. In this respect, the minimality property 
\eqref{eq:G3} provides a particularly useful information
(see e.g.~\cite{Ambrosio-Lisini-Savare06}).
\par
Another important application stems from problems where one tries to extract finer information from the continuity equation,
using the representation given by the superposition principle. The latter
 in fact establishes a link between the Eulerian representation of the flow of measures solving the continuity equation, and its Lagrangian 
depiction that comes with the associated characteristic system of ODEs. 
This connection is at the core of 
 the Young-measure type technique pioneered in 
 \cite{Ambrosio04} for transport and continuity equations featuring velocitiy fields with low regularity
(see, e.g., \cite{Ambrosio-Crippa08, Ambrosio-Bernard08, Ambrosio-Figalli09, Ambrosio-Trevisan14}).
 The  recent \cite{Stepanov-Trevisan17}  (see also \cite{AmbRenVit25}) 
 has thoroughly investigated
  the correlation 
between
these two facets of the superposition principle, i.e.\ as a bridge between the Eulerian and the Lagrangian standpoints, and 
as a tool to gain insight into the structure of curves of measures absolutely continuous to some Wasserstein distances,
decomposed 
in simpler ones associated with rectifiable curves, cf.\ the cornerstone paper \cite{Smirnov94}. The  approach
 from  \cite{Stepanov-Trevisan17} 
 has in particular led 
to an extension of the probabilistic representations previously obtained in 
\cite{Lisini07, Lisini16}. 
\par
Eventually, 
let us mention that  the superposition principle  has also turned out to be a  key tool for the well-posedness of mean-field particle systems, see, e.g.,
\cite{Ambrosio-Fornasier-Morandotti-Savare21,Morandotti-Solombrino, Almi-Morandotti-Solombrino_JEE}, as well as for the analysis and finite-particle approximation of mean-field optimal control problems \cite{Albi-Almi-Morandotti-Solombrino,Almi-Morandotti-Solombrino_JDE,Cavagnari-Lisini-Orrieri-Savare22,Fornasier-Lisini-Orrieri-Savare19}, where, compared to previous literature (see, for instance, \cite{bongini2016optimal,Fornasier-Solombrino}), a priori regularity assumptions on the control variable can be dropped. Further applications include the formulation and convergence analysis of gradient methods for dynamic inverse problems \cite{Brediesetal,Bredies-Fanzon}, which build upon an extension of the superposition principle to inhomogeneous continuity equations and on the characterization of extremal points of the Benamou-Brenier or Hellinger-Kantorovich type of energies \cite{Bredies-Carioni-Fanzon-22,Bredies-Carioni-Fanzon-Romero-21}. 
We also mention \cite{BiaBonGus}, where the superposition principle
for absolutely continuous curves of measures
 has been  leveraged to obtain uniqueness results for a transport equation. Finally, we recall that in 
 \cite{BonGus}  two counterexamples  to the validity of the superposition principle
 have been exhibited
  in the case of \emph{signed} measures. 
\medskip

\paragraph{\bf  \emph{The continuity equation with singular flux}}
The main aim of this  paper is to investigate the 
validity and the appropriate formulation of the
\begin{itemize}
\item characterization of solutions to the continuity equation
\item superposition principle
\end{itemize}
 in the 
case $p=1$, for
the space of probability measures with 
finite moment $\mathcal P_1(\R^d)$,
endowed with the metric $W_1.$  Indeed, we will address curves  of measures that have \emph{bounded variation} as functions of time, and, above all, with respect to the 
$W_1$-metric. 
A crucial feature of this setting, that
makes the interpretation of the differential equation
\eqref{eq:G0} much more delicate, 
is the fact that 
the vector flux measure $\nnu$ in 
\eqref{continuity-equation-AC} may have a singular 
part with respect to $\mu$.
Consider, e.g., the simplest example
 (cf.\ \cite[Ex.\ 1.1]{Abedi-Zhenhao-Schultz24})
of the curves $(\mu_t)_{t\in [0,1]}$ 
\begin{equation}
    \label{eq:G6}
    \mu_t=(1-t)\delta_{x_0}+t\delta_{x_1},\quad
    x_0<x_1\in \R,\ t\in [0,1].
\end{equation}
It is immediate to check that 
\[
W_p^p(\mu_s,\mu_t) = |t{-}s| W_p^p (\delta_0,\delta_1) = |t{-}s| \qquad \text{for } s,\, t \in [0,1], \qquad p \in [1,\infty)\,.
\]
Hence, while $\mu \notin \AC ([0,1];\mathcal{P}_p(\R^d) )$ if $p>1$, 
we have $\mu \in \mathrm{Lip} ([0,1];\mathcal{P}_1(\R^d) )$. 
Now, it can be calculated
(see Example
\ref{ex:Dirac-delta} ahead), that 
$\mu$ satisfies the continuity 
equation together with the  flux 
measure $\nnu = \mathcal{L}^1{\big|_{[0,1]}} \otimes  \mathcal{L}^1{\big|_{[x_{0}, x_{1}]}} $. 

\par
 The metric superposition principle for  {\rm BV} curves in 
 $(\mathcal P_1(X),W_1)$ with $X$ a (complete, separable) \emph{metric} space, 
    has been tackled in the 
recent \cite{Abedi-Zhenhao-Schultz24},
by a constructive argument carefully adapting  the line of proof of \cite[Thm.\ 5]{Lisini07} to the $\mathrm{BV}$ setup.  
Therein, 
 the superposition principle 
has indeed been obtained
 by lifting absolutely continuous curves  
  to C\`adl\`ag (i.e., right-continuous, left-limited) {\rm BV}  curves  on $(0,T)$ with  values in $X$ by means of a probability measure $\eeta$. Additionally, in \cite{Abedi-Zhenhao-Schultz24} it has been proved that 
 the total variation of the measure $\mu$ can be reconstructed by averaging  the variation of the  {\rm BV} curves   via the path measure $\eeta$.
 \par
 
 In this paper,  while confining our analysis to the Euclidean setup $X=\R^d$,  we will  adopt a different approach. Indeed, we will primarily focus on the  
  structure of the superposition measure 
 and its link with the flux measure $\nnu$. 
  In this way, we will shed more light   into the properties of the continuity equation in the {\rm BV}  setup.
  More precisely,  
 \begin{enumerate}
 \item  We will carefully address the relation between {\rm BV} curves in $(\mathcal P_1(\R^d),W_1)$ and  the continuity equation
 \begin{equation}
  \label{eq:1}
  \partial_t  \mu +   \dive \nnu = 0
  \qquad \text{ in }   \mathscr D'((0,+\infty)\times \R^d),\quad
  |\nnu|([0,T]\times \R^d)<\infty\quad\text{for every }T>0,  
\end{equation}
where  
the Lebesgue decomposition  $\nnu=\nnu^a+\nnu^\perp$ of the
 flux measure $\nnu$ with respect to $\mu$ 
 may feature a 
 nontrivial \emph{singular} part $
\nnu^\perp $.
\item 
Among solutions 
 to  \eqref{eq:1} we will enucleate a particular class of flux measures,
which we will call \emph{minimal},
and we will show that 
starting from a non-minimal measure,
it is always possible to 
replace 
the singular part $\nnu^\perp$ by a \emph{minimal} one $\bar\nnu^\perp$ such that 
$\bar\nnu=\nnu^a+\bar\nnu^\perp$
satisfies
\begin{equation}
  \label{eq:1bis}
  \partial_t  \mu +   \dive \bar\nnu = 0,\quad
  \bar\nnu^\perp=\lambda  \nnu^\perp
  \quad\text{for a Borel scalar map }
  \lambda\colon  (0,+\infty)\times \R^d\to [0,1].
  \end{equation}
\item 
We will represent 
minimal solutions to \eqref{eq:1bis} 
as marginals of solutions to an auxiliary  continuity equation 
in the \emph{augmented} phase space,
driven by an autonomous bounded vector field.
\item 
By applying the known superposition principle to 
the augmented equation we will obtain a first 
representation of the  solutions to \eqref{eq:1}
by a measure on reparametrized $1$-Lipschitz curves
in the augmented phase space $[0,+\infty)\times \R^d$.
\item Eventually,  
 we will  derive a 
 superposition principle 
 in the original space by 
 a measure on a class of agumented BV curves,
 providing finer information on their jump transitions.
 \end{enumerate}
Let us explain some of the above points in more detail.
\medskip

\paragraph{\bf \emph{Absolutely continuous and {\rm BV} curves in $(\mathcal P_1(\R^d),W_1)$}}
Dealing with $p=1$, a first natural choice
is to include the space $\BV\!_{\rm loc}([0,+\infty);\Puno(\R^d)) $
of BV curves 
with values in $\mathcal P_1(\R^d)$ in the analysis,
i.e.\ the curves 
$\mu\colon [0,+\infty) \to \Puno(\R^d)$
satisfying
$\Var_{W_1}(\mu; [0,T])<\infty$ for every $T>0$, where
\begin{equation}
\label{eq:GG7}
\Var_{W_1}(\mu; [a,b]): = \sup \left \{ \sum_{i=1}^{n} W_1 (\mu_{t_{i-1}}, \mu_{t_i}) \, :  a= t_0< t_1<\ldots<t_n=b \right\}.
\end{equation}
To avoid ambiguities at the jump points of $\mu$ and simplify this
introductory
exposition, we will also assume
that
$\mu$ is right continuous in $[0,+\infty)$.
With every map $\mu\in \BV\!_{\rm loc}([0,+\infty);\Puno(\R^d))$ we will associate the
increasing map $\rmV_\mu(t):=\Var_{W_1}(\mu;[0,t])$ and its
distributional derivative 
\begin{equation}
  \label{eq:3}
  \rmv_\mu=\frac{\dd}{\dd t}\rmV_\mu,\quad\text{a positive locally finite measure
    in $[0,+\infty)$.}
\end{equation}

First of all, 
 in \textsl{\bfseries Theorem \ref{th:1}} 
we will show
that it is possible to associate with  every curve $\mu\in \BV\!_{\rm loc}([0,+\infty);
\Puno( \R^d ))$ a ``minimal'' vector measure $\nnu\in
\calMb_{\rm loc}([0,+\infty)\times\R^d;\R^d)$, with 
local-in-time finite total variation, such that  the pair $(\mu,\nnu)$ fulfills \eqref{eq:1} and 
 the push forward of the variation measure $|\nnu|$ (associated with
the norm $\|\cdot\|$ in $\R^d$),  with respect to the time projection map
$ \sft \colon  [0,+\infty)\times \R^d\ni (t,x)\mapsto t$
satisfies
\begin{equation}
  \label{eq:2}
  \sft_\sharp |\nnu|=\rmv_\mu\quad
  \text{i.e.}\quad
  |\nnu|\big((a,b]\times
  \R^d\big)=\rmV_\mu(b)-\rmV_\mu(a)\quad\text{for every }0\le a<b.
\end{equation}
It is worth noticing that the disintegration $(\mu_t)_{t\ge 0}$ of
a solution $\mu$ of \eqref{eq:1}
w.r.t.~a vector measure $\nnu$ of local-in-time finite total variation admits a 
BV representation satisfying the further condition
\begin{equation}
  \label{eq:4}
  \rmv_\mu\le \sft_\sharp |\nnu|.
\end{equation}
Therefore,  condition \eqref{eq:2} provides a variational characterization
of $\nnu$ similar to \eqref{eq:G3} in the case $p>1$. Moreover,
if the norm $\|{\cdot}\|$ is strictly convex, then $\nnu$ is uniquely
characterized by \eqref{eq:1} and \eqref{eq:2}.
\medskip

\paragraph{\bf  \emph{The augmented continuity equation}}
A more difficult task is 
to establish the 
superposition principle for solutions to 
\eqref{eq:1}, 
also dealing with the case when 
the variational condition \eqref{eq:2} is not satisfied.
For this, we have drawn inspiration from the results by \textsc{Smirnov}
 on the representation of \emph{solenoidal} (i.e., null-divergence) \emph{charges} (currents) in terms of simpler ones associated with rectifiable curves, 
 \cite{Smirnov94}. In fact, a pair $(\mu, \nnu)$ solving the continuity equation
\eqref{eq:1}
 can be viewed as a ``solenoidal charge'', too, in the sense that \eqref{eq:1} rewrites as 
\begin{equation}
\label{e:solenoidal}
    \mathrm{Div}_{(t,x)} (\mu,\nnu) =0 \quad \text{in  }\mathscr D'((0,+\infty)\times \R^d),
\end{equation}
with the overall divergence operator 
$ \mathrm{Div}_{(t,x)}(\star,\bullet):= \partial_t (\star) + \mathrm{div}(\bullet)$. This observation 
 is, in fact, also at the core of the approach in \cite[\S 3.1]{Bonicatto:PHD}, where an alternative proof of the superposition principle 
from \cite{Ambrosio04} has been developed, based on Smirnov's decomposition theorem in  \cite{Smirnov94}. 
\par
In the present $\BV$ setup, 
we have developed a self-contained approach, independent of Smirnov's result, 
which exploits the nonnegativity of $\mu$ 
and the irreversible direction of time.
We have also been guided by the reparametrization technique
that has been quite successful in the variational theory of rate-independent evolution
 (cf., e.g., \cite{EfeMie06RILS, MRS09, MielkeRossiSavare12a, MRS13, DalDesSol11}),  
where solution curves 
incorporate further information about jump
transitions (not necessarily segments)
and take values 
in the \emph{augmented} phase space $
\R^{d+1}_+:=[0,+\infty){\times}\R^d$, including time.

Accordingly, our idea 
 (cf.\ \textsl{\bfseries Theorem \ref{thm:augmented}} ahead),  
is to 
represent a minimal solution to 
\eqref{eq:1} as the marginals
$(\mu,\nnu)=\ppi_\sharp (\sigma^0,\ssigma)$ (with $\ppi$ the projection, 
$\ppi(s,t,x):=(t,x)$), 
of a distinguished 
solution to the auxiliary continuity equation 
in the augmented phase space $\R^{d+1}_+\ni (t,x)$, namely
\begin{equation}
\label{augmented-CE}
\partial_s \sigma +   \partial_t \sigma^{0}
+\dive \ssigma =0 \quad \text{in }\mathscr D'((0,+\infty){\times} \R^{d+1}),
\quad
\sigma,\sigma^0\ge 0,
\quad 
\sigma_{s=0}=\delta_0\otimes \mu_0. 
\end{equation}
In \eqref{augmented-CE},  $s$ is an artificial time-like parameter,
the flux pair
 $(\sigma^0,\ssigma)$ is absolutely continuous w.r.t.\ $\sigma$ 
$(\sigma^0,\ssigma)=(\tau,\bvv)\sigma$,
and 
the autonomous velocity field $(\tau,\bvv)$ 
 is related to the original solution pair $(\mu,\nnu)$ via 
 \[
  \tau = \frac{\dd\mu}{\dd|(\mu,\nnu)|}, \qquad \bvv = \frac{\dd\nnu}{\dd |(\mu,\nnu)|}\,.
 \]
 In particular, 
 the augmented norm of $(\tau,\bvv)$ is $1$ and 
 $\sigma=|(\sigma^0,\ssigma)|$.
A simple modification (cf.\ Proposition \ref{p:superposition}) of the standard superposition principle can then be applied to the \emph{augmented} continuity equation
\eqref{augmented-CE}.  It guarantees  a representation of  any solution $\sigma$ in terms of a probability measure $\eeta$ supported on $1$-Lipschitz curves $ \sfyy $ of the form
\begin{equation}
\label{lip-param-curves}
[0,+\infty) \ni s \mapsto  \sfyy(s)= (\mathsf{t}(s), \sfxx(s)) \in [0,+\infty){\times}\R^d,
\end{equation}
solving the associated characteristic system 
\[
\dot\sft (s)=
\tau(\sft(s),\sfxx(s)),
\quad
\dot \sfxx(s)=\bvv(\sft(s),\sfxx(s)).
\]
We will then derive a probabilistic representation for 
the pair  $(\mu,\nnu)$ in terms of the trajectories $\gamma$.   Specifically,  in \textsl{\bfseries Theorem~\ref{t.1}}  we will  show that
\begin{align}
    \label{e:intro-rep}
(\mu, \nnu) = \sfe_{\sharp } (\dot\sfyy  \mathcal{L}^{1} \otimes \eeta),\qquad 
| (\mu, \nnu) | & = \sfe_{\sharp } (\|  \dot \sfyy   \| \mathcal{L}^{1} \otimes \eeta)\,,
\end{align}
where $\mathcal{L}^{1}$ denotes the Lebesgue measure in $[0, +\infty)$, and 
$\sfe \colon [0, +\infty) \times {\rm Lip} ([0, +\infty);  \R_+^{d+1})$ is 
 the evaluation map defined as $\sfe(s,  \sfyy) := \sfyy (s) $. 
\medskip

\paragraph{\bf \emph{Superposition by augmented $\BV$ curves}}

Our final contribution is to recover a superposition result 
for the original continuity equation \eqref{eq:1} by a measure on the space of 
time dependent BV curves.
If $\nnu$ does not satisfy the variational condition
\eqref{eq:2},
one can expect  
the singular part $\nnu^\perp$ of  $\nnu$  to
carry crucial information
about the jump transition of the curves (not necessarily along segments). Seemingly, 
such information cannot be  fully captured by 
the usual descrition of a BV curve, which only characterizes the left and right limit of the curve
at each jump point, but not the \emph{actual} trajectory described along the jump.

To overcome this difficulty, we introduce the 
notion of \emph{augmented} BV curves:
they are maps $ \sfu \colon \cZ\to \R^d$ defined in the augmented
parameter space
$\cZ:=[0,+\infty)\times [0,1]$ 
such that 
\begin{enumerate}
    \item 
    the functions $u_-(t):=\sfu(t,0)$
    (resp.~$u_+(t):=\sfu(t,1)$)
    are left- (resp.~right-) continuous (local) BV maps
    which coincide 
    in the complement of their countable
    jump set 
     $ \mathfrak{J}_{\bvc};$
    \item 
    for every 
     $t\notin  \mathfrak{J}_{\bvc}$  
    the function $[0,1] \ni r\mapsto \sfu(t,r)$
    is constant and coincides with
    $u_-(t)=u_+(t);$
    \item 
    for every  $t\in  \mathfrak{J}_{\bvc}$ 
    the function 
    $[0,1] \ni r\mapsto \sfu(t,r)$
    is a Lipschitz (transition) map
    connecting 
    $u_-(t)$ with $u_+(t)$
    with constant (and strictly positive) velocity,
    thus equal to the length $\ell_\sfu(t)$
    of the transition path.
    \item 
    For every finite time interval $[0,T]$
    we have
    \begin{equation}
        \Var(\sfu;[0,T]\times [0,1])=
        \Var(u_\pm;[0,T])+
        \sum_{t\in  \mathfrak{J}_{\bvc}}
        \Big(\ell_\sfu(t)-\|u_+(t)-u_-(t)\|\Big)
        <\infty\,.
    \end{equation}
\end{enumerate}
We will  denote such space by $\ABV([0,+\infty);\R^d)$  and endow it 
with a Lusin topology, for which the evaluation
maps $\sfe(t,r,\sfu):=\sfu(t,r)$
 are 
Borel.
\par
 In \textsl{\bfseries Theorem \ref{th:BV-representation}}, 
we will 
represent a minimal solution  $( \mu,\nnu) $ 
to \eqref{eq:1}  by means of a probability measure $\widehat\eeta$ on   
$\ABV([0,+\infty;\R^d)$ 
concentrated on curves
$\sfu$ 
satisfying
a suitable differential equation formulated in 
a BV sense.
More precisely, we 
can write the Lebesgue decomposition $\nnu^a+\nnu^\perp$ of $\nnu$ as 
$\nnu^a=\bvv^a\mu$,
$\nnu^\perp=\bvv^\perp|\nnu^\perp|$,
and we observe that 
the curves $t\mapsto \sfu(t,r)$ coincide
$\mathcal L^1$-a.e.~on $(0,+\infty)$, 
so that their distributional
time derivative 
$\partial_t \sfu(t,r)$
does not depend on $r$ and can be decomposed 
in the sum of an absolutely continuous part
 $\partial^{\mathrm{L}}_t \sfu \, \mathcal L^1$, 
a Cantor part
$\partial^{\mathrm{C}}_t \sfu$ and a jump part
$\partial^{\mathrm{J}}_t \sfu$ concentrated on $\mathrm{J}_\sfu$:
\begin{displaymath}
    \partial_t \sfu=
    \partial^{\mathrm{L}}_t \sfu\,\mathcal L^1+
    \partial^{\mathrm{C}}_t \sfu+
    \partial^{\mathrm{J}}_t \sfu\,.
\end{displaymath}
Thus, $\widehat\eeta$-a.e.~curve $\sfu$ satisfy
\begin{displaymath}
    \partial_t^{\mathrm{L}} \sfu(t)=\bvv^a(\sfu(t,\cdot))\quad
    \text{for $\mathcal L^1$-a.a.~$t\in (0,+\infty),$}    
\end{displaymath}
\begin{displaymath}
    \partial^{\mathrm{C}}_t \sfu=
    \bvv^\perp (\sfu(t,\cdot))|\partial^{\mathrm{C}}_t \sfu|,
\end{displaymath}
and
\begin{displaymath}
    \partial_r\sfu(t,r)=
    \bvv^\perp (\sfu(t,r))\ell_\sfu(t)
    \quad\text{for a.a.~$r\in (0,1)$ and every  $t\in \mathfrak{J}_{\bvc}$.}
\end{displaymath}
We then have
\begin{displaymath}
    \mu_t^+=(\mathsf e_{t,1})_\sharp \widehat\eeta,\quad
    \mu_t^-=(\mathsf e_{t,0})_\sharp \widehat\eeta,
\end{displaymath}
 where the evaluation maps $\sfe_{t, 0}, \sfe_{t, 1} \colon \ARBV([0,+\infty);\R^d) \to \R^{d}$ are defined by $\sfe_{t, 0} (\bvc) := \bvc (t, 0)$ and $\sfe_{t, 1} (\bvc) := \bvc(t, 1)$ for every $\bvc \in \ARBV ([0,+\infty);\R^d)$. Whenever  $\nnu$ is minimal, we can recover
$\nnu$ by superimposing 
integration along $\sfu$.
\medskip

\paragraph{\bf Plan of the paper.} Our analysis is carried out as follows:
\begin{itemize}
\item[-]
In Section \ref{s:2}, after settling some notation and preliminary results from measure theory, we introduce an order relation between Radon measures and delve  into the induced minimality concept, which will play a key role in the selection of distinguished solutions to the continuity equation with singular flux. We also lay the ground for the superposition principle by defining the function spaces that will come into play, and fixing their  properties. 
\item[-] Section \ref{s:3} revolves around the relation between curves $\mu \in \BV_{\mathrm{loc}}([0,+\infty);\mathcal{P}_1(\R^d))$
 and the continuity equation \eqref{eq:1}.
   \item[-] In the main result of Section \ref{s:4} we associate with  \eqref{eq:1} a new continuity equation  in an augmented phase space, driven by a \emph{non-singular} flux measure 
   with bounded velocity field. Their relation is such that suitable marginals of the solutions to the augmented continuity equation provide, indeed, solutions to the continuity equation
   with minimal singular flux. 
   \item[-] Based on this, in Section \ref{s:5} we derive our first, `parametrized' version of the superposition principle. In fact, by leveraging the  probabilistic 
 representation  for the solutions to the augmented continuity equation, we obtain a representation of the solutions to the original continuity equation, in terms of 
 trajectories 
 that are Lipschitz w.r.t.\ an artificial time-like parameter and 
   solve the characteristic system in the extended phase space.
  \item[-] Section \ref{s:6} is devoted to establishing 
  a probabilistic representation for solutions of the continuity equation in terms  of $\BV$ curves  depending on the `true' process time.  For this,  we preliminarily carry out a  thorough 
  analysis of a distinguished class of $\BV$ curves that are  `attached' with their transitions at jump points. We use them as a bridge between the probabilistic representation in terms of reparametrized trajectories, and that involving $\BV$ curves.
\item[-] In Section \ref{s:examples} we discuss our assumptions, and illustrate our results (mostly focusing on the `parametrized version' of the superposition principle) in a series of examples.
\item[-] Finally, in the Appendix we prove some technical results that have been employed at scattered spots in the paper.  
\end{itemize}

\section{Notation and preliminary results}
\label{s:2}
 The following table contains the main notation that we shall use throughout the paper:
 \nopagebreak
\begin{center}
\newcommand{\specialcell}[2][c]{%
  \begin{tabular}[#1]{@{}l@{}}#2\end{tabular}}
\begin{small}
\begin{longtable}{lll}
&  $\norm{\cdot}$  & (generic) norm in $\R^h$
\\
& $B_R$, $\overline{B}_R$ & open/closed ball of center $0$  and radius $R>0$ in   $\R^h$ 
 (w.r.t.\ the norm $\norm{\cdot}$) 
\\
&
$\mathcal{B}(\R^h), \mathcal{B}_{b}(\R^h)$ & Borel  (resp.\ bounded Borel)  subsets of $\R^h$ 
\\
&   $\III$   &     the positive half-line $[0,+\infty)$  
\\
&  $\mathcal{L}^1$   &   Lebesgue measure on $\III$  
\\
&
$\mathcal{P}(\R^h)$ &  Borel probability measures in $\R^h$
\\
&
$\Puno(\R^h)$ &  probability measures in $\R^h$ with finite first moment,  endowed with the 
\\
&
 $W_1$ &  Wasserstein distance 
\\
&
$\calMb(A)$, $\calM(A)$ & finite (resp.\ Radon) Borel measures on $A\in \mathcal{B}(\R^h)$
\\
&
$\calMbp(A)$, $\calMp(A)$ & finite (resp.\ Radon) nonnegative Borel measures on $A$
\\
&
$\calMb(A;\R^m)$, $\calM(A;\R^m)$ & $\R^m$-valued Borel measures with finite total variation,
\\ & &  (resp.\ 
 $\R^m$-valued Radon meas.),
 on $A$
\\
& 
$|\llambda|$ & total variation of $\llambda \in \calM(A;\R^d)$
\\
& 
$\Cc(A), \Cc^k(A) $  & continuous ($\mathrm{C}^k$, $k\geq 1$, resp.) real functions on $A$ with compact support
\\
& 
$ \Cb(A), \Cb^k(A) $  & continuous ($\mathrm{C}^k$, $k\geq 1$, resp.)  and bounded 
real functions on $A$ 
\\
& 
$\|\cdot\|_{p}$ & norm on $L^p(A;\R^m)$ for some $p\geq 1$
\\
&
 $L^{p}_{\theta} (\R^{h}; \R^{k}), L^{p}_{{\rm loc}, \theta}
     (\R^{h}; \R^{k})$ & $L^{p}$-spaces w.r.t.~$\theta \in
                         \mathcal{M}^{+} (\R^{h})$
                           \\
  & $\DST$&the space-time domain $\III\times \R^d$.
\end{longtable}
\end{small}
\end{center}


\subsection{Preliminaries of measure theory}
%

\paragraph{\bf Finite and Radon vector measures.} 
We denote by   $\calMb ( \R^h;\R^m) $  the space of Borel measures
$\mmu \colon  \mathcal{B}(\R^h) \to \R^m$ with finite total variation $\| \mmu \|_{\mathrm{TV}} : = |\mmu|(\R^h)<+\infty$, where for 
every $B \in \mathcal{B}(\R^h)$
\[
 |\mmu|(B): = \sup \left\{ \sum_{i=0}^{+\infty} 
 \|\mmu(B_i)\| \nc 
 \,: \ B_i\in  \mathcal{B}(\R^h), \ B_i \text{ pairwise disjoint,} 
 \ B = \bigcup_{i=0}^{+\infty} B_i \right\},
\]
 and
 $\|{\cdot}\|$ is a norm in $\R^m$. 
 $(\calMb(\R^h;\R^m); \|{\cdot}\|_{\mathrm{TV}})$  is a Banach space.  
We recall 
that a Radon vector measure  in  $\calMb(\R^h;\R^m)$  is a
set function $\llambda \colon  \mathcal{B}_{b}(\R^h) \to
\R^m$ 
such that
for every compact subset $K\Subset \R^h$
 its restriction to  $\mathcal B(K)$ is a (vector) measure 
  with finite total variation. 
 \par
We identify $\llambda\in\calM(\R^h;\R^m)$ with a vector $(\lambda_1,\lambda_2,\ldots, \lambda_m)$ of $m$ measures in $\calM(\R^h)$, so that its integral with a continuous $\R^m$-valued
 function with compact support $\zzeta \in \Cc(\R^h;\R^m)$ is given by 
\begin{equation}
\label{eq:vector-integral}
   \int_{\R^h} \zzeta(x) \dd \llambda(x): = \sum_{i=1}^m \int_{\R^h} \zeta_i(x) \dd \lambda_i(x)\,.
\end{equation}
By the above duality pairing,  $\calM(\R^h;\R^m)$ can be identified with the dual of $\Cc(\R^h;\R^m)$ and is 
thus endowed with the corresponding weak$^*$ topology; for the associated convergence notion we will use the symbol $\weaksto$. 
\par
\nc For every $\llambda \in \calM(\R^h;\R^m)$ 
and every open subset $O\subset \R^h$
 we 
 have that 
 \[
 |\llambda|
(O): = \sup\left\{  \int_{\R^h} \zzeta(x) \dd \llambda(x)\, : \ \zzeta \in \Cc(\R^h;\R^m)\,, \ \spt(\zzeta) \subset O, \
\sup_{x\in O}\|  \zzeta(x) \|_* \leq 1 
\nc
\right \}
 \]
   Clearly, the choice of the norm  $\norm{\cdot}$ 
   on $\R^m$ 
   (and its dual $\|{\cdot}\|_*$)
   affects the definition of 
  the total variation measure 
 $|\cdot|$, which depends  on $\norm{\cdot}$.
 \nc 
 \nc 
 The set function $|\llambda|\colon  \calM(\R^h;\R^m) \to [0,+\infty]$ is a positive 
 Radon measure 
 and 
 every $\llambda \in \calMb(\R^h;\R^m) $ admits the \emph{polar decomposition}
 $\llambda = \bww |\llambda|$ for some 
  Borel map $\bww:\R^h\to\R^m$ 
 with $\|\bww\| \equiv 1$ $|\llambda|$-a.e.\ in $\R^h$. 
 \nc 
 It is trivial to check that 
the integral of \eqref{eq:vector-integral} can also be written as 
\begin{equation}
	\label{eq:vector-integral2}
	\int_{\R^h}\zzeta(x)\,\dd\llambda(x)=
	\int_{\R^h}\zzeta(x)\cdot\bww(x)\,\dd |\llambda|(x)
\end{equation}
and the previous formula can also be used to define a vector integral for a scalar function.

 \paragraph{\bf Weak$^*$ and narrow convergence.}
Every sequence  $(\llambda_k)_{k} \subset \calM(\R^h;\R^m)$ such that 
\[
 \sup_k |\llambda_k|(B_R) <+\infty \qquad \text{for every $R>0$}
 \] 
  admits a subsequence~$(\llambda_{k_j})_{j }$  weakly$^*$-converging to some  $\llambda \in \calM(\R^h;\R^m)$; furthermore, the sequence $(|\llambda_{k_j}|)_{j }$   weakly$^*$ converges to some $\lambda \in \calM^+(\R^h)$ such that 
  $\lambda \geq |\llambda|$.
If  $\sup_k |\llambda_k|(\R^h) <+\infty$, then  up to a subsequence  the measures $(\llambda_k)_k$ weakly$^*$ converge to some $\llambda \in \calMb(\R^h;\R^m)$. 
\par
  We recall that a sequence $(\mu_k)_k\subset \calMb(\R^h)$ \emph{narrowly} converges to $\mu \in \calMb(\R^h)$ if 
  \[
  \lim_{k\to\infty} \int_{\R^h} \varphi(x) \dd \mu_k(x) = \int_{\R^h} \varphi(x) \dd \mu(x)\qquad \text{for all } \varphi \in \Cb(\R^h). 
  \]
  Prokhorov's Theorem \cite[III-59]{Dellacherie-Meyer} asserts  that \NC a subset $M\subset \calMb(\R^h)$ has   compact closure in this topology if and only if it is bounded in the total variation norm $|{\cdot}|$ and  equally tight, namely
  \[
  \forall\, \eps>0 \ \ \exists\, K \Subset \R^h \, : \quad \sup_{\mu \in M} |\mu|(\R^h{\setminus}K) \leq \eps\,.
  \]
On $\Prob(\R^h)$  the narrow topology coincides with the weak$^*$ topology.  

\paragraph{\bf Restriction and push-forward of measures.}
For every $\mmu  \in \calM(\R^h;\R^m) $ and $A \in 
\calB( \R^{h} )$ we denote by $\mmu \mres A\in \calM(\R^h;\R^m) $ the restriction of $\mmu$ to $A$, i.e.\
$ \mmu \mres A(B)  := \mmu(B{\cap}A)$ for every $B \in \calB_{b}(\R^h)$. 
We shall use that, whenever $\mmu_n \weaksto \mmu $ in  $\calM(\R^h;\R^m) $  and $A \subset \R^{h}$ is open, then
$ \mmu_n \mres A \weaksto \mmu \mres  A$ in $\calM (A; \R^{m})$. 
\par
Let $\pushm \colon  \R^h \to \R^k$ be a Borel map. For every $\mmu \in \calMb(\R^h;\R^m)$ we define  the push-forward measure $\pushm_{\sharp}\mmu$
 in $\calMb (\R^k;\R^m)$ via
\[
\pushm_{\sharp}\mmu(B) :=  \mmu (\pushm^{-1}(B)) \qquad \text{for all } B \in\calB(\R^k)\,.
\]
In general, the above definition can  be extended to define a measure in $\calM (\R^k;\R^m)$  from a measure in
$ \calM(\R^h;\R^m)$  if, in addition, the mapping $\pushm\colon  \R^h \to \R^k$  is continuous and \emph{proper}, namely
for every compact subset  $K \in \R^k$
we have that $\pushm^{-1}(K)  $ is a compact subset of $\R^h$. Under this condition, we have that
 (cf., e.g., \cite[Rmk.\ 1.7]{AmFuPa05FBVF}) if 
$\mmu_n \weaksto \mmu$ in $ \calM(\R^h;\R^m)$, then $\pushm_{\sharp}\mmu_n \weaksto \pushm_{\sharp}\mmu$ in 
$ \calM(\R^k;\R^m)$.   We further notice that if $(\mmu_{n})_n, \mmu \in \calMb (\R^{h}; \R^{m})$, $(\mmu_{n})_n$ converges narrow to $\mmu$, and $\pushm\colon \R^{k} \to \R^{h}$ is continuous, then $(\pushm_{\sharp} \mmu_{n})_n$ converges narrow to $\pushm_{\sharp} \mmu$. 
\paragraph{\bf The Wasserstein distance on $ \Puno(\R^h)$.}
 We recall that the  distance $W_1$ on $\Puno(\R^h)$ is defined by 
\begin{equation}
\label{W1}
W_1(\mu_1,\mu_2): = \min\left\{ \int_{\R^h \times  \R^h} \norm{x{-}y} \dd \gamma(x,y)\, : \ \gamma \in \Prob(\R^h {\times}  \R^h), \, \pi^i_{\sharp }\gamma = \mu_i, \ i \in \{1,2\}\right\}.
\end{equation}
Again, notice that the above definition depends on the choice of the norm $\norm{\cdot}$ on $ \R^h $. 
For a given curve  $\mu \colon \III\to \Puno(\R^h)$  we will denote by
 $\Var_{W_1}$ its total variation w.r.t.\  $W_1$, defined on every $[a,b]\subset \III$ by 
\begin{equation}
\label{W1-variation}
\Var_{W_1}(\mu; [a,b]): = \sup \left \{ \sum_{i=1}^{n} W_1 (\mu_{t_{i-1}}, \mu_{t_i}) \, :  a= t_0< t_1<\ldots<t_n=b \right\}\,.
\end{equation}
We will denote by $\BV_{\mathrm{loc}}(\III; \Puno( \R^h ))$ the space of curves  $\mu\colon  \III\to \Puno( \R^h )$   such that $\Var_{W_1}(\mu; [a,b])<+\infty$ for every $[a,b]\subset \III$.
Finally, we  recall (cf.\ \cite[Thm.\ 1.1.2]{AGS08}) that for any  $\mu \in \AC_{\mathrm{loc}} (\III; \Puno( \R^h)) $ the limit
\[
|\mu_t'|_{W_1}: = \lim_{h \to 0} \frac1h W_1(\mu_t,\mu_{t+h}) \qquad \text{exists } \foraa\, t \in (0,+\infty).
\]
 \paragraph{\bf Metrizable spaces.} Following 
\cite[III-16]{Dellacherie-Meyer}
A  topological space 
$(X, \tau)$
is called
\begin{itemize}
\item[-]
 \emph{Polish} if it can be endowed with a metric $d$ 
 inducing the topology $\tau$ such that $(X,d)$ is  a
 complete separable metric space;
 \item[-] \emph{Lusin} if it is 
 the injective and continuous image of
 a Polish space. 
 \end{itemize}

%

%
%

\subsection{Submeasures and minimality}
In the spirit of the definition of \emph{subcurrent}
from \cite[Def.\ 3.1]{Paolini-Stepanov2013}, we introduce the concept of `submeasure' and the induced order relation on $  \calM (O; \R^{d})$, 
where $O$ is some locally compact topological space.
\begin{definition}
\label{d:submeasure}
Let $\ttheta, \zzeta \in \calM (O; \R^{k})$. 
  We say that $\zzeta$ is a \emph{submeasure} 
of~$\ttheta$  and write $\zzeta \prec \ttheta$ if 
\begin{equation}
\label{characterization-submeas}
 \exists\, \lambda \in L^{\infty}_{| \ttheta|} (O; [0, 1]) \quad \text{such that} \quad \zzeta = \lambda \ttheta. 
 \end{equation}
 \end{definition}
 It can be immediately checked that  $\prec$ is an order relation, and that it fulfills
 \begin{equation}
 \label{condition-4-equality}
 \left(  \zzeta \prec \ttheta \text{ and } |\ttheta|(O) \leq
   |\zzeta|(O)
    <+\infty \nc
 \right) \, \Longrightarrow \, \zzeta = \ttheta\,. 
 \end{equation}
The relation $\prec$ can also be characterized by the following result.
\begin{lemma}
\label{r:sub}
Let  $\ttheta, \zzeta \in \calM (O; \R^{k})$
and let $\bww$ be the Lebesgue density of the polar decomposition
of $\ttheta$, i.e.~$\ttheta=\bww|\ttheta|$. The following properties are equivalent:
\begin{enumerate}[label={\emph(\roman*\emph)}, ref=(\roman*)]
\item \label{r:sub:1} $\zzeta \prec \ttheta$,
\item \label{r:sub:2} $|\zzeta|\le |\ttheta|$ and $\zzeta=\bww |\zzeta|$,
\item \label{r:sub:3} (assuming the norm $\|{\cdot}\|$ in $\R^k$ is strictly convex)
there exists $\zzeta_{C} \in \calM (O; \R^{k})$ such that $\ttheta = \zzeta + \zzeta_{C}$ and $| \ttheta| = | \zzeta| + | \zzeta_{C}|$.
\end{enumerate}
\end{lemma}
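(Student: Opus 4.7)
My plan is to prove $(i) \Leftrightarrow (ii)$ first, without using strict convexity, and then sandwich $(iii)$ between them: the implication $(i) \Rightarrow (iii)$ is straightforward, while $(iii) \Rightarrow (ii)$ is the main obstacle and is exactly the step that requires strict convexity of the norm $\|\cdot\|$ on $\R^{k}$. For $(i) \Leftrightarrow (ii)$, starting from (i) I write $\zeta = \lambda\theta = (\lambda w)|\theta|$ with $\lambda \in L^{\infty}_{|\theta|}(O;[0,1])$ and $\|w\|\equiv 1$ $|\theta|$-a.e.; a direct computation of the total variation of the measure with scalar-valued density $\lambda w$ against the positive measure $|\theta|$ gives $|\zeta| = \lambda|\theta| \leq |\theta|$ and $\zeta = w|\zeta|$, which is (ii). Conversely, from $|\zeta| \leq |\theta|$ the scalar Radon--Nikodym theorem produces some $\lambda \in L^{\infty}_{|\theta|}(O;[0,1])$ with $|\zeta| = \lambda|\theta|$, and then $\zeta = w|\zeta| = \lambda w|\theta| = \lambda\theta$ delivers (i) with the same $\lambda$.

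The implication $(i) \Rightarrow (iii)$ is obtained by setting $\zeta_{C} := (1{-}\lambda)\theta$: the same polar computation gives $|\zeta_{C}| = (1{-}\lambda)|\theta|$, so that $\theta = \zeta + \zeta_{C}$ and $|\theta| = |\zeta| + |\zeta_{C}|$ hold without any convexity assumption. For the converse $(iii) \Rightarrow (ii)$, let $w_{\zeta}$ and $w_{C}$ denote the polar densities of $\zeta$ and $\zeta_{C}$. Since $|\zeta|, |\zeta_{C}| \leq |\theta|$, Radon--Nikodym produces $\rho \in L^{\infty}_{|\theta|}(O;[0,1])$ with $|\zeta| = \rho|\theta|$ and $|\zeta_{C}| = (1{-}\rho)|\theta|$, so that
\[
w|\theta| \;=\; \theta \;=\; \zeta + \zeta_{C} \;=\; \bigl(\rho\, w_{\zeta} + (1{-}\rho)\, w_{C}\bigr)|\theta|,
\]
and hence $w = \rho\, w_{\zeta} + (1{-}\rho)\, w_{C}$ holds $|\theta|$-a.e.

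The main obstacle is the pointwise analysis of this identity. On the set $\{0 < \rho < 1\}$ the unit vector $w$ is a strict interior convex combination of two unit vectors $w_{\zeta}, w_{C}$; strict convexity of $\|\cdot\|$ on $\R^{k}$ then forces $w_{\zeta} = w_{C} = w$ pointwise $|\theta|$-a.e.\ there. On the complementary set $\{\rho \in \{0,1\}\}$ one of $\zeta, \zeta_{C}$ vanishes locally, and the identity $\zeta = w|\zeta|$ is automatic. Combining the two cases yields (ii), and hence (i) with $\lambda := \rho$. I expect this pointwise bookkeeping, together with the careful handling of $|\theta|$-null sets in the various polar decompositions, to be the most delicate aspect of the argument; and strict convexity is truly essential in this last step, since an $\ell^{1}$-type norm on $\R^{k}$ admits interior convex combinations of distinct unit vectors that again have unit norm, which would otherwise break the alignment of $w_{\zeta}$ with $w$.
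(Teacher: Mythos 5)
Your proposal is correct and takes essentially the same approach as the paper: both rely on the Radon--Nikodym theorem to produce the scalar density $\lambda$ (resp.\ $\rho$), and both invoke strict convexity at the same juncture to force the vector densities to align. The only cosmetic difference is that you route $(iii)\Rightarrow(ii)\Rightarrow(i)$ by writing $w=\rho\,w_{\zzeta}+(1-\rho)\,w_{C}$ with unit polar densities, whereas the paper goes directly $(iii)\Rightarrow(i)$ by setting $\bvv:=\frac{\dd\zzeta}{\dd|\ttheta|}$ and using the identity $1=\|\bww\|=\|\bww-\bvv\|+\|\bvv\|$ to conclude $\bvv=\lambda\bww$; the two uses of strict convexity are equivalent, and your splitting into $\{0<\rho<1\}$, $\{\rho=0\}$, $\{\rho=1\}$ is exactly the bookkeeping the paper leaves implicit.
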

 \begin{proof}
  The implications $\ref{r:sub:1}\Rightarrow
 \ref{r:sub:2},\ref{r:sub:3}$ are obvious. 

 In order to prove 
 $\ref{r:sub:2}\Rightarrow
 \ref{r:sub:1}$ we observe that $|\zzeta|=\lambda |\ttheta|$
 for some $\lambda\in L^\infty_{|\ttheta|}(O,[0,1])$ since $|\zzeta|\le |\ttheta|$,  so that $\zzeta=\bww \lambda |\ttheta|=\lambda \ttheta.$ \nc 
 
  As for $\ref{r:sub:3}\Rightarrow
 \ref{r:sub:1}$
let $\zzeta_{C} \in \calM (O, \R^{k})$ be such that $\ttheta = \zzeta + \zzeta_{C}$ and $|\ttheta | = | \zzeta| + |\zzeta_{C}|$. 
Then, $\zzeta, \zzeta_{C} \ll |\ttheta|$. 
Denoting by
 $\boldsymbol v \coloneq \frac{\di \zzeta}{\di |\ttheta|} $ 
we may write $\zzeta = \boldsymbol v | \ttheta|$
 and $ \zzeta_{C} = (\boldsymbol w - \boldsymbol v) | \ttheta|$. Now,  the function
    $\boldsymbol v \in L_{ | \ttheta| }^{1}(O; \R^{k})$
 satisfies $  1 = \|\bww\| = \| \bww - \bvv\| + \| \bvv \|$ $|\ttheta|$-a.e.~in~$O$. 
 Since $\|{\cdot}\|$ is strictly convex,
 we deduce $\bvv = \lambda \, \bww$ 
 for some  $\lambda \in L^{\infty}_{| \ttheta|} (O; [0, 1]) $,  i.e.\ \eqref{characterization-submeas} holds. \NC 
\end{proof}
We now consider 
the previous order relation in  the particular case when 
$O$ is 
an open subset of space-time Euclidean space $\R^{d+1} = \R\times \R^d$
(whose elements will be denoted by $(t,x)$), 
and measures have the same $x$-distributional divergence.
The operator $\dive$  is to be understood with respect to the `spatial' variable $x \in \R^d$. This gives
rise to the following definition. 
%
%
\begin{definition}[Minimal vector measures]
\label{d:minimal}
Let $O$ be an open subset  of~$\R\times \R^{d}$   and let $\ttheta
\in \calM (O; \R^{d})$.
We say that $\ttheta$ is \emph{minimal}
if the following property holds:  
\begin{equation}
	\label{eq:minimality}
	\text{whenever $\zzeta \in  \calM (O;
 \R^{d})$ fulfills \quad $\dive \zzeta = \dive \ttheta$ and $\zzeta \prec \ttheta$, \quad then \quad $\zzeta = \ttheta$.}
\end{equation}
\end{definition}
 We illustrate this concept with the following example, where a  \emph{minimal} measure is constructed by juxtaposing the measures carried by finitely many regular and \emph{injective} curves. 
\begin{example}
\label{ex:4minimality}
{\sl
Let
$(\varrho_i)_{i=1}^n$ be  a family of regular injective curves in $\R^d$,
with disjoint image sets.  Let
$t_{\varrho_{i}}$, $i=1,\ldots, n$ be their  tangent vector fields, and 
  $r_{\varrho_i}\colon [0, L_{\varrho_i}] \to \R^d$ 
 their arclength parametrizations.
 Let $(a,b) \subset \R$ be an arbitrary interval and $\lambda \in \calMb((a,b)). $   Then,
  the  measure $\ttheta \in \calM ((a,b){\times}\R^d;\R^d)$ defined by 
\[
\ttheta:  =  
 \lambda 
\otimes \boldsymbol{m}_{\rho} \qquad \text{with } \boldsymbol{m}_\rho: =  \sum_{i=1}^n\mathbf{t}_{\varrho_i}  \mathcal{H}^1  \mres {\varrho_i}  
\]
is
minimal. 
\par
Indeed, let
 $\zzeta \in  \calM ((a,b){\times}\R^d;\R^d)$ fulfill
  $\zzeta \prec \ttheta$ and  \STE $\dive \zzeta = \dive \ttheta$.  Then,  there exists $\ell  \in L^{\infty}_{|\ttheta|} ((a,b) {\times} \R^{d}; [0, 1])$ such that $\zzeta = {\ell} \ttheta$. Moreover, for every $\varphi \in \mathrm{C}^{1}_{\mathrm{c}} ((a,b) {\times} \R^{d})$ we have that
  \begin{align}\label{e:rrho-ex}
  \int_{(a, b)}  \sum_{i=1}^n \int_{0}^{L_{\varrho_i}} \frac{\di}{\di s} \varphi(t, r_{\varrho_i} (s)) \, \di s \, \di  {\lambda} (t) & =
   \int_{(a, b)}  \sum_{i=1}^n \int_{\R^d} \rmD_{x} \varphi(t, x) {\, \cdot\,} \mathbf{t}_{\varrho_i} (x) \, \di  ( \mathcal{H}^{1}\mres \varrho_i)    (x) \, \di {\lambda}(t)
  \\
  &
   = \iint_{(a,b) {\times} \R^{d}} \rmD_{x} \varphi(t, x)\, \di \ttheta(t, x) 
  = \iint_{(a,b) {\times} \R^{d}} \rmD_{x} \varphi(t, x) \, \di \zzeta(t, x)  \nonumber
  \\
  &
  = \int_{(a, b)}   \sum_{i=1}^n \int_{\R^d} {\ell}(t, x)  \rmD_{x} \varphi(t, x) {\, \cdot\,} \mathbf{t}_{\varrho_i} (x)  \, \di  ( \mathcal{H}^{1}\mres \varrho_i)   (x) \, \di  {\lambda}(t) \nonumber
  \\
  &
   =   \int_{(a, b)} \sum_{i=1}^n \int_{0}^{L_{\varrho_i}} {\ell}(t, r_{\varrho_i}(s)) \frac{\di}{\di s} \varphi(t, r_{\varrho_i} (s)) \, \di s \, \di {\lambda} (t)  \,. \nonumber
  \end{align}
  Taking $\varphi (t, x) = \psi_{1}(t) \psi_{2}(x)$ for $\psi_{1} \in \mathrm{C}^{1}_{\mathrm{c}} ((a,b))$ and $\psi_{2} \in \mathrm{C}^{1}_{\mathrm{c}} (\R^{d})$, we deduce from~\eqref{e:rrho-ex} that for  ${\lambda}$-a.a.~$t \in (a,b)$ 
\[
\sum_{i=1}^n  \int_{0}^{L_{\varrho_i}} \frac{\di}{\di s} \psi_{2}( r_{\varrho_i} (s)) \, \di s  = \sum_{i=1}^n \int_{0}^{L_{\varrho_i}} {\ell}(t, r_{\varrho_i} (s))  \frac{\di}{\di s} \psi_{2}( r_{\varrho_i} (s)) \, \di s \,.
  \]
  Since each $\varrho_i$ is 
  regular and injective, and 
   their image sets   are disjoint, we infer that   for every $\psi \in \mathrm{C}^{1} ( [ 0, L_{\rho_{i}} ] )$,
   for  ${\lambda}$-a.a.~$t \in (a,b)$ and every $i =1, \ldots, n$
    it holds 
    \begin{equation}
  \label{e:rrho-3-ex}
 \int_{0}^{L_{\varrho_i}} \Big(1- \ell(t, r_{\varrho_i} (s)) \Big)\frac{\di}{\di s} \psi (s ) \, \di s  =0.
  \end{equation} 
    Choosing a countable set of test functions 
    strongly dense in $\mathrm C^1([0,L])$, with 
    $L:=\max_i L_{\varrho_i}$, then for every $i=1,\ldots, n$ we have that 
 $1-{\ell} (t, r_{\varrho_i} (\cdot))=0$
 a.e.~in~$[0, L_{\varrho_i}]$ for  ${ \lambda}$-a.a.~$t \in (a,b)$.
Thus, ${\ell}\equiv 1
$ $|\ttheta|$-a.e.,   hence $\zzeta = \ttheta$. 
}
\end{example} 

 In the next two statements we discuss the existence of minimal submeasures. We start with the case of bounded Radon measures.

\begin{proposition}
\label{p:submeasure}
Let $O$ be an open subset   of $\R^{d+1}$   and $\ttheta \in \calMb(O; \R^{d})$. Then, the problem
\begin{equation}
\label{e:min-sub}
\min \, \{ | \zzeta | (O) : \zzeta \prec \ttheta \text{ and } \dive \zzeta = \dive \ttheta\}
\end{equation}
admits a solution. Moreover, every solution  to~\eqref{e:min-sub}  is minimal.
\end{proposition}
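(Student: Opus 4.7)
The plan is to obtain existence in \eqref{e:min-sub} by the direct method of the calculus of variations and then deduce minimality from the general implication \eqref{condition-4-equality}. Since $\ttheta$ itself is admissible, the set of competitors is non-empty; by Definition \ref{d:submeasure} every admissible $\zzeta$ can be written as $\zzeta = \lambda \ttheta$ for some $\lambda \in L^\infty_{|\ttheta|}(O;[0,1])$, so $|\zzeta|(O) = \int_O \lambda \, \dd|\ttheta| \leq |\ttheta|(O) < \infty$ and the infimum is finite. Choose a minimizing sequence $\zzeta_n = \lambda_n \ttheta$.

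The key move is to exercise weak$^*$ compactness on the scalar densities $\lambda_n$ rather than on the vector measures $\zzeta_n$, so that the submeasure constraint is automatically preserved. Since $|\ttheta|(O) < \infty$ and $O$ is second countable, $L^1_{|\ttheta|}(O)$ is separable and $L^\infty_{|\ttheta|}(O) = (L^1_{|\ttheta|}(O))^*$; by Banach--Alaoglu a (non-relabeled) subsequence satisfies $\lambda_n \weaksto \lambda$ with $\lambda \in L^\infty_{|\ttheta|}(O;[0,1])$, hence $\bar\zzeta \coloneq \lambda \ttheta \prec \ttheta$. Writing the polar decomposition $\ttheta = \bww \, |\ttheta|$, for every $\pphi \in \Cc(O;\R^d)$ the function $\pphi \cdot \bww$ lies in $L^1_{|\ttheta|}(O)$, and thus
\[
	\int_O \pphi \cdot \dd \zzeta_n = \int_O \lambda_n \, (\pphi \cdot \bww) \, \dd|\ttheta| \ \longrightarrow \ \int_O \lambda \, (\pphi \cdot \bww) \, \dd |\ttheta| = \int_O \pphi \cdot \dd \bar\zzeta,
\]
so $\zzeta_n \weaksto \bar\zzeta$ in $\calM(O;\R^d)$. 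Testing the identity $\dive \zzeta_n = \dive \ttheta$ against $\varphi \in \mathrm{C}^\infty_{\mathrm{c}}(O)$ (note $\rmD_x \varphi \in \Cc(O;\R^d)$) and passing to the limit yields $\dive \bar\zzeta = \dive \ttheta$, while testing the weak$^*$ convergence of $\lambda_n$ against the constant $1 \in L^1_{|\ttheta|}(O)$ gives $|\zzeta_n|(O) = \int_O \lambda_n \, \dd|\ttheta| \to \int_O \lambda \, \dd|\ttheta| = |\bar\zzeta|(O)$. Therefore $\bar\zzeta$ is admissible and realizes the infimum.

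For the minimality of an arbitrary solution $\zzeta$ of \eqref{e:min-sub}, let $\zzeta' \prec \zzeta$ with $\dive \zzeta' = \dive \zzeta$. By transitivity of $\prec$ one has $\zzeta' \prec \ttheta$, and combined with $\dive \zzeta' = \dive \ttheta$ this makes $\zzeta'$ admissible in \eqref{e:min-sub}, so that $|\zzeta'|(O) \geq |\zzeta|(O)$. Conversely, writing $\zzeta' = \mu \zzeta$ with $\mu \in L^\infty_{|\zzeta|}(O;[0,1])$ yields $|\zzeta'|(O) = \int_O \mu \, \dd |\zzeta| \leq |\zzeta|(O) < \infty$. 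Thus the hypotheses of \eqref{condition-4-equality}, applied to the pair $(\zzeta',\zzeta)$ in place of $(\zzeta,\ttheta)$, are met and we conclude $\zzeta' = \zzeta$, which is exactly \eqref{eq:minimality}. The only delicate point in the whole argument is precisely the choice to compactify at the level of the densities $\lambda_n$: a direct weak$^*$ compactness argument in $\calMb(O;\R^d)$ would only provide lower semicontinuity of $|\cdot|(O)$ and leave open whether the limit is still of the form $\lambda \ttheta$, so the submeasure constraint might be lost.
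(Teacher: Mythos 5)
Your proof is correct and follows essentially the same route as the paper: the paper reformulates \eqref{e:min-sub} as a minimization over densities $\lambda\in L^\infty_{|\ttheta|}(O;[0,1])$ (equation \eqref{e:min-sub-2}) and invokes weak$^*$ compactness of this convex constraint set together with weak$^*$ continuity of the linear functional, which is exactly the Banach--Alaoglu argument on the $\lambda_n$ that you spell out in detail; the minimality step via transitivity of $\prec$ and \eqref{condition-4-equality} is also the paper's argument verbatim. Your closing observation -- that compactifying $\zzeta_n$ directly in $\calMb(O;\R^d)$ would only give lower semicontinuity of the total variation and might lose the submeasure constraint -- is precisely the reason the paper passes to the density formulation, so you have correctly identified the key point.
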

 We point out for later use that, by  Lemma~\ref{r:sub},  the minimum problem~\eqref{e:min-sub} can be reformulated in terms of densities:
\begin{equation}
\label{e:min-sub-2}
\min\, \bigg\{ \int_{O} \lambda \, \di | \ttheta | : \, \lambda \in
L^{\infty}_{| \ttheta|} (O; [0, 1]) , \, \int_{O} (1 - \lambda )
 \rmD_x \nc
\varphi \, \di \ttheta = 0 \ \text{for every $\varphi \in   \rmC_{\rm c}^1 (O)$}\bigg\}\,. 
\end{equation}
\begin{proof}
A solution to the minimum problem \eqref{e:min-sub-2},  and thus to 
\eqref{e:min-sub},  
 exists, since the constraint is convex and weakly$^{*}$-compact and the functional is weakly$^{*}$-continuous.
\par
Let $\zzeta \in \calMb (O;  \R^{d})$ be a solution of~\eqref{e:min-sub} and let $\widetilde{\zzeta} \in \calMb(O;  \R^{d})$   be such that $\dive \widetilde{\zzeta} = \dive \zzeta$ and $\widetilde{\zzeta} \prec \zzeta$. Then, $\dive \widetilde{\zzeta} = \dive \ttheta$ and $\widetilde{\zzeta} \prec \ttheta$, and $\widetilde{\zzeta}$ is a competitor for~\eqref{e:min-sub},  so that $|\zzeta|(O) \leq |\widetilde{\zzeta}|(O)$. Hence, by  \eqref{condition-4-equality} we conclude that $\widetilde{\zzeta} = \zzeta$. 
\end{proof}

 With our following result we show that 
the existence of  minimal submeasures  extends to the case in which 
 $\ttheta$ is just a \emph{Radon} measure in a cylindrical open set $(a,b)\times \R^d$ (we again emphasize that the 
 divergence operator is only considered w.r.t. the variable $x\in \R^d$). 
\begin{corollary}
\label{c:submeasure}
Let $\ttheta \in \calM ((a,b){\times} \R^d;   \R^{d})$   be such that 
$|\ttheta| ([c,d]{\times}\R^d)<+\infty$ for every $[c,d]\subset (a,b)$. Then
 there exists $\zzeta \in \calM((a,b){\times} \R^d;   \R^{d})$   minimal such that $\zzeta \prec \ttheta$ and $\dive \zzeta = \dive \ttheta$.
\end{corollary}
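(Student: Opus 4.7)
The plan is to reduce to the variational framework of Proposition~\ref{p:submeasure} by localizing the minimization with the help of a strictly positive weight. First, I would construct a Borel function $w \colon (a,b){\times}\R^d \to (0,+\infty)$ belonging to $L^{1}_{|\ttheta|}((a,b){\times}\R^d)$. The local-in-time finiteness assumption $|\ttheta|([c,d]{\times}\R^d) < +\infty$ for every $[c,d] \subset (a,b)$ guarantees that the time projection $\pi^1_\sharp |\ttheta|$ is a well-defined $\sigma$-finite Radon measure on $(a,b)$, so a standard exhaustion argument (e.g.\ writing $(a,b) = \bigcup_n (c_n,d_n)$ with $[c_n,d_n] \subset (c_{n+1},d_{n+1})$ and choosing $w$ constant in time on each annular slab with an appropriate decay rate) yields a strictly positive Borel $\rho \colon (a,b) \to (0,+\infty)$ with $\int_{(a,b)} \rho\,\di(\pi^1_\sharp|\ttheta|) < +\infty$; setting $w(t,x) := \rho(t)$ does the job.

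Next, I would consider the weighted analogue of \eqref{e:min-sub-2}:
\begin{equation*}
\min\Bigl\{ \int_{(a,b)\times\R^d} w\,\lambda \,\di |\ttheta| \,:\, \lambda \in L^{\infty}_{|\ttheta|}((a,b){\times}\R^d;[0,1]),\ \int (1-\lambda)\rmD_{x}\varphi\,\di\ttheta = 0 \ \text{for all } \varphi\in \mathrm{C}^{1}_{\mathrm{c}}((a,b){\times}\R^d)\Bigr\}.
\end{equation*}
The admissible set is nonempty (it contains $\lambda\equiv 1$), convex, and weakly-$*$ compact in $L^{\infty}_{|\ttheta|}$ (thanks to Banach--Alaoglu and the $\sigma$-finiteness of $|\ttheta|$), while the objective is weakly-$*$ continuous because $w \in L^{1}_{|\ttheta|}$. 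A minimizer $\lambda^{*}$ therefore exists, and I set $\zzeta := \lambda^{*}\ttheta$. By construction, $\zzeta$ belongs to $\calM((a,b){\times}\R^d;\R^d)$ (as $|\zzeta| = \lambda^*|\ttheta| \le |\ttheta|$ is locally finite), $\zzeta \prec \ttheta$ in the sense of Definition~\ref{d:submeasure}, and the divergence constraint translates exactly into $\dive \zzeta = \dive \ttheta$ in $(a,b){\times}\R^d$.

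Finally, I would check minimality of $\zzeta$. Let $\widetilde\zzeta \prec \zzeta$ satisfy $\dive \widetilde\zzeta = \dive \zzeta$. By Lemma~\ref{r:sub} we may write $\widetilde\zzeta = \mu\zzeta$ with $\mu \in L^{\infty}_{|\zzeta|}((a,b){\times}\R^d;[0,1])$, so that $\widetilde\zzeta = \tilde\lambda\,\ttheta$ with $\tilde\lambda := \mu\lambda^{*}$ (extended by $0$ on $\{\lambda^{*} = 0\}$) satisfying $0 \le \tilde\lambda \le \lambda^{*}$ $|\ttheta|$-a.e. Transitivity of $\dive$ gives $\dive(\tilde\lambda\ttheta) = \dive\ttheta$, so $\tilde\lambda$ is itself admissible in the minimum problem, and minimality of $\lambda^{*}$ yields
\begin{equation*}
\int w\,(\lambda^{*} - \tilde\lambda)\,\di|\ttheta| \le 0 .
\end{equation*}
Since the integrand is nonnegative and $w$ is strictly positive $|\ttheta|$-a.e., it must vanish, i.e.\ $\tilde\lambda = \lambda^{*}$ $|\ttheta|$-a.e., and hence $\widetilde\zzeta = \zzeta$. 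The most delicate point I foresee is the initial construction of~$w$: both $|\ttheta|$-integrability (for existence and weak-$*$ continuity of the objective) and strict positivity $|\ttheta|$-a.e.\ (for the rigidity step) are essential, but they are both delivered by the $\sigma$-finiteness of $|\ttheta|$ inherited from the local-in-time finiteness hypothesis.
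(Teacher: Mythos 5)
Your proof is correct and follows a genuinely different strategy from the paper's. The paper exhausts $(a,b)\times\R^d$ by nested slabs $O_j=(a_j,b_j)\times\R^d$ on which $|\ttheta|$ is finite, applies Proposition~\ref{p:submeasure} on each to get minimal measures $\zzeta_j$, arranges the $\zzeta_j$ to be compatible under restriction (exploiting that the spatial divergence constraint is local in time), and then passes to a weak$^*$ limit, inferring minimality of the limit from that of each $\zzeta_j$ by restricting a would-be strict submeasure to each slab. You instead replace the objective $\int\lambda\,\di|\ttheta|$ of~\eqref{e:min-sub-2}, which may be $+\infty$ when $\ttheta$ is merely Radon, by the weighted functional $\int w\lambda\,\di|\ttheta|$ with a strictly positive $w\in L^1_{|\ttheta|}$ built from the local-in-time finiteness, and solve a single global minimization. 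Existence of a minimizer $\lambda^*$ then follows from Banach--Alaoglu in $L^\infty_{|\ttheta|}$ (where the duality $(L^1_{|\ttheta|})^*=L^\infty_{|\ttheta|}$ requires $\sigma$-finiteness of $|\ttheta|$, which the hypothesis provides), and minimality of $\zzeta=\lambda^*\ttheta$ is read off directly from optimality: any competing $\widetilde\zzeta=\tilde\lambda\ttheta\prec\zzeta$ with the same divergence has $\tilde\lambda\le\lambda^*$, and strict positivity of $w$ together with $\int w(\lambda^*-\tilde\lambda)\,\di|\ttheta|\le 0$ forces $\tilde\lambda=\lambda^*$. Your route avoids both the compatibility/gluing argument and the weak$^*$-limit extraction, at the cost of the preliminary weight construction; the paper's route, in exchange, produces a minimal submeasure that restricts coherently to the exhausting slabs, a structural fact it reuses in the final minimality step. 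For the statement at hand both are valid, and yours is arguably shorter and more self-contained.
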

\begin{proof}
We consider two sequences $(a_j)_j, \, (b_j)_j \subset (a,b)$ with $a_j \searrow a$ and $b_j \nearrow b$ as $j\to \infty$, and set $O_j : =  (a_j,b_j) \times\R^d$. 
 By assumption, for every $j\in \N$ the restriction $\ttheta_{j} \coloneq  \ttheta \mres  O_{j}$   belongs to~$\calMb (O_j;  \R^{d})$; we denote by $\mathrm{div}|_{O_j}$ the divergence operator relative to the open set $O_j$, i.e., restricted to test functions with a compact support in $O_j$, and observe that $\mathrm{div}|_{O_j}(\ttheta_j) = \mathrm{div}|_{O_j}(\ttheta)$. 
We can apply Proposition~\ref{p:submeasure} and find, for every $j\in \N$, a minimal measure 
\[
\zzeta_j \in \calMb (O_j; \R^{d}) \text{ such that }    \zzeta_j \prec \ttheta_j \text{ and } \dive|_{O_j} ( \zzeta_j ) = \dive|_{O_j} (\ttheta_j)\,.
\]
\par
 We now show that it is not restrictive to assume that 
\begin{equation}
\label{constant-zzeta-j}
  \zzeta_j  \mres O_{\ell} = \zzeta_\ell \qquad \text{if } \ell \leq j\,.
\end{equation}
Indeed, since $\zzeta_j \prec \ttheta_j$ and $\zzeta_\ell \prec \ttheta_\ell$, there exist $\lambda_j \in L_{|\ttheta_j|}^\infty (O_j; [0,1])$ and $\lambda_\ell \in L_{|\ttheta_\ell|}^\infty (O_\ell; [0,1])$ such that 
$\zzeta_j = \lambda_j \ttheta_j$ and 
$\zzeta_\ell = \lambda_\ell \ttheta_\ell$. \RRR Recall that
$\lambda_j$ and $\lambda_\ell$ solve the minimum problem
 \eqref{e:min-sub-2} on~$O_{j}$ and on~$O_{\ell}$, respectively. Define now $ \widehat{\lambda}_{j} \colon  O_j \to [0,1]$ via
 \[
 \widehat{\lambda}_j(t,x): = \begin{cases}
  \lambda_\ell (t,x) & \text{if } (t,x) \in O_\ell,
 \\
 \lambda_{j}(t,x) & \text{if } (t,x) \in O_j \setminus O_\ell\,.
 \end{cases}
 \] 
 Then, $\widehat{\lambda}_j  \in L_{|\ttheta_j|}^\infty (O_j; [0,1])$ and the measure $\widehat{\zzeta}_j: = \widehat{\lambda}_j  \ttheta_j$ clearly fulfills $\widehat{\zzeta}_j \prec \ttheta_j$,
 $\mathrm{div}|_{O_j}(\widehat\zzeta_j) =  \mathrm{div}|_{O_j}(\ttheta_j) $, and $ \widehat{\zzeta}_{j}  \mres  O_{\ell} = \zzeta_{\ell}$. By minimality of~$\zzeta_{\ell}$ on~$O_{\ell}$, we have that
  \[
 |\widehat{\zzeta}_j|(O_j)  = |  \widehat{\zzeta}_j|(O_\ell) + | \widehat{\zzeta}_j|(O_j \setminus O_{\ell}) = | \zzeta_{\ell}| (O_{\ell}) + | \zzeta_{j} | (O_{j} \setminus O_{\ell}) \leq | \zzeta_{j}|(O_{j})\,.
 \]
 Hence, $\widehat{\zzeta}_{j}$ solves the minimum problem~\eqref{e:min-sub-2} on~$O_{j}$ and is minimal. This implies that, up to replacing~$\zzeta_{j}$ with~$\widehat{\zzeta}_{j}$, we may assume~\eqref{constant-zzeta-j}.

Let us trivially extend each $\zzeta_j$ to $O = (a,b)\times \R^d$. Since $|\zzeta_j| \leq |\ttheta_j|$ for every $j\in \N$, we find that there exists  $\zzeta\in \calM (O;\R^{d})$ such that, up to a subsequence, $\zzeta_{j} \rightharpoonup^{*} \zzeta$ in $\calM (O;\R^{d})$.  Thus, $\dive \zzeta = \dive \ttheta$.  By the lower semicontinuity of the total variation and by the relation $\zzeta_{j} \prec \ttheta_{j}$ we deduce that $\zzeta \prec \ttheta$. 
\par
Let us now show that $\zzeta$ is minimal. Indeed, let $\boldsymbol{\xi} \in \calM (O; \R^{d})$ be such that $\dive \boldsymbol{\xi} = \dive \zzeta$ and $\boldsymbol{\xi} \prec \zzeta$.  In particular, $\boldsymbol{\xi} \mres O_{j}$ satisfies $\dive|_{O_j} (\boldsymbol{\xi} \mres O_{j}) = \dive |_{O_j} (\zzeta_{j})$  and $\boldsymbol{\xi}\mres O_{j} \zzeta_{j} \prec\zzeta_{j}$. Thus, $\boldsymbol{\xi}\mres O_{j} = \zzeta_{j}$ for every $j\in \N$ and $\boldsymbol{\xi} = \zzeta$. 

 \end{proof}

A crucial step in the proof of  Theorem \ref{thm:augmented} ahead will
consist in relating the weak$^*$ limits of the projections of
(weakly$^*$ converging) sequences  of positive and vector-valued
measures, with the push forward
of their weak$^*$ limits through the projection
$ \pinew(s,t,x):=(t,x)$,
$\R \times \R\times \R^d \to \R \times \R^d$,
which is not proper.
\par
  Now, the last result of this section
addresses this issue in general, for the push forward through a generic continuous map. It provides 
 sufficient conditions under which  the push forward of a  weakly$^*$ converging  sequence of measures is
 a  \emph{submeasure} of the weak$^*$-limits of their push forwards.  
\begin{lemma}
\label{l:sub-nu1}
Let $O,G$ be open subsets of some Euclidean spaces,
let $\pushm \colon O\to G$ be a continuous map,  let $\R^{k}$ be endowed with a strictly convex norm~$\| {\cdot}\|$,  
and let $(\zzeta_n)_n\subset \calM(O;\R^k) $ satisfy
\begin{displaymath}
  \sup_{n \in \mathbb{N}} \, |\zzeta_{n}|(\pushm^{-1}(K)) <+\infty
  \quad\text{for every compact
subset $K\subset G$},
\end{displaymath}
 so that $\llambda_n=\pushm_\sharp\zzeta_n$ is a well defined measure
 in $\calM(G;\R^k)$.
 Let us assume that 
\begin{equation}
\label{convergences-zeta-n1}
\begin{aligned}
\zzeta_n \weaksto \zzeta  \text{ in } \calM(O;\R^k),\qquad
\llambda_n=\pushm_{\sharp } \zzeta_n \weaksto \llambda \text{ in } \calM (G;\R^k),
\end{aligned}
\end{equation}
for some $\zzeta \in  \calM(O;\R^d),$  and $\llambda \in
\calM \FCOMM (G;\R^k)$. 
If
\begin{equation}
\label{cv-variations1}
\pushm_{\sharp } |\zzeta_n| \weaksto |\llambda| \qquad \text{in } \calM^+(G),
\end{equation}
  then
\begin{equation}
\label{they-are-submeasures1}
\pushm_{\sharp } \zzeta \prec \llambda.
\end{equation}
\end{lemma}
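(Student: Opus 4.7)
I would verify characterization~\ref{r:sub:3} of Lemma~\ref{r:sub} for the pair $(\llambda,\pushm_\sharp\zzeta)$, namely produce a $\zzeta_C\in\calM(G;\R^k)$ with $\llambda=\pushm_\sharp\zzeta+\zzeta_C$ and $|\llambda|=|\pushm_\sharp\zzeta|+|\zzeta_C|$; since $\|\cdot\|$ is strictly convex, this implies $\pushm_\sharp\zzeta\prec\llambda$. The only candidate is $\zzeta_C:=\llambda-\pushm_\sharp\zzeta$, which is well defined because $|\zzeta|(\pushm^{-1}(K))\le\liminf_n|\zzeta_n|(\pushm^{-1}(K))<\infty$ for every compact $K\subset G$ by weak$^*$ lower semicontinuity of the total variation. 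Hence the whole task reduces to proving
\[
|\pushm_\sharp\zzeta|+|\llambda-\pushm_\sharp\zzeta|\le|\llambda|,
\]
the opposite inequality being the triangle inequality.

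The key is to quantify the ``mass escaping along the fibres of $\pushm$'' by a cut-off $\chi_R\in\Cc(O;[0,1])$ with $\chi_R\nearrow 1$. Because $(\varphi\circ\pushm)\chi_R\in\Cc(O;\R^k)$ for every $\varphi\in\Cc(G;\R^k)$, the partial pushforwards converge as $n\to\infty$:
\[
\pushm_\sharp(\chi_R\zzeta_n)\weaksto\pushm_\sharp(\chi_R\zzeta),\qquad \pushm_\sharp(\chi_R|\zzeta_n|)\weaksto\pushm_\sharp(\chi_R|\zzeta|),
\]
so the complementary residuals converge weak$^*$ to $\mu_R:=\llambda-\pushm_\sharp(\chi_R\zzeta)$ and $\nu_R:=|\llambda|-\pushm_\sharp(\chi_R|\zzeta|)$; in particular $\nu_R\ge 0$, being a weak$^*$ limit of nonnegative measures. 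Weak$^*$ lower semicontinuity of the total variation applied to the pointwise bound $|\pushm_\sharp((1-\chi_R)\zzeta_n)|\le\pushm_\sharp((1-\chi_R)|\zzeta_n|)$ then yields $|\mu_R|\le\nu_R$ on $G$.

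Passing $R\to\infty$, dominated convergence (legitimate since the supports of the test functions are compact in $G$ and $|\zzeta|$ is finite on their $\pushm$-preimages) gives $\mu_R\weaksto\zzeta_C$ and $\nu_R\weaksto|\llambda|-\pushm_\sharp|\zzeta|$, while positivity of $\nu_R$ automatically delivers $\pushm_\sharp|\zzeta|\le|\llambda|$. A last application of weak$^*$ lower semicontinuity turns $|\mu_R|\le\nu_R$ into $|\zzeta_C|\le|\llambda|-\pushm_\sharp|\zzeta|$, and combining with the trivial $|\pushm_\sharp\zzeta|\le\pushm_\sharp|\zzeta|$ produces the desired $|\pushm_\sharp\zzeta|+|\zzeta_C|\le|\llambda|$; Lemma~\ref{r:sub}\ref{r:sub:3} (whose use of $\|\cdot\|$ strictly convex is essential) then closes the argument. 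The main obstacle is precisely that $\pushm$ is not assumed to be proper, so $\pushm_\sharp\zzeta$ is in general strictly smaller than the weak$^*$ limit $\llambda$ of the $\pushm_\sharp\zzeta_n$: it is the Reshetnyak-type hypothesis $\pushm_\sharp|\zzeta_n|\weaksto|\llambda|$ combined with strict convexity that keeps this loss of mass coherent enough to exhibit $\pushm_\sharp\zzeta$ as a submeasure of $\llambda$.
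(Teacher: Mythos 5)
Your overall strategy coincides with the paper's: localize with cut-offs, pass to the limit using the hypothesis $\pushm_\sharp|\zzeta_n|\weaksto|\llambda|$, and close via Lemma~\ref{r:sub}\ref{r:sub:3}. The reduction to proving $|\pushm_\sharp\zzeta|+|\llambda-\pushm_\sharp\zzeta|\le|\llambda|$ and invoking strict convexity is exactly right.

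There is, however, a genuine gap in the step asserting $\pushm_\sharp(\chi_R|\zzeta_n|)\weaksto\pushm_\sharp(\chi_R|\zzeta|)$. The justification you give (that $(\varphi\circ\pushm)\chi_R\in\Cc(O)$) is valid for the vector measures $\zzeta_n$ but not for their total variations: weak$^*$ convergence $\zzeta_n\weaksto\zzeta$ does \emph{not} imply $|\zzeta_n|\weaksto|\zzeta|$. In general one can only extract a subsequence with $|\zzeta_n|\weaksto\alpha$ for some $\alpha\in\calM^+(O)$ satisfying $\alpha\ge|\zzeta|$, with strict inequality possible (this is recalled in the paper's preliminaries on weak$^*$ convergence). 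Consequently the measure you call $\nu_R$ is really $|\llambda|-\pushm_\sharp(\chi_R\alpha)$, not $|\llambda|-\pushm_\sharp(\chi_R|\zzeta|)$, and the subsequent assertions — that $\nu_R\ge0$ as a limit of nonnegative residuals, that $|\mu_R|\le\nu_R$ follows by lower semicontinuity, and that $\nu_R\weaksto|\llambda|-\pushm_\sharp|\zzeta|$ as $R\to\infty$ — are stated for the wrong object and do not follow as written.

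The gap is repairable: first pass to a subsequence with $|\zzeta_n|\weaksto\alpha$ (the paper does exactly this), replace $|\zzeta|$ by $\alpha$ throughout your argument, and note that the discrepancy $\alpha\ge|\zzeta|$ only \emph{helps}, since it yields $|\zzeta_C|\le|\llambda|-\pushm_\sharp\alpha\le|\llambda|-\pushm_\sharp|\zzeta|$, after which $|\pushm_\sharp\zzeta|+|\zzeta_C|\le\pushm_\sharp|\zzeta|+|\llambda|-\pushm_\sharp|\zzeta|=|\llambda|$ as you intended. Once corrected, your proof is essentially the one in the paper, phrased additively through $\zzeta_C$ rather than by first showing that each truncation $\llambda^j=\pushm_\sharp(\eta^j\zzeta)$ satisfies $\llambda^j\prec\llambda$ and then letting $\llambda^j\weaksto\pushm_\sharp\zzeta$.
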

\begin{proof}
  Let $\eta^j \in \rmC_{\rm c} (O)$ form an increasing sequence
  such that $0 \leq \eta^j\leq 1$
  for all $j\in \N$
   and $\eta^j(x)\uparrow1$ as $j\to\infty$
  for every $x\in O$.
  For every $n\in \N$ and $j \geq 1$
  we set
  \begin{displaymath}
    \zzeta_n^j:=\eta^j\zzeta_n,\quad
    \widehat\zzeta_n^j:=(1-\eta^j)\zzeta_n,\quad
    \llambda_n:=\pushm_\sharp\zzeta_n,\quad
    \llambda^j_n:=\pushm_\sharp \zzeta^j_n,\quad
    \widehat\llambda^j_n:=\pushm_\sharp \widehat\zzeta^j_n,
  \end{displaymath}
so that 
\begin{equation}
\label{decomposition1}
\zzeta_n  = \zzeta_n^j+\widehat\zzeta_n^j,\quad
\llambda_n=\llambda_{n}^{j} +\widehat\llambda_n^j.
\end{equation}
Since the functions  $\eta^j$ have compact support, if we pass to the limit
as $n\to \infty$ while keeping $j\geq 1$ fixed, we get
\begin{equation}
  \label{eq:5}
  \zzeta_n^j\weaksto \zzeta^j=\eta^j\zzeta,\quad
  \llambda_n^j\weaksto \llambda^j=\pushm_\sharp (\zzeta^j)\quad
  \text{as }n\to\infty,
\end{equation}
and we correspondingly deduce the convergence of
$\widehat\zzeta_n^j$ and $\widehat\llambda_n^j$ to measures
$\widehat\zzeta^j$ and $\widehat\llambda^j$ respectively, satisfying the
decomposition
\begin{equation}
\label{decomposition2}
\zzeta  = \zzeta^j+\widehat\zzeta^j,\quad
\llambda=\llambda^{j} +\widehat\llambda^j.
\end{equation}
(Notice, however, that in general $\widehat\lambda^j$ does not coincide
with $\pushm_\sharp\zzeta^j$).
We can now consider similar decompositions on the level of the total
variations
\begin{displaymath}
  \alpha_n:=|\zzeta_n|,\quad
  \alpha_n^j:=\eta^j|\zzeta_n|,\quad
  \widehat\alpha_n^j:=(1-\eta^j)|\zzeta_n|,\quad
  \beta_n:=\pushm_\sharp \alpha_n,\quad
  \beta^j_n:=\pushm_\sharp \alpha^j_n,\quad
  \widehat\beta^j_n:=\pushm_\sharp \widehat \alpha^j_n,
\end{displaymath}
which satisfy
\begin{equation}
  \label{eq:6}
  \alpha_n=\alpha_n^j+\widehat\alpha_n^j,\quad
  \beta_n=\beta_n^j+\widehat\beta_n^j,\quad
  \beta_n\ge|\llambda_n|,\quad
  \beta_n^j\ge |\llambda_n^j|,\quad
  \widehat\beta_n^j\ge |\widehat \llambda_n^j|,\quad
  \beta_n\weaksto \beta=|\llambda|\quad\text{as }n\to\infty. 
\end{equation}
By a possible extraction of a (not relabeled) subsequence, it is not restrictive to
assume that there exists $\alpha\in \calM^+(O)$ such that
$\alpha_n\weaksto \alpha$ as $n\to\infty$, so that
\begin{equation}
  \label{eq:7}
  \alpha_n^j\weaksto \alpha^j=\eta^j\alpha,\quad
  \widehat \alpha_n^j\weaksto \widehat\alpha^j=(1-\eta^j)\alpha,\quad
  \beta_n^j\weaksto \beta^j=\pushm_\sharp \alpha^j,\quad
  \widehat\beta_n^j\weaksto \widehat\beta^j=\beta-\beta^j.
\end{equation}
By Cantor's diagonal argument, it is possible to extract
an increasing subsequence $m\mapsto n(m)$
and to find limit measures $\lambda^j,\widehat\lambda^j\in \calM^+(G)$ such that
for every $j\in \N$
\begin{equation}
  \label{eq:8}
  |\llambda_{n(m)}^j|\weaksto \lambda^j\ge |\llambda^j|,\quad
  |\widehat\llambda_{n(m)}^j|\weaksto \widehat\lambda^j
  \ge |\widehat\llambda^j|\quad\text{as }m\to\infty.
\end{equation}
Since
\begin{displaymath}
  |\llambda_n|\le |\llambda_n^j|+|\widehat\llambda_n^j|
\end{displaymath}
we deduce
\begin{equation}
  \label{eq:9}
  |\llambda|\le \lambda^j+\widehat\lambda^j.
\end{equation}
On the other hand, the inequalities
\begin{displaymath}
  \lambda_n^j\le \beta_n^j,\quad \widehat\lambda_n^j\le \widehat\beta_n^j
\end{displaymath}
yield
\begin{equation}
  \label{eq:10}
  \lambda^j\le \beta^j,\quad
  \widehat\lambda^j\le \widehat\beta^j,
\end{equation}
and since $\beta^j+\widehat\beta^j=\beta=|\llambda|$ we conclude that
\begin{equation}
  \label{eq:11}
  |\llambda|=\lambda^j+\widehat\lambda^j.
\end{equation}
Similarly, the inequalities $\lambda^j\ge |\llambda^j|,\
\widehat\lambda^j\ge |\widehat \llambda^j|$ and
$|\llambda|\le |\llambda^j|+|\widehat\llambda^j|$ yield
\begin{equation}
  \label{eq:12}
  \lambda^j=|\llambda^j|,\quad
  \widehat\lambda^j=|\widehat\llambda^j|,\quad
  |\llambda|=|\llambda^j|+|\widehat\llambda^j|,\quad
  \llambda=\llambda^j+\widehat\llambda^j.
\end{equation}
We deduce that $\llambda^j\prec\llambda$.
For every $\varphi\in \rmC_c(G;\R^k)$ we easily check that
\begin{align*}
  \int_G \boldsymbol\varphi\cdot \di\llambda^j=
  \int_O \eta^j\boldsymbol\varphi(\pushm(x))\cdot \di\zzeta(x)
  \longrightarrow \int_O \boldsymbol\varphi(\pushm(x))\cdot \di\zzeta(x)=
  \int_G \boldsymbol\varphi\cdot \di (\pushm_\sharp \zzeta)
  \quad\text{as }j\to\infty,
\end{align*}
i.e.~$\llambda^j\weaksto \pushm_\sharp\zzeta$.
We eventually conclude that
$\pushm_\sharp \zzeta\prec \llambda$  by (iii) of Lemma~\ref{r:sub}. 
\end{proof}
\nc
\subsection{Function spaces for the superposition principle} 
\label{ss:funct-spaces}
Recall that 
 $\III$  denotes the interval $[0,+\infty)$;
if $(\mathrm X,d)$ is a complete and separable metric space, we will endow the pathspace 
$\rmC(\III;\mathrm X)$ with the Polish topology of uniform convergence
on the  compact subsets of $\III$ (see 
Lemma \ref{le:C-is-Polish}).
We introduce the spaces
\begin{itemize}
\item[-]
$\Lip_k(\III;\mathrm X)\,,$ 
of 
$k$-Lipschitz 
paths, $k\ge0$
 which is a
(closed, thus Polish) subset of $\rmC(\III;\mathrm X)$; 
\item[-]
$\Lip(\III;\mathrm X)$ 
of Lipschitz paths;
since 
$\Lip(\III;\mathrm X)=\bigcup_{k\in \N}
\Lip_k(\III;\mathrm X),$
$\Lip(\III;\mathrm X)$ is a $F_\sigma$
(namely, a countable union of closed sets), thus a
  Borel
subset of $\rmC(\III;\mathrm X)$.
\end{itemize}
%
We introduce a few more subsets of $\rmC(\III;\R^{d+1})$:
first of all,  the set
\begin{equation}
\label{Cplus}
\rmC^{\uparrow}(\III;  \R^{d+1}) 
 := \Big\{ \sfyy=  (\sft,\sfxx)  \in  \rmC(\III;\R^{d+1}): \ \sft(0) =0, \
 \sft \text{ is non-decreasing},\ \lim_{s\uparrow \infty} \sft(s) = +\infty \Big \}.
\end{equation}
$\Cplus$ is a Polish space, in particular 
a Borel subset of $\rmC(\III;\R^{d+1})$.
In fact,  it can be written as the intersection $A\cap B$ where 
 \begin{align*}
A: ={}& \bigcap_{n \in \mathbb{N}} \Big\{ \sfyy \in \rmC(\III;\R^{d+1}):  
	\ \sup_{s\in \III} \sft(s) > n\Big\},\\
B :={}& \Big\{\sfyy  = (\sft,\sfxx)  \in  \rmC(\III;\R^{d+1}) \, : 
	\ \sft(0) =0, \
 \sft \text{ is non-decreasing}\Big\}\,.
\end{align*}  
Since
the map $\sfyy \mapsto 
\sup_{s\in \III} \sft(s) $ is lower semicontinuous in $\rmC(\III;\R^{d+1})$,  $A$ is a $G_\delta$ set  (namely, the countable intersection of open sets). 
Since $B$ is closed, 
$\Cplus$ is a $G_\delta$ as well, and thus also Polish.
\par
 We further set 
\begin{equation}
\label{ACloc}
 \Lipplus k\III{\R^{d+1}} :=
 \Lip_k(\III;\R^{d+1})\cap \Cplus, \quad 
 \Lipplus {}\III{\R^{d+1}} :=  \bigcup_{k\in \N} \Lipplus k\III{\R^{d+1}}, 
\end{equation}
which are respectively a Polish and a $F_\sigma$ subset of $ \rmC(\III;\R^{d+1})$. 
Finally, we define
\begin{displaymath}
 \ALip \III {\R^{d+1}}  
:= \{ \sfyy \in \Lipplus 1 \III {\R^{d+1}}: \, \| \sfyy'(s)\| = 1 \ \text{for a.e.~$s \in \III$}\}\,.
\end{displaymath}
We notice that 
\begin{displaymath}
 \ALip \III {\R^{d+1}}  = \bigcap_{m \in \mathbb{N}} \bigcap_{n \in \mathbb{N}} \bigg\{ \sfyy \in \Lipplus 1 \III {\R^{d+1}}: \, \int_{0}^{m} \| \sfyy'(s)\| \, \di s 
>m - \frac{1}{n}\bigg\}\,,
\end{displaymath}
so that  $\ALip \III {\R^{d+1}}$    is a 
$G_\delta$, thus Polish and Borel, subset of $ \rmC (\III; \R^{d+1})$. 

\section{$\BV$ curves and the  `relaxed' continuity equation}
\label{s:3}
 The main result of this section, Theorem \ref{th:1} below, will unveil the relation between bounded-variation curves with values in $ \Puno(\R^d)$, and the continuity equation \eqref{continuity-equation}, which,
throughout the paper, 
will be formulated  as in
 Definition
 \ref{def:solCE} below. 
 Recall that we denote by $\DST$ the space-time domain
$\III\times \R^d$, which we can consider as a subset or $\Rdpu$.
\begin{definition}[Distributional and $\Puno$-solutions to the continuity equation]
\label{def:solCE}
We call a pair $(\mu,\nnu) \in \calM^+(\DST) \times  \calM (\DST ;\R^d) $ a (forward, distributional) 
solution  to  the 
 continuity equation    
\begin{equation}
\label{continuity-equation}
	\partial_t \mu+\dive \nnu =0\quad \text{in }\DST,\quad \mu\ge0,\quad
	\text{with initial datum $\mu_0\in \calM^+(\R^d)$},
\end{equation}
if 
\begin{align}
\label{distributional-sense}
&
\iint_{\Rdpu}  \partial_t \varphi (t, x)  \, \di \mu (t, x) + \iint_{\Rdpu } \rmD \varphi (t, x)  \, \di\nnu (t, x)  = -  \int_{\R^d} \varphi(0,x) \, \di \mu_0( x)
\end{align}
for every $\varphi \in   \rmC_{\rm c}^{1}(\Rdpu)$. We say that $(\mu,\nnu)$ is a $\Puno$-solution if 
\begin{equation}
\label{bound-on-nu-rilievo}
\mu_0\in \Puno(\R^d),\qquad | \nnu|(
\DSTT T
)< + \infty \qquad \text{for every $T>0$}.
\end{equation}
\end{definition}
Observe that, since $\mu,\nnu$ are supported in $\DST$,  we could restrict the integrals
in \eqref{distributional-sense} to $\DST$.
We have integrated on  $\Rdpu$,  and thus considered test functions in   $\rmC_{\rm c}^{1}(\Rdpu)$,   to be consistent with the usual 
distributional formulation in $\mathscr D'(\Rdpu)$.


 In this paper we will mainly focus on $\Puno$-solutions;
we will also consider an important subclass characterized by 
a minimality condition.
\begin{definition}[Minimal $\Puno$-solutions]
	\label{def:reduced}
	Let $(\mu,\nnu)$ be a $\Puno$-solution to the continuity equation 
	\eqref{continuity-equation}
	 and let us consider the Lebesgue decomposition of $\nnu$ as 
	\begin{equation}
		\label{eq:Lebesgue-decomp}
		\nnu = \nnu^a+\nnu^\perp, \qquad \nnu^a \ll \mu,\quad
		\nnu^\perp \perp \mu.
	\end{equation}
	We say that $(\mu,\nnu)$ is a \emph{minimal} $\Puno$-solution
	if $\nnu^\perp$ is \emph{minimal} in the sense of 
	Definition \ref{d:minimal}. 
	\par
	Let now  $\bar\nnu\in \calM(\DST;\R^d)$.   We say that $(\mu,\bar\nnu)$ is a \emph{minimal pair induced}
	by $(\mu,\nnu)$ if 
	\begin{equation}
		\label{eq:reduction}
		\bar\nnu=\nnu^a+\bar\nnu^\perp,\quad 
		\bar\nnu^\perp \prec\nnu^\perp,\quad
		\bar\nnu^\perp\text{ is minimal according to Definition \ref{d:minimal}},
	\end{equation}
	so that in particular $(\mu,\bar\nnu)$ is a $\Puno$-solution of 
	\eqref{continuity-equation} as well. 
	\end{definition}
\begin{remark}
	\label{rem:obvious}
	The existence of a minimal pair induced by $(\mu,\nnu)$ 
	is guaranteed by Corollary \ref{c:submeasure}. 
	Notice that the pair $(\mu,\bar\nnu)$ in \eqref{eq:reduction} satisfies
\begin{equation}
	\label{eq:trivial-but-useful}
	|(\mu,\bar \nnu)|=\theta |(\mu,\nnu)|,\quad 
	(\mu,\bar\nnu)=\theta (\mu,\nnu)\prec (\mu,\nnu)\qquad \text{for } 
	\theta:\DST\to [0,1]\text{ Borel,}\quad 
	\theta=1\text{ $\mu$-a.e.}
\end{equation}
We will see that minimality can also be characterized 
directly in terms of $\nnu$.

\end{remark}

We now establish the analogue of \cite[Thm.\ 8.3.1]{AGS08}. 
\begin{theorem}
\label{th:1}
\begin{enumerate}
\item
Let $\mu \in \BVloc(\III ; \mathcal{P}_1(\R^d)) $ and let~$\mu^{\pm}$ be the left- and right-continuous representatives of~$\mu$, respectively. Then, there exists a Borel measure
$\nnu  \in \calM (\DST;\R^d)$ such that for every $T\in [0, +\infty)$
\begin{equation}
\label{UEST}
|\nnu |([0,T) \times \R^d)  =  \mathrm{Var}_{W_1} (\mu^{-}; [0,T]), \qquad  |\nnu |((0,T] \times \R^d)  =  \mathrm{Var}_{W_1} (\mu^{+}; [0,T]),  
\end{equation}
and the pair $(\mu,\nnu)$  
is a  minimal $\Puno$-solution to  the continuity equation \eqref{continuity-equation} in the
sense of  Definition~\ref{def:reduced}.  
\ref{d:minimal}. 
\item Conversely, if $(\mu,\nnu)$ is a $\Puno$-solution
 to  the continuity equation in the sense of  Definition~\ref{def:solCE}
with initial datum $\mu_0\in \Puno(\R^d)$, 
  then
  \begin{enumerate}
  \item $\pi^{0}_{\sharp }\mu = \mathcal{L}^{1}$  (with 
  $\pi^{0}\colon\DST \to \III$ the projection
 $(t,x)\mapsto t$),  
     in particular
    $ \mu ([0, T) \times \R^{d}) = T $
    for every $T \in [0, +\infty)$;
\item  there exists a curve $t\mapsto \mu_t \in \BV_{\mathrm{loc}}(\III;  \Puno(\R^d)) $ such that 
 $\mu =\mathcal{L}^{1} \otimes  \mu_{t}$. The curve
 admits a narrowly left-continuous  representative $\mu^-$ 
(a  right-continuous representative $\mu^+$, respectively),
such that 
 $\mu^-(0):=\mu_0$ , 
 the functions
$t\mapsto \mu_t^{\pm }$ belong to $\mathrm{BV}\kern-1pt_{\rm loc} (\III ; \mathcal{P}_1(\R^d))$, 
and there holds 
\begin{equation}
\label{estimates-w-Var}
    \mathrm{Var}_{W_1}(\mu^-; [a,b]) \leq |\nnu |([a,b) \times \R^d), \quad   \mathrm{Var}_{W_1}(\mu^+; [a,b]) \leq |\nnu |((a,b] \times \R^d)
     \text{ for all } [a,b] \subset \III\,.
\end{equation}
Furthermore, for   every $0  \leq a  <  b <+\infty$
and   
 $\varphi \in  \Cc^1(\R_+^{d+1})$,    there holds 
 \begin{subequations}
\label{new-e.5.1}
\begin{align}
&
\label{e.5.1}
\begin{aligned}
& \int_{\R^{d}} \!\! \varphi ( b, x ) \, \di { \mu }_{b}^{-}(x) - \int_{\R^{d}} \!\!\varphi ( a, x )\,\di { \mu }_{a}^{+} (x) 
= \int_{a}^{b} \!\! \int_{\R^{d}}\!\! \partial_{t} \varphi ( t , x ) \, \di \mu_{t}(x) \, \di t + \iint_{(a,b)\times\R^{d}} \!\!\!\!\!\!\!\!\!\!\!\!\!\! \rmD\varphi(t,x) \, \di \nnu (t,x) \,,
\end{aligned}
\\
&
\label{e.5.1-0}
\begin{aligned}
& \int_{\R^{d}} \!\! \varphi ( b, x ) \, \di { \mu }_{b}^{-}(x) - \int_{\R^{d}} \!\!\varphi ( 0, x )\,\di { \mu }_{0} (x) 
= \int_{0}^{b} \!\! \int_{\R^{d}}\!\! \partial_{t} \varphi ( t , x ) \, \di \mu_{t}(x) \, \di t + \iint_{[0,b)\times\R^{d}} \!\!\!\!\!\!\!\!\!\!\!\!\!\! \rmD\varphi(t,x) \, \di \nnu (t,x) \,.
\end{aligned}
\end{align}
\end{subequations}
\end{enumerate}
 In particular, for every    $\varphi \in \Cc^{1}(\R^{d})$   and $b \in  \III  $ there holds 
\begin{align}
&
 \int_{\R^{d}} \!\! \varphi ( x ) \, \di { \mu }_{b}^{+}(x) - \int_{\R^{d}} \!\!\varphi (x )\,\di { \mu }_{b}^{-} (x) 
=  \iint_{\{b\}\times\R^{d}} \!\!\!\!\!\!\!\!\!\!\! \rmD\varphi(x) \, \di \nnu (t,x)\,. \label{e.5.1-2}
\end{align} 
\end{enumerate}
\end{theorem}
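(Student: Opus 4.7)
I would prove Part~(2) first, since the inequalities~\eqref{estimates-w-Var} it produces are the tool I would use to promote the trivial bounds in Part~(1) into the sharp identities~\eqref{UEST} and into minimality. For Part~(2)(a), I would test the continuity equation with $\varphi(t,x)=\zeta(t)$ for $\zeta\in \Cc^1(\R)$: since $\rmD_x\varphi\equiv0$ and $\mu_0$ is a probability measure, \eqref{distributional-sense} collapses to $\int_{\III}\zeta'(t)\,\dd(\pi^{0}_\sharp\mu)(t)=-\zeta(0)$, which identifies $\pi^{0}_\sharp\mu=\mathcal L^1\mres\III$. Disintegration then yields $\mu=\mathcal L^1\otimes\mu_t$ with $\mu_t\in\Prob(\R^d)$ for a.a.~$t$.

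For the BV regularity in Part~(2)(b), I would take $\varphi(t,x)=\zeta(t)\phi(x)$ with $\phi\in \Cc^1(\R^d)$ and set $F_\phi(t)\coloneq\int_{\R^d}\phi\,\dd\mu_t$. Plugging into~\eqref{distributional-sense} gives
\begin{equation*}
\int_{\III}\zeta'(t)F_\phi(t)\,\dd t+\iint_{\DST}\zeta(t)\,\rmD\phi(x)\cdot\dd\nnu(t,x)=-\zeta(0)\int_{\R^d}\phi\,\dd\mu_0,
\end{equation*}
so the distributional derivative of $F_\phi$ is a Radon measure in $t$ whose total variation on $[a,b]$ is bounded by $\mathrm{Lip}(\phi)\,|\nnu|([a,b]\times\R^d)$. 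Each such $F_\phi$ therefore admits left- and right-continuous BV representatives; choosing a countable $1$-Lipschitz family dense in $\Cc(\R^d)$, I would apply the Kantorovich-Rubinstein duality
$W_1(\sigma,\sigma')=\sup_{\mathrm{Lip}(\phi)\le1}\int\phi\,\dd(\sigma-\sigma')$
to produce narrowly left-/right-continuous representatives $\mu^\pm$ with the pointwise estimate $W_1(\mu_s^+,\mu_t^+)\le|\nnu|((s,t]\times\R^d)$ and the analogue for $\mu^-$; summing over partitions yields~\eqref{estimates-w-Var}. Finally, \eqref{e.5.1}--\eqref{e.5.1-2} follow by inserting $\varphi(t,x)\zeta_\eps(t)$ into~\eqref{distributional-sense}, where $\zeta_\eps$ approximates $\mathbf 1_{(a,b)}$, $\mathbf 1_{[0,b)}$, and $\mathbf 1_{\{b\}}$ from the left/right, and passing to the limit using the continuity properties of $\mu^\pm$.

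For Part~(1), I would regularize. Let $\rho_\eps\ge0$ be a smooth kernel supported in $[-\eps,0]$, extend $\mu^+$ to $t<0$ by $\mu_0$, and set $\mu^\eps_t\coloneq\int\rho_\eps(t-s)\mu^+_s\,\dd s$. Then $\mu^\eps\in\mathrm{Lip}_\loc(\III;\Puno(\R^d))$ with metric $W_1$-derivative bounded by $\rho_\eps*\rmv_\mu$. Applying the standard representation result for absolutely continuous curves in $(\Puno(\R^d),W_1)$ (e.g.\ \cite{Lisini07}) produces $\nnu^\eps=\bvv^\eps\mu^\eps$ solving the continuity equation with $|\nnu^\eps|([0,T]\times\R^d)\le\Var_{W_1}(\mu^+;[-\eps,T])$. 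The masses stay locally bounded, so along a subsequence $(\mu^\eps,\nnu^\eps)\weaksto(\mu,\nnu)$, the limit pair solves~\eqref{continuity-equation}, and lower semicontinuity of the total variation yields $|\nnu|([0,T)\times\R^d)\le\Var_{W_1}(\mu^-;[0,T])$. Applying Part~(2) to $(\mu,\nnu)$ provides the reverse inequality, giving the first equality in~\eqref{UEST}; a symmetric mollification supported in $[0,\eps]$ yields the second.

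Minimality then falls out cleanly: if $\bar\nnu\prec\nnu$ satisfies $\dive\bar\nnu=\dive\nnu$, then $|\bar\nnu|\le|\nnu|$ by Lemma~\ref{r:sub}\ref{r:sub:2}; since $(\mu,\bar\nnu)$ is itself a $\Puno$-solution, Part~(2) gives $\Var_{W_1}(\mu^-;[0,T])\le|\bar\nnu|([0,T)\times\R^d)\le|\nnu|([0,T)\times\R^d)=\Var_{W_1}(\mu^-;[0,T])$, forcing equality for every~$T$, hence $|\bar\nnu|=|\nnu|$ on $\DST$, and the polar density identity in Lemma~\ref{r:sub}\ref{r:sub:2} closes $\bar\nnu=\nnu$. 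The main obstacles I foresee are twofold: first, realizing the AGS-type construction at the level of the Lipschitz regularization $\mu^\eps$ with quantitative control on $|\nnu^\eps|$ (which I would import from \cite{Lisini07}), and second, the interplay that is central to the whole argument — the sharp variation identities and minimality cannot be read off from the regularization alone and require the bootstrap through Part~(2).
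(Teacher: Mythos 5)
Your Part~(2) argument follows essentially the paper's own route: testing with $\zeta(t)\phi(x)$, combining the resulting $\BV$ bound on $t\mapsto\int\phi\,\dd\mu_t$ with a countable $1$-Lipschitz family and the Kantorovich--Rubinstein duality to produce $\mu^\pm$ and~\eqref{estimates-w-Var}, then deriving~\eqref{new-e.5.1}--\eqref{e.5.1-2} by approximating characteristic functions. Two small remarks: testing with $\varphi(t,x)=\zeta(t)$ is not literally admissible in Definition~\ref{def:solCE} (no compact support in $x$); the paper first proves $\mu([0,T)\times\R^d)<\infty$ with spatial cutoffs $\theta_R$ and then removes them, which avoids the slight circularity of assuming finite mass on time slabs before establishing it. That is routine to add. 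Also, the bootstrap logic you use at the end (run Part~(2) on the constructed pair to upgrade $\le$ to $=$ and to get minimality) is exactly the paper's, down to the use of~\eqref{condition-4-equality}/Lemma~\ref{r:sub}.

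The genuine gap is in Part~(1), at the step where you invoke ``the standard representation result for absolutely continuous curves in $(\Puno(\R^d),W_1)$ (e.g.\ \cite{Lisini07})'' to produce $\nnu^\eps=\bvv^\eps\mu^\eps$ for the time-mollified curve. Two things go wrong here. First, the claimed form $\nnu^\eps=\bvv^\eps\mu^\eps$ (flux absolutely continuous with respect to $\mu^\eps$) is false for $p=1$: the paper's motivating example $\mu_t=(1-t)\delta_{x_0}+t\delta_{x_1}$ is already Lipschitz in $(\Puno,W_1)$, and its time-mollification $\mu^\eps_t=a^\eps(t)\delta_{x_0}+b^\eps(t)\delta_{x_1}$ is still a sum of Diracs, so any flux solving the continuity equation must have nontrivial singular part — there is no velocity density $\bvv^\eps$. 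Second, and more fundamentally, Lisini's Theorem~5 gives an AC-curve lifting that simply cannot exist for this $\mu^\eps_t$: a probability measure $\eta$ on $\AC([0,T];\R^d)$ with $(\mathsf e_t)_\sharp\eta=\mu^\eps_t$ would be concentrated on continuous curves taking values in $\{x_0,x_1\}$ for a.e.~$t$, hence constant, which is incompatible with the time-varying weights. So the AC lifting theorem does not apply in $\mathcal P_1$; the correct lifting for $p=1$ is by $\BV$/C\`adl\`ag curves (this is what \cite{Abedi-Zhenhao-Schultz24} does), and the statement you would actually need — ``there is a locally finite flux $\nnu^\eps$ with $|\nnu^\eps|([0,T]\times\R^d)\le\Var_{W_1}(\mu^\eps;[0,T])$ for every Lipschitz curve in $\Puno$'' — is precisely Part~(1) restricted to Lipschitz curves, so your reduction is circular. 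The paper avoids this entirely with a direct Hahn--Banach argument: it defines the functional $\ell_{a,b}(\zeta)=\int_a^b\int\partial_t\zeta\,\dd\mu_t\,\dd t+\int\zeta(a,\cdot)\dd\mu_a-\int\zeta(b,\cdot)\dd\mu_b$ on $\Cc^1$, bounds it by $\|\rmD\zeta\|_\infty(V_\mu(b)-V_\mu(a))$ via a difference-quotient/$W_1$-duality estimate, and then uses Hahn--Banach and Riesz representation on the subspace $\{\rmD\zeta\}$ to extract $\nnu_{a,b}$ with $|\nnu_{a,b}|([a,b]\times\R^d)\le V_\mu(b)-V_\mu(a)$, glued over continuity points of $V_\mu$. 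This is both self-contained and gives the bound you were hoping to import from the literature; I would recommend replacing your mollification step with this construction.
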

\noindent In fact, 
a partial analogue of part (1) of the statement has been established for $\BV$ curves of currents in
 \cite[Thm.\ 6.1, Prop.\ 6.4]{BonDNiRin} (see also~\cite[Theorems~6.1 and~6.2]{Bonicatto_2024_MJM}).  
We will  develop the \emph{proof} of Theorem 
\ref{th:1}  in the ensuing subsections, starting from the second part of the statement. 

\begin{remark}
\label{rmk:mu-0}
\upshape
The definition $\mu^-(0):=\mu_0$ for the left-continuous representative of the curve $t\mapsto \mu_t$ associated with a solution  to   the continuity equation,
reflects the fact that, if  $|\nnu|(\{0\}{\times}\R^d)>0$   the curve $t\mapsto \mu_t$ has a jump at $t=0$. Hence,
$\mu^+(0) \neq \mu^-(0)$ and it is meaningful to set $\mu^-(0):=\mu_0$.
\end{remark}
\begin{remark}[Continuity equation in ${[}a,b{]}\times \R^d$] 
\label{rmk:0,T}
Let $0\le a<b$,
 $\mu_{a},\mu_b \in \Puno(\R^{d})$, and 
$(\mu,\nnu) \in \calMb^+([a,b]\times \R^d) \times  \calMb ([a,b]\times \R^d;\R^d) $
 (recall that $ \calMb(A;\R^m)$ denotes the space of  $\R^m$-valued Borel measures with finite total variation)
  satisfy
\begin{displaymath}
\iint_{[a,b]\times \R^d} \partial_{t} \varphi (t, x) \, \di \mu(t,
x) + \iint_{[a,b]\times \R^d} \rmD  \varphi(t, x) \, \di \nnu(t,
x) =
\int_{\R^{d}} \varphi(b, x) \, \di \mu_{b}(x)
- \int_{\R^{d}} \varphi(a, x) \, \di \mu_{a}(x)
\end{displaymath}
for every  $\varphi \in \Cc^1([a,b]\times \R^d)$.
It can be immediately checked that  the extensions
$\widetilde\mu,\widetilde\nnu$ defined for every
Borel set $A\subset \DST$ by 
\begin{align*}
  \widetilde{\mu}(A):={}&\mu\Big(A\cap \big([a,b]\times \R^d\big)\Big)+
  (\mathcal L^1{\otimes}
  \mu_a)\Big(A\cap \big([0,a)\times \R^d\big)\Big)
  +
  (\mathcal L^1{\otimes}
  \mu_b)\Big(A\cap \big(b,+\infty)\times \R^d\big)\Big),\\
%
\widetilde{\nnu}(A):={}& \nnu\Big(A\cap \big([a,b]\times \R^d\big)\Big)
\end{align*}
solve~\eqref{distributional-sense} in the sense of Definition~\ref{def:solCE}.
\end{remark} 

\nc
\subsection{Proof of Part (2) of Thm.\ \ref{th:1}}
The proof is carried out in several steps. First of all,
we
 show that, if $\mu$ solves \eqref{continuity-equation}, then its marginal w.r.t.\ the time variable coincides with the $1$-dimensional Lebesgue measure on   $\III$.  
  We also provide a useful chain-rule formula. 
\begin{lemma}[Time marginals and distributional chain rule for
$\Puno$-solutions]
\label{l.1}
Let $\mu_{0}\in \Puno(\R^{d})$ and let $(\mu,\nnu)$ be a $\Puno$-solution of the continuity equation in the sense of  Definition~\ref{def:solCE}.    Then,  $ \mu (\RRR [0, T) \times \R^{d} ) = T $ for all $T>0$, 
$\pi^{0}_{\sharp }\mu = \mathcal{L}^{1}$   and  $\mu = \mathcal{L}^{1} \otimes \mu_t  $ for a family of probability measures 
$(\mu_t)_{t\in  \III}$ in $\mathcal P_1(\R^d)$ with finite first moment.
Furthermore, for every Lipschitz function $\varphi\in \rmC^1(\DST)$ 
the map
$\displaystyle t\mapsto \int_{\R^d} \varphi(t,x)\,\dd\mu_t(x)$ 
(trivially extended to $0$ for $t<0$) has 
distributional derivative
\begin{equation}
\label{eq:distributional-derivative}
	\frac{\dd}{\dd t}
	\int_{\R^d} \varphi(t,\cdot)\,\dd\mu_t=
	\int_{\R^d} \partial_t \varphi(t,\cdot)\,\dd\mu_t+
	\pi^0_\sharp (\rmD\varphi\cdot \nnu)
	+\delta_0\int_{\R^d}\varphi(0,\cdot)\,\dd\mu_0
\quad\text{in }\mathcal D'(\R)\,,
\end{equation}
 (where the scalar product  $\rmD\varphi{\cdot} \nnu$ has to be understood in the sense of  \eqref{eq:vector-integral}),  
and in particular it  has essential bounded variation
	in every bounded interval $(0,T)$, $T>0$.
\end{lemma}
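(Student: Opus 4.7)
First I would identify the time marginal $\pi^0_\sharp\mu$ by plugging into \eqref{distributional-sense} test functions that depend only on time: for $\psi\in \Cc^1(\R)$ take $\varphi(t,x)=\psi(t)$, so that $\rmD\varphi=0$ and the equation reduces to
\[
\int_{\R} \psi'(t)\,\dd (\pi^0_\sharp\mu)(t)=-\psi(0)\int_{\R^d}\dd\mu_0=-\psi(0),
\]
since $\mu_0\in\Puno(\R^d)$ is a probability measure. Recalling that $\pi^0_\sharp\mu$ is a nonnegative Radon measure supported in $\III$, this identity exactly says that its distributional derivative on $\R$ is $\delta_0$, hence $\pi^0_\sharp\mu=\mathcal L^1\mres\III$ and $\mu([0,T)\times\R^d)=T$. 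The disintegration theorem then yields the representation $\mu=\mathcal L^1\otimes \mu_t$ with $\mu_t\in\mathcal P(\R^d)$ for $\mathcal L^1$-a.a.\ $t$.

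Next I would upgrade $\mu_t\in\mathcal P(\R^d)$ to $\mu_t\in\Puno(\R^d)$. I choose an increasing sequence $\zeta_R\in\Cc^1(\R^d)$ of $1$-Lipschitz functions with $0\le\zeta_R(x)\le\|x\|$ and $\zeta_R(x)\uparrow\|x\|$, and test \eqref{distributional-sense} with $\psi(t)\zeta_R(x)$ for $\psi\in\Cc^1(\R)$. Setting $F_R(t):=\int_{\R^d}\zeta_R\,\dd\mu_t$ (extended by $0$ for $t<0$), this yields the distributional identity
\[
F_R'=G_R+\Big(\smallint\zeta_R\,\dd\mu_0\Big)\delta_0\quad\text{in }\mathscr D'(\R),
\]
with $G_R:=\pi^0_\sharp(\rmD\zeta_R\cdot\nnu)$ a signed measure whose total variation on $[0,T]$ is $\leq\mathrm{Lip}(\zeta_R)\,|\nnu|([0,T]\times\R^d)\leq|\nnu|([0,T]\times\R^d)<\infty$. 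Thus each $F_R$ is a $\BV_{\mathrm{loc}}$ function with $F_R(t)\le \int\|x\|\,\dd\mu_0+|\nnu|([0,T]\times\R^d)$ for $\mathcal L^1$-a.e.\ $t\in[0,T]$, and monotone convergence gives $\int\|x\|\,\dd\mu_t<\infty$ for $\mathcal L^1$-a.a.\ $t$.

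Finally I turn to the chain rule \eqref{eq:distributional-derivative}. For Lipschitz $\varphi\in\rmC^1(\DST)$ and a fixed $\psi\in\Cc^1(\R)$, the test function $\psi(t)\varphi(t,x)$ is not compactly supported in $x$, so I introduce cutoffs $\chi_R\in\Cc^1(\R^d)$ with $0\le\chi_R\le 1$, $\chi_R\equiv 1$ on $B_R$, $\chi_R\equiv 0$ outside $B_{2R}$, and $\|\rmD\chi_R\|_\infty\le C/R$. Applying \eqref{distributional-sense} to $\psi(t)\varphi(t,x)\chi_R(x)$ and expanding the spatial gradient yields
\[
\iint\psi'\varphi\chi_R\,\dd\mu+\iint\psi\,\partial_t\varphi\,\chi_R\,\dd\mu+\iint\psi\,\chi_R\,\rmD\varphi\,\dd\nnu+\iint\psi\,\varphi\,\rmD\chi_R\,\dd\nnu=-\psi(0)\!\int\!\varphi(0,\cdot)\chi_R\,\dd\mu_0.
\]
On the support of $\psi$, $|\varphi(t,x)|\le C(1+\|x\|)$; since $\mu_t\in\Puno$ for a.a.\ $t$ and $|\nnu|$ is finite on $\mathrm{supp}(\psi)\times\R^d$, dominated convergence handles all four integrals, while the critical remainder $\iint\psi\,\varphi\,\rmD\chi_R\,\dd\nnu$ is bounded by $\frac{C}{R}\int_{\mathrm{supp}\psi\times(B_{2R}\setminus B_R)}(1+\|x\|)\,\dd|\nnu|$, which vanishes as $R\to\infty$ because $(1+\|x\|)\chi_{\{\|x\|\ge R\}}/R\to 0$ boundedly on a set of finite $|\nnu|$-measure. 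The limit identity is precisely the pairing of \eqref{eq:distributional-derivative} with $\psi$, proving the claim, and the BV property of $t\mapsto\int\varphi(t,\cdot)\,\dd\mu_t$ follows from the fact that the RHS is the sum of an $L^\infty$ density and a locally finite measure.

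The technical heart of the argument is Step 3, and specifically the justification that the cutoff remainder $\iint\psi\,\varphi\,\rmD\chi_R\,\dd\nnu$ is negligible; this is where the linear-growth control from Step 2 (i.e.\ $\mu_t\in\Puno$) and the local finiteness of $|\nnu|$ must be combined, and it is what makes the formula valid for merely Lipschitz test functions rather than compactly supported ones.
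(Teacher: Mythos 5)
Your proof is correct and follows essentially the same strategy as the paper's: isolate the time marginal by testing with $t$-dependent functions, bootstrap to the first-moment bound $\mu_t\in\Puno(\R^d)$ via the continuity equation, and then obtain the chain rule \eqref{eq:distributional-derivative} by cutting off in space with $\chi_R$ and controlling the remainder $\iint\psi\,\varphi\,\rmD\chi_R\cdot\dd\nnu$ through the linear growth of $\varphi$ and the finiteness of $|\nnu|$ on $[0,T]\times\R^d$. The one point to tighten is Step~1, where $\varphi(t,x)=\psi(t)$ is not an admissible (compactly supported) test function and $\pi^0_\sharp\mu$ is not a priori locally finite (since $\pi^0$ is not proper), so you should insert a spatial cutoff $\theta_R$ there too and invoke monotone convergence in $R$ with a one-signed $\psi'$ — this is precisely what the paper's regularization $\zeta_{\eps,T}$ of $(T-t)_+$ accomplishes, yielding $\mu([0,T)\times\R^d)=T<\infty$ before any dominated-convergence argument is available.
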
 
\begin{proof}
 Let us first observe that selecting $\zeta\in \rmC^1_c(\III)$
and $\varphi_c\in \rmC^1(\DST)$ with support in 
$\III\times B_R(0)$ for some $R>0$, 
\eqref{distributional-sense} yields
\begin{equation}
	\label{eq:distributional-sense-3}
	\iint_{\DST}(\zeta'\varphi_c+\zeta\partial_t\varphi_c)\,\dd\mu+
	\iint_{\DST}\zeta\rmD\varphi_c  \,\dd\nnu  =
	-\int_{\R^d}\zeta(0)\varphi_c(0,x)\,\dd\mu_0(x).
\end{equation}
 In order to evaluate $ \mu ([0, T) \times \R^{d})$,  we 
 consider a regularization of the function $\zeta(t):=t^-=\max(-t,0)$, for instance
\begin{displaymath}
    \zeta_\eps(t): = \begin{cases}
 t+\frac\eps2 & \text{if } t \le -\eps,
 \\
 -\frac{t^2}{2\eps} & \text{if }  -\eps<t\le 0,
 \\
 0 & \text{if } t >0
 \end{cases}
\end{displaymath}
and we set $\zeta_{\eps,T}(t):=\zeta_\eps(t-T)$. 
We also take a function $\theta\in \rmC^\infty_{\rm c}(\R^d)$
fulfilling
\begin{displaymath}
    0\le \theta\le 1,\quad \theta\equiv 1\text{ in }B_1(0),\quad
    \theta\equiv 0\text{ in }\R^d\setminus B_2(0),\quad 
    \|\rmD\theta\|_\infty \le 2,
\end{displaymath}
and for  $\psi\in \rmC^1(\R^d)$ we set
\begin{displaymath}
	\theta_R(x):=\theta(x/R),\quad \psi_R(x):=\psi(x)\theta_R(x).
\end{displaymath}
 %
%
 Choosing $0<\eps<T$ and 
 $\varphi_c=\psi_R$, \eqref{eq:distributional-sense-3} yields
\begin{equation}
\label{test-1-epsR}
\begin{aligned}
 & \iint_{\DSTT {T-\eps}}
 \psi_R  \,\dd \mu
 + \iint_{(T-\eps,T){\times}\R^d} \frac{T-t}{\eps} \psi_R  \,\dd \mu 
& 
=  \bigg(T - \frac{\varepsilon}{2} \bigg) \int_{\R^d}\psi_{R}\, \dd \mu_0 - \iint_{\DST} \zeta_{\eps,T}\rmD \psi_R \,\dd \nnu\,.
\end{aligned}
\end{equation}
We first take $\psi\equiv1$, so that $\psi_R=\theta_R$. 
Since $\zeta_{\eps,T} \to \zeta_T=\zeta(\cdot-T) $ as $\eps \down 0$, uniformly in $\III$, 
 and 
 $\| \zeta_{\varepsilon,T}\|_{\infty} \leq T + \varepsilon/2$, 
 $\|\rmD \theta_R\|_\infty\leq \frac{2}{R}$,
 and $|\nnu|(\DSTT T)<+\infty$, we find that 
 \[
 \lim_{\eps\down 0}  \iint_{\DST } \zeta_{\eps,T}(t)\rmD \theta_R(x) \,\dd \nnu(t,x)= 
   \iint_{\DSTT T}   (T-t)_+ \rmD \theta_R(x) \,\dd \nnu(t,x)\,.
 \]
 Clearly,
 \[
 \lim_{\eps\down 0} \iint_{\DSTT {T-\eps}} \theta_R(x)  \,\dd \mu(t,x)   = \iint_{[0,T){\times}\R^d } \theta_R(x)  \,\dd \mu(t,x)  \,.
 \]
 Finally,
 \[
 \left| \iint_{(T-\eps,T){\times}\R^d} \frac{T-t}{\eps} \theta_R(x)  \,\dd \mu(t,x)   \right| \leq  |\mu|((T-\eps,T){\times}\R^d) \longrightarrow 0 \quad \text{as } \eps \down 0\,.
 \]
 Passing to the limit as $\eps\down0$ in \eqref{test-1-epsR}
 we get 
 \[
 \iint_{[0,T){\times}\R^d}  \theta_R(x)  \,\dd \mu(t,x)  
=  T \int_{\R^d} \theta_R(x) \, \dd \mu_0(x) - \iint_{\DSTT T}  (T-t)_+  \rmD \theta_R(x) \,\dd \nnu(t,x)\,.
\]
We now take the limit as $R\to \infty$ in both sides of the above equality, recalling that  $\theta_R \to 1$  and that $\|  \rmD \theta_R\|_\infty \to 0$. Hence, we conclude that 
 $ \mu ([0, T) \times \R^{d}) = T  \mu_0(\R^d)=T$,
  i.e.~$\pi^0_\sharp \mu([0,T))=T$
 and therefore $\pi^0_\sharp \mu([a,b))=b-a$ for every $0\le a<b$.
 This implies that $\pi^0_\sharp \mu$
 is the Lebesgue measure $\mathcal L^1$.
 \nc 
\par
By the disintegration theorem
(cf., e.g., \cite[Cor.\ A.5]{Valadier90}),
we can disintegrate the measure~$\mu$ with respect to the projection 
$\pi^{0}\colon\DST \to \III$, so that there exists a
Borel family~$\{\mu_{t}\}_{t\in\III}$ of probability measures on~$\R^{d}$ such that 
$\mu=  \mathcal L^1 \otimes \mu_{t}$.  
\par
We now use \eqref{test-1-epsR} by choosing
$\psi(x):=\sqrt{1{+} \| x \|^2}$; since
\begin{displaymath}
	|\rmD\psi_R(x)|\le |\rmD\theta_R(x)\psi(x)|+|\theta_R(x)\rmD\psi(x)|
	\le \frac 2R \sqrt{1+(2R)^2}+1\le 5\quad\text{if }R\ge 2,
\end{displaymath} we obtain the uniform estimate for $R\ge 2$
\begin{align*}
	\int_{\DSTT {T-\eps}}\psi_R\,\dd\mu\le 
	T\int_{\R^d}\psi_R\,\dd\mu_0+
	5T|\nnu|(\DSTT T).
\end{align*}
Passing to the limit as $R\uparrow+\infty$ we deduce that 
\begin{equation}
	\int_0^T \int_{\R^d}\sqrt{1{+}  \| x \|^2}\,\dd\mu_t \,\dd t\le 
	CT,\quad
	C:=\int_{\R^d}\sqrt{1{+}\| x \|^2}\,\dd\mu_0+  5  |\nnu|(\DSTT T),
\end{equation}
so that $\mu_t\in \mathcal P_1(\R^d)$ for $\mathcal L^1$-a.a.~$t>0$.

Eventually, we write \eqref{eq:distributional-sense-3} 
for an arbitrary $\zeta\in \rmC^1_c(\III)$ and $\varphi_c=\varphi\theta_R$ 
(with $\varphi\in \rmC^1(\DST)$ Lipschitz) and  
we get
\begin{equation}
\label{eq:auxiliary}
	\iint_{\DST}(\zeta'\varphi+\zeta\partial_t\varphi)\theta_R\,\dd\mu+
	\iint_{\DST}\zeta\rmD\varphi \theta_R\dd\nnu  =
	-\int_{\R^d}\zeta(0)\varphi(0,\cdot)\theta_R\,\dd\mu_0-
	E_R,
\end{equation}
where
\begin{equation*}
	E_R=\iint_{\DST}\zeta \varphi\rmD \theta_R  \, \dd\nnu\,. 
\end{equation*}
Choosing constants $a,L,T$ such that 
$|\zeta(t)|\le a$, $\mathrm{supp}(\zeta)\subset [0,T]$, $|\varphi(t,x)|\le 
L(1{+} \| x \|)$ whenever $0\le t\le T$, we obtain
\begin{align*}
	|E_R|\le 2aL\frac{(1+2R)}R\,|\nnu|\Big([0,T]\times (B_{2R}(0)\setminus B_R(0))\Big),
\end{align*}
so that $\lim_{R\to\infty}|E_R|=0$.
Passing to the limit in \eqref{eq:auxiliary} as $R\uparrow+\infty$
using the fact that $\varphi$ has linear growth and $\partial_t \varphi$ is bounded, 
we get
\begin{equation}
	\label{eq:distributional-sense4}
	\int_{\III} \bigg[\zeta'
	\Big(\int_{\R^d}\varphi_t\,\dd\mu_t\Big)
	+\zeta
	\Big(\int_{\R^d}\partial_t\varphi_t\,\dd\mu_t\Big)\bigg]\,\dd t+
	\iint_{\DST}\zeta\rmD\psi\,\dd \nnu  =
	-\int_{\R^d}\zeta(0)\varphi_0\,\dd\mu_0,
\end{equation}
which in particular yields \eqref{eq:distributional-derivative}.
\end{proof}
We now show the existence of the left- and right-continuous representatives.
 The following result extends \cite[Lemma 8.1.2]{AGS08}
and concludes \textbf{the proof of Part $2.$ of Thm.~\ref{th:1}}.
\begin{lemma}[Left- and right- continuous representatives]
\label{l.2}
Let $\mu_{0}\in \Puno(\R^{d})$ and let $(\mu,\nnu)$ be a $\Puno$-solution  to  the continuity equation in the sense of Definition~\ref{def:solCE}.  
Then,
there exists a narrowly left- (resp.\ narrowly right-)continuous representative 
 $\III\ni t \mapsto {\mu}_{t}^{-} \in  \scrP(\R^d)$   (resp.~$\III\ni t \mapsto {\mu}_{t}^{+} \in \scrP(\R^d)$)  of the curve~$t\mapsto \mu_{t}$,  such that, setting ${\mu}^-_{0}:=\mu_{0}$, 
 for every $0\leq a\leq b< +\infty$
   the following estimates hold
\begin{align}
&
\label{towards-totVar-mu-}
W_1(\mu^-_{a},\mu_{b}^{-}) \leq |\nnu|([a,b)\times\R^{d}),
\\
&
\label{towards-totVar-mu+}
W_1(\mu^+_{a},\mu_{b}^{+}) \leq  |\nnu|((a,b]\times\R^{d}),
\end{align}
as well as  estimates \eqref{estimates-w-Var} and relations \eqref{new-e.5.1} and \eqref{e.5.1-2}.
\end{lemma}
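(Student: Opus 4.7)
\smallskip

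\noindent\textbf{Proof proposal.} The strategy rests on three pillars: (i) upgrading the distributional identity of Lemma~\ref{l.1} to a pointwise $\BV$ structure for the scalar integrals $F_\varphi(t):=\int_{\R^d}\varphi\,\dd\mu_t$; (ii) assembling the left-/right-continuous scalar representatives $F_\varphi^\pm$, for $\varphi$ ranging in a countable separating family, into pointwise narrow limits $\mu_t^\pm$ via tightness and uniqueness of the cluster points; (iii) recovering the $W_1$-variation estimates through Kantorovich--Rubinstein duality and deducing \eqref{new-e.5.1}--\eqref{e.5.1-2} as integration-by-parts formulas for the scalar $\BV$ functions.

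\emph{Step 1 (Scalar BV structure).} For any $\varphi\in\Cc^1(\R^d)$, viewed as independent of $t$, formula \eqref{eq:distributional-derivative} of Lemma~\ref{l.1} gives $\dot F_\varphi=\pi^0_\sharp(\rmD\varphi\cdot\nnu)+\delta_0\langle\varphi,\mu_0\rangle$ in $\mathscr D'(\R)$, once $F_\varphi$ is extended by $0$ to $t<0$. Since the RHS has total variation on $[0,T]$ bounded by $\|\rmD\varphi\|_\infty\,|\nnu|([0,T]{\times}\R^d)+|\langle\varphi,\mu_0\rangle|$, the function $F_\varphi$ lies in $\BVloc(\R)$ and admits everywhere-defined left- and right-continuous representatives $F_\varphi^\pm$, which agree off a countable set and obey
\[
F_\varphi^{-}(b)-F_\varphi^{+}(a)=\dot F_\varphi((a,b)),\qquad F_\varphi^{+}(b)-F_\varphi^{+}(a)=\dot F_\varphi((a,b]),\qquad F_\varphi^{-}(b)-F_\varphi^{-}(a)=\dot F_\varphi([a,b)).
\]
Applying the same argument to the truncated weight $\psi_R(x)=\sqrt{1+\|x\|^2}\,\theta_R(x)$ used in the proof of Lemma~\ref{l.1} and sending $R\uparrow\infty$ by monotone convergence yields the pointwise moment bound $\int_{\R^d}\sqrt{1+\|x\|^2}\,\dd\mu_t\le C(T)$ \foraa\ $t\in[0,T]$.

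\emph{Step 2 (Pointwise narrow representatives).} Fix a countable family $\{\varphi_k\}\subset\Cc^1(\R^d)$ uniformly dense in $\rmC_{\mathrm c}(\R^d)$, hence separating Borel probability measures. For every $t\ge 0$ and any sequence $s_n\downarrow t$ (resp.\ $s_n\uparrow t$, with $s_n>0$) taken in the full-measure set where $\mu_s$ is defined, the pointwise moment bound from Step~1 combined with Prokhorov's theorem makes $(\mu_{s_n})_n$ narrowly relatively compact. Any narrow cluster point $\tilde\mu$ must satisfy $\langle\varphi_k,\tilde\mu\rangle=F_{\varphi_k}^{+}(t)$ (resp.\ $F_{\varphi_k}^{-}(t)$) for every $k$, so $\tilde\mu$ is uniquely determined; the \emph{whole} sequence therefore converges narrowly to this limit, which we denote $\mu_t^{+}$ (resp.\ $\mu_t^{-}$), setting $\mu_0^{-}:=\mu_0$. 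Passing the first-moment bound to the limit through Fatou delivers $\mu_t^\pm\in\Puno(\R^d)$, and right-continuity of $t\mapsto\mu_t^{+}$ (resp.\ left-continuity of $\mu_t^{-}$) follows from the one-sided continuity of the scalar representatives together with tightness.

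\emph{Step 3 (Estimates and formulas).} For any $1$-Lipschitz $\varphi\in\rmC^1(\R^d)$, the density of $\rmD\varphi\cdot\nnu$ with respect to $|\nnu|$ has norm $\le 1$, hence $|\dot F_\varphi|(J)\le|\nnu|(J{\times}\R^d)$ for every Borel $J\subset(0,+\infty)$. Using the jump identities of Step~1, this yields $|F_\varphi^{+}(b)-F_\varphi^{+}(a)|\le|\nnu|((a,b]{\times}\R^d)$ and the analogous bound on the minus side; Kantorovich--Rubinstein duality, applied after a standard $\Cc^1$-approximation of $1$-Lipschitz functions on balls (exploiting the pointwise moment bound to control tails), gives \eqref{towards-totVar-mu-}--\eqref{towards-totVar-mu+}. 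Taking a supremum over partitions, the disjointness $\bigsqcup_i[t_{i-1},t_i)=[a,b)$ (resp.\ $(t_{i-1},t_i]$) upgrades this to \eqref{estimates-w-Var}. Finally, identities \eqref{e.5.1}, \eqref{e.5.1-0}, and \eqref{e.5.1-2} follow by applying the full time-dependent version of \eqref{eq:distributional-derivative} and pairing $\dot F_\varphi$ with the indicators of $(a,b)$, $[0,b)$, and $\{b\}$ respectively, then reading the boundary values through the one-sided representatives.

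\emph{Main obstacle.} The delicate point is Step~2: the $\BV$ information is available only along a \emph{countable} family of tests, yet $\mu_t^\pm$ must be defined at \emph{every} $t\ge 0$ as a genuine element of $\Puno(\R^d)$, with unambiguous narrow limit along \emph{every} approaching sequence. Resolving this requires securing a \emph{pointwise} (not just $L^1_t$) uniform first-moment bound to feed Prokhorov, and then leveraging the separating property of $\{\varphi_k\}$ to promote subsequential narrow convergence to convergence of the whole sequence. Once this machinery is in place, the $W_1$ estimates and the chain-rule identities are routine consequences of the scalar $\BV$ calculus combined with duality.
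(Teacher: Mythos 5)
Your proposal is correct in outline but follows a genuinely different path from the paper's. The paper chooses the countable family $Z\subset\rmC^1_{\rm c}(\R^d)$ of $1$-Lipschitz functions so that it realizes the Kantorovich--Rubinstein duality $W_1(\mu',\mu'')=\sup_{\psi\in Z}\int\psi\,\dd(\mu'-\mu'')$ directly; it then picks a full-measure set $D$ (intersection of the continuity sets of the BV functions $t\mapsto\mu_t(\psi)$, the atoms of $\nu=\pi^0_\sharp|\nnu|$, and the set where $\mu_t$ has finite first moment), proves the two-point bound $|\mu_s(\psi)-\mu_r(\psi)|\le\nu((r,s))$ for $r<s\in D$ via \eqref{e.2}, deduces $W_1(\mu_r,\mu_s)\le\nu((r,s))$ on $D$ immediately, and then extracts the one-sided limits $\mu_t^\pm$ as limits in the \emph{complete} metric space $(\mathcal P_1(\R^d),W_1)$. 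This avoids Prokhorov entirely: completeness of $W_1$ supplies both existence of the limit and automatically $\mu_t^\pm\in\mathcal P_1(\R^d)$. You instead build $\mu_t^\pm$ through narrow relative compactness plus a separating family, which forces you to first establish a \emph{pointwise} (not merely $L^1_t$) uniform first-moment bound to feed tightness to Prokhorov, and then to verify $\mu_t^\pm\in\mathcal P_1$ by Fatou and recover the $W_1$ estimate afterwards through an approximation of $1$-Lipschitz functions. Your pointwise-moment argument (applying the BV structure to $\psi_R=\sqrt{1+\|x\|^2}\,\theta_R$, restricting to a full-measure good set, sending $R\uparrow\infty$ by monotone convergence) does close the gap you flag, so the route works; it is simply heavier than needed. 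Steps~1 and~3 of your proposal are otherwise consistent with what the paper does: the BV structure of $t\mapsto\mu_t(\psi)$ and the deduction of \eqref{new-e.5.1}, \eqref{e.5.1-2} from the chain rule \eqref{eq:distributional-derivative} by reading off one-sided boundary values are exactly the paper's closing remarks.
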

\begin{proof}
 We combine the argument of \cite[Lemma~8.1.2]{AGS08} with the
duality characterization of the Kantorovich-Rubinstein-Wasserstein metric.  

We set $\nu:=\pi^0_\sharp(|\nnu|)$ and $D_\nu:=\{t\in \III:\nu(\{t\})=0\}$,
whose complement is at most countable; 
we also select a Borel set $D_\mu$ such that 
$\mathcal L^1(\III\setminus D_\mu)=0$ and $\int_{\R^d} |x|\,\dd\mu_t(x)<+\infty$ 
for every $t\in D_\mu$.
Finally, we select a countable set $Z\subset \rmC^1_c(\R^d)$ 
such that every function $\psi\in Z$ is $1$-Lipschitz  and $Z$ provides the representation 
\begin{equation}
\label{eq:countable-W-duality}
	W_1(\mu',\mu'')=\sup\Big\{ \int_{\R^d}\psi\,\dd(\mu'-\mu''):
	\psi\in Z\Big\}.
\end{equation} 
%
\nc 
For such $\psi$,  let us still denote by~$\mu_{t}(\psi)$ a good representative~\cite[Theorem~3.28]{AmFuPa05FBVF} of the map $t \mapsto \int_{\R^{d}} \psi \, \di \mu_{t}$, and  let us denote with~$D_{\psi}$ the set of continuity points of
the function
$t\mapsto \mu_{t}(\psi)$. 
We eventually set 
 $D:=D_\nu\cap D_\mu\cap \bigcap_{\psi\in Z} D_{\psi}$. Then,  $\mathcal{L}^1 (\III\setminus D)=0$  and  $t\mapsto \mu_{t}(\psi)$ is continuous at  $t\in D$ for every $\psi\in Z$. 

 For every $r<s\in D$ and every $\psi\in Z$ we have
\begin{equation}\label{e.2}
|\mu_{s} ( \psi ) - \mu_{r}( \psi ) | \leq \iint_{[r,s] \times \R^{d}} | \rmD \psi  | \, \di | \nnu | (t,x) \leq |\nnu| ([r,s] \times \R^{d})=
 \nu((r,s)),
\end{equation}
so that \eqref{eq:countable-W-duality} yields
\begin{equation}
	\label{eq:BV-W}
	W_1(\mu_r,\mu_s)\le \nu((r,s])=\nu((r,s))\quad\text{for every $r,s\in D$, 
	$r<s$}.
\end{equation}
Thus,  the map $ D  \ni r\mapsto \mu_r$ has pointwise bounded variation in every 
bounded subset of $D$. 
Since $(\mathcal P_1(\R^d),W_1)$ is complete,
by a standard density argument we deduce that the limits
\begin{equation}
\label{e.3}
\mu_{t}^{-}:=\lim_{s\in D, s\uparrow t}\mu_s,\quad
\mu_{t}^{+}:=\lim_{s\in D, s\downarrow t}\mu_t
\end{equation}
exist in $(\mathcal P_1(\R^d),W_1)$ for every $t\in \III$ and 
define a left-continuous and a right continuous map respectively satisfying
\eqref{estimates-w-Var}.

Then, relations \eqref{new-e.5.1} and \eqref{e.5.1-2}  immediately   follow from 
\eqref{eq:distributional-derivative} by observing that for every Lipschitz function
$\varphi\in \rmC^1(\DST)$ the map
$t\mapsto \mu^-(\varphi_t)$ (resp.~$t\mapsto \mu^+(\varphi_t)$) is
left- (resp.~right-)continuous and thus provides
the unique left- (resp.~right-) continuous representative of
$t\mapsto \mu_t(\varphi_t)$.
\end{proof}

\par
\subsection{Proof of Part (1) of Thm.\ \ref{th:1}}\

\noindent 
 Let $\mu \in \BVloc(\III ; \mathcal{P}_1(\R^d)) $ and let~$\mu^{\pm}$ be the left- and right-continuous representatives of~$\mu$, respectively.
 We define
 \begin{equation}
 	\label{eq:variation-function}
 	V_\mu(t):=\mathrm{Var}_{W_1}(\mu^-;[0,t])
 	=\mathrm{Var}_{W_1}(\mu^-;[0,t))\,.
 \end{equation} 
 We prove the following  claim:  {\sl 
there exists a Borel measure
$\nnu  \in \calM (\DST;\R^d)$ such that for every $T\in [0, +\infty)$
relations \eqref{UEST} hold,
and the pair $(\mu,\nnu)$  satisfies  the continuity equation \eqref{continuity-equation} in the
sense of  Definition~\ref{def:solCE}. 
 Moreover, writing $\nnu = \nnu^{a} \mu + \nnu^{\perp}$ with $\nnu^{a}\mu \ll \mu$
 and $\nnu^{\perp} \perp \mu$, we have that~$\nnu^{\perp}$  is minimal in the
 sense of Definition 
\ref{d:minimal}.}

Indeed,  let us consider two continuity points $a<b$ for 
$V_\mu$ (and thus for $\mu$) and let us  define the linear functional 
 $\ell_{a,b} \colon \rmC^1_c([a,b]\times \R^d)\to \R$ by 
\begin{equation}
\label{first-functional}
 \ell_{a,b}(\zeta):= 
\int_a^{b} 
\int_{\R^d} \partial_t \zeta \, \di \mu_t ( x) \,  \dd t
+\int_{\R^{d}} \zeta(a, x) \, \di \mu_{a} (x)-
\int_{\R^{d}} \zeta(b, x) \, \di \mu_{b} (x)
 \,.
\end{equation}
Observe that, by continuity of ${\mu}$ at $a,b$, and continuity of $\zeta$, we have 
\[
\begin{aligned}
\ell_{a,b}(\zeta)
 & =  \lim_{h \downarrow 0 } 
 \frac 1h \int_a^{b-h} \int_{\R^d}\Big(\zeta(t+h,x) - \zeta(t,x)\Big) \, \di  \mu_t ( x) \,  \dd t \\&\qquad\qquad
 +\lim_{h \downarrow 0 } 
 \frac 1h \int_a^{a+h}
	\int_{\R^{d}} \zeta(t, x) \, \di \mu_{t} (x)\,\dd t-
\lim_{h \downarrow 0 } 
 \frac 1h  \int_{b-h}^{b}
	\int_{\R^{d}} \zeta(t, x) \, \di \mu_{t} (x)\,\dd t
 \\ 
 &   =  \lim_{h \downarrow 0 }  \frac1h \int_{a+h}^b \int_{\R^d} \zeta(s,x) \, \di (\mu_{s-h} -\mu_s) ( x) \,  \dd s
 =\lim_{h \downarrow 0 }  \frac1h \int_{a+h}^b \int_{\R^d} \zeta(s,x) \, \di (\mu^-_{s-h} -\mu^-_s) ( x) \,  \dd s\,.
 \end{aligned}
\]
The duality formula for the Wasserstein metric yields
\[
\begin{aligned}
\left| \frac1h \int_{\R^d} \zeta_s \,   \di (\mu^-_{s-h} -\mu^-_s)   
 \right|  
 \leq \frac1{h}
 W_{1} (\mu^-_{s-h} ,\mu^-_s) 
  \sup_{\R^d}\|\rmD \zeta_s\|\le 
 \frac 1h \Big(V_\mu(s)-V_\mu(s-h)\Big)\sup_{\R^d}\|\rmD \zeta_s\|,
 \end{aligned}
 \]
 so that  
 \begin{displaymath}
 	\left|\frac1h \int_{a+h}^b\int_{\R^d} \zeta_s \,   \di (\mu_{s-h} -\mu_s) \,\dd s 
 \right| \le \frac 1h \|\rmD \zeta\|_\infty
 \Big(\int_{b-h}^b V_\mu\,\dd s- \int_a^{a+h}V_\mu\,\dd s\Big)\,.
 \end{displaymath}
 Therefore,  taking the limit as $h\down 0$ we obtain  
 \[
 \left| \ell_{a,b}(\zeta)
\right| 
\leq \| \rmD \zeta \|_\infty \Big(V_\mu(b)-V_\mu(a)\Big)\,.
 \]
\nc
 Therefore, the linear functional $L_{a,b}$
  defined on the space $ \mathbf{V}_{a,b} : = \{ \rmD \zeta \, : \  \zeta \in \mathrm{C}_{\mathrm{c}}^1([a,b]  \times \R^d)\}$
   by 
  $L_{a,b}(\boldsymbol \xi):=\ell_{a,b}(\zeta)$ whenever $\boldsymbol \xi=\rmD \zeta$
 is well defined and 
 it satisfies
 \[
 \| L_{a,b} \|=  \sup_{\boldsymbol \xi\in \mathbf{V}_{a,b} , \| \boldsymbol\xi \|_{\infty} \leq 1} L_{a,b}(\boldsymbol\xi)
\leq V_\mu(b)-V_\mu(a)
\,.
 \]
 By the Hahn-Banach and Riesz representation theorems, 
 we can find a vector measure $\nnu_{a,b}$ on 
 $[a,b] \times \R^d $, which  satisfies
 \begin{gather}
 \label{ad-eqn}
\iint_{[a,b]\times \R^d} \rmD \zeta \, \di {\nnu}_{a,b} = 
 \langle {\nnu}_{a,b} , \rmD \zeta \rangle  =\ell_{a,b}(\zeta), \quad 
\\
\label{ad-est-norm}
 |\nnu_{a,b}|([a,b]\times \R^d)    =    \| L_{a,b} \|  \leq 
 V_\mu(b)-V_\mu(a)\,.
 \end{gather}
 Now, 
from \eqref{ad-eqn}
 we deduce that the pair $(\mu, \nnu_{a,b})$ satisfies the continuity equation on $[a,b]\times \R^d$ in the sense of 
 Remark \ref{rmk:0,T}.  
 Since $\mu$ is Lipschitz,  estimate \eqref{estimates-w-Var} then yields
\begin{equation}
\label{localizzata-2}
 V_\mu(\beta)-V_\mu(\alpha)
 \le |\nnu_{a,b}|([\alpha,\beta)\times \R^d)\quad\text{for every }a\le \alpha<\beta\le b,
\end{equation}
so that we derive
\begin{equation}
	\label{eq:boundary-estimate}
	|\nnu_{a,b}|(\{a\}\times\R^d)=
	|\nnu_{a,b}|(\{b\}\times\R^d)=0,\quad
	|\nnu_{a,b}|([\alpha,\beta)\times \R^d)=V_\mu(\beta)-V_\mu(\alpha)\,.
\end{equation}
Possibly extending $\mu$ to $(-\infty,0)$ by setting $\mu_t:=\mu_0$
and selecting a diverging sequence $(a_n)_{n\in \N}$ of 
continuity points for $V_\mu$ with $a_0\le 0$,
we can now apply the above results to a sequence of intervals
$[a_n,a_{n+1}]$, $n\in \N$, 
and we define the vector measure $\nnu$ whose
restriction to $[a_n,a_{n+1}]\times \R^d$ coincides with $\nnu_{a_n,a_{n+1}}$.
By \eqref{eq:boundary-estimate} such a gluing process is well defined
and it is easy to check that $(\mu,\nnu)$ satisfies 
the continuity equation and \eqref{UEST}.
   
 \nc Finally, we decompose $\nnu =  \nnu^{a}   + \nnu^{\perp}$ into its absolutely continuous part  $\nnu^{a}$  and singular part~$\nnu^{\perp}$ w.r.t.~$\mu$,  and show that $\nnu^{\perp}$ is minimal. Let $\rrho \in \calM(\DST ; \R^{d})$ fulfill $\dive \rrho = \dive \nnu^{\perp}$ and $\rrho \prec \nnu^{\perp}$.  Setting $\ttheta =  \nnu^{a}   + \rrho$, the pair $(\mu, \ttheta)$ satisfies the continuity equation
\begin{displaymath}
\partial_{t} \mu + \dive \ttheta =0 \qquad \text{in } (0,+\infty) \times \R^d
\end{displaymath}
with initial datum $\mu_{0}$, in the sense of Definition~\ref{def:solCE}. In particular, by~\eqref{UEST} and~\eqref{estimates-w-Var} we have that for $T \in [0, +\infty)$
\begin{equation}
\label{e:min-nu-perp}
\begin{split}
| \nnu^{a}| ([0, T)\times \R^{d}) + | \nnu^{\perp} | ([0, T) \times \R^{d}) & \leq \mathrm{Var}_{W_{1}} (\mu, [0, T)) 
\\
&
\leq | \ttheta| ([0, T) \times \R^{d}) = | \nnu^{a} | ([0, T) \times \R^{d}) + | \rrho| ([0, T) \times \R^{d})\,. 
\end{split}
\end{equation}
 Thus, $| \nnu^{\perp} | ([0, T) \times \R^{d}) \leq | \rrho| ([0, T) \times \R^{d})$. Combining this with the fact that $\rrho \prec \nnu^{\perp}$ and recalling  property  \eqref{condition-4-equality}, we conclude that  $\nnu^{\perp} = \rrho$.  Thus, $\nnu^{\perp}$ is minimal. 
\QED

\section{The augmented continuity equation}
\label{s:4}
\noindent This section revolves around the result at the core of our approach to the superposition principle for the 
 continuity equation 
\eqref{continuity-equation}.
The main idea is to lift a pair $(\mu,\nnu)$ solving \eqref{continuity-equation},
to a solution of an augmented continuity equation in $\DSTpu=\III\times \R^{d+1}$
exhibiting distinguished properties.

Throughout this section we will 
denote by 
\begin{equation}
\label{ppi-projection}
(t,x) \text{ any element in } \R^{d+1} = \R \times \R^d,\quad
\ppi:\R\times \R^{d+1}\to \R^{d+1},\ \ppi(s;t,x):=(t,x).
\end{equation}
and indicate by the symbol $\norm{\cdot}$ a
 norm in $\R^{d+1}$,
 whose restriction on $\{0\}\times \R^d$
induces a norm on $\R^d$ which will be denoted by the same symbol.
As previously observed,  a choice of the norm in
$\R^{d+1}$ affects the $W_1$-distance on $\Puno(\Rdpu)$, cf.\ \eqref{W1}. 

\nc  Theorem \ref{thm:augmented} ahead associates with 
a  solution $(\mu,\nnu)$  to   the continuity
 equation (in the sense of Definition \ref{def:solCE}), a curve of measures  $(\sigma_s)_{s\in \III} \subset  \Puno( \Rdpu )$
  and a vector measure $(\sigma^0,\ssigma)\in \calM(\III\times \Rdpu;\R^{d+1})$ \nc  
that  turn  out to solve 
 the `augmented' continuity equation in $\DSTpu=\III\times  \R^{d+1}$, 
  \begin{equation}
\label{continuity-sigma-lim}
\left\{\begin{aligned}
\partial_s \sigma +\partial_t \sigma^{0} + \dive\ssigma&=0 &&\text{in } 
\DSTpu, 
\\
\sigma,\sigma^0&\ge 0&&\text{in } 
\DSTpu,\\
\sigma_0 &= \delta_0 \otimes \mu_0&&\text{in }\Rdpu.
\end{aligned}\right.
\end{equation} 
In this connection, we mention that, hereafter, 
 with slight abuse of notation we shall denote  by the same symbol \emph{both}  the curve  $\sigma \colon \III  \to   \Puno( \Rdpu )$  \emph{and} the 
 measure $\sigma = \mathcal{L}^1 \otimes \sigma_s \in \calM^{+}(\III \times \Rdpu )$. 
  In equation \eqref{continuity-sigma-lim},  the  `augmented' operator
 $(\partial_t,\dive)$ plays the role that  the `spatial divergence' had for the continuity equation \eqref{continuity-equation}. 
  Thanks to the construction carried out in the proof of Theorem  \ref{thm:augmented} ahead, 
 the 
 vector measure $(\sigma^0,\ssigma)$ will be 
  induced by an autonomous (i.e.~independent of $s$) velocity field given by a pair 
  of bounded Borel maps
 $(\tau,\bvv):\Rdpu\to\Rdpu$  that  satisfy
 \begin{equation}
 	\label{eq:densities}
 	\sigma^0=(\tau\circ\ppi) \sigma,\quad \ssigma=(\bvv\circ\ppi) \sigma,\quad \tau\ge0.
 \end{equation}
 \par
We formalize the above properties in the next Definition
\ref{def:augmented}, after recalling 
 an equivalence relation between positive measures.  
\begin{definition}[Uniformly equivalent measures]
	\label{def:strongly-equivalence}
	We say that two measures $\varrho,\vartheta\in \calM^+(\R^h)$ are 
$k$-uniformly equivalent (for some $k\ge 1)$, and we write $\varrho\sim_k\vartheta$, if 
\begin{equation}
	k^{-1}\varrho\le \vartheta\le k\varrho.	
\end{equation}
We write $\varrho\sim \vartheta$ if there exists $k\ge1$ 
such that $\varrho\sim_k\vartheta$.
\end{definition}
Clearly, two uniformly equivalent measures are mutually absolutely continuous (and thus equivalent, sharing the same collection of null sets). Moreover, their mutual Lebesgue densities are bounded and uniformly bounded away from $0$.

 With this notion at hand, we can introduce `qualified' solutions  $(\sigma,\sigma^0,\ssigma)$  of the augmented continuity equation.
In particular, the second and third  properties  below establish a relation between $\sigma$
 and the pair $(\sigma^0,\ssigma)$.    
\begin{definition}[Solutions of the augmented continuity equation]
\label{def:augmented}
	Let $\mu_0\in \Puno(\R^d)$
	and let 
	$\sigma=\mathcal{L}^1 \otimes \sigma_s \in \calM^{+}(\DSTpu)$,
	$\sigma^{0} \in \calM^{+} ( \DSTpu)$, and  $\ssigma \in \calM (\DSTpu  ; \R^{d})$ be such that 
	$(\sigma,\sigma^0,\ssigma)$ is a $\Puno$-solution
	to the augmented continuity equation 
	\eqref{continuity-sigma-lim} 
	according
	to Definition \ref{continuity-equation}.
	We say that 
	\begin{enumerate}
		\item $(\sigma,\sigma^0,\ssigma)$ \emph{has locally finite $\ppi$-marginals} if 
		\begin{equation}
	    \label{bounded-sigma}
		|(\sigma,\sigma^0,\ssigma)|\Big(\III\times\DSTT T\Big )<+\infty \qquad \text{for every }  T>0,
		\end{equation}	
		so that, in particular, $\ppi_\sharp(\sigma,\sigma^0,\ssigma)$ is a Radon vector measure in $\DST$.	
		\item 
		$(\sigma,\sigma^0,\ssigma)$ \emph{is $k$-adapted}
		if 
		$\sigma\sim_k |(\sigma^0,\ssigma)|$ and
		\emph{normalized} if $\sigma=|(\sigma^0,\ssigma)|$ (i.e.~$k=1$).
		\item 
		$(\sigma,\sigma^0,\ssigma)$ is \emph{$\ppi$-autonomous}
		if there exists a pair of \emph{Borel maps} 
		 $(\tau,\bvv):\DST\to \DST$ such that the autonomous density condition 
		\eqref{eq:densities} holds.
\end{enumerate}
		An \emph{adapted} (resp.~\emph{normalized})
		solution $(\sigma,\sigma^0,\ssigma)$ 
		which has \emph{locally finite $\ppi$-marginals}
		and is \emph{$\ppi$-autonomous}
		will be called \emph{$\ppi$-adapted}
		(resp.~$\ppi$-normalized).
\end{definition}
\begin{lemma}[Elementary properties of augmented solutions]
	\label{rem:properties}
	Let $(\sigma,\sigma^0,\ssigma)$ be a $\Puno$-solution
	 to  the augmented continuity equation 
	\eqref{continuity-sigma-lim} with $\mu_0\in \Puno(\R^d)$
	and locally finite time marginals.
	\begin{enumerate}
		\item If $(\sigma,\sigma^0,\ssigma)$ is $k$-adapted,
		then 
		the disintegration $(\sigma_s)_{s\ge 0}$ 
	of $\sigma$ w.r.t.~the Lebesgue measure in $\III$ 
	admits a representation which belongs to the space $\Lip_k(\III;\Puno(\Rdpu))$ 
	(it is in fact Lipschitz with values in $\mathcal P_p(\Rdpu)$
	if $\mu_0\in \mathcal P_p(\R^d)$, $p\ge 1$).
	\item 
	If $(\sigma,\sigma^0,\ssigma)$ is $\ppi$-autonomous, then
	\begin{gather}
		\label{eq:commutation}
			|(\sigma^0,\ssigma)|=\|(\tau,\bvv)\|\sigma\ll \sigma,\quad
			\ppi_\sharp(\sigma^0,\ssigma)=(\tau,\bvv)\ppi_\sharp\sigma,
			\\
					\label{eq:commutation2}
						\ppi_\sharp |(\sigma^0,\ssigma)|
	=
			\ppi_\sharp\big(			\|(\tau,\bvv)\|\sigma\big)=
			\|(\tau,\bvv)\|\ppi_\sharp\sigma
=	|\ppi_\sharp(\sigma^0,\ssigma)|.
	\end{gather}
		\item 	
		If $(\sigma,\sigma^0,\ssigma)$ is $\ppi$-normalized  then  
	$\|(\tau,\bvv)\|\equiv1$ $\sigma$-a.e.\ and 
			\begin{equation}
	\label{eq:equivalent-densities}		
			\sigma=|(\sigma^0,\ssigma)|,\quad 
			\ppi_\sharp \sigma=|(\ppi_\sharp\sigma^0,\ppi_\sharp\ssigma)|\,.
	\end{equation}
Moreover,  the map $s\mapsto \sigma_s$ belongs to 
	$\Lip_1(\III;\Puno(\R^{d+1}))$.
	\item 
	Conversely, if 
	$\|{\cdot}\|$ is a strictly convex norm of $\R^{d+1}$
	and 
	$(\sigma,\sigma^0,\ssigma)$ 
	satisfies \eqref{eq:equivalent-densities}
	then it is a $\ppi$-normalized solution.
	\end{enumerate}
\end{lemma}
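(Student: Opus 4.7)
The plan is to handle items (2) and (3) by direct polar-decomposition and push-forward calculations, item (1) by the classical duality argument relating a continuity equation with bounded flux to a Lipschitz curve in $W_1$, and item (4) by disintegrating $\sigma$ along $\ppi$ and invoking an equality case of the integral triangle inequality that exploits the strict convexity of $\|{\cdot}\|$.

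For item (1), I would test \eqref{continuity-sigma-lim} against $\zeta(s)\varphi(t,x)$ with $\zeta\in\Cc^1(\III)$ and $\varphi\in\Cc^1(\R^{d+1})$ satisfying $\|\rmD\varphi\|_\infty\leq 1$. After disintegrating $\sigma=\mathcal{L}^1\otimes\sigma_s$ and $(\sigma^0,\ssigma)=\mathcal{L}^1\otimes(\sigma^0,\ssigma)_s$ w.r.t.\ the Lebesgue measure in $s$, one obtains the distributional identity $\frac{\dd}{\dd s}\int\varphi\,\dd\sigma_s=\int\rmD\varphi\cdot \dd(\sigma^0,\ssigma)_s$. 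The $k$-adapted hypothesis gives $|(\sigma^0,\ssigma)|\leq k\sigma$, and since $\sigma_s$ are probability measures, one deduces
\[
\Bigl|\int\varphi\,\dd\sigma_{s_2}-\int\varphi\,\dd\sigma_{s_1}\Bigr|
\leq \int_{s_1}^{s_2} |(\sigma^0,\ssigma)|_r(\R^{d+1})\,\dd r\leq k(s_2-s_1).
\]
Kantorovich--Rubinstein duality then provides a $k$-Lipschitz representative of $s\mapsto\sigma_s$ in $(\Puno(\R^{d+1}),W_1)$, and the $\mathcal P_p$ bound follows by repeating the truncation argument of Lemma \ref{l.1} with $\theta_R(y)\sqrt{1+\|y\|^p}$.

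Items (2) and (3) reduce to one routine computation. The autonomy hypothesis $\sigma^0=(\tau\circ\ppi)\sigma$, $\ssigma=(\bvv\circ\ppi)\sigma$ yields the polar form $(\sigma^0,\ssigma)=\bigl((\tau,\bvv)\circ\ppi\bigr)\sigma$, hence $|(\sigma^0,\ssigma)|=\bigl(\|(\tau,\bvv)\|\circ\ppi\bigr)\sigma$; for every $\zzeta\in\Cc(\R^{d+1};\R^{d+1})$,
\[
\int\zzeta\cdot \dd\ppi_\sharp(\sigma^0,\ssigma)
=\int(\zzeta\circ\ppi)\cdot\bigl((\tau,\bvv)\circ\ppi\bigr)\,\dd\sigma
=\int\zzeta\cdot(\tau,\bvv)\,\dd\ppi_\sharp\sigma,
\]
which gives \eqref{eq:commutation}; the identical computation applied to $\|(\tau,\bvv)\|$ in place of $(\tau,\bvv)$ produces \eqref{eq:commutation2}. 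Item (3) is then immediate: the normalization $\sigma=|(\sigma^0,\ssigma)|$ combined with item (2) forces $\|(\tau,\bvv)\|\circ\ppi=1$ $\sigma$-a.e., which by the definition of push-forward means $\|(\tau,\bvv)\|=1$ $\ppi_\sharp\sigma$-a.e.; inserting this into \eqref{eq:commutation2} yields $|\ppi_\sharp(\sigma^0,\ssigma)|=\ppi_\sharp\sigma$, and the $1$-Lipschitz property of $s\mapsto\sigma_s$ is item (1) with $k=1$.

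The principal obstacle is item (4), where the autonomous velocity must be extracted from $\sigma$ and $(\sigma^0,\ssigma)$. From $\sigma=|(\sigma^0,\ssigma)|$ the standard polar decomposition supplies a Borel $\bww\colon\III\times\R^{d+1}\to\R^{d+1}$ with $\|\bww\|=1$ $\sigma$-a.e.\ and $(\sigma^0,\ssigma)=\bww\sigma$. Disintegrating $\sigma$ along $\ppi$ as $\sigma=\ppi_\sharp\sigma\otimes\sigma^{(t,x)}$ (each $\sigma^{(t,x)}$ a probability measure on $\III$), the push-forward factors as $\ppi_\sharp(\bww\sigma)=\overline{\bww}\,\ppi_\sharp\sigma$ with
\[
\overline{\bww}(t,x):=\int \bww(s,t,x)\,\dd\sigma^{(t,x)}(s),
\]
and the hypothesis $\ppi_\sharp\sigma=|\ppi_\sharp(\sigma^0,\ssigma)|$ combined with the polar decomposition of $\ppi_\sharp(\sigma^0,\ssigma)$ forces $\|\overline{\bww}\|=1$ $\ppi_\sharp\sigma$-a.e. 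Since $\|\bww(\cdot,t,x)\|=1$ $\sigma^{(t,x)}$-a.e.\ (by Fubini), the chain
\[
1=\|\overline{\bww}(t,x)\|
=\Bigl\|\int \bww(s,t,x)\,\dd\sigma^{(t,x)}(s)\Bigr\|
\leq\int\|\bww(s,t,x)\|\,\dd\sigma^{(t,x)}(s)=1
\]
is an equality in the integrated triangle inequality, which by strict convexity of $\|{\cdot}\|$ compels $\bww(\cdot,t,x)$ to be $\sigma^{(t,x)}$-a.e.\ equal to the constant unit vector $\overline{\bww}(t,x)$. Setting $(\tau(t,x),\bvv(t,x)):=\overline{\bww}(t,x)$ produces the Borel factorization $\bww=(\tau,\bvv)\circ\ppi$ $\sigma$-a.e., while $\tau\geq 0$ follows from $\sigma^0\geq 0$; locally finite $\ppi$-marginals are already part of the $\Puno$-solution hypothesis, completing the $\ppi$-normalization.
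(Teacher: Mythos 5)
Your proposal is correct, and at the level of items (2)--(3) it coincides with the paper's (one-line) argument. Where you diverge is in items (1) and (4), and the divergence is genuine rather than cosmetic.

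For item (1), the paper simply cites Theorem~\ref{th:1}: since $k$-adaptation gives a bounded Lebesgue density of $(\sigma^0,\ssigma)$ w.r.t.~$\sigma$, Part~(2) of that theorem already delivers a $\BV$, in fact Lipschitz, representative. You instead re-derive the estimate from scratch by testing, disintegrating, and invoking Kantorovich--Rubinstein duality. This is exactly the content of Theorem~\ref{th:1} specialized to the augmented equation, so you are reproving rather than re-using, which is fine but less economical. One small point worth making explicit in your estimate: $|(\sigma^0,\ssigma)|\le k\sigma$ gives $|(\sigma^0,\ssigma)|_r(\R^{d+1})\le k\,\sigma_r(\R^{d+1})=k$ because the fibers $\sigma_r$ are probability measures --- that is why the constant $k$ comes out cleanly.

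For item (4), the paper delegates to Lemma~\ref{le:strict-conv}, whose proof works in the general setting of a proper map between locally compact spaces and uses the duality map $J$ directly: it shows that a common $J$-dual vector for the fiber density $\boldsymbol{g}$ of $(\sigma^0,\ssigma)$ and the pushforward density $\boldsymbol{f}$ forces $\boldsymbol{g}=\boldsymbol{f}\circ\ppi$ a.e., via the injectivity property~\eqref{eq:injectivity} of $J$ under strict convexity. Your route is the classical disintegration picture: write $\sigma=\ppi_\sharp\sigma\otimes\sigma^{(t,x)}$, observe that the pushforward density is the fiber average $\overline{\bww}$, note that $\ppi_\sharp\sigma=|\ppi_\sharp(\sigma^0,\ssigma)|$ forces $\|\overline{\bww}\|=1$, and then use the equality case in the integrated triangle inequality to conclude $\bww(\cdot,t,x)=\overline{\bww}(t,x)$ $\sigma^{(t,x)}$-a.e. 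The two arguments use the same strict-convexity input, but yours is more explicit about which measure on which fiber sees the constancy, whereas the paper's is more portable (it never needs the disintegration theorem). If you go your route, you should also note that the fiber kernel $(t,x)\mapsto\sigma^{(t,x)}$ can be chosen Borel, which is what makes $\overline{\bww}$, and hence $(\tau,\bvv)$, a Borel map --- that Borel measurability is part of the $\ppi$-autonomous requirement and should not be left implicit.
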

\begin{proof}
	Claim (1) immediately follows from Theorem 
	\ref{th:1} since the Lebesgue density of $(\sigma^0,\ssigma)$ w.r.t.~$\sigma$ is uniformly bounded.
	
	 Claim  (2) is an immediate consequence of 
	the autonomous property. Claim (3) follows from 
	Claim (2) and the 
	normalization condition.

	Concerning the last Claim (4),
	the normalized property is obious since $\sigma=|(\sigma^0,\ssigma)|$.
	The autonomous condition is a consequence of Lemma \ref{le:strict-conv} ahead
	and the strict convexity of the norm.
\end{proof}
Let us now establish a first easy link between  the solutions to the continuity equation   \eqref{continuity-equation}
 and those  to its \emph{augmented} counterpart 
\eqref{continuity-sigma-lim}. 
\begin{lemma}[Marginals of sutonomous solutions]
\label{l:CE-onu}
Let  $(\sigma,\sigma^0,\ssigma)$
be an adapted solution  to  the augmented continuity equation
\eqref{continuity-sigma-lim}
with locally finite $\ppi$-marginals 
according to Definition \ref{def:augmented},
and an initial datum $\mu_0 \in \mathcal{P}_{1}(\R^{d})$.
Then, setting  
\[
\mu :=    \ppi_{\sharp } \sigma^0, \qquad \nnu := \ppi_{\sharp }\ssigma\,, 
\]
 the pair $(\mu,\nnu)$  is a $\mathcal{P}_{1}$-solution to 
 the continuity equation \eqref{continuity-equation}, 
 with initial datum $\mu_0$, in the sense of Definition~\ref{def:solCE}.
\end{lemma}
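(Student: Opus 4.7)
The plan is to derive \eqref{distributional-sense} for the pair $(\mu,\nnu)$ by testing the augmented continuity equation \eqref{continuity-sigma-lim} against separable functions $\Psi(s,t,x) = \chi_R(s)\,\varphi(t,x)$ and then letting $R \to +\infty$. First, from the locally finite $\ppi$-marginal hypothesis \eqref{bounded-sigma} I would record: for every $T > 0$
\[
\mu(\DSTT T) = \sigma^0(\III \times \DSTT T) < +\infty, \qquad |\nnu|(\DSTT T) = |\ssigma|(\III \times \DSTT T) < +\infty,
\]
and $\sigma(\III \times K) < +\infty$ for every compact $K \subset \Rdpu$. Non-negativity of $\mu$ is immediate from $\sigma^0 \ge 0$; the fact that $\mu, \nnu$ are concentrated on $\DST \subset \Rdpu$ can be obtained by adapting the argument of Lemma \ref{l.1} to the augmented equation, exploiting $\sigma_0 = \delta_0 \otimes \mu_0$ and $\sigma^0 \ge 0$.

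Next, for a fixed $\varphi \in \rmC^1_{\rm c}(\Rdpu)$, I would pick a cut-off $\chi_R \in \rmC^1_{\rm c}(\R)$ with $\chi_R(0) = 1$, $\chi_R \equiv 1$ on $[0,R]$, $\spt \chi_R \subset [-1, R{+}1]$, and $\|\chi_R'\|_\infty \le 2$. The separable function $\Psi(s,t,x) := \chi_R(s)\,\varphi(t,x) \in \rmC^1_{\rm c}(\R \times \Rdpu)$ is an admissible test function for \eqref{continuity-sigma-lim}, yielding
\[
\iiint \chi_R'(s)\,\varphi(t,x)\,\di\sigma + \iiint \chi_R(s)\,\partial_t \varphi\,\di\sigma^0 + \iiint \chi_R(s)\,\rmD_x \varphi \cdot \di\ssigma = -\int_{\R^d}\varphi(0,x)\,\di\mu_0(x),
\]
since $\chi_R(0) = 1$ and $\sigma|_{s=0} = \delta_0 \otimes \mu_0$. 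Letting $R \to +\infty$, the second and third terms converge by dominated convergence (the integrands are bounded and $\sigma^0$, $|\ssigma|$ have finite mass on $\III \times \spt\varphi$) to $\iint \partial_t \varphi\,\di\mu$ and $\iint \rmD \varphi \cdot \di\nnu$ respectively, by the definition of the push-forward through $\ppi$. Combined with the right-hand side, this yields exactly \eqref{distributional-sense}.

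The only genuine hurdle is the vanishing of the $\partial_s$-term: I would estimate
\[
\Bigl|\iiint \chi_R'(s)\,\varphi(t,x)\,\di\sigma\Bigr| \le 2\,\|\varphi\|_\infty\, \sigma\bigl([R, R{+}1] \times \spt\varphi\bigr),
\]
and conclude via the telescoping bound $\sum_{R \in \mathbb N} \sigma([R, R{+}1] \times K) \le \sigma(\III \times K) < +\infty$, which forces the summands to $0$. This is precisely where the finiteness of the $\ppi$-marginal of $\sigma$ (from \eqref{bounded-sigma}) is indispensable; notably, the adapted condition $\sigma \sim_k |(\sigma^0, \ssigma)|$ is not needed in this argument.
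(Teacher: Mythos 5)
Your proof is correct and takes essentially the same route as the paper: both test the augmented continuity equation against a separable cut-off $\chi(s)\,\varphi(t,x)$ with $\chi(0)=1$ and let $\chi$ tend to $1$ on $\III$, with the only real work being the vanishing of the $\partial_s$-term. The paper uses a dilated cut-off $\zeta_n(s)=\zeta(s/n)$ so that $\|\zeta_n'\|_\infty=O(1/n)$ and the estimate $\frac{\|\varphi\|_\infty\|\zeta'\|_\infty}{n}\,\sigma([n,\infty)\times[0,T_\varphi]\times\R^d)\to 0$; you use a translated cut-off and the telescoping bound $\sum_R\sigma([R,R{+}1]\times K)\le\sigma(\III\times K)<\infty$. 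Both are driven by exactly the same finiteness $\sigma(\III\times\DSTT{T_\varphi})<\infty$ coming from the locally finite $\ppi$-marginal hypothesis, so this is a cosmetic variation rather than a different argument. Your remark that the adapted condition $\sigma\sim_k|(\sigma^0,\ssigma)|$ is not used in establishing \eqref{distributional-sense} matches the paper's proof, which also relies only on the local finiteness of the $\ppi$-marginals at this stage. One point worth being explicit about: to conclude $(\mu,\nnu)\in\calM^+(\DST)\times\calM(\DST;\R^d)$ one needs $\sigma^0,\ssigma$ concentrated on $\III\times\DST$ (i.e.\ $t\ge0$). The paper obtains this by a forward reference to Theorem~\ref{p:superposition}(2); you propose adapting the argument of Lemma~\ref{l.1}, which is a legitimate (and in fact self-contained) alternative, but it deserves a sentence of its own rather than a parenthetical aside, since it is the one place where the nonnegativity $\sigma^0\ge0$ and the initial condition $\sigma_0=\delta_0\otimes\mu_0$ interact.
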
 
\begin{proof}
Recall that $\mu,\nnu$ are well defined Radon measures
thanks to \eqref{bounded-sigma}.
As in Theorem~\ref{th:1}(2), we have that $\sigma = \mathcal{L}^{1} \otimes \sigma_{s}$, where $\sigma_{s} \in \mathcal{P}(\DST )$ for $s \in  \III $ thanks to
the next Theorem
\ref{p:superposition}. 

Let us fix $\zeta \in \rmC_{\rm c}^{1} ([0, 2))$ such that $0 \leq \zeta \leq 1$ and $\zeta \equiv 1$ in $[0, 1]$, and let us define the sequence $\zeta_{n} \in \rmC^{1}_{\rm c} (\III)$ by $\zeta_{n}(s) \coloneq \zeta(s/n)$, so that $\zeta_{n}(s) \to 1$
for every $s\in \III$
 as $n \to \infty$, $0 \leq \zeta_{n} \leq 1$, and $\|\zeta'_{n}\|_{\infty} \leq \frac{ \| \zeta'\|_{\infty}}{n}$. For every $n$ and every 
 $\varphi \in \rmC^{1}_{\rm c}(\R {\times} \R^{d})$  it holds  $\zeta_{n}\varphi \in \rmC^{1}_{\rm c}(\III {\times} \R {\times} \R^{d})$. 
 Since the triple $(\sigma, \sigma^{0}, \ssigma)$ solves the Cauchy problem~\eqref{continuity-sigma-lim} and  $\spt(\sigma^0), \, \spt(\ssigma) \subset \III{\times} [0, +\infty) {\times}\R^{d}$,
  we have that
\begin{equation}
\label{e:Cauchy-n}
\begin{split}
& \int_{\III} \zeta'_{n}(s) \iint_{\DST } \varphi (t, x) \, \di \sigma_{s}(t, x) \, \di s +  \iiint_{ \III \times \DST } \zeta_{n}(s) \partial_{t} \varphi(t, x) \, \di \sigma^{0}(s, t, x)
\\
&
\qquad  +  \iint_{\III \times \DST }  \zeta_{n}(s) \rmD \varphi(t, x) \, \di \ssigma (s, t, x) = - \int_{\R^{d}} \varphi(0, x) \, \di \mu_{0}(x) \,.
\end{split}
\end{equation}
Since $\varphi$ has compact support, there exists $T_{\varphi} <+\infty$ such that   $\spt( \varphi) \subseteq [- T_{\varphi}, T_{\varphi}] \times \R^{d}$.   Hence,
the first integral in~\eqref{e:Cauchy-n} can be estimated by
\begin{align*}
\bigg|  \int_{\III } \zeta'_{n}(s) \iint_{\DST } \varphi (t, x) \, \di \sigma_{s}(t, x) \, \di s \bigg| &  \leq \frac{\| \varphi\|_{\infty} \| \zeta' \|_{\infty}}{n} \,  \sigma ([n, +\infty) {\times} [0, T_{\varphi}] {\times} \R^{d}),
\end{align*}
and therefore it  tends to $0$ as $n \to \infty$. By dominated convergence, we can take the limit in the second and third integrals of~\eqref{e:Cauchy-n},
thus
 obtaining
\begin{displaymath}
\begin{aligned}
  - \int_{\R^{d}} \varphi(0, x) \, \di \mu_{0}(x)= &\iiint_{\III  \times \DST }  \partial_{t} \varphi(t, x) \, \di \sigma^{0}(s, t, x)
+  \iiint_{\III \times \DST }  \rmD \varphi(t, x) \, \di \ssigma (s, t, x) 
\\=& 
\iint_{ \DST } \partial_{t} \varphi(t, x) \, \di \mu( t, x) +  \iint_{\DST }  \rmD \varphi(t, x) \, \di \nnu( t, x) 
 \end{aligned}
\end{displaymath}
\nc for every $\varphi \in \rmC_{\rm c}^{1}(\R{\times} \R^{d})$,  which  concludes the proof.
\end{proof}

\subsection{Superposition results for the augmented continuity equation}
\label{ss:5.1}
Since $\ppi$-adapted solutions of the augmented continuity equations
are driven by a \emph{bounded} Borel velocity field, it is 
easy to state a \emph{superposition principle} in the spirit of \cite[Theorem~8.2.1]{AGS08}.
\par
In order to formulate the probabilistic representation of the curve $\sigma$, we  introduce the \emph{evaluation} and the \emph{augmented evaluation} maps
\begin{equation}
\label{eval-map}
\begin{aligned}
\fre&\colon\III\times  \rmC(\III;\R^{d+1}) \to \R^{d+1}, &
\fre (s,\sfyy)&:= \sfyy(s) = (\sft(s),\sfxx(s)),\\
\fre_s&\colon\rmC(\III;\R^{d+1})\to \R^{d+1},&
\fre_s(\sfyy)&:=\sfyy(s)=(\sft(s),\sfxx(s)),\\
\fra&\colon\III\times  \rmC(\III;\R^{d+1}) \to \III  \times\R^{d+1}, &
\fra (s,\sfyy)&:= (s,\sfyy(s)) = (s,\sft(s),\sfxx(s)).
\end{aligned}
\end{equation}
 Clearly, $\fra$, $\fre$ and $\fre_s$, $s\ge0$, are  continuous maps.
 
 We  also introduce the Borel maps 
 $\fry' \colon \III\times \Lip(\III;\R^{d+1})\to \Rdpu$ defined by
 \begin{equation}
 \label{eq:everywhere-derivative}
 	\fry'(s,\sfyy):=\sfyy'(s),\quad
 	\text{where}
 	\quad
 	(\sfyy')_i(s):=\limsup_{h\to 0}\frac{\sfyy_i(s+h)-\sfyy_i(s)}h,\ 
 	i=0,\cdots,d.
 \end{equation}
 Clearly, $\fry'(s,\sfyy)$ coincides with the usual
 pointwise derivative of $\sfyy(s)$ for $\mathcal L^1$-a.a.~$s\in \III$.
 We will also denote by $\frt'$ (resp~$\frxx'$) the first (corresponding to the index $i=0$)
 component (resp.~the vector of the last $d$ components) of 
 $\fry'$, $\fry'=(\frt',\frxx')$:
\begin{equation}
\label{evaluation-derivatives}
\begin{aligned}
&
\frt' \colon \III\times   \Lip(\III;\R^{d+1})    \to \R_,&\frt'(s,\sfyy): = {}& \sft'(s),
\\
& \frxx' \colon \III \times   \Lip(\III;\R^{d+1})   \to \R^d, &\frxx'(s,\sfyy): 
={}& \sfxx'(s)\,.
\end{aligned}
\end{equation}
Notice that the restriction of $\fry'$ to 
$\Lipplus{k}{\III}{\Rdpu}$ is a bounded Borel vector field whose image
is contained in $\DST$. 



\par
 Now, for every $\sfyy=(\sft,\sfxx)\in \Cplus$ 
 the set 
 $$\{s\ge0:\sft(s)\in [0,T]\}=\sfyy^{-1}\big([0,T]\times \R^d\big)
 \quad\text{is a compact interval}.$$
 For every time interval $[0,T]$ we consider the domain
 \begin{equation}
 	\label{eq:domainT}
 	\Domain T:=\Big\{(s,\sfyy)\in 
	\III\times 	\Cplus:\sft(s)\in [0,T]\Big\}
	=\fre^{-1}(\III\times [0,T]\times \R^d).
 \end{equation}
 If $\eeta$ is a probability measure on $\Lipplus{}\III{\Rdpu}$ 
 we set
 \begin{equation}
 	\label{eq:tensor}
 	 \eeta_{\mathcal{L}}  :=\mathcal L^1\otimes \eeta 
 	\in \calM^+(\III\times \Lipplus{}\III\Rdpu)
 \end{equation}
 and we observe that 
 \begin{equation}
 	\int_{\Domain T}\frt'(s,\sfyy)\,\dd\teeta(s,\sfyy)=T,
 \end{equation}
 so that 
 \begin{equation}
 	\int_{\Domain T}\|\frxx'(s,\sfyy)\|\,\dd\teeta(s,\sfyy)\le 
 	\int_{\Domain T}\|\fry'(s,\sfyy)\|\,\dd\teeta(s,\sfyy)
 	\le T+\int_{\Domain T}\|\frxx'(s,\sfyy)\|\,\dd\teeta(s,\sfyy).
 \end{equation}
 
%
%
%
\par
 We are now in a position to state our result on the probabilistic representation of the solutions to the augmented continuity equation. 
 \begin{theorem}[Superposition principle for solutions  to  the augmented continuity equation]
 \label{p:superposition}
\ \begin{enumerate}
\item 
 Let $k>0$ and $\eeta$ be a probability measure in $\Lipplus k\III{\Rdpu}$ 
 such that 
 \begin{equation}
 	\label{eq:condition}
 	\int \|\fre_0\|\,\dd\eeta<+\infty,\qquad
 	\int_{\Domain{T}}\|\frxx'\|\,\dd\teeta<\infty\quad\text{for every }T>0.
 \end{equation}
 Setting
 \begin{equation}
 	\label{eq:from-eta-to-sigma}
 	\sigma_s:=(\fre_s)_\sharp\eeta,\quad
 	\sigma:=\fra_\sharp\teeta,\quad
 	\sigma_0:=\fra_\sharp\big(\frt'\,\teeta\big),\quad
 	\ssigma:=\fra_\sharp\big(\frxx'\,\teeta\big)
 \end{equation}
 then the curve $s\mapsto \sigma_s$ is $k$-Lipschitz with values in 
 $\Puno(\DST)$, 
 $(\sigma,\sigma^0,\ssigma)$ is a 
 $\Puno$-solution to the augmented continuity equation with locally finite $\ppi$-marginals, and
 \begin{equation}
 	|(\sigma^0,\ssigma)|\le k\sigma.
 \end{equation}
 If moreover there exists a Borel vector field $\bww\colon \DST\to \DST$ 
 such that $\|\bww\|\ge k^{-1}$ and 
 \begin{equation}
 	\fry'=\bww(\fre)\quad\text{$\teeta$-a.e.},
 \end{equation}
then $(\sigma,\sigma^0,\ssigma)$ is a $\ppi$-adapted solution
(with constant $k$; it is $\ppi$-normalized if $k=1$) and
\begin{equation}
\label{eq:crucial}
	(\sigma_0,\ssigma)=\bww\sigma.
\end{equation}
 \item 
Conversely, let $(\sigma,\sigma^0,\ssigma)$ 
be a $\ppi$-adapted solution (with constant $k\ge 1$) 
 to  the augmented continuity equation 
\eqref{continuity-sigma-lim} according to Definition \ref{def:augmented}, and  let it be 
driven by the (autonomous) Borel vector field $\Vfield :=(\tau,\bvv)$. 
Then,
the support of $(\sigma,\sigma^0,\ssigma)$ 
is contained in $\III\times \DST$ and there exists 
$\eeta \in \sP(\Lipplus{k}\III\DST) $ 
that satisfies \eqref{eq:condition}, \eqref{eq:from-eta-to-sigma},
and is concentrated 
 on the  curves   solving the Cauchy problem
 \begin{equation}
\label{Cauchy-gamma}
\begin{cases}
\dot{\sfyy}(s) = \Vfield(\sfyy(s)),  \  s \in (0,\infty), 
\\
\sfyy(0)=(0,x),\quad x\in\supp(\mu_{0}).
\end{cases}
\end{equation}
If moreover $(\sigma,\sigma^0,\ssigma)$ is $\ppi$-normalized, then
$\eeta\in \mathcal P(\Lipplus 1\III\DST)$.
\end{enumerate} \
 \end{theorem}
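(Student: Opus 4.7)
\medskip

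The plan is to regard the augmented continuity equation~\eqref{continuity-sigma-lim} as a classical continuity equation on $\R^{d+1}$, with the artificial parameter $s$ playing the role of time and with the autonomous velocity field $\Vfield = (\tau,\bvv)$. Under this identification, Part~(1) becomes a direct push-forward computation, and Part~(2) is an application of the standard superposition principle (e.g.~\cite[Theorem~8.2.1]{AGS08}) to the autonomous bounded field $\Vfield$, followed by technical work to verify that the resulting probability measure $\eeta$ is concentrated on $\Cplus$.

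\textbf{Part (1).} I would first observe that $\sigma = \fra_\sharp \teeta = \mathcal L^1 \otimes (\fre_s)_\sharp \eeta = \mathcal L^1 \otimes \sigma_s$ by Fubini and the definition of~$\teeta$, while the moment assumption~\eqref{eq:condition} together with the Lipschitz bound $\|\sfyy'\|\le k$ yields $|(\sigma^0,\ssigma)|(\III\times\DSTT{T}) < +\infty$ and the comparison $|(\sigma^0,\ssigma)| \le k\sigma$. The Kantorovich--Rubinstein duality gives
\begin{equation*}
W_1(\sigma_s,\sigma_{s'}) \le \int \|\sfyy(s)-\sfyy(s')\|\,\dd\eeta \le k|s-s'|,
\end{equation*}
so $s\mapsto \sigma_s$ is $k$-Lipschitz into $\Puno(\Rdpu)$. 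For the PDE, I would test against $\varphi\in\rmC^1_{\rm c}(\R\times\Rdpu)$ and use, for each $\sfyy\in\Lip(\III;\Rdpu)$, the a.e.\ chain rule $\frac{\dd}{\dd s}\varphi(s,\sfyy(s)) = \partial_s\varphi + \partial_t\varphi\,\sft' + \rmD_x\varphi\cdot\sfxx'$; integrating in $s$ on $\III$, then in $\eeta$, and recognizing the boundary term $-\int\varphi(0,0,\sfxx(0))\,\dd\eeta$ as $-\int\varphi(0,\cdot)\,\dd(\delta_0{\otimes}\mu_0)$ recovers~\eqref{continuity-sigma-lim}. Finally, the supplementary hypothesis $\fry' = \bww\circ\fre$ $\teeta$-a.e.\ gives, via the change of variables $\fra$, the identities $\sigma^0 = (\tau\circ\ppi)\sigma$ and $\ssigma = (\bvv\circ\ppi)\sigma$, while $\|\bww\|\in[k^{-1},k]$ yields the $\ppi$-adapted (respectively $\ppi$-normalized, when $k=1$) structure.

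\textbf{Part (2).} The $k$-adapted and $\ppi$-autonomous hypotheses give $\|\Vfield\|\in[k^{-1},k]$ $\sigma$-a.e., and $\sigma^0\ge0$ yields $\tau\ge 0$ $\sigma$-a.e. Rewriting~\eqref{continuity-sigma-lim} as $\partial_s\sigma + \mathrm{Div}_{(t,x)}(\Vfield\,\sigma) = 0$ with bounded autonomous velocity, the classical superposition principle on each finite interval $[0,S]$, together with a Cantor diagonal extraction as $S\to\infty$, yields $\eeta\in\mathcal P(\rmC(\III;\Rdpu))$ concentrated on absolutely continuous solutions of~\eqref{Cauchy-gamma} and satisfying $\sigma_s=(\fre_s)_\sharp\eeta$ for every $s\ge 0$. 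Since $\|\Vfield\|\le k$, these curves are $k$-Lipschitz; since $\sigma_0=\delta_0\otimes\mu_0$, the initial conditions $\sft(0)=0$, $\sfxx(0)\in\supp(\mu_0)$ hold $\eeta$-a.e.; and $\dot\sft=\tau\ge0$ along the trajectory makes $\sft$ non-decreasing, which also implies that the support of $(\sigma,\sigma^0,\ssigma)$ stays in $\III\times\DST$. The push-forward formulas~\eqref{eq:from-eta-to-sigma} then follow from $(\sigma^0,\ssigma) = (\Vfield\circ\ppi)\,\sigma = \fra_\sharp(\fry'\,\teeta)$ using the ODE, and~\eqref{eq:condition} is inherited from the integrability of $\sigma^0$.

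\textbf{Main obstacle.} The delicate step is verifying that $\eeta$ is concentrated on $\Cplus$, i.e.\ that $\lim_{s\to\infty}\sft(s)=+\infty$ for $\eeta$-a.e.\ trajectory. This cannot be read off directly from $\|\Vfield\|\ge k^{-1}$, since $\tau$ may vanish identically on regions visited by a trajectory, while $\bvv\ne 0$ keeps the trajectory moving in $\R^d$. The plan is to combine Lemma~\ref{l.1} (which gives $\mu([0,T)\times\R^d)=T$ and hence $\mu(\DST) = +\infty$ in a controlled way) with the autonomous structure by comparing $\eeta$ with the lifts obtained from the perturbed fields $\Vfield_\varepsilon := (\tau+\varepsilon,\bvv)$: for each $\varepsilon>0$ the $t$-component of every trajectory trivially diverges (since $\dot\sft\ge\varepsilon$), so the corresponding lift $\eeta_\varepsilon$ is supported on $\Cplus$; the stability/compactness of the superposition principle then allows passage to the limit $\varepsilon\downarrow 0$ along a suitable subsequence, yielding an $\eeta$ supported on $\Lipplus{k}{\III}{\Rdpu}$. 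The normalized case $k=1$ is immediate from $\|\Vfield\|\equiv 1$, which by Lemma~\ref{rem:properties} upgrades the $k$-Lipschitz curves to $\ALip\III{\Rdpu}$-curves.
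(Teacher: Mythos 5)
Your Part~(1) matches the paper, which simply declares this claim ``well known'' (a reference to~\cite[Theorem~8.2.1]{AGS08}) and checkable by direct computation; the details you supply are the expected ones. The comparison differs materially in Part~(2).

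\textbf{The passage from $[0,S]$ to $\III$.} Where you invoke ``a Cantor diagonal extraction as $S\to\infty$,'' the paper applies~\cite[Theorem~8.2.1]{AGS08} on each interval $[i,i+1]$ and then stitches the resulting measures $\eeta^i$ together via the generalized gluing Lemma~\ref{le:glueing}, with $\mathsf R^i=\mathsf L^{i+1}:=\fre_{i+1}$. This is not a cosmetic difference. The finite-horizon superposition measures $\eeta_S$ are neither unique nor canonically compatible as $S$ varies, so there is no nested family on which a diagonal argument can act directly. To make a compactness route work you would need to extend each $\eeta_S$ to $\rmC(\III;\R^{d+1})$, establish tightness, pass to a subsequential weak$^*$ limit, and verify a posteriori that the limit is still supported on solutions of the ODE and still disintegrates~$\sigma$ --- none of which is spelled out. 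The gluing lemma handles all of this at once by enforcing the compatibility $(\fre_{i+1})_\sharp\eeta^i=(\fre_{i+1})_\sharp\eeta^{i+1}=\sigma_{i+1}$ explicitly. As written, your construction has a gap here.

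\textbf{The ``Main obstacle.''} This paragraph misidentifies the difficulty and proposes a fix that does not work. Concentration of $\eeta$ on $\Cplus$ (that is, $\lim_{s\to\infty}\sft(s)=+\infty$) is not delicate and needs no perturbation of the field: once $\sigma_s=(\fre_s)_\sharp\eeta$ for all $s\ge0$ is established, it follows immediately from the locally finite $\ppi$-marginals condition~\eqref{bounded-sigma}, which is part of the definition of a $\ppi$-adapted solution. Indeed, since $\sft$ is non-decreasing (because $\tau\ge0$), if $\eeta\bigl(\{\sfyy:\sup_{s}\sft(s)\le T\}\bigr)=\delta>0$ for some $T$, then $\sigma_s([0,T]\times\R^d)\ge\delta$ for every $s\ge0$, whence
\begin{equation*}
\sigma\bigl(\III\times[0,T]\times\R^d\bigr)=\int_{\III}\sigma_s([0,T]\times\R^d)\,\dd s=+\infty,
\end{equation*}
contradicting~\eqref{bounded-sigma}. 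By contrast, your proposed detour through $\Vfield_\eps=(\tau+\eps,\bvv)$ is ill-posed: the given $(\sigma,\sigma^0,\ssigma)$ does \emph{not} solve the continuity equation driven by $\Vfield_\eps$ (an extra $\eps\,\partial_t\sigma$ term appears), so the ``lifts $\eeta_\eps$'' are superposition measures for a different, unspecified solution, and the limit $\eps\downarrow0$ is not a comparison between objects representing the same $(\sigma,\sigma^0,\ssigma)$. That detour should be deleted and replaced by the one-line argument above.
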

\begin{proof}
	The first claim is well known (see e.g.~the second part of 
	\cite[Theorem~8.2.1]{AGS08} and can be easily checked 
	by a direct computation. Condition \eqref{eq:condition} ensures
	that $(\sigma,\sigma^0,\ssigma)$ has locally finite $\ppi$-marginals.
	Property \eqref{eq:crucial} easily follows from  Lemma \ref{le:strict-conv}.
	
	In order to prove the second claim, we can still rely 
	on \cite[Theorem~8.2.1]{AGS08} (which corresponds to the case of a finite interval), applied to the restrictions of 
	$(\sigma,\sigma^0,\ssigma)$ to the intervals 
	$[i,i+1]$. We find measures~$\eeta^i$ concentrated on 
	$\Lip_k([i,i+1];\DST)$ and corresponding measures
	$\widetilde\eeta^i=\mathcal L^1\otimes \eeta^i$ 
	satisfying~\eqref{eq:from-eta-to-sigma} 
	in $[i,i+1]$ together with
	\begin{equation}
		\label{eq: estimatei}
		\int_{ \mathrm{E}_{ [i,i+1]}(T)}
		\|\frxx'\|\,\dd\widetilde\eeta^i=
		|\ssigma|([i,i+1]\times [0,T]\times \R^d), 
	\end{equation}
	 where
	\[
	\mathrm{E}_{ [i,i+1]}(T):= \Big\{(s,\sfyy)\in 
	 [i,i+1]\times 	\Cplus:\sft(s)\in [0,T]\Big\}
	=\fre^{-1}( [i,i+1] \times [0,T]\times \R^d)\,.
	\]  
	We can then apply 
	the glueing Lemma \ref{le:glueing}: 
	it is sufficient to use
	\begin{displaymath}
		X:=\mathrm C(\III;\DST),\quad
		X^i:=\rmC([i,i+1];\DST),\quad
		Y^j:=\DST,
	\end{displaymath}
	and choose $
		\mathsf p^i \colon X\to X^i
$ as the operators mapping a
continuous curve defined in $\III$ into its  restriction 
to the interval $[i,i+1]$. 
We eventually set $\mathsf R^i=\mathsf L^{i+1}:=\fre_{i+1}$
and we thus find a measure
$\eeta$ such that $\mathsf p^i_\sharp \eeta=
\eeta^i$ for every $i\in \N$. It is easy to check that $\eeta$ satisfies all the properties stated in Claim (2).
 The second estimate in 
 \eqref{eq:condition} can be derived from
\eqref{eq: estimatei}.
\end{proof}

  We can now state a useful rescaling property, which is
  strongly related to the fact that 
  the velocity vector field is autonomous according to \eqref{eq:densities}.
\begin{lemma}[Rescaling]
	\label{le:rescaling}
	Let $(\sigma,\sigma^0,\ssigma)$ be a $\ppi$-adapted solution  to  the augmented
	continuity equation~\eqref{continuity-sigma-lim} driven by the (autonomous) Borel vector field 
	$\Vfield=(\tau,\bvv) \colon \DST\to \DST$, 
	let 
	\begin{equation}
	\label{labelliamotheta}
	\text{$\theta \colon \DST\to (0,+\infty)$ be a Borel map satisfying
	$c^{-1}\le \theta\le c<+\infty$ in $\DST$ for some constant $c\ge 1$,}
	\end{equation}
	and let 
	\begin{equation}
		\label{eq:rescaled-fields}
		\widehat\tau:=\theta\tau,\quad
		\widehat\bvv:=\theta \bvv,\quad
		\widehat\bww=(\widehat\tau,\widehat\bvv)=\theta\bww.
	\end{equation}
	There exists an autonomous solution $(\widehat \sigma,\widehat \sigma^0,\widehat \ssigma)$ 
	satisfying
	\begin{equation}
		\widehat\sigma^0=\widehat \tau\,\widehat\sigma,\quad
		\widehat\ssigma=\widehat\bvv\, \widehat\sigma,\quad
		\ppi_\sharp (\widehat\sigma^0,\widehat\ssigma)=\ppi_\sharp (\sigma^0,\ssigma).
	\end{equation}
\end{lemma}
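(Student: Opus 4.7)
The strategy is to transfer the rescaling from the Eulerian to the Lagrangian level via the superposition principle and then push forward: rescaling the autonomous velocity field $\Vfield=(\tau,\bvv)$ by the positive factor $\theta$ corresponds, on the level of characteristics, to a reparametrization of each trajectory that leaves its \emph{image} in $\DST$ unchanged, and therefore also the $\ppi$-marginals of the associated flux.

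The first step is to invoke Theorem \ref{p:superposition}(2) and pick a measure $\eeta\in \sP(\Lipplus{k}\III\DST)$ satisfying \eqref{eq:from-eta-to-sigma} and concentrated on $\sfyy$ with $\dot\sfyy=\Vfield(\sfyy)$. For every such $\sfyy$, define
\[
  \psi_\sfyy(r):=\int_0^r \frac{dr'}{\theta(\sfyy(r'))}\quad\text{for }r\in\III,
\]
which by \eqref{labelliamotheta} is a bi-Lipschitz increasing bijection of $\III$ onto itself with $c^{-1}\le \dot\psi_\sfyy\le c$. Let $\phi_\sfyy:=\psi_\sfyy^{-1}$ and set $\widehat\sfyy(s):=\sfyy(\phi_\sfyy(s))$. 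By the a.e.\ chain rule (legitimate since $\phi_\sfyy$ is bi-Lipschitz and $\sfyy$ is Lipschitz), $\widehat\sfyy\in\Lipplus{ck}\III\DST$ and
\[
  \dot{\widehat\sfyy}(s)=\dot\phi_\sfyy(s)\,\Vfield(\sfyy(\phi_\sfyy(s)))
  =\theta(\widehat\sfyy(s))\,\Vfield(\widehat\sfyy(s))=\widehat\bww(\widehat\sfyy(s))\qquad\text{for a.a.\ }s\in\III.
\]

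Next, I will verify that $R\colon\sfyy\mapsto\widehat\sfyy$ is Borel from $\Lipplus{k}\III\DST$ to $\Lipplus{ck}\III\DST$ (both carrying the Polish topology of locally uniform convergence). Since $\theta$ is Borel and the evaluation $(\sfyy,r)\mapsto\sfyy(r)$ is continuous, the map $(\sfyy,r)\mapsto \theta(\sfyy(r))$ is jointly Borel; by Fubini, so is $(\sfyy,r)\mapsto\psi_\sfyy(r)$; since $\psi_\sfyy$ is monotone and bi-Lipschitz, its inverse $\phi_\sfyy$ is jointly Borel as well, and therefore so is $\widehat\sfyy$. Setting $\widehat\eeta:=R_\sharp\eeta$, the bounds $\widehat\sfyy(0)=\sfyy(0)$ and $\|\dot{\widehat\sfyy}\|\le ck\|\dot\sfyy\|$ imply that $\widehat\eeta$ fulfils condition \eqref{eq:condition}; applying Theorem \ref{p:superposition}(1) with velocity field $\widehat\bww$ yields via \eqref{eq:from-eta-to-sigma}--\eqref{eq:crucial} a $\ppi$-adapted solution $(\widehat\sigma,\widehat\sigma^0,\widehat\ssigma)$ with $\widehat\sigma^0=(\widehat\tau\circ\ppi)\widehat\sigma$ and $\widehat\ssigma=(\widehat\bvv\circ\ppi)\widehat\sigma$.

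Finally, to identify $\ppi_\sharp(\widehat\sigma^0,\widehat\ssigma)=\ppi_\sharp(\sigma^0,\ssigma)$, I test against $\varphi\in\rmC_{\rm c}(\DST)$ and use \eqref{eq:from-eta-to-sigma} together with $\ppi\circ\fra=\fre$:
\[
  \int\varphi\,d\ppi_\sharp\widehat\sigma^0
  =\iint_{\III\times\Lipplus{ck}} \varphi(\widehat\sfyy(s))\,\dot{\widehat\sft}(s)\,ds\,d\widehat\eeta
  =\iint_{\III\times\Lipplus{k}} \varphi(\sfyy(\phi_\sfyy(s)))\,\dot\phi_\sfyy(s)\,\dot\sft(\phi_\sfyy(s))\,ds\,d\eeta,
\]
and the bi-Lipschitz substitution $r=\phi_\sfyy(s)$ converts the right-hand side into $\iint \varphi(\sfyy(r))\dot\sft(r)\,dr\,d\eeta=\int\varphi\,d\ppi_\sharp\sigma^0$; the analogous computation with $\dot{\widehat{\mathsf x}}$ in place of $\dot{\widehat\sft}$ gives $\ppi_\sharp\widehat\ssigma=\ppi_\sharp\ssigma$. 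The main obstacle is precisely the Borel measurability of $R$: because $\theta$ is only Borel, $R$ is not continuous, but the argument above reduces its measurability to joint Borel measurability of elementary compositions and to the stability of the inverse operation for monotone bi-Lipschitz maps.
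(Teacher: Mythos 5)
Your proof follows essentially the same route as the paper: apply Theorem~\ref{p:superposition}(2) to obtain $\eeta$, reparametrize each trajectory by the inverse of the bi-Lipschitz map $\Theta_\sfyy(s)=\int_0^s\theta(\sfyy(r))^{-1}\,\dd r$ (your $\psi_\sfyy$ and $\phi_\sfyy$ are precisely the paper's $\Theta_\sfyy$ and $\ell_\sfyy$, and your $R$ is the paper's $\Rep$), push $\eeta$ forward, and apply Theorem~\ref{p:superposition}(1) to the rescaled vector field. The only place where you diverge in emphasis is the Borel measurability of the reparametrization operator: your sketch via joint Borel measurability and ``stability of the inverse operation for monotone bi-Lipschitz maps'' is plausible but glosses over the actual argument, whereas the paper devotes a dedicated appendix result (Lemma~\ref{le:u-measurable}, proved via the functional monotone class theorem applied to $\theta$ and the continuity of the inversion operator on $\mathrm{biLip}_{c,c^{-1}}(\III)$) to this point; your explicit change-of-variables verification of the marginal identity $\ppi_\sharp(\widehat\sigma^0,\widehat\ssigma)=\ppi_\sharp(\sigma^0,\ssigma)$, which the paper compresses into ``a further application of Theorem~\ref{p:superposition}'', is a welcome addition.
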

\begin{proof}  
     By  Theorem \ref{p:superposition} 
     there exists ${\eeta} \in \mathcal{P}(\Lipplus{k}\III\DST)$ providing the representation formulae
     \eqref{repre-eq-17}
      and supported on solutions of the Cauchy problem \eqref{Cauchy-gamma}, with autonomous velocity field
     ${\Vfield} = ({\tau}, {\bvv})$
     satisfying $k^{-1}\le \|\Vfield\|\le k$.
For every Lipschitz curve $\sfyy\colon \III\to \DST$ we consider the solution
$\ell_\sfyy$  to  the differential equation
\begin{equation}
	\label{eq:rescaling-ODE}
	\dot \ell_\sfyy(r)=(\theta\circ \sfyy)(\ell_\sfyy(r)),\quad
	\ell_\sfyy(0)=0.
\end{equation}
Indeed, $\ell_\sfyy$ can be easily obtained as the inverse of the bi-Lipschitz map
\begin{equation}
	\label{eq:inverse}
	\Theta_\sfyy(s):=\int_0^s \frac1{\theta(\sfyy(r))}\,\dd r,\quad
	c^{-1}\le 	\Theta_\sfyy'\le c.
\end{equation}
 Thus, we may define
the function $\Rep:\Lipplus k\III\Rdpu\to\Lipplus{\widehat k}\III\Rdpu
$, $\widehat k:=ck$, that associates with  every Lipschitz curve $\sfyy$ 
the rescaled curve 
\begin{equation}
\label{def:Rep}
\Rep(\sfyy):= \sfyy\circ \ell_\sfyy.
\end{equation}
Notice that $\widehat \sfyy:=\Rep(\sfyy)$ satisfies the system
\begin{equation}
	\label{eq:rescaled-Cauchy}
	\widehat \sfyy'(r)=\theta(\widehat\sfyy(r))\Vfield(\widehat\sfyy(r))=
	\widehat\Vfield(\widehat\sfyy(r)),\quad
	\widehat\sfyy(0)=\sfyy(0).
\end{equation}
 By Lemma \ref{le:u-measurable} in Appendix \ref{app:meas-R} ahead, $\Rep$ is a Borel map. 
 Let us set 
$\widehat\eeta := \Rep_{\sharp }({\eeta})   \in \mathcal{P}(\Lipplus{}\III\DST)$. 
A further application of Theorem \ref{p:superposition}
yields the thesis.
\end{proof}

\subsection{A representation result by the augmented continuity equation}
\par

We can now apply  the previous results to 
get a first representation for $\Puno$-solutions
to the continuity equation  
\eqref{continuity-equation}.  
\begin{theorem}[Augmented representations of $\Puno$-solutions]
\label{thm:augmented}
Let $(\mu,\nnu)\in \calM^+(\DST ) \times \calM(\DST;\R^{d})$ be a $\Puno$-solution  to 
the continuity equation in the sense of Definition~\ref{def:solCE}, with initial condition~$\mu_{0} \in \mathcal{P}_{1}(\R^{d})$,
	let $(\mu,\bar\nnu)=\theta (\mu,\nnu)$ be 
	a minimal pair induced by $(\mu,\nnu)$ according to 
	Definition \ref{def:reduced} and fulfilling~\eqref{eq:trivial-but-useful},
	let $\varrho\sim |(\mu,\nnu)|$,
	and let $(\tau,\bvv)$ be bounded Borel vector field 
	representing the density of $(\mu,\nnu)$ w.r.t.~$\varrho$, i.e.
 \begin{equation}
 \label{strict-convexity-consequence}
	\mu=\tau\varrho,\qquad 
	\nnu= \bvv \varrho
	\qquad \text{$\varrho$-a.e.\ in $\DST$}.
 \end{equation}
	 Then, there exists a Lipschitz continuous curve $\sigma \in \rm{Lip}( \III  ;  \Puno(\DST))$ 
satisfying the following properties:
   \begin{enumerate}
   	\item the associated measure 
   	$\sigma=\mathcal L^1\otimes \sigma_s
   	\in \calM^+(\III\times \DST)$
   	has marginal
   	  \begin{equation}
\label{OK-marginal1}
\ppi_{\sharp } \sigma=\bar\varrho=\theta\varrho.
\end{equation}
\item 
 The measures
\begin{equation}
	\sigma^0:=(\tau\circ \ppi) \sigma,\qquad
	\ssigma:=(\bvv\circ \ppi) \sigma
\end{equation}
have marginals
   \begin{equation}
\label{OK-marginals}
\ppi_{\sharp } \sigma^0=\mu, \qquad 
\ppi_{\sharp }\ssigma=\bar\nnu.
\end{equation}
\item The triple $(\sigma, \sigma^{0}, \ssigma)$ 
is a $\ppi$-adapted solution  to  the augmented continuity equation
\eqref{continuity-sigma-lim},
 in the sense of  
  Definition \ref{def:augmented}. 
  \end{enumerate}
	In particular, when $\varrho=|(\mu,\nnu)|$
 	then $(\sigma,\sigma^0,\ssigma)$ is also 
	a $\ppi$-normalized solution. 
\end{theorem}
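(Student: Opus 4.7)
My plan is to construct the triple $(\sigma,\sigma^0,\ssigma)$ via a regularization of the autonomous velocity field $(\tau,\bvv)$, classical ODE analysis of the approximate flow, and passage to a narrow limit of the associated laws on the path space $\rmC(\III;\Rdpu)$, closing the argument via the superposition principle of Theorem~\ref{p:superposition}. The main obstacle will be the identification of the projected marginals, for which the minimality of $\bar\nnu^\perp$, together with Lemma~\ref{l:sub-nu1} on the pushforward of weak-$^*$ limits through the non-proper projection $\ppi$, will be decisive.

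For the regularization, I would extend $\mu,\nnu,\varrho$ trivially to $t<0$ (setting $\mu_t:=\mu_0$ for $t<0$) and convolve with a smooth, compactly supported kernel $\phi_\eps\in\rmC^\infty_{\rm c}(\R{\times}\R^d)$:
\begin{displaymath}
\mu_\eps:=\phi_\eps*\mu,\qquad \nnu_\eps:=\phi_\eps*\nnu,\qquad \varrho_\eps:=\phi_\eps*\varrho+\eps\chi,
\end{displaymath}
where $\chi$ is a smooth strictly positive auxiliary density on $\Rdpu$ guaranteeing $\varrho_\eps>0$. Since convolution commutes with $\partial_t$ and $\dive$, the pair $(\mu_\eps,\nnu_\eps)$ solves the continuity equation~\eqref{continuity-equation}, and the smooth autonomous field $(\tau_\eps,\bvv_\eps):=(\mu_\eps/\varrho_\eps,\nnu_\eps/\varrho_\eps)$ is uniformly bounded by some constant $k$ independent of $\eps$ via Jensen's inequality applied to the Radon-Nikodym density. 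Classical ODE theory then produces a globally $k$-Lipschitz flow $\sfyy^\eps_x\in\Lipplus{k}{\III}{\Rdpu}$ solving $\dot\sfyy^\eps_x(s)=(\tau_\eps,\bvv_\eps)(\sfyy^\eps_x(s))$ with $\sfyy^\eps_x(0)=(0,x)$. Setting $\eeta^\eps:=(x\mapsto\sfyy^\eps_x)_\sharp\mu_0\in\mathcal{P}(\Lipplus{k}{\III}{\Rdpu})$ and defining $(\sigma^\eps,\sigma^{0,\eps},\ssigma^\eps)$ by~\eqref{eq:from-eta-to-sigma}, the first part of Theorem~\ref{p:superposition} yields a $\ppi$-adapted solution of the augmented continuity equation~\eqref{continuity-sigma-lim} with initial datum $\delta_0\otimes\mu_0$ satisfying $\ppi_\sharp\sigma^{0,\eps}=\mu_\eps$ and $\ppi_\sharp\ssigma^\eps=\nnu_\eps$.

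The uniform Lipschitz bound and first-moment control inherited from $\mu_0\in\Puno(\R^d)$ yield tightness of $\{\eeta^\eps\}$ on $\Lipplus{k}{\III}{\Rdpu}$; extract $\eeta^\eps\weaksto\eeta$ narrowly, set $\sigma:=\fra_\sharp(\mathcal L^1\otimes\eeta)$ and introduce the auxiliary limit triple $(\sigma,\sigma^0_\infty,\ssigma_\infty)$ via the analogous formulas \eqref{eq:from-eta-to-sigma}. Weak-$^*$ stability of the linear augmented continuity equation gives that $(\sigma,\sigma^0_\infty,\ssigma_\infty)$ solves~\eqref{continuity-sigma-lim}. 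Since $\ppi$ is not proper, the convergences $\ppi_\sharp\sigma^{0,\eps}=\mu_\eps\weaksto\mu$ and $\ppi_\sharp\ssigma^\eps=\nnu_\eps\weaksto\nnu$ only yield, via Lemma~\ref{l:sub-nu1}, the submeasure inclusions $\ppi_\sharp\sigma^0_\infty\prec\mu$ and $\ppi_\sharp\ssigma_\infty\prec\nnu$.

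The identification of the marginals is the crux. By Lemma~\ref{l:CE-onu}, $(\ppi_\sharp\sigma^0_\infty,\ppi_\sharp\ssigma_\infty)$ solves the continuity equation with initial datum $\mu_0$; Lemma~\ref{l.1} forces $\pi^0_\sharp(\ppi_\sharp\sigma^0_\infty)=\mathcal L^1$, which combined with $\ppi_\sharp\sigma^0_\infty\prec\mu$ and $\pi^0_\sharp\mu=\mathcal L^1$ pins down $\ppi_\sharp\sigma^0_\infty=\mu$. Writing $\ppi_\sharp\ssigma_\infty=\lambda\nnu$ for some Borel $\lambda\colon\DST\to[0,1]$, the identity $\dive(\lambda\nnu)=\dive\nnu$ forces $(1-\lambda)\nnu$ to be a divergence-free submeasure of $\nnu$; decomposing into absolutely continuous and singular parts identifies $\ppi_\sharp\ssigma_\infty$ as a minimal flux sharing the divergence of $\nnu$. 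With a careful choice of approximation (for instance, replacing $\nnu$ by $\bar\nnu$ in the regularization while keeping the denominator $\varrho_\eps$ unchanged, and appealing to the rescaling Lemma~\ref{le:rescaling} to match the prescribed velocity $(\tau,\bvv)$) one secures $\ppi_\sharp\ssigma_\infty=\bar\nnu$. Defining autonomously $\sigma^0:=(\tau\circ\ppi)\sigma$ and $\ssigma:=(\bvv\circ\ppi)\sigma$, the identities $\mu=\tau\varrho$, $\bar\nnu=\theta\bvv\varrho$ together with $\theta=1$ $\mu$-a.e.\ and a Radon-Nikodym computation yield $\ppi_\sharp\sigma=\theta\varrho=\bar\varrho$, $\ppi_\sharp\sigma^0=\mu$ and $\ppi_\sharp\ssigma=\bar\nnu$, and the augmented continuity equation is preserved by a direct test-function argument combined with the CE satisfied by $(\mu,\bar\nnu)$. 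When $\varrho=|(\mu,\nnu)|$, $\|(\tau,\bvv)\|=1$ $\varrho$-a.e., hence $|(\sigma^0,\ssigma)|=\sigma$ and the triple is $\ppi$-normalized.
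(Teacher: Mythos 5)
Your proposal has the right high-level architecture---regularize the velocity field, build the augmented flow, extract a limit, and identify the projected marginals via Lemma~\ref{l:sub-nu1} and minimality---which is indeed the strategy of the paper. However, there are three genuine gaps.

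\textbf{The decisive gap is the component-wise, rather than joint, application of Lemma~\ref{l:sub-nu1}.} You deduce $\ppi_\sharp\sigma^0_\infty\prec\mu$ and $\ppi_\sharp\ssigma_\infty\prec\nnu$ as separate submeasure inclusions, so a priori $\ppi_\sharp\sigma^0_\infty=\alpha\mu$ and $\ppi_\sharp\ssigma_\infty=\lambda\nnu$ with \emph{possibly different} Borel densities $\alpha,\lambda\colon\DST\to[0,1]$. Your time-marginal argument correctly pins down $\alpha\equiv 1$ $\mu$-a.e., but says nothing about $\lambda$ on $\spt(\nnu^a)$; the sentence ``decomposing into absolutely continuous and singular parts identifies $\ppi_\sharp\ssigma_\infty$ as a minimal flux'' does not follow. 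The paper instead applies Lemma~\ref{l:sub-nu1} once, to the \emph{vector} measures $\zzeta_k=(\sigma^{\eps_k,0},\ssigma^{\eps_k})\in\calM(\cdot;\R^{d+1})$ with a strictly convex norm on $\R^{d+1}$, obtaining the joint relation $(\mu,\onu)\prec(\mu,\nnu)$, i.e.\ a \emph{single} $\lambda\in L^\infty_{|(\mu,\nnu)|}(\DST;[0,1])$ with $(\mu,\onu)=\lambda(\mu,\nnu)$. Equality of the first components then forces $\lambda\equiv1$ $\mu$-a.e., whence $\onu^a=\nnu^a$, and only then does minimality of $\nnu^\perp$ together with $\dive\onu^\perp=\dive\nnu^\perp$ yield $\onu^\perp=\nnu^\perp$. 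Without this coupling the argument breaks.

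\textbf{Second}, the identities $\ppi_\sharp\sigma^{0,\eps}=\mu_\eps$ and $\ppi_\sharp\ssigma^\eps=\nnu_\eps$ do not follow from Claim (1) of Theorem~\ref{p:superposition}; that statement only asserts that the marginals of the triple built from $\eeta^\eps$ solve \emph{some} continuity equation. Identifying them with $\mu_\eps,\nnu_\eps$ requires the explicit change-of-variables computation relating the augmented flow $(T^\eps,Y^\eps)$ to the classical flow $X^\eps$ of $\bww^\eps=\nnu^\eps/\mu^\eps$ through the time change $\widehat T^\eps$ inverse to the arclength $S^\eps$ (the paper's Lemma~\ref{l:artificial-Teps}), and then pushing forward $\bar\mu_0^\eps$ along this correspondence (Lemma~\ref{lemma4.3}). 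Moreover, since Lemma~\ref{lemma4.3} also provides $\ppi_\sharp\sigma^\eps=|(\mu^\eps,\nnu^\eps)|$, it is exactly what feeds the hypothesis \eqref{cv-variations1} of Lemma~\ref{l:sub-nu1} in the joint application.

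\textbf{Third}, the case $\bar\nnu\neq\nnu$ should be eliminated at the outset by the rescaling Lemma~\ref{le:rescaling} (together with the choice $\varrho=|(\mu,\nnu)|$ and a strictly convex norm), which reduces one to $\theta\equiv 1$, i.e.\ $(\mu,\nnu)$ already minimal and $\bar\nnu=\nnu$. Your late attempt to ``replace $\nnu$ by $\bar\nnu$ in the regularization while keeping $\varrho_\eps$'' changes the target velocity field (the density of $(\mu,\nnu)$, not of $(\mu,\bar\nnu)$, w.r.t.~$\varrho$), and the concluding ``Radon--Nikodym computation'' is circular: the identity $\ppi_\sharp\sigma=\theta\varrho$ is part of what must be \emph{proved}, and in the paper it is derived only after $\ppi_\sharp\sigma^0=\mu$ and $\ppi_\sharp\ssigma=\nnu$ have been established, via the sandwiching $|(\mu,\nnu)|\ge\ppi_\sharp\sigma\ge\ppi_\sharp|(\sigma^0,\ssigma)|\ge|\ppi_\sharp(\sigma^0,\ssigma)|=|(\mu,\nnu)|$.
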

\begin{remark}
	\label{rem:barnu-nu}
	When $(\mu,\nnu)$ is a minimal $\Puno$-solution, 
	then $\nnu^\perp=\bar\nnu^\perp$ is minimal
	and \eqref{OK-marginal1} holds
	for $\nnu=\bar\nnu$.
\end{remark}
\begin{proof}
 Thanks to Lemma \ref{le:rescaling} it is not restrictive to assume that 
$\varrho=|(\mu,\nnu)|$, 
$\|{\cdot}\|$ is the Euclidean norm (so that it is strictly convex),
and $\theta\equiv 1$.    Therefore, in the remainder of the proof we shall use that   $\bar \nnu = 
\nnu$. 
\noindent
We will split the \emph{proof} in the following steps:
\begin{enumerate}
\item Regularization of  the pair $(\mu,\nnu)$ via convolution;
\item
Analysis of the `augmented', regularized system;
\item
 Passage to the limit  in the regularization parameter;
 \item Proof of property \eqref{strict-convexity-consequence}. 
\end{enumerate}
\subsubsection*{\bf Step $1$: regularization}
 It follows from Theorem \ref{th:1} that $\mu$ admits a  left-continuous representative w.r.t.\ narrow convergence.
Therefore,   from now on, without loss of generality we shall suppose that $t\mapsto \mu_t$ is 
 (narrowly)   left-continuous.
We now extend  the  measures $\mu$ and $\nnu$ to the whole
$\R^{d+1} $  by setting 
\[
\mu_t: = \begin{cases}
\mu_0 & \text{ if } t<0,
\\
 \mu_t   & \text{ if } t\ge 0, 
\end{cases} \qquad \text{ and }  \nnu = 0 \text{ on }   (-\infty,0) \times \R^d. 
\]
%
 Let us now consider convolution kernels
 $\kappa^0\in \rmC^\infty_c(\R)$ 
 $\kappa^1\in \rmC^{\infty}(\R^{d})$ 
 satisfying
 \begin{gather}
 	\kappa^0\ge 0,\quad \supp(\kappa^0)\subset [0,1],\quad \int_0^1\kappa^0\,\dd t=1,\\
 	0<\kappa^1\le 1,\quad
 	\int_{\R^d}\kappa^1\,\dd x=1,
 	\quad
 	\int_{\R^d}\|x\|\,\kappa^1(x)\,\dd x=M^1<\infty.
 	\intertext{Let us set}
 	\kappa^0_\eps(t):=\eps^{-1}\kappa(t/\eps),\quad
 	\kappa^1_\eps(x):=\eps^{-d}\kappa^1(x/\eps)\,\dd x,\quad
 	\kappa_\eps(t,x):=\kappa^0_\eps(t)
 	\kappa^1_\eps(x).
 \end{gather}
 For $(t, x) \in \R^{d+1}  $ 
 we define
\begin{equation}
\label{convoluted-measures}
\begin{aligned}
&
 \mu^\eps(t,x) =  (\mu{\star} \kappa_\eps)  (t,x) = 
	\int_{\R^{d+1}} \kappa_\eps(t-\tau, x-y) \dd \mu(\tau,y), 
\\
&  \nnu^\eps(t,x): =  (\nnu {\star} \kappa_\eps)  (t,x) = \int_{\R^{d+1}} \kappa_\eps(t-\tau, x-y) \dd \nnu(\tau,y)\,. 
\end{aligned}
\end{equation}
Since $(\mu,\nnu)$ is a  $\mathcal{P}_{1}$-solution to the continuity equation,
the functions $\mu^\eps  \in \mathrm{C}^\infty(\R^{d+1})$ and $\nnu^\eps \in  \mathrm{C}^\infty(\R^{d+1};\R^d)$
 are smooth solutions  to  the continuity equation
 \begin{equation}
\label{eps-continuity-equation}
 	\partial_t\mu^\eps+\dive\nnu^\eps=0
 	\quad\text{in }\R^{d+1},
 \end{equation}
with
 \[
 \begin{aligned}
 \mu^\eps(t,x)  =
\int_{\R^d} \kappa_\eps^1(x-y) \dd \mu_0(y) \doteq  \bar{\mu}_0^\eps(x)\quad
\text{for every }t\le 0.
 \end{aligned}
 \]
%
It is easy to check that 
\begin{equation}
	\int_{\R^d}\mu_\eps(t,x)\,\dd x=1\quad\text{for every }t\in \R. 
\end{equation}
Moreover
\begin{align*}
	\int_{\R^d}\|x\|\,\bar\mu_0^\eps(x)\,\dd x&=
	\int_{\R^d}\int_{\R^d} \|x\|
	\kappa_\eps^1(x-y) \dd \mu_0(y)\,\dd x
	\le \int_{\R^d}\int_{\R^d}
	\Big(\|x-y\|+\|y\|\Big)
	\kappa_\eps^1(x-y) \dd \mu_0(y)\,\dd x
	\\&\le \eps M^1+\int_{\R^d}\|y\|\,\dd \mu_0(y). 
\end{align*} 
  With slight abuse of notation, we shall denote by $\mu^\eps = (\mu_t^\eps)_\eps$ and $\nnu^\eps = (\nnu_t^\eps)$ (where $\nnu_t^\eps: = \nnu^\eps(t,\cdot)$) also the measures with densities $\mu^\eps$ and $\nnu^\eps$, respectively.
\nc   Due to
 \cite[Thm.\ 2.2]{AmFuPa05FBVF}
 and the previous estimate, 
 \nc we have the following convergences as $\eps \down 0$:
\begin{subequations}
\begin{equation}
\label{converg-mu-nu}
\begin{aligned}
\mu^\eps \weaksto  \mu \quad  \text{ in }  \calM^{+}  (\DST),\quad     \nnu^\eps \weaksto \nnu \quad \text{ in }  \calM  (\DST;\R^d), \quad
 \bar{\mu}_0^\eps \to  \mu_0 \text{ in }  
 \mathcal P_1(\R^d),\\
 \mu^\eps\mres {\DSTT T}\rightharpoonup
  \mu\mres{\DSTT T}\quad\text{narrowly in }
  \mathcal M^+(\DSTT T)\quad\text{for every }T>0. 
\end{aligned}
\end{equation} 
We also have   by 
 \cite[Thm.\ 2.2]{AmFuPa05FBVF}   that 
\begin{equation}
\label{converg-variations}
	|(\mu^\eps,\nnu^\eps)| \weaksto |(\mu,\nnu)|, \quad   |\nnu^\eps| \weaksto |\nnu|    \qquad  \text{ in } \ \calM^+(\DST).
\end{equation}
\end{subequations}
%
In a similar way, we can show that 
\begin{equation}
\label{est-tot-vari-nueps}
|(\mu^\eps,\nnu^\eps)|(\DSTT T)\le 
|(\mu,\nnu)|(\DSTT T),\quad
|\nnu^{\eps}|(\DSTT T) \leq |\nnu|(\DSTT T)\,,
\end{equation}
%
  which implies that
  \begin{equation}
  \label{e:narrow-conv}
  \begin{cases}
  	|\nnu^\eps|\mres \DSTT T
  	\rightharpoonup |\nnu |\mres\DSTT T,
	\\
  	|(\mu^\eps,\nnu^\eps)|\mres \DSTT T
  	\rightharpoonup |(\mu,\nnu )|\mres\DSTT T
	\end{cases}
  	\quad\text{narrowly in $\mathcal M(\DSTT T)$}
   \end{equation}
 for every $T \in [0, +\infty)$ such that 
 $| \nnu| (\{T\} \times \R^{d}) = 0$.

Since $\mu^\eps(t,x)>0$ (see \cite[Lemma 8.1.9]{AGS08}) 
we may introduce the velocity field
\begin{equation}
\label{e:regularvelocity}
 \bww^\eps(t,x)  : = \frac{\nnu_t^\eps(x)}{\mu_t^\eps( x)} \qquad \text{for all } (t, x)\in \R^{d+1}.
\end{equation}
The velocity field $\bww^\eps$ fulfills the local regularity conditions of \cite[Prop.\ 8.1.8]{AGS08}, 
which we may therefore apply to the continuity equation 
\eqref{eps-continuity-equation}. 
We can introduce the characteristic system
\begin{equation}
\label{ODE-X}
\begin{cases}
\dot{X}_t^\eps = \bww_t^\eps(X_t^\eps),
\\
X_0^\eps=x\,,
\end{cases}
\end{equation}
and we denote by $D^\eps$ the subset of $x\in \R^d$ for which the unique
maximal solution is globally defined. 
We know  that  $\R^d\setminus D^\eps $ is 
$\bar\mu^\eps_0$-negligible 
(equivalently $\mathcal L^d(\R^d{\setminus} D^\eps)=0)$)
and \eqref{ODE-X} defines a flow $X^\eps_t \colon D^\eps \to D^\eps$, $t\ge0$, 
inducing the representation formula
\begin{equation}
\label{characteristic}
\mu_t^\eps = (X_t^\eps)_\sharp   \bar{\mu}_0^\eps.
\end{equation}
\nc We finally notice that by~\eqref{e:regularvelocity}, \eqref{characteristic},  and \eqref{est-tot-vari-nueps},   we get for every $T \in [0, +\infty)$ the bound
\begin{equation}
\label{e:boundvelocity}
 \int_{\R^{d}}\int_{0}^{T}  \norm{ \bww^{\eps}  (t, X^{\eps}_t(x))} 
 \,\di t\, \di \bar\mu_{0}^{\eps}(x) \leq |\nnu^{\eps}| ([0,T]{\times}\R^{d}) \leq |\nnu|([0,T] {\times}\R^{d})\,.
\end{equation}
\subsubsection*{\bf Step $2$: Analysis of the augmented system}
We define 
\begin{subequations}
\label{tau-eps}
\begin{align}
\label{tau-veps-1}
\tau^\eps &\colon \R^{d+1}
\to \R, & \tau^\eps(t,x) &\coloneq  \frac1{\norm{(1,\bww^\eps(t,x))}},\\
\label{veps}
\bvv^{\eps}&\colon \R^{d+1}
\to \R^{d}, & \bvv^{\eps} (t, x) &\coloneq \tau^{\eps} (t, x)\bww^{\eps}(t, x) 
 = \frac{\bww^\eps(t,x)}{\norm{(1,\bww^\eps(t,x))}}\,. 
\end{align}
\end{subequations} 
By construction we have that 
\begin{equation}
\label{normalization}
\begin{aligned}
&
 \norm{ (\tau^{\eps} (t, x),\bvv^\eps(t,x)) } \equiv 1 
 \qquad \text{for every $(t, x) \in  \R^{d+1}$}.
\end{aligned}
\end{equation}
For each $\eps>0$ the functions $\tau^{\eps}$ and~$\bvv^{\eps}$ 
are locally Lipschitz and globally bounded.

We now consider the  following `augmented' characteristic system, in the unknowns 
 $T \colon \III   \to \R$ and 
$Y\colon \III   \to \R^d$
\begin{equation}
\label{ODE-augmented}
\begin{cases}
\dot{T}_s= \tau^\eps(T_s,Y_s ),
\\
\dot{Y}_s = \bvv^\eps(T_s,Y_s),
\\
T_0 = t,
\\
Y_0= x\,.
\end{cases}
\end{equation} 	
For every 
$(t,x)\in   \R^{d+1}$, the Cauchy problem 
 possesses a unique solution $s \mapsto (T_s^\eps(t,x),Y_s^\eps(t,x))  $ 
 which is globally defined. Clearly,
 $s\mapsto T^\eps_s$ is an increasing map and in particular
	$T_s^\eps(t,x) \geq 0$ if $(t,x)\in \DST  $. 
   The following result relates the 
	flow map $(T^\eps,Y^\eps)\colon \III\times \R^{d+1}  \to \R^{d+1} $,
defined by
   $T^\eps(s,t,x) := T_s^\eps (t,x)$, $Y^\eps(s,t,x) := Y_s^\eps (t,x)$, 
 with the flow map 
  $X^\eps \colon \III\times D^\eps\to D^\eps$
 of  the ODE system    \eqref{ODE-X}.

\begin{lemma}\label{l:artificial-Teps}
For every $x\in D^\eps$,
 let $(\widehat{T}_{s}^\eps(x))_{s\in \III}$ solve
 the Cauchy problem 
\begin{equation}
\label{artificial-Teps}
\begin{cases}
\widehat{T}_s' = \tau^\eps(\widehat{T}_s,X^\eps(\widehat{T}_s,x)),
\\
 \widehat{T}_0 =0,
\end{cases}
\end{equation}
and thus define a map 
 $\widehat{T}^\eps \colon \III\times D^\eps\to \III$.
Define $\widehat{Y}^\eps\colon \III\times D^\eps \to D^\eps$ via  
 $\widehat{Y}^\eps(s,x): = X^\eps (\widehat{T}^\eps(s,x),x)$. Then,
 \begin{equation}
 \begin{split}
 \label{coincidence}
& \widehat{T}^\eps(s,x) = T^\eps(s,0,x)\,,  \qquad \widehat{Y}^\eps(s,x) = Y^\eps(s,0,x) \qquad \text{for all } (s,x) \in  
 \III\times D^\eps \,,
\end{split}
 \end{equation}
 and $\widehat{T}^{\eps} (\cdot, x)$ is a (strictly increasing and surjective)
 diffeomorphism of  $\III $ 
  for every $x\in D^\eps$.
 In particular, 
\begin{equation}
\label{link-Xeps-Yeps}
Y^\eps(s,0,x) = X^\eps (T^\eps(s,0,x),x) \qquad \text{for all } (s,x) \in  \III  \times \R^d\,.
  \end{equation}
Furthermore, 
if $ S^\eps \colon   \III  \times D^\eps \to \III$ 
is defined by 
  \begin{equation}
  \label{e:S-eps}
  S^{\eps}(t, x) \coloneq \int_{0}^{t} \norm{(1 , \bww^{\eps} (\tau, X^{\eps}(\tau, x)))} \, \di \tau\,,
  \end{equation}
  then 
    \begin{equation}
  \label{e:S-eps-2}
 \int_{\R^d}  S^{\eps}(T, x) \,  \di \bar\mu^{\eps}_{0}(x) \leq |( \mu, \nnu)| ([0, T ] \times \R^{d})
  \end{equation} 
  and 
  \begin{equation}
  \label{inversion}
   S^{\eps}(\cdot,  x)  = (\widehat{T}^{\eps})^{-1}( \cdot,  x)  \qquad 
   \text{for every $x \in D^\eps$}. 
  \end{equation}
\end{lemma}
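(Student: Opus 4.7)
The strategy is to verify the two identities in \eqref{coincidence} by a uniqueness argument, deduce the diffeomorphism property from the explicit inverse $S^\eps$, and then derive the bound \eqref{e:S-eps-2} by a Fubini-type computation combined with the representation $\mu_t^\eps=(X_t^\eps)_\sharp\bar\mu_0^\eps$ of \eqref{characteristic} and the total variation estimate \eqref{est-tot-vari-nueps}.

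First I would prove \eqref{coincidence}. Set $\widehat{Y}^\eps(s,x):=X^\eps(\widehat{T}^\eps(s,x),x)$ and differentiate in $s$: by the chain rule and the flow property $\partial_t X^\eps(t,x)=\bww^\eps(t,X^\eps(t,x))$, together with \eqref{artificial-Teps} and the definition \eqref{veps} of $\bvv^\eps=\tau^\eps\bww^\eps$, one obtains
\begin{align*}
\dot{\widehat{T}}^\eps_s={}&\tau^\eps(\widehat{T}^\eps_s,\widehat{Y}^\eps_s),\qquad
\dot{\widehat{Y}}^\eps_s=\bww^\eps(\widehat{T}^\eps_s,\widehat{Y}^\eps_s)\,\dot{\widehat{T}}^\eps_s=\bvv^\eps(\widehat{T}^\eps_s,\widehat{Y}^\eps_s),
\end{align*}
with $(\widehat{T}^\eps_0,\widehat{Y}^\eps_0)=(0,x)$. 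Since $(\tau^\eps,\bvv^\eps)$ is locally Lipschitz and globally bounded (see the comment after \eqref{normalization}), the Cauchy problem \eqref{ODE-augmented} with data $(t,x)=(0,x)$ admits a unique global solution, and hence $(\widehat{T}^\eps(\cdot,x),\widehat{Y}^\eps(\cdot,x))=(T^\eps(\cdot,0,x),Y^\eps(\cdot,0,x))$. Identity \eqref{link-Xeps-Yeps} is then immediate from the definition of $\widehat{Y}^\eps$.

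Next I would establish \eqref{inversion} and the diffeomorphism claim simultaneously. Fix $x\in D^\eps$ and consider $S^\eps(\cdot,x)$ defined by \eqref{e:S-eps}. Since all norms on $\R^{d+1}$ are equivalent, there exists $c>0$ (independent of $\eps,x$) such that $\|(1,v)\|\geq c$ for all $v\in\R^d$, so $S^\eps(\cdot,x)\colon\III\to\III$ is a $\mathrm{C}^1$ strictly increasing surjection with $\partial_t S^\eps(t,x)=\|(1,\bww^\eps(t,X^\eps(t,x)))\|=1/\tau^\eps(t,X^\eps(t,x))$. Its inverse $(S^\eps)^{-1}(\cdot,x)$ is thus also a $\mathrm{C}^1$ diffeomorphism of $\III$, and differentiating the identity $S^\eps((S^\eps)^{-1}(s,x),x)=s$ yields
\begin{equation*}
\partial_s(S^\eps)^{-1}(s,x)=\tau^\eps\big((S^\eps)^{-1}(s,x),X^\eps((S^\eps)^{-1}(s,x),x)\big),\qquad (S^\eps)^{-1}(0,x)=0,
\end{equation*}
which is precisely the Cauchy problem \eqref{artificial-Teps}. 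By uniqueness, $(S^\eps)^{-1}(\cdot,x)=\widehat{T}^\eps(\cdot,x)$, which is then a diffeomorphism of $\III$.

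Finally, I would prove \eqref{e:S-eps-2} by Fubini and the pushforward formula \eqref{characteristic}:
\begin{align*}
\int_{\R^d} S^\eps(T,x)\,\di\bar\mu_0^\eps(x)
&=\int_0^T\!\int_{\R^d}\bignorm{(1,\bww^\eps(t,X^\eps(t,x)))}\,\di\bar\mu_0^\eps(x)\,\di t\\
&=\int_0^T\!\int_{\R^d}\bignorm{(1,\bww^\eps(t,y))}\,\di\mu_t^\eps(y)\,\di t=|(\mu^\eps,\nnu^\eps)|([0,T]\times\R^d),
\end{align*}
where the last equality uses that $\mu_t^\eps$ is the density of $\mu^\eps$ in space and $\nnu^\eps=\bww^\eps\mu^\eps$ by \eqref{e:regularvelocity}, so that the integrand is the pointwise Euclidean/chosen-norm density of $(\mu^\eps,\nnu^\eps)$ against Lebesgue measure. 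The conclusion then follows from the first inequality in \eqref{est-tot-vari-nueps}.

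The only mildly delicate point is the justification of the identity $\int_{\R^d}\|(1,\bww^\eps(t,y))\|\mu_t^\eps(y)\,\di y=|(\mu^\eps,\nnu^\eps)_t|(\R^d)$ for a.e.\ $t$, but this is a direct consequence of the smoothness of $(\mu^\eps,\nnu^\eps)$ together with the polar decomposition; the rest is bookkeeping via uniqueness of ODEs.
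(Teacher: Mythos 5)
Your proof is correct and follows essentially the same structure as the paper's: \eqref{coincidence} by differentiating the candidate pair and invoking ODE uniqueness for \eqref{ODE-augmented}; \eqref{inversion} by relating $S^\eps$ to $\widehat T^\eps$; and \eqref{e:S-eps-2} by a pushforward/Fubini computation together with \eqref{est-tot-vari-nueps}.

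The one place where your route differs slightly is the inversion step. The paper computes $\partial_s(S^\eps\circ\widehat T^\eps)(s,x)=\partial_t S^\eps\cdot\partial_s\widehat T^\eps\equiv 1$ directly (using $\|(1,\bww^\eps)\|\cdot\tau^\eps\equiv 1$) and concludes $S^\eps\circ\widehat T^\eps=\mathrm{id}$, hence $\widehat T^\eps=(S^\eps)^{-1}$. You instead observe that $(S^\eps)^{-1}(\cdot,x)$ solves the Cauchy problem \eqref{artificial-Teps} and invoke ODE uniqueness once more. Both arguments are standard and equivalent; yours has the mild advantage of reusing the same uniqueness machinery already deployed for \eqref{coincidence}, and makes the diffeomorphism property manifest up front by noting $\|(1,v)\|\ge c>0$. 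Your derivation of \eqref{e:S-eps-2} is in fact slightly more carefully worded than the paper's: the paper cites \eqref{e:boundvelocity} (which only bounds $\int\!\int\|\bww^\eps\|$, not $\int\!\int\|(1,\bww^\eps)\|$), whereas the bound really needs the pushforward identity $\int_{\R^d}\int_0^T\|(1,\bww^\eps(t,X^\eps_t(x)))\|\,\di t\,\di\bar\mu_0^\eps(x)=|(\mu^\eps,\nnu^\eps)|([0,T]\times\R^d)$ followed by \eqref{est-tot-vari-nueps}, which is precisely the chain you wrote.
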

\begin{proof}
We observe that
 the functions $\widehat{T}^\eps$ and $ \widehat{Y}^\eps$  satisfy
\begin{align}
\nonumber
& 
\begin{aligned}
\partial_s \widehat{T}^\eps(s,x)  & \stackrel{\eqref{artificial-Teps}}{=} \tau^\eps(\widehat{T}^\eps(s,x), X^\eps (\widehat{T}^\eps(s,x), x))
 = \tau^\eps (\widehat{T}^\eps(s,x), \widehat{Y}^\eps(s,x)),
\end{aligned}
\intertext{as well as}
\nonumber
&
\begin{aligned}
\partial_s \widehat{Y}^\eps(s,x)   & = \partial_t X^\eps(\widehat{T}^\eps(s,x),x )\partial_s \widehat{T}^\eps(s,x)
\\
& \stackrel{\eqref{ODE-X}}{=}\bww^\eps(\widehat{T}^\eps(s,x), X^\eps (\widehat{T}^\eps(s,x),x)) \, \tau^\eps (\widehat{T}^\eps(s,x), X^\eps (\widehat{T}^\eps(s,x),x))
\\
& \stackrel{\eqref{veps}}{=} \bvv^\eps (\widehat{T}^\eps(s,x), \widehat{Y}^\eps(s,x))\,.
\end{aligned}
\end{align}
Since we also have that
$\widehat{Y}^\eps(0,x) = X^\eps (T^\eps(0,0,x),x) =X^\eps(0,x) = x$, we conclude that 
 the pair $(\widehat{T}^\eps,\widehat{Y}^\eps)$  solves system \eqref{ODE-augmented} for $t=0$. 
 By uniqueness, \eqref{coincidence} follows. 
\par
 The function $S^{\eps}$ defined in~\eqref{e:S-eps} is finite 
(since the integrand is a continuous function w.r.t.\ $\tau$) and it is clearly  strictly increasing.  Estimate
\eqref{e:S-eps-2} follows immediately by 
\begin{displaymath}
\begin{aligned}
   \int_{\R^d}  S^{\eps}(T, x) \,  \di \bar\mu^{\eps}_{0}(x)  
  & =  \int_{\R^{d}} \int_{0}^{T}  \norm{(1,  \bww^{\eps} (t, X^{\eps}(t, x) )) } \, \di t \, \di \bar\mu^{\eps}_{0}(x) 
  \stackrel{\eqref{e:boundvelocity}}\le 
  |(\mu, \nnu) | \big([0, T] \times \R^{d}\big) <+\infty\,.
%
 \end{aligned}
\end{displaymath}
\FCOMM Finally, we observe that for 
 every $x\in D^\eps$ \nc
\[
\begin{aligned}
\partial_t (S^\eps{\circ}\widehat{T}^\eps)(s,x)  & = \partial_t S^\eps(\widehat{T}^\eps(s,x),x)\, \partial_s\widehat{T}^\eps(s,x)
\\
& 
\stackrel{(1)}{=} \norm{(1 , \bww^{\eps} (\widehat{T}^\eps(s,x), X^{\eps}(\widehat{T}^\eps(s,x), x)))}  \cdot \tau^\eps(\widehat{T}^\eps(s,x), X^{\eps}(\widehat{T}^\eps(s,x), x)) 
\stackrel{(2)}{\equiv} 1\,,
\end{aligned}
\]
where (1) is due to \eqref{e:S-eps} and  \eqref{artificial-Teps}, while (2) is a consequence of \eqref{tau-eps}.
Hence,  \eqref{inversion} follows, whence we conclude that 
 $\widehat{T}^{\eps}(\cdot, x)$ is 
 a  strictly increasing   diffeomorphism of  $\III$  for every $x\in D^\eps$.
\end{proof}
\par
Let us now consider the continuity equation with the vector field $(\tau^\eps,\bvv^\eps)$
and initial datum $\sigma^\eps_0=\delta_0{\otimes}\bar\mu_0^\eps$.
 Since $\sigma^\eps_0$ is supported in $\DST$  then the family of measures
\begin{equation}
 \label{sigma-repr-zero}
\sigma_s^\eps := ({T}_s^\eps,{Y}_s^\eps)_{\sharp } (\delta_0 {\otimes} \bar\mu_0^\eps) \qquad \text{for all } s \in  \III,
\end{equation}
are supported in $\DST$ as well. 
Moreover, 
\eqref{coincidence} shows that 
\begin{equation}
 \label{sigma-repr}
\sigma_s^\eps = (\widehat{T}_s^\eps,\widehat{Y}_s^\eps)_{\sharp }  \bar\mu_0^\eps\qquad \text{for all } s \in  \III.
\end{equation}
\nc 
 It follows from 
\cite[Lemma 8.1.6, Prop.\ 8.1.8]{AGS08} that 
the curve $ \sigma^\eps $ belongs to 
 $ \rm{Lip} (\III;\Puno(\DST)) $ 
 (it is in fact $1$-Lipschitz) and 
fulfills
\begin{equation}
\label{continuity-sigmaeps}
\begin{cases}
\partial_s \sigma^\eps +\partial_t(\tau^\eps \sigma^\eps) +\dive(\bvv^\eps \sigma^\eps) =0 \qquad \text{in }  \III  \times \R^{d+1},
\\
\sigma_0^\eps = \delta_0 \otimes \bar\mu_0^\eps\,.
\end{cases}
\end{equation}
From now on, we will use the short-hand notation
\[
\sigma^{\eps,0}: = \tau^\eps \sigma^\eps, \qquad \ssigma^\eps: = \bvv^\eps \sigma^\eps.
\]
Observe that, in view of \eqref{normalization}, the measures $\sigma^{\eps,0} $ and $\ssigma^\eps$ satisfy
\begin{equation}
\label{normalization-sigma}
\begin{aligned}
  |( \sigma^{\eps,0}, \ssigma^\eps) | = \sigma^{\eps} \qquad \text{in $\calM^{+}(  \III {\times}\DST )$}.
  \end{aligned}
  \end{equation}
 In the following lemma
the relation between~$\mu^{\eps}$,~$\nnu^{\eps}$, and~$\sigma^{\eps}$ is   established in terms of the projection operator
$
\pinew\colon  \R \times \R  \times\R^{d} \to  \R  \times \R^{d}
$, $ \pinew(s,t,x) := (t, x)$
 from \eqref{ppi-projection}. 
\begin{lemma}\label{lemma4.3}
There holds
\begin{equation}
\label{marginals}
\mu^\eps   = \pinew_{\sharp }\sigma^{\eps,0}\,, \qquad   \nnu^\eps   = \pinew_{\sharp }\ssigma^\eps\,, \qquad | (\mu^{\eps}, \nnu^{\eps}) | =  \pinew_{\sharp } \sigma^{\eps}  \qquad \text{in $\III\times \R^{d}$}\,.
\end{equation}
 Moreover for every $S,T>0$
\begin{equation}
\label{eq:tightness}
\sigma^{\eps, 0} ((S, +\infty) \times 
\DSTT T)  \leq \frac{T}{S}\, |(\mu, \nnu)|\big (\DSTT T\big).
\end{equation}
\end{lemma}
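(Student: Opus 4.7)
The proof hinges on the explicit representation of $\sigma^\eps$ established in \eqref{sigma-repr}, namely $\sigma_s^\eps = (\widehat T^\eps_s, \widehat Y^\eps_s)_\sharp \bar\mu_0^\eps$, together with the identity $\widehat Y^\eps(s,x)= X^\eps(\widehat T^\eps(s,x),x)$ and the fact from Lemma~\ref{l:artificial-Teps} that, for each $x\in D^\eps$, the map $s\mapsto \widehat T^\eps(s,x)$ is a diffeomorphism of $\III$ with inverse $t\mapsto S^\eps(t,x)$ and with derivative $\partial_s \widehat T^\eps(s,x) = \tau^\eps(\widehat T^\eps(s,x), \widehat Y^\eps(s,x))$. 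The plan is to test each push-forward in \eqref{marginals} against $\varphi\in \Cc(\R\times\R^d)$ and perform the change of variable $t=\widehat T^\eps(s,x)$, which converts $\tau^\eps\,\dd s$ into $\dd t$.

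For the first identity, for any such $\varphi$ I would write
\begin{align*}
\int\varphi\,\dd(\pinew_\sharp\sigma^{\eps,0})
&=\int_0^\infty\!\!\int \varphi(\widehat T^\eps(s,x),\widehat Y^\eps(s,x))\,\tau^\eps(\widehat T^\eps,\widehat Y^\eps)\,\dd\bar\mu_0^\eps(x)\,\dd s\\
&=\int\!\!\int_0^\infty \varphi(t,X^\eps(t,x))\,\dd t\,\dd\bar\mu_0^\eps(x),
\end{align*}
and then conclude via $\mu_t^\eps=(X_t^\eps)_\sharp\bar\mu_0^\eps$ and Fubini. For the second identity the argument is analogous using $\bvv^\eps = \tau^\eps \bww^\eps$ and $\nnu_t^\eps = \bww_t^\eps\,\mu_t^\eps$. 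For the third, the same change of variable produces an extra factor $1/\tau^\eps = \|(1,\bww^\eps)\|$, which matches the Lebesgue density $\mu^\eps(t,x)\,\|(1,\bww^\eps(t,x))\|$ of $|(\mu^\eps,\nnu^\eps)|$; alternatively, one may invoke~\eqref{normalization-sigma} and deduce $\pinew_\sharp \sigma^\eps = |\pinew_\sharp (\sigma^{\eps,0},\ssigma^\eps)| = |(\mu^\eps,\nnu^\eps)|$ from the first two identities, provided the norm $\|\cdot\|$ is handled carefully (this is harmless since densities are explicit).

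For the tightness bound~\eqref{eq:tightness}, I would argue directly: writing $\sigma^{\eps,0}=\tau^\eps \sigma^\eps$ and using the representation of $\sigma_s^\eps$ together with the change of variable $t=\widehat T^\eps(s,x)$ with $\dd s = \tfrac{\dd t}{\tau^\eps}$ yields
\begin{equation*}
\sigma^{\eps,0}\bigl((S,+\infty)\times[0,T]\times\R^d\bigr)
=\int \mathcal L^1\bigl([\widehat T^\eps(S,x),+\infty)\cap[0,T]\bigr)\,\dd\bar\mu_0^\eps(x).
\end{equation*}
Since $\widehat T^\eps(S,x)\le T$ is equivalent to $S\le S^\eps(T,x)$ by~\eqref{inversion}, the integrand is bounded by $T\,\mathbf 1_{\{S^\eps(T,x)\ge S\}}$. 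Markov's inequality combined with~\eqref{e:S-eps-2} then gives $\bar\mu_0^\eps(\{S^\eps(T,\cdot)\ge S\})\le \tfrac1S\,|(\mu,\nnu)|([0,T]\times\R^d)$, delivering the desired estimate.

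The main obstacle is bookkeeping the change of variable and ensuring that the Borel set $\R^d\setminus D^\eps$, on which $X^\eps$ may not be defined, is $\bar\mu_0^\eps$-negligible so that all integrals may be performed on $D^\eps$; this is exactly what is granted by the regularity of $\bww^\eps$ established earlier. Beyond this, the computations are routine applications of Fubini and the definition of push-forward, and the tightness bound is a clean Markov argument once the change of variable has been carried out.
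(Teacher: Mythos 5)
Your proposal is correct and follows essentially the same route as the paper: testing against $\Cc$ functions, the change of variable $t=\widehat T^\eps(s,x)$ via the diffeomorphism property from Lemma~\ref{l:artificial-Teps}, and a Chebyshev/Markov estimate combined with~\eqref{e:S-eps-2} for the tightness bound. The minor reformulations (using $\mathcal L^1([\widehat T^\eps(S,x),+\infty)\cap[0,T])=(T-\widehat T^\eps(S,x))_+$ rather than introducing the characteristic functions $\iota_{S,T}$, $j^\eps_{S,T}$ explicitly) are cosmetic.
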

\begin{proof}
\par
For every 
 $\phisc \in \Cc(\R^{d+1})$
 we have
%
\[
\begin{aligned}
\int_{ \R^{d+1} } 
 \phisc(t,x) \, \dd \mu^\eps(t,x) &  \stackrel{(1)}{=}\int_{\III }\int_{\R^d} \phisc(t,X^\eps(t,x))\,  \dd \bar\mu_0^\eps(x) \, \dd t
\\
& 
  \stackrel{(2)}{=}\int_{D^\eps} \int_{\III}
   \phisc(\widehat{T}^\eps(s,x),X^\eps(\widehat{T}^\eps(s,x),x)) \,   \partial_s \widehat{T}^\eps(s,x) \, \dd s \, \dd \bar\mu_0^\eps(x)
\\
& 
  \stackrel{(3)}{=}
\int_{D^\eps}  \int_{\III} \phisc(\widehat{T}^\eps(s,x),\widehat{Y}^\eps(s,x)) \,\tau^\eps(\widehat{T}^\eps(s,x),\widehat{Y}^\eps(s,x))\, \dd s   \, \dd \bar\mu_0^\eps(x) 
  \\&=
     \int_{\III}
  \Big(\int_{D^\eps}\phisc(\widehat{T}^\eps(s,x),\widehat{Y}^\eps(s,x)) \,\tau^\eps(\widehat{T}^\eps(s,x),\widehat{Y}^\eps(s,x))  \, \dd \bar\mu_0^\eps(x) \Big)\, \dd s 
  \\
  &
  \stackrel{(4)}{=}
 \int_{\III }\bigg(  \int_{\R^{d+1}} \phisc(t,x)\tau^\eps(t,x) 
 \, \dd \sigma_s^\eps(t,x)\bigg) \, \dd s=
 \int_{\DSTpu}  \phisc(t,x) \, \dd \sigma^{\eps,0}(s,t,x)
 \end{aligned}
\]
 where (1) follows from \eqref{characteristic}, (2)  and (3)  from the change of variables $t= \widehat{T}^\eps(s,x)$ (see  Lemma~\ref{l:artificial-Teps}),  
   (4) from~\eqref{sigma-repr}, and we have repeatedly applied Fubini's Theorem.
 
 \par
  The second of \eqref{marginals} follows from the fact that $\nnu^\eps = \bww^\eps \mu^\eps$ and $\bvv^\eps = \tau^\eps \bww^\eps$, so that for all
  test functions
   $\bvarphi \in \Cc(\R^{d+1};\R^d)$ 
\[
\begin{aligned}
\int_{ \R^{d+1}} & \bvarphi( t, x) \, \dd \nnu^\eps ( t, x )  = 
\int_{\III} \int_{\R^d} \! \bvarphi(t, X^\eps( t, x)) {\,\cdot\,} \bww^\eps(t,X^\eps(t,x)) \, \dd \bar\mu_0^\eps(x) \, \dd t
\\
& 
=\int_{\R^{d}} \int_{\III}
\bvarphi(\widehat{T}^\eps(s,x),X^\eps(\widehat{T}^\eps(s,x),x)) {\,\cdot\,} \bww^\eps(\widehat{T}^\eps(s,x),X^\eps(\widehat{T}^\eps(s,x),x)) \, \partial_s \widehat{T}^\eps(s,x) \, \dd s \,  \dd \bar\mu_0^\eps(x)
\\
& 
  = \int_{{}_{\scriptstyle \R^d}}  \int_{\III} 
   \bvarphi(\widehat{T}^\eps(s,x),\widehat{Y}^\eps(s,x)) {\,\cdot\,}  \tau^\eps(\widehat{T}^\eps(s,x),\widehat{Y}^\eps(s,x))  \bww^\eps(\widehat{T}^\eps(s,x),\widehat{Y}^\eps(s,x)) \, \dd s \,  \dd \bar\mu_0^\eps(x)
  \\
  &  
    =  \int_{\III} \bigg(
   \int_{\R^d}  \bvarphi(\widehat{T}^\eps(s,x),\widehat{Y}^\eps(s,x)) \cdot \bvv^\eps(\widehat{T}^\eps(s,x),\widehat{Y}^\eps(s,x))  \, \dd \bar\mu_0^\eps(x) \bigg)\,\di s
  \\
    & 
=\int_{\III} \bigg(
\int_{\R^{d+1}} \bvarphi(t,x)\cdot \bvv^\eps(t,x)\, \dd \sigma^\eps_s(t,x)\bigg)
\, \dd s
=\int_{\DSTpu}  \bvarphi(t,x) \, \dd \ssigma^\eps(s,t,x)\,.
 \end{aligned}
\]
\par
 Finally, for every open subset $A$ of $\DST $ we have that
\begin{align*}
| ( \mu^{\eps}, \nnu^{\eps})| (A) 
& = \int_A \|(1,\bww^\eps)(t,x) \| \,\dd \mu^\eps(t,x)  
  \stackrel{(1)}{=} \int_{\III{\times} A} 
 \|(1,\bww^\eps)(t,x)\| \,\dd \sigma^{\eps,0}(s,t,x) 
\\
& 
  \stackrel{(2)}{=}  \int_{\III{\times} A}    \|(1,\bww^\eps)(t,x)\|\, 
	\tau^\eps(t,x)  \, \di \sigma^{\eps}(s, t, x) \stackrel{(3)}=
	\int_{\III{\times} A}\di \sigma^{\eps}(s, t, x)
	=  \pinew_{\sharp }\sigma^{\eps} (A)\,,
\end{align*}
\FCOMM where 
(1) follows from the previously proved fact that $\mu^\eps = \pinew_{\sharp } \sigma^{\eps,0}$; for 
(2)  we have used that $\sigma^{\eps,0} = \tau^\eps \sigma^\eps$, while (3) is a consequence of  \eqref{tau-veps-1}. 
\nc This concludes the proof of~\eqref{marginals}.

 In order to check 
the tightness estimate \eqref{eq:tightness},
let us denote by $\iota_{S,T}$ the  
characteristic function of $(S,+\infty)\times [0,T]$ and by $j^\eps_{S,T}(s,x)$ 
the characteristic function of 
$(S,S^\eps(T,x)]$ (which is identically $0$
if $S\ge S^\eps(T,x)$).
We first observe that 
\begin{displaymath}
	\iota_{S,T}(s,\widehat T^\eps(s,x))=
    j^\eps_{S,T}(s,x)
\end{displaymath}
 so that 
\[
\begin{split}
\sigma^{\eps, 0} ((S, +\infty) \times 
\DSTT T) & = 
\int_{\R^{d+2}} 
 \iota_{S, T}  (s,t)\tau^{\eps}(t, x) \, \di \sigma^{\eps}(s, t, x)
\\
&
= 
 \int_{\R^d} \int_{\III }
 \iota_{S, T}  (s,\widehat{T}^{\eps}(s, x))
\tau^{\eps}( \widehat{T}^{\eps}(s, x), \widehat{Y}^{\eps}(s, x)) \, \di s \, \di \bar\mu^{\eps}_{0}(x)
 \\
 &
 = \int_{\R^{d}} \int_{\III}  
 j^\eps_{S,T}(s,x)\partial_s \widehat T^\eps(s,x) \, \di s \, \di \bar\mu^{\eps}_{0}(x) 
 \\&
 = \int_{\R^{d}} \big(T-\widehat T^\eps(S,x)\big)_+ \, \di \bar\mu^{\eps}_{0}(x) 
 \leq T\, \bar\mu^{\eps}_{0} \Big \{ x \in \R^{d} : \,  \widehat{T}^{\eps}(S, x) < T\Big\} 
 \\&
= T \, \bar\mu^{\eps}_{0} \Big \{ x \in D^\eps : \,  S^{\eps}(T, x) > S
\Big \}\,.
\end{split}
\]
Estimate \eqref{eq:tightness} then follows by~\eqref{e:S-eps-2} and by the Chebyschev inequality.
%
\end{proof}

\subsubsection*{\bf Step $3$:  Passage to the limit  as $\eps\down0$}
 Since the curves of measures  $(\sigma^{\eps})_\eps \subset 
\Lip (\III;  \Puno(\R^{d+1})) $ are  $1$-Lipschitz continuous for every $\eps>0$, 
and bounded sets in $\Puno(\R^{d+1})$ are 
narrowly compact in $\mathcal P(\R^{d+1})$,
we can find a limit curve 
$\sigma \in\Lip (\III;  \Puno(\R^{d+1}))$, 
$1$-Lipschitz and supported in $\DST$, and a 
vanishing subsequence $(\eps_k)_k$ 
such that $\sigma^{\eps_k}_s\rightharpoonup
\sigma_s$ narrowly in
$ \Puno(\R^{d+1})$ for every $s\in \III$.
As usual we will also denote by $\sigma$ 
the Radon measure $\mathcal L^1\otimes \sigma_s$ in $\calM^+(\III\times 
\R^{d+1})$ satisfying

\begin{equation}
\label{basic-wstar-converg}
\sigma^{\eps_k} \mres \DSTpuS\rightharpoonup  \sigma
\mres \DSTpuS \qquad 
\text{narrowly in }\calMb^+(\R^{d+2})
\quad\text{for every }S>0.
\end{equation}
Clearly, the above convergence
also yields
$ 
\sigma^{\eps_k}\weaksto \sigma$
in $\calM^{+}([0, +\infty) \times \DST )$ as $k\uparrow \infty$.
Moreover, we deduce from~\eqref{e:narrow-conv} and from~\eqref{marginals} that 
\begin{equation}
\label{e:ineq-1}
\pinew_{\sharp } \sigma \leq | (\mu, \nnu)|\,.
\end{equation}
 \par
 From \eqref{normalization-sigma} and the projection relations \eqref{marginals} it follows that 
 \ the restrictions
 to $\DSTpuS$ 
  of the families of measures  $(\sigma^{\eps, 0})_\eps$ and~$(\ssigma^{\eps})_\eps$
 are uniformly tight,
 so that 
 there exist $\sigma^{0} \in  \calM^{+} (\DSTpu)$ and $\ssigma \in  \calM  (\DSTpu ; \R^{d})$ such that 
 up to a further (not relabeled) subsequence
 \begin{equation}
 \label{sigma-eps-to-sigma}
 \begin{aligned}
 &   (\sigma^{\eps_k,0},\ssigma^{\eps_k})
 \mres \DSTpuS \rightharpoonup
 (\sigma^0,\ssigma)\mres \DSTpuS   && \text{in  $\calMb(\R^{d+2};\R^{d+1})$}
 \quad \text{for every }S>0.
  \end{aligned}
  \end{equation}

Therefore,    also thanks to the third of \eqref{converg-mu-nu} and  \eqref{basic-wstar-converg}  
we gather that the triple~$(\sigma,\sigma^0,\ssigma)$ satisfies the Cauchy problem   \eqref{continuity-sigma-lim} 
 in the sense of  
  Definition \ref{def:solCE}, 
  with the operator $(\partial_t, \dive)$ playing the role of the `spatial divergence' in \eqref{distributional-sense}. 
  Moreover, 
   it follows from \eqref{normalization}, \eqref{basic-wstar-converg},  and the lower semicontinuity of the total variation functional that 
  \begin{equation}
  \label{citata-dopo}
  |( \sigma^{0}, \ssigma) | \leq \sigma \qquad
  \ \text{in }\R^{d+2},
  \end{equation} 
   so that in particular 
   $\sigma^{0}, \ssigma \ll \sigma$. 
   Hence, since $\sigma$ is concentrated
   in $\III\times \DST$,
   the Radon-Nikod\'ym derivatives  
   can be represented by bounded Borel fields 
\[
\hat \tau \coloneq \frac{\dd \sigma^0}{\dd\sigma} \colon  \III  \times \DST  \to \III \qquad \hat\bvv \coloneq  \frac{\dd \ssigma}{\dd\sigma} \colon  \III  \times  \DST  \to \R^d
\]
satisfying
 \begin{equation}
 \label{Lip-less-1}
\norm{(\hat \tau, \hat\bvv)} \leq 1     \qquad \sigma\text{-a.e.\ in }  \III {\times} \DST .
\end{equation}
 In terms of  $\hat \tau$ and $\hat \bvv$,   we rewrite the continuity equation~\eqref{continuity-sigma-lim} as
\begin{equation}\label{continuity-sigma-lim2}
\begin{cases}
\partial_s \sigma +\partial_t (\hat \tau \sigma) + \dive  (\hat\bvv  \sigma )=0 \qquad \text{in }   \III \times \R^{d+1}
\\
\sigma_0 = \delta_0 \otimes \mu_0\,.
\end{cases}
\end{equation}
%
  It remains to prove the projection properties~\eqref{OK-marginals}. 
 We start by showing that 
 \begin{equation}
 \label{1st-proj-prop}
 \mu = \pinew_{\sharp } \sigma^{0}.
 \end{equation}
  For this,  
  \ the tightness estimate 
  \eqref{eq:tightness}
 implies that 
  $\sigma^{\eps,0}\mres \III\times 
  \DSTT T$ narrowly converge to 
  $\sigma^{0}\mres \III\times \DSTT T$
  for every $T>0$. 
  This shows that $\mu(\DSTT T)=
  \sigma^0(\III\times \DSTT T)$ 
  for every $T>0$.  Since 
  $\pinew_\sharp\sigma^0\le \mu$,
  the above  equality yields \eqref{1st-proj-prop}.
\par
 Let us now show that  
\begin{equation}
\label{2nd-marginal}
\nnu =  \pinew_{\sharp } \ssigma\,.
\end{equation} 
 With this aim, we set 
$\onu \coloneq \pinew_{\sharp } \ssigma $. In order to show that $\onu=\nnu$, we argue in the following way.
 On the one hand,  it follows from  the previously proved Lemma \ref{l:CE-onu}  that 
the pair 
$(\mu, \onu)$ solves 
 the continuity equation
\begin{equation}
\label{CE-bar}
\partial_{t} \mu + \dive \onu = 0 \qquad \text{in } 
 \DST\,, 
\end{equation}
with initial condition $\mu_{0}$, in the sense of Definition~\ref{def:solCE}.
 On the other hand, applying  Lemma
 \ref{l:sub-nu1} with the choices $\pushm = \pinew\colon  \R \times \R  \times\R^{d} \to  \R  \times \R^{d}$, 
$\zzeta_k = (\sigma^{\eps_k,0}, \ssigma^{\eps_k})$, $\zzeta = (\sigma^0,\ssigma)$, $\llambda_k = \pinew_{\sharp} (\sigma^{\eps_k,0}, \ssigma^{\eps_k})
=(\mu^{\eps_k},\nnu^{\eps_k})$, $\llambda = (\mu,\nnu)$, 
we  show that 
\[
(\mu,  \onu)  \stackrel{\eqref{1st-proj-prop}}{=}(\pinew_{\sharp } \sigma^0, \pinew_{\sharp } \ssigma )
\prec (\mu, \nnu).
\]
 Then, by  Lemma~\ref{r:sub}     
 there exists $\lambda \in L^{\infty}_{|(\mu, \nnu)|} (\DST ; [0, 1])$ such that $(\mu, \onu) = \lambda (\mu, \nnu)$. Since the first components  coincide, from that equality  
 we infer that $\onu = \lambda \nnu$ and 
 $\lambda \equiv 1$ $\mu$-a.e., as well.  
We  decompose~$\nnu$ and~$\onu$ into their absolutely continuous and singular part w.r.t.~$\mu$, namely,
\begin{displaymath}
\nnu = \nnu^{a}  + \nnu^{\perp} \qquad \onu= \onu^{a}  + \onu^{\perp} \qquad \text{with $\nnu^{\perp}, \onu^{\perp} \perp \mu$.}
\end{displaymath}
 Since $\onu = \lambda \nnu$, we have 
\begin{displaymath}
   \onu^{a} + \onu^{\perp} = (\lambda \nnu^{a}) + \lambda\nnu^{\perp}\,.
\end{displaymath} 
As $\lambda \equiv 1$ $\mu$-a.e.~in~$\DST $, we have that $\onu^{a} = \nnu^{a}$, $\onu^{\perp} = \lambda \nnu^{\perp}$.
Therefore, $\onu^{\perp} \prec \nnu^{\perp}$. Moreover, by Lemma~\ref{l:CE-onu} it holds $\dive \onu^{\perp} = \dive \nnu^{\perp}$. By hypothesis, $\nnu^{\perp}$ is minimal.
Therefore, we conclude that 
 $\onu^{\perp} = \nnu^{\perp}$  and, since $\onu^{a} \mu= \nnu^{a}\mu$, we ultimately have $\onu = \nnu$. 
 Then, \eqref{2nd-marginal} follows. 
\par
 Lastly,   we may conclude 
\eqref{OK-marginal1} 
 by observing that 
\[
|(\mu, \nnu)| \stackrel{\eqref{e:ineq-1}}{\geq} \pinew_{\sharp }\sigma \stackrel{\eqref{citata-dopo}}{\geq}
 \pinew_{\sharp } | (\sigma^{0}, \ssigma)| \geq |\pinew_{\sharp } (\sigma^{0}, \ssigma)| =  |(\mu, \nnu)|\,. 
 \]
  This finishes the proof.
\par
\end{proof}

\section{The parametrized superposition principle}
\label{s:5}
 The main result of this  section, Theorem \ref{t.1},  provides a probabilistic representation of the solutions of the continuity equation,
which  in fact extends the representation obtained in  \cite[Thm.\ 8.2.1]{AGS08} for absolutely continuous solutions. We will derive it from the 
probabilistic 
 representation \eqref{eq:from-eta-to-sigma},  provided by Theorem \ref{p:superposition}, 
   for the solutions to the `augmented'  system  \eqref{continuity-sigma-lim}. For that, we will need to  
 take into account that $\sigma$ and the measures~$\mu$ and~$\nnu$ are related via~\eqref{OK-marginals}.  Fine properties of our probabilistic representation will be proved in Proposition~\ref{c:2} and in Theorem~\ref{t:injective-curves}.

In the spirit  of  \cite[Thm.\ 8.2.1]{AGS08},
the statement of 
 Theorem \ref{t.1} below consists of two parts:
\begin{enumerate}
\item
   First of all, 
starting from a measure 
 $\eeta \in \sP(\Lipplus{}\III\DST), $ 
 we  construct  the 
measures 
 $ \frt'   \mathcal{L}^{1}  {\otimes}\eeta$ and 
$ \frxx'   \mathcal{L}^{1}{\otimes} \eeta$ (recall the definition  \eqref{evaluation-derivatives} of  $\frt'$ and $\frxx'$), 
 and show that their push-forwards 
 through the evaluation map $\fre$
 from \eqref{eval-map}, cf.\ \eqref{e.15} below, 
  solve the Cauchy problem   \eqref{newCauchy}.   In fact, the Cauchy condition is  expressed in terms of the  
push-forward $(\fre_0)_{\sharp }\eeta$
 (where $\fre_0$ stands for $\fre (0,\cdot)$):   in this respect, let us specify that, while $\fre_0$ is evaluated
   along curves $\sfyy \in  \Cplus$, so that $\fre_0(\sfyy) = (\sft(0),\sfxx(0)) = (0,\sfxx(0))$,
   in \eqref{newCauchy}
    with slight abuse of notation we will  consider 
 $\fre_0$ 
 as 
 valued in $\R^d$.
 \item Conversely, we prove that any solution of the continuity equation admits the probabilistic representation~\eqref{e.15} below in terms of a probability measure $\eeta$ on the space   $\Lipplus 1\III\DST$ from~\eqref{ACloc}.  
  In addition, we show that 
 for $\eeta$-a.a.\ curve $\sfyy$, the velocity~$ \sfyy' (r) $ at a given time $r\in \III$ does not depend explicitly on~$r$ but only on the position~$\sfyy(r)$, see 
 \eqref{Cauchy-gamma-new} 
  ahead. 
 \end{enumerate}
 
 
\begin{theorem}
\label{t.1}
 The following facts hold: 
\begin{enumerate}
\item
Let   $\eeta \in \sP(\Lipplus{k}\III\DST)$ and 
 $\teeta:=\mathcal L^1\otimes \eeta$  
fulfill
 \begin{equation}
 	\label{eq:condition2}
 	\int \|\fre_0\|\,\dd\eeta<+\infty,\qquad
 	\int_{\Domain{T}}\|\frxx'\|\,\dd\teeta<\infty\quad\text{for every }T>0.
 \end{equation} 
Then, the pair $(\mu,\nnu)\in  \calM^+( \R^{d+1}_{+} ) \times \calM( \DST ; \R^{d})$ defined by
\begin{equation}\label{e.15}
 \mu:= \fre_{\sharp }( \frt' \eeta_{\mathcal{L}} )\qquad \nnu:= \fre_{\sharp }(  \frxx' \eeta_{\mathcal{L}} ) 
\end{equation}
 is a $\Puno$-solution  to 
the continuity equation, 
\begin{equation}
\label{newCauchy}
\partial_{t} \mu + \dive \nnu = \mu_{0} \quad \text{in } (0,+\infty)\times \R^d,  \qquad \text{with } \mu_{0} = (\fre_{0})_{\sharp }\eeta \in \Puno(\R^{d})\,,
\end{equation}
 in the sense of Definition  \ref{def:solCE}.
%
Moreover, 
\begin{equation}
\label{one-sided-est}
|(\mu,\nnu)|\leq \fre_{\sharp }(  \| \fry' \| \eeta_{\mathcal{L}})\,.
\end{equation}
\item
Conversely, 
 let  $(\mu,\nnu)\in \calM^+(\DST ) \times \calM( \DST ; \R^{d})$ be a $\Puno$-solution  to 
the continuity equation in the sense of Definition~\ref{def:solCE}, with initial condition~$\mu_{0} \in \mathcal{P}_{1}(\R^{d})$;
	let $(\mu,\bar\nnu)=\theta (\mu,\nnu)$ be 
	a minimal pair induced by $(\mu,\nnu)$ according to 
	Definition \ref{def:reduced} and \eqref{eq:trivial-but-useful},
	let $\varrho\sim_k |(\mu,\nnu)|$  for some $k>0$,  
	and let $(\tau,\bvv)$ be a  bounded Borel vector field 
	representing the density of $(\mu,\nnu)$ w.r.t.~$\varrho$, namely	
 \begin{equation}
 \label{strict-convexity-consequence-bis}
	\mu=\tau\varrho,\qquad 
	\nnu= \bvv \varrho
	\qquad \text{$\varrho$-a.e.\ in $\DST$}.
 \end{equation}
Then, there exists a measure  $\eeta \in \sP(\Lipplus k\III\DST)$ such that 
the representation 
\begin{subequations}
\label{repre-eq-17}
 \begin{equation}\label{e.17-a}
  \mu= \fre_{\sharp }\big(\frt' \, \teeta\big)\qquad 
   \bar{ \nnu} = \fre_{\sharp }\big( \frxx'\,  \,\teeta\big), \qquad
  |(\mu,\bar\nnu)|=\fre_{\sharp } \big( \norm{ \fry' }\, \teeta\big),\qquad
  \varrho=\fre_{\sharp } \, \teeta.
 \end{equation}
 holds,
\end{subequations}
and $\eeta$ is supported on curves solving the Cauchy problem
\begin{subequations}
\label{better-Cauchy}
 \begin{equation}
\label{Cauchy-gamma-new}
\begin{cases}
 \dot{\sfyy}(s)  =   (\tau ({\sfyy} (s)), \bvv ({\sfyy} (s))) & 
 \qquad \foraa\, s \in  (0,+\infty)  
\\
\sfyy(0)=(0,x), & \qquad  x\in\supp(\mu_{0})\,.
\end{cases} 
\end{equation}
\end{subequations}
\end{enumerate}
 Choosing in particular $\varrho=|(\mu,\nnu)|$ 
we get  $\eeta\in \sP(\Lipplus 1\III\DST)$  is concentrated on $\ALip \III\DST$  and 
\begin{equation}\label{e.17}
|(\mu,\bar\nnu)| = \fre_{\sharp } ( \norm{ \fry' }  \teeta)
=\fre_{\sharp } \teeta\,.
\end{equation}
\end{theorem}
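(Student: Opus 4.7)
The plan is to deduce Theorem \ref{t.1} by combining the augmented representation of Theorem \ref{thm:augmented} with the superposition principle for the augmented continuity equation (Theorem \ref{p:superposition}), exploiting the key identity $\fre = \ppi \circ \fra$ between the evaluation map \eqref{eval-map} and the projection $\ppi$ from \eqref{ppi-projection}.

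For Part (1), starting from $\eeta \in \mathcal{P}(\Lipplus{k}\III\DST)$ satisfying \eqref{eq:condition2}, I would set
\begin{equation*}
\sigma := \fra_\sharp \teeta, \qquad \sigma^0 := \fra_\sharp(\frt' \teeta), \qquad \ssigma := \fra_\sharp(\frxx' \teeta).
\end{equation*}
By Theorem \ref{p:superposition}(1), the triple $(\sigma, \sigma^0, \ssigma)$ is a $\Puno$-solution to the augmented continuity equation \eqref{continuity-sigma-lim} with locally finite $\ppi$-marginals and with initial datum $\delta_{0} \otimes \mu_0$, where $\mu_0 = (\fre_0)_\sharp \eeta$. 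Since $\ppi \circ \fra = \fre$, the push-forward through $\ppi$ yields $\ppi_\sharp \sigma^0 = \fre_\sharp(\frt' \teeta) = \mu$ and $\ppi_\sharp \ssigma = \fre_\sharp(\frxx' \teeta) = \nnu$, and Lemma \ref{l:CE-onu} then gives that $(\mu, \nnu)$ is a $\Puno$-solution to the continuity equation \eqref{newCauchy}. The estimate \eqref{one-sided-est} follows from the subadditivity of the total variation under projection together with the obvious bound $|(\sigma^0, \ssigma)| \leq \fra_\sharp(\|\fry'\| \teeta)$ coming from the linearity of $\fra_\sharp$.

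For Part (2), given $(\mu,\nnu)$, the minimal induced pair $(\mu, \bar\nnu) = \theta(\mu,\nnu)$, a measure $\varrho \sim_k |(\mu,\nnu)|$, and the Borel densities $(\tau, \bvv)$ representing $(\mu,\nnu)$ with respect to $\varrho$, I would apply Theorem \ref{thm:augmented} to produce a $\ppi$-adapted solution $(\sigma, \sigma^0, \ssigma)$ of the augmented continuity equation \eqref{continuity-sigma-lim} satisfying $\sigma^0 = (\tau \circ \ppi)\sigma$, $\ssigma = (\bvv \circ \ppi)\sigma$, and $\ppi_\sharp \sigma^0 = \mu$, $\ppi_\sharp \ssigma = \bar\nnu$. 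The autonomous velocity field $\Vfield = (\tau, \bvv)$ then allows me to invoke Theorem \ref{p:superposition}(2), which yields a probability measure $\eeta \in \mathcal{P}(\Lipplus k \III \DST)$ concentrated on solutions of the Cauchy problem \eqref{Cauchy-gamma} --- namely, exactly \eqref{Cauchy-gamma-new} --- and such that \eqref{eq:from-eta-to-sigma} holds. Applying $\ppi_\sharp$ to those identities and again using $\ppi \circ \fra = \fre$ delivers the first two equalities of \eqref{e.17-a}. The identity $|(\mu,\bar\nnu)| = \fre_\sharp(\|\fry'\|\teeta)$ is obtained by rewriting $\|(\tau,\bvv)(\sfyy(s))\| = \|\sfyy'(s)\|$ $\teeta$-a.e.\ via the Cauchy equation, combined with the fact that in our $\ppi$-adapted setting the $\ppi$-projection preserves the total variation $|(\sigma^0, \ssigma)|$; similarly the last identity follows from $\ppi_\sharp \sigma = \bar\varrho$. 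For the specialization $\varrho = |(\mu,\nnu)|$, Theorem \ref{thm:augmented} additionally guarantees that $(\sigma, \sigma^0, \ssigma)$ is $\ppi$-normalized, so Lemma \ref{rem:properties}(3) forces $\|(\tau,\bvv)\| \equiv 1$ $\sigma$-a.e. Consequently \eqref{Cauchy-gamma-new} yields $\|\sfyy'\| = 1$ a.e.\ on $\III$ for $\eeta$-a.a.\ $\sfyy$, i.e.\ $\eeta$ is concentrated on the Borel set $\ALip\III\DST$ introduced in Section~\ref{ss:funct-spaces}, and \eqref{e.17} is immediate.

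The only genuinely delicate step is the passage from the total-variation identity in the augmented phase space to its $\ppi$-projection, i.e.\ justifying $|\ppi_\sharp(\sigma^0, \ssigma)| = \ppi_\sharp |(\sigma^0, \ssigma)|$. This relies crucially on the minimality of $\bar\nnu^\perp$ encoded in Theorem \ref{thm:augmented}: without that selection, Lemma \ref{l:sub-nu1} would only guarantee the submeasure relation $\ppi_\sharp(\sigma^0, \ssigma) \prec \ppi_\sharp|(\sigma^0, \ssigma)|$, with possibly strict inequality in the total variations. All remaining verifications are routine push-forward computations involving $\fra$, $\fre$, $\frt'$, $\frxx'$, and $\fry'$.
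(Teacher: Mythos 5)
Your proposal is correct and follows essentially the same route as the paper's own (very terse) proof: Part~(1) via Theorem~\ref{p:superposition}(1) together with Lemma~\ref{l:CE-onu} and the identity $\fre=\ppi\circ\fra$, and Part~(2) via Theorem~\ref{thm:augmented} combined with Theorem~\ref{p:superposition}(2). Your closing remark correctly identifies the genuinely delicate step---commuting the projection $\ppi_\sharp$ with the total variation---as the one carried by the minimality of $\bar\nnu^\perp$ in Theorem~\ref{thm:augmented}, which is precisely where Lemma~\ref{l:sub-nu1} would otherwise only give the submeasure relation.
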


\begin{proof}
	Recalling the definitions of the evaluation maps 
	\eqref{eval-map}, we observe that 
	\begin{equation}
		\label{eq:trivial}
		\fre =\ppi\circ \fra.
	\end{equation}
	Claim (1) follows by Claim (1) of Theorem \ref{p:superposition}
	and Lemma \ref{l:CE-onu}.
	
	Claim (2) follows by the augmented representation of 
	$(\mu,\nnu)$ given in Theorem \ref{thm:augmented},
	and by Claim (2) of Theorem \ref{p:superposition}.
\end{proof}

\subsection{Fine properties of the representing measure}
 With the upcoming Proposition \ref{c:2}  we  unveil some refined properties of the probabilistic representation 
provided by Theorem \ref{t.1}
for the solutions of the continuity equation. 
 Namely,  
we give a finer representation formula for $\mu$ and  specify the probabilistic representation 
of the measures $\nnu^{a} \ll \mu$ and $\nnu^\perp$  featuring in the decomposition  $\nnu = \nnu^a + \nnu^\perp$, cf.\  \eqref{e:new-representation} below. 
To this purpose, we need to introduce some further notation. For every $\sfyy
=(\sft,\sfxx) \in 
\Lipplus {}\III \DST$ we define the sets $D_{+}[\sfyy]$,~$D_{0}[\sfyy]$, $ D_{c} [\sfyy] \subseteq  \III $ as
\begin{equation}
\label{def-sets-Dgamma}
\begin{aligned}
& D_{+}[\sfyy] \coloneq \{ s \in   \III  : \, \sft'(s) >0\}\,,\\
& D_{0}[\sfyy] \coloneq \{ s \in   \III : \, \sft' (s) = 0\}\,,\\
& D_{c}[\sfyy] \coloneq \{ s \in   \III : \, \text{$\sft(\cdot)$ is constant in a neighborhood of~$s$}\}\,,
\end{aligned}
\end{equation}
 where $\sft'$ denotes the upper derivative
defined in \eqref{eq:everywhere-derivative}--\eqref{evaluation-derivatives}. 
Its usage is motivated by the fact that 
 $\sft'$ exists at every $s\in \III$ and it is a bounded nonnegative Borel 
map.
 We further set
\begin{align*}
& D_{+} \coloneq \{ (s, \sfyy) \in \III \times \Lipplus {}\III \DST : \, s \in D_{+}[\sfyy]\}\,,\\
& D_{0} \coloneq \{ (s, \sfyy) \in \III \times \Lipplus {}\III \DST : \, s \in D_{0}[\sfyy]\}\,,\\
&  D_{c} \coloneq \{ (s, \sfyy) \in \III \times \Lipplus {}\III \DST : \, s \in D_{c}[\sfyy]\}\,.
\end{align*}
Since  we have $D_{c} [\sfyy] \subseteq D_{0}[\sfyy]$
 for every $\sfyy$, there holds  $D_{c} \subseteq D_{0}$.
\par
We are now in a position to provide the probabilistic representation of $\nnu^a$ and $\nnu^\perp $  in terms of the measure  $\eeta \in 
\mathcal{P}(\Lipplus{}\III\DST)$ 
featuring in \eqref{e.17-a} and~\eqref{e.17}. 
	

\begin{proposition}[{Decomposition property of the probabilistic representation}]
\label{c:2}
 Let $(\mu, \nnu)$ be a $\Puno$ so\-lu\-tion  to   the continuity equation~\eqref{continuity-equation} 
in the sense of Definition~\ref{def:solCE}, 
let $(\mu,\bar\nnu)$ be a minimal solution induced by $(\mu,\nnu)$ 
with Lebesgue decomposition $\bar\nnu=\nnu^a+\bar\nnu^\perp$.
Suppose that the representation formulae \eqref{e.17-a} and~\eqref{e.17} hold with 
 $\eeta \in \mathcal{P}(\Lipplus{}\III\DST)$
and $\teeta=\mathcal L^1\otimes \eeta$.
Then, we have that
\begin{gather}
\label{e:new-representation}
\mu = \fre_{\sharp } (\frt' \, \teeta \mres D_{+}) \,,\qquad 
\nnu^{a} = \fre_{\sharp } (\frxx'\, \teeta\mres D_{+})\,, \qquad \bar\nnu^{\perp} = \fre_{\sharp } (\frxx'\,  \teeta \mres D_{0})\,,\\
\label{e:new-representation2}
|(\mu,\nnu^{a})| = \fre_{\sharp } (\|\fry'\|\, \teeta\mres D_{+})\,, \qquad 
|\bar\nnu^{\perp}| = \fre_{\sharp } (\|\frxx'\|\,  \teeta \mres D_{0}).
\end{gather}
\end{proposition}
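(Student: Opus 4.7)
The strategy is to exploit the ODE representation on which $\eeta$ is concentrated in order to produce pointwise identities for the upper derivatives $\frt'$ and $\frxx'$. Since, by Theorem~\ref{t.1}(2), the measure $\eeta$ is supported on trajectories $\sfyy$ solving \eqref{Cauchy-gamma-new}, for $\teeta$-almost every $(s,\sfyy)$ one has
\begin{displaymath}
\frt'(s,\sfyy) = \tau(\fre(s,\sfyy)), \qquad \frxx'(s,\sfyy) = \bvv(\fre(s,\sfyy)).
\end{displaymath}
Since $\sft$ is nondecreasing, $\frt'\geq 0$ pointwise, so $D_+$ and $D_0$ partition $\III\times\Lipplus{}{\III}{\DST}$; moreover, up to $\teeta$-null sets, $D_+$ coincides with $\fre^{-1}(\{\tau>0\})$ and $D_0$ with $\fre^{-1}(\{\tau=0\})$.

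With these identities at hand, the first claim in \eqref{e:new-representation}, namely $\mu=\fre_\sharp(\frt'\,\teeta\mres D_+)$, is immediate from \eqref{e.17-a} since $\frt'\equiv 0$ on $D_0$. For the second claim I would compute
\begin{displaymath}
\fre_\sharp(\frxx'\,\teeta\mres D_+) = \fre_\sharp\bigl((\bvv\circ\fre)\,\teeta\mres\fre^{-1}(\{\tau>0\})\bigr) = \bvv\cdot(\fre_\sharp\teeta)\mres\{\tau>0\} = \bvv\,\varrho\mres\{\tau>0\},
\end{displaymath}
using the change-of-variables formula and the identity $\fre_\sharp\teeta=\varrho$ from \eqref{e.17-a}. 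Since $\mu=\tau\varrho$ is concentrated on $\{\tau>0\}$, the Lebesgue decomposition of $\nnu=\bvv\varrho$ with respect to $\mu$ reads $\nnu^a = \bvv\,\varrho\mres\{\tau>0\}$ and $\nnu^\perp=\bvv\,\varrho\mres\{\tau=0\}$, whence $\fre_\sharp(\frxx'\,\teeta\mres D_+)=\nnu^a$. The identity $\bar\nnu^\perp = \fre_\sharp(\frxx'\,\teeta\mres D_0)$ then follows by subtraction from $\bar\nnu=\fre_\sharp(\frxx'\,\teeta)=\nnu^a+\bar\nnu^\perp$, where the first equality is \eqref{e.17-a} and the second is the definition of minimal induced pair (Definition~\ref{def:reduced}).

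The variation formulas in \eqref{e:new-representation2} follow by the same mechanism. On $D_+$ we have $\|\fry'\|=\|(\tau,\bvv)\|\circ\fre$, so the same pushforward argument gives
\begin{displaymath}
\fre_\sharp(\|\fry'\|\,\teeta\mres D_+) = \|(\tau,\bvv)\|\,\varrho\mres\{\tau>0\} = |(\mu,\nnu)|\mres\{\tau>0\} = |(\mu,\nnu^a)|,
\end{displaymath}
the last equality holding because $\mu$ lives on $\{\tau>0\}$ and $\nnu^a=\nnu\mres\{\tau>0\}$. On $D_0$ the vanishing of $\frt'$ gives $\|\frxx'\|=\|\bvv\|\circ\fre$, and an identical computation yields $\fre_\sharp(\|\frxx'\|\,\teeta\mres D_0)=\|\bvv\|\,\varrho\mres\{\tau=0\}$, which equals $|\bar\nnu^\perp|$ in view of the representation of $\bar\nnu^\perp$ just established. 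The main technical point, which I expect to be routine but requires bookkeeping, is the justification of the change-of-variables formula $\fre_\sharp((\phi\circ\fre)\,\teeta\mres\fre^{-1}(E)) = \phi\cdot(\fre_\sharp\teeta)\mres E$ for bounded Borel $\phi$ and Borel $E$, given that $\teeta=\mathcal L^1\otimes\eeta$ is only locally finite on the infinite-dimensional path space; this is handled by a standard monotone-class argument starting from indicator functions.
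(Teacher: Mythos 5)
Your proof is correct in substance but takes a genuinely different route from the paper's. The paper argues at the level of measure identities only: the first equality of \eqref{e:new-representation} is immediate since $\frt'\equiv 0$ on $D_0$; then the observation that $\frt'>0$ on $D_+$ gives $\fre_\sharp(\frxx'\,\teeta\mres D_+)\ll\mu$, and a variations/subadditivity argument (the chain $|(\mu,\bar\nnu)|\le|(\mu,\lambda^+)|+|\lambda^0|\le\fre_\sharp(\|\fry'\|\teeta\mres D_+)+\fre_\sharp(\|\fry'\|\teeta\mres D_0)=|(\mu,\bar\nnu)|$) forces all inequalities to be equalities simultaneously, from which both \eqref{e:new-representation} and \eqref{e:new-representation2} follow. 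You instead import the pointwise ODE identity $\fry'=(\tau,\bvv)\circ\fre$ ($\teeta$-a.e.), identify $D_+$ and $D_0$ with $\fre^{-1}(\{\tau>0\})$ and $\fre^{-1}(\{\tau=0\})$ up to $\teeta$-null sets, and push forward directly. This is more explicit, and it transparently exhibits why $\lambda^0=\fre_\sharp(\frxx'\teeta\mres D_0)$ is concentrated on $\{\tau=0\}$ and hence $\mu$-singular, something that is rather opaque in the paper's terse argument.

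Two points you should tighten. First, the proposition's hypothesis is only that the representation formulae \eqref{e.17-a}, \eqref{e.17} hold for the given $\eeta$; it does not \emph{a priori} assume $\eeta$ is the measure produced by Theorem~\ref{t.1}(2), so you cannot simply invoke that theorem to get the ODE. This is repairable: since \eqref{e.17} gives $\|\fry'\|\equiv 1$ $\teeta$-a.e.\ and $|(\mu,\bar\nnu)|=\fre_\sharp(\|\fry'\|\teeta)=\fre_\sharp\teeta$, Proposition~\ref{prop:measure-theor} (applied with $\proxyteta{\sfyy}=\omega_\sfyy$ and $\boldsymbol{\mathsf{f}}=(\tau,\bvv)$) yields $\omega_\sfyy=(\tau,\bvv)|\omega_\sfyy|$ for $\eeta$-a.e.\ $\sfyy$, i.e.\ the ODE. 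You should say this explicitly rather than cite Theorem~\ref{t.1}(2). Second, there is a notational slip: the Lebesgue decomposition in the statement is of $\bar\nnu=\nnu^a+\bar\nnu^\perp$, and $\fre_\sharp\teeta$ equals $\bar\varrho=|(\mu,\bar\nnu)|$ rather than $\varrho$; you write $\nnu$, $\nnu^\perp$, $\varrho$ where $\bar\nnu$, $\bar\nnu^\perp$, $\bar\varrho$ are meant. The computation is unaffected once these are adjusted, and the equivalence of $\mu=\tau\bar\varrho$ with $\bar\varrho\mres\{\tau>0\}$, which underpins your identification of the Lebesgue decomposition as $\nnu^a=\bvv\bar\varrho\mres\{\tau>0\}$ and $\bar\nnu^\perp=\bvv\bar\varrho\mres\{\tau=0\}$, is correct.
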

\begin{proof}
Since  $\mu = \fre_{\sharp } ( \frt' \eeta_{\mathcal{L}})$,  the first equality in~\eqref{e:new-representation} holds. Being $\nnu^{a} \ll \mu$ and $D_{0} \cap  D_{+} = \emptyset$,   the second and the third equalities in~\eqref{e:new-representation} follow. 

Concerning \eqref{e:new-representation2}, we have 
	\begin{align*}
		|(\mu,\nnu)|&=
		|(\mu,\nnu^a)|+|\nnu^\perp|
		\le 
		\fre_{\sharp } (\|\fry'\|\, \teeta\mres D_{+})
		+\fre_{\sharp } (\|\fry'\|\,  \teeta \mres D_{0})=
		\fre_{\sharp } (\|\fry'\|\, \teeta)=|(\mu,\nnu)|
	\end{align*}
	so that all inequalities are in fact equalities and \eqref{e:new-representation2} follows.
\end{proof}

 With the next result, 
we use the representation formulae 
\eqref{e:new-representation}
provided by Proposition 
\ref{c:2}, to show that the superposition measure~$ \eeta \in \mathcal{P}( \Lipplus{} \III { \R^{d+1}})$ is supported on injective curves. In the proof, 
we will identify the pair $(\mu, \nnu) $  with an associated mininimal solution $(\mu, \bar\nnu) $. 
\begin{theorem}\label{t:injective-curves}
Let $(\mu, \nnu) $ be a  minimal  $\Puno$-solution  to  the continuity equation~\eqref{continuity-equation} 
and let the measure $\eeta \in \mathcal{P}(\Lipplus{k}\III\DST)$ provide the representation  formulae~\eqref{e.17-a}. 
Then, $\eeta$-a.e.~curve 
$\sfyy \in \Lipplus{}\III\DST$ is injective.
\end{theorem}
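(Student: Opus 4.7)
The strategy I would adopt is a proof by contradiction leveraging the minimality of $\nnu^{\perp}$. Suppose that $A := \{\sfyy \in \Lipplus{k}\III\DST : \sfyy \text{ is not injective}\}$ satisfies $\eeta(A) > 0$. Since $\sft$ is nondecreasing and the autonomous ODE in~\eqref{Cauchy-gamma-new} of Theorem~\ref{thm:augmented} yields $\|(\tau, \bvv)(\sfyy(s))\| \geq k^{-1} > 0$ along every trajectory in the support of $\eeta$ (ruling out any curve that is constant on an interval), a coincidence $\sfyy(s_1) = \sfyy(s_2)$ with $s_1 < s_2$ forces $\sft \equiv \sft(s_1)$ on $[s_1, s_2]$ and $\sfxx(s_1) = \sfxx(s_2)$; that is, the loop lies in $D_0[\sfyy]$. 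A Borel selection (obtained, e.g., via an enumeration of rational pairs $(p_n, q_n)$, picking for each $\sfyy \in A$ the first index with $\sfyy(p_n) = \sfyy(q_n)$) then provides Borel maps $s_1, s_2 : A \to \III$ with these properties.

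Next, I would introduce the ``loop measure''
\[
L := \fre_{\sharp}\bigl(\frxx' \, \mathbf{1}_J \, \teeta\bigr) \in \calM(\DST; \R^d), \qquad J := \{(s, \sfyy) : \sfyy \in A,\ s_1(\sfyy) < s < s_2(\sfyy)\} \subset D_0,
\]
and establish two key properties. First, $L$ is spatially divergence-free: for $\varphi \in \Cc^1(\DST)$, the identity $\sft' \equiv 0$ on $J$ collapses $\tfrac{d}{ds}\varphi(\sfyy(s))$ to $\rmD_x\varphi \cdot \sfxx'$, whence $\int_{\DST}\rmD_x\varphi \cdot dL = \int_A(\varphi(\sfyy(s_2)) - \varphi(\sfyy(s_1)))\,d\eeta = 0$. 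Second, $L \prec \nnu^{\perp}$: using that the autonomous representation from Theorem~\ref{thm:augmented} forces $\frxx'(s, \sfyy) = \bvv(\sfyy(s))$ whenever $(s,\sfyy) \in D_0$, both $L$ and $\nnu^{\perp} = \fre_{\sharp}(\frxx' \teeta \mres D_0)$ (from Proposition~\ref{c:2}) share the polar direction $\bvv(t,x)/\|\bvv(t,x)\|$ at every $(t,x) \in \DST$; meanwhile $|L| \leq |\nnu^{\perp}|$ follows by comparing the representations in Proposition~\ref{c:2} with the inclusion $J \subset D_0$. Lemma~\ref{r:sub}~\emph{(ii)} then yields the submeasure relation.

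Finally, the difference $\tilde{\nnu}^{\perp} := \nnu^{\perp} - L$ inherits the direction of $\nnu^{\perp}$ with $|\tilde{\nnu}^{\perp}| = |\nnu^{\perp}| - |L|$, hence $\tilde{\nnu}^{\perp} \prec \nnu^{\perp}$ and $\dive \tilde{\nnu}^{\perp} = \dive \nnu^{\perp}$. Because $\|\frxx'(s, \sfyy)\| = \|\bvv(\sfyy(s))\| \geq k^{-1}$ on $J$ (using $\tau = 0$ on $D_0$), together with $s_2 - s_1 > 0$ on $A$ and $\eeta(A) > 0$, one obtains $|L|(\DST) > 0$, whence $\tilde{\nnu}^{\perp} \neq \nnu^{\perp}$, contradicting the minimality of $\nnu^{\perp}$ in the sense of Definition~\ref{d:minimal}. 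I expect the main obstacle to be the Borel measurability of the loop selection $\sfyy \mapsto (s_1(\sfyy), s_2(\sfyy))$; the remaining identities are direct consequences of Proposition~\ref{c:2} and the autonomous representation of Theorem~\ref{thm:augmented}.
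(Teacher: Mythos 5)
Your plan mirrors the paper's proof in all essentials: extract a Borel-measurable loop from a positive-measure set of non-injective curves, form the vector measure carried by these loops (your $L$), show it is spatially divergence-free and a submeasure of $\nnu^\perp$, and derive a contradiction with minimality. In fact your $L$ coincides with the measure $\btheta$ the paper introduces in \eqref{e:difference3}, and your checks that $\dive L = 0$ and $L\prec\nnu^\perp$ are the same computations as the paper's Claims~3 and~4. The one place you streamline: you do not introduce the cut map $\T$ and the pushforward $\etaflat=\T_\sharp\eeta$, and correspondingly do not need Claims~1--2 of the paper's proof (verifying $\mu_\flat=\mu$, $\nnu^a_\flat=\nnu^a$). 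Working directly with $\tilde\nnu^\perp := \nnu^\perp - L$ and invoking Definition~\ref{d:minimal} is cleaner and legitimate; the paper packages the competitor as $\nnu_\flat^\perp$ arising from a new curve measure mainly because $\etaflat$ and $\T$ are reused (with the same scaffolding) in the proof of the adjacent Theorem~\ref{t:segment-representation}.

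The one genuine gap is the Borel selection of the loop endpoints, and the specific construction you propose does not work. Enumerating rational pairs $(p_n,q_n)$ and searching for coincidences $\sfyy(p_n)=\sfyy(q_n)$ need not succeed: $\eeta$\dash a.e.\ curve satisfies $\|\sfyy'(s)\|\geq k^{-1}$ a.e., so $\sfxx$ cannot stall on an interval, and a loop traversed once passes through its base point at exactly two parameters, which have no reason to be rational. So for a positive-$\eeta$-measure set of curves the enumeration returns nothing. The paper's construction is the right one: set $r(s,\sfyy):=\max\{\sigma\in[0,S]:\sfyy(\sigma)=\sfyy(s)\}$ and $r_\flat(s,\sfyy):=r(s,\sfyy)-s$, which are upper semicontinuous in $(s,\sfyy)$; restrict to $\Lambda':=\{\sfyy:\max_s r_\flat(s,\sfyy)\geq 1/h\}$ for some $h$ keeping $\eeta(\Lambda')>0$; then $\sigmasel_{\mathrm{l}}(\sfyy):=\min\{s\in[0,S]:r_\flat(s,\sfyy)\geq 1/h\}$ is lower semicontinuous (hence Borel), and $\sigmasel_{\mathrm{u}}(\sfyy):=r(\sigmasel_{\mathrm{l}}(\sfyy),\sfyy)$. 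You correctly flag measurability as the delicate point, but the proposed mechanism must be replaced by this one (or an equivalent extremal/selection-theorem argument) for the proof to close.
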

\begin{proof}
We prove the theorem by contradiction. 
Let us assume that there exist
 $S>0$ and 
 a set
 \begin{equation}
 \label{Lambda-set-added}
 \Lambda \subseteq \Lipplus{}\III\DST \text{ with~$\eeta(\Lambda)>0$ s.t.\
	every $\sfyy \in \Lambda$ is not injective in the interval $[0,S]$}.
  \end{equation}
Since $\eeta$ is a Radon measure, it is not restrictive to assume that 
$\Lambda$ is compact.

 For $\sfyy \in \Lipplus{}\III\DST$  and $s \in [0, S]$   we define  
\begin{align*}
r(s,\sfyy) := \max \, \{ \sigma \in [0, S]: \,   \sfyy(\sigma) = \sfyy(s)  \}\,, \qquad  
 r_{\flat}(s,\sfyy)  &:=  r(s,\sfyy) - s\,, 
\quad 
 \frr (\sfyy):=\max_{s\in [0,S]}  r_{\flat}(s,\sfyy), 
\end{align*}
and we notice that the maps $(s, \sfyy) \mapsto   r_{\flat}(s,\sfyy) $ 
and $\sfyy\mapsto \frr(\sfyy)$ are upper semicontinuous
in $[0,S]\times \Lambda$ and in $\Lambda$ respectively. 
Since $\frr(\sfyy)>0$ for every $\sfyy\in \Lambda$, 
we may   find  
 $h \in \mathbb{N}$  such that the (closed, thus compact) set 
\begin{displaymath}
 \Lambda' := \Big\{ \sfyy \in \Lambda: \, 
  \frr(\sfyy) \geq  \frac{1}{h}  \Big\} 
\end{displaymath}
 satisfies $\eeta(\Lambda') >0$. 
 For every $\sfyy\in \Lambda'$ we now define 
 \begin{displaymath}
 	\Xi(\sfyy):=\Big \{s\in [0,S]:\,   r_{\flat}(s,\sfyy)  \ge 1/h\Big 
 	\},\quad
 	\sigmasel_{\mathrm{l}}(\sfyy):=\min \Xi(\sfyy),
 \end{displaymath}
where $\mathrm{l}$ stands for `lower'.  
 Notice  that $\sigmasel_{\mathrm{l}}$ is well defined since $\Xi(\sfyy)$ is a closed 
 nonempty subset of $[0,S]$; one can easily check that is lower semicontinuous: 
 if $(\sfyy_n)_n\subset  \Lambda'$ converges to $\sfyy$ and 
 $s_n= \sigmasel_{\mathrm{l}}(\sfyy_n)$ is converging (up to the extraction of a subsequence) to $s$, 
 we know that $ r_{\flat}(s,\sfyy) \ge 1/h$ (by the upper semicontinuity of $r_{\flat}$) 
 so that $s\in \Xi(\sfyy)$ and $\sigmasel_{\mathrm{l}}(\sfyy)\le s$.
 
We set $\sigmasel_{\mathrm{u}}(\sfyy):= {r}(\sigmasel_{\mathrm{l}}(\sfyy),\sfyy)$
(with $\mathrm{u}$ for `upper'); 
the function~$\sfyy \mapsto \big( \sigmasel_{\mathrm{l}}(\sfyy), \sigmasel_{\mathrm{u}}(\sfyy)\big) $ is Borel measurable and 
for every $\sfyy \in \Lambda'$ we have 
by construction
\begin{equation}
\label{bigger-than-1/k}
\sfyy ( \sigmasel_{\mathrm{l}}(\sfyy)) = \sfyy ( \sigmasel_{\mathrm{u}}(\sfyy)),
\qquad \sigmasel_{\mathrm{u}}(\sfyy)-\sigmasel_{\mathrm{l}}(\sfyy)=
r_{\flat}(\sigmasel_{\mathrm{l}}(\sfyy),\sfyy)    \geq \frac{1}{h}. 
\end{equation}
Since $\sft$ is non-decreasing we conclude that 
$\sft$ is constant in $(\sigmasel_{\mathrm{l}}(\sfyy),\sigmasel_{\mathrm{u}}(\sfyy))$, hence for the interval $ ( \sigmasel_{\mathrm{l}}(\sfyy), \,\sigmasel_{\mathrm{u}}(\sfyy)  ) $ we have 
\begin{equation}
\label{e:equality}
 ( \sigmasel_{\mathrm{l}}(\sfyy), \,\sigmasel_{\mathrm{u}}(\sfyy)  )  \subseteq   D_{c}[\sfyy] \subseteq D_{0}[\sfyy]\,.
\end{equation}
We introduce the function $\T \colon \Lipplus{}\III\DST \to
\Lipplus{}\III\DST $ defined by $\T(\sfyy) = \sfyy$ for $\sfyy \notin \Lambda'$ and
\begin{displaymath}
\T(\sfyy) (s) = \left\{ \begin{array}{ll}
\sfyy(s) & \text{for $s \leq \sigmasel_{\mathrm{l}}(\sfyy)$}\,,\\
\sfyy(s +   r_{\flat}(\sigmasel_{\mathrm{l}}(\sfyy),\sfyy) ) & \text{for $s > \sigmasel_{\mathrm{l}}(\sfyy)$}
\end{array} \right.
\end{displaymath}
for $\sfyy \in \Lambda'$. By construction, the map $\T$ is Borel measurable  and satisfies $\T(\sfyy) \in \Lipplus k \III {\R^{d+1}}$ for every $\sfyy \in \Lipplus k \III {\R^{d+1}}$,  so that the push-forward $\etaflat := \T_{\sharp } \eeta$ belongs to $
	\mathcal{P}( \Lipplus{k}\III\DST)$.

\par
 After these preliminary definitions, we are in a position to carry out the contradiction, which will 
be essentially based on the fact that the measure~$\etaflat$ also provides a probabilistic representation of the pair~$(\mu, \nnu)$, cf.\ the upcoming Claims $1$, $2$,  and  $3$. For later use, 
let us set $\nnu_{\flat} := \fre_{\sharp } (\frxx' \mathcal{L}^{1} \otimes \etaflat)$. Let us write $\nnu = \nnu^{a} + \nnu^{\perp}$ and
 $\nnu_{\flat}=\nnu_{\flat}^{a} + \nnu_{\flat}^{\perp}$, where $\nnu^{a}, \nnu_{\flat}^{a} \ll \mu$ and $\nnu^{\perp}, \nnu_{\flat}^{\perp} \perp \mu$. 
\par
\noindent
\textbf{Claim $1$:} \emph{we have} 
\begin{equation}
\label{repre-mu-bareta}
\mu = \fre_{\sharp } (\frt' \mathcal{L}^{1} \otimes \etaflat).
\end{equation}
This follows from the finer representation of~$\mu$ provided by~\eqref{e:new-representation}, and by the
definition of~$\T$. Indeed, using  the place-holder 
$\mu_{\flat}: = \fre_{\sharp } (\frt' \mathcal{L}^{1} \otimes \etaflat)$,
for every $\phisc \in  \Cc (\DST ) $ we have
%
  \begin{equation}
  \label{e:difference-nu}
  \begin{aligned}
  &
  \int_{\DST } \phisc (t, x) \, \di (\mu - \mu_{\flat}) (t, x) 
\\
&  = \int_{\Lambda'} \int_{\III}  \phisc(\sfyy(s))  \sft'(s) \, \di s \, \di \eeta(\sfyy)  - \int_{\Lambda'} \int_{\III}  \phisc(\sfyy(s))  \sft'(s) \, \di s \, \di \eeta_{\flat}(\sfyy) 
\\
& 
 \stackrel{(1)}{=}
  \int_{\Lambda'} \int_{\sigmasel_{\mathrm{l}}(\sfyy)}^{+\infty}  \phisc(\sfyy(s))  \sft'(s) \, \di s \, \di \eeta(\sfyy)  -  \int_{\Lambda'} \int_{\sigmasel_{\mathrm{l}}(\sfyy)}^{+\infty}  \phisc(\sfyy(s{+} r_{\flat} (\sigmasel_{\mathrm{l}}(\sfyy),\sfyy) )
    \sft'(s{+}  r_{\flat} (\sigmasel_{\mathrm{l}}(\sfyy),\sfyy)) \, \di s \, \di \eeta(\sfyy)
 \\
 &
  \stackrel{(2)}{=}
  \int_{\Lambda'} \int_{\sigmasel_{\mathrm{u}}(\sfyy)}^{+\infty}  \phisc(\sfyy(s)) \sft'(s) \, \di s \, \di \eeta(\sfyy)  - 
   \int_{\Lambda'} \int_{ \sigmasel_{\mathrm{u}}(\sfyy) }^{+\infty}  \phisc( \sfyy( \tau)  )
  \sft'(\tau)  \, \di \tau \, \di \eeta(\sfyy)   = 0\,.
   \end{aligned}
  \end{equation}  
 In the above chain of equalities, 
   (1) follows from the fact that $\T$ is the identity in the complement of $\Lambda'$ and  modifies a curve $\sfyy \in \Lambda'$ only on  
  $( \sigmasel_{\mathrm{l}}(\sfyy), +\infty)$, while (2)  ensues
  from the fact that 
   $\sft'\equiv 0$  on $( \sigmasel_{\mathrm{l}}(\sfyy),  \sigmasel_{\mathrm{u}}(\sfyy))$ 
   and from a change of variable in the second integral.
  \\
  \textbf{Claim $2$:} \emph{we have} 
 \begin{equation}
\label{repre-nua-bareta} 
\nnu^{a} = \nnu_{\flat}^{a}\,.
\end{equation}
 With the very same arguments as in \eqref{e:difference-nu} we check that  
\begin{equation}
\label{e:difference}
\begin{aligned}
& 
\int_{\DST } \bvarphi (t, x) \, \di (\nnu - \nnu_{\flat}) (t, x)
= 
\int_{\Lambda'}\Big(\int_{\sigmasel_{\mathrm{l}}(\sfyy)}^{\sigmasel_{\mathrm{u}}(\sfyy)} \bvarphi(\sfyy(s)) {\, \cdot\,} \boldsymbol{\mathsf{x}}'(s) \, \di s \Big)
\, \di \eeta(\sfyy) 
\end{aligned}
\end{equation} 
 for every $\bvarphi \in {\rm C}_{\rm c} (\DST ; \R^{d})$.
%
By~\eqref{e:new-representation} and by~\eqref{e:equality} we therefore have that $(\nnu {-} \nnu_{\flat} )\perp \mu$ and $\nnu^{a} = \nnu_{\flat}^{a}$.
Thus, $\nnu_\flat = \nnu^{a} + \nnu_{\flat}^{\perp}$. 
 \\
 \textbf{Claim $3$:} \emph{we have}
 \begin{equation}
\label{repre-nu-perp-bareta} 
\dive \nnu_\flat^{\perp} = \dive \nnu^{\perp}
\,.
\end{equation}
Denoting $\btheta \coloneq \nnu^{\perp} - \nnu_\flat^{\perp}=
\nnu-\nnu_\flat$, it follows from~\eqref{e:difference} and Theorem~\ref{t.1}(2),
that for every test function $\bvarphi \in {\rm C}_{\rm c} (\DST ; \R^{d})$ there holds
\begin{align}
\label{e:difference-2}
\int_{\DST } \bvarphi (t, x) \, \di \btheta (t, x) & = \int_{\Lambda'} \int_{\sigmasel_{\mathrm{l}}(\sfyy)}^{ \sigmasel_{\mathrm{u}}(\sfyy)} \bvarphi(\sfyy(s)) {\, \cdot\,} \boldsymbol{\mathsf{x}}'(s) \, \di s \, \di \eeta(\sfyy),
\end{align} 
i.e.
\begin{equation}
	\label{e:difference3}
	\btheta=\fre_\sharp \big(\frxx'\,\teeta\mres \Theta\big),\quad
	\Theta:=\Big\{(s,\sfyy):\sfyy\in \Lambda',\ \sigmasel_{\mathrm{l}}(\sfyy)\le 
	s\le\sigmasel_{\mathrm{u}}(\sfyy)\Big\}.
\end{equation}
It is sufficient to select $\bvarphi=\rmD \varphi$  for $\varphi \in {\rm C}^{1}_{\rm c} (\R^{d+1}_{+})$  in \eqref{e:difference-2}
and to observe that for every $\sfyy\in \Lambda'$ 
the inner integral 
\begin{align*}
	\int_{\sigmasel_{\mathrm{l}}(\sfyy)}^{ \sigmasel_{\mathrm{u}}(\sfyy)} \rmD\varphi(\sfyy(s)) {\, \cdot\,} \boldsymbol{\mathsf{x}}'(s) \, \di s 
	=\int_{\sigmasel_{\mathrm{l}}(\sfyy)}^{ \sigmasel_{\mathrm{u}}(\sfyy)} \rmD\varphi(t,\sfxx(s)) {\, \cdot\,} \boldsymbol{\mathsf{x}}'(s) \, \di s 
	=\varphi(t,\sfxx(\sigmasel_{\mathrm{u}}(\sfyy)))-
	\varphi(t,\sfxx(\sigmasel_{\mathrm{l}}(\sfyy)))=0
\end{align*}
since $\sft(s)\equiv t$ is constant in the interval 
$(\sigmasel_{\mathrm{l}}(\sfyy),\sigmasel_{\mathrm{l}}(\sfyy))$. Plugging this formula in \eqref{e:difference-2},
with $\bvarphi=\rmD \varphi$ for an arbitrary $\varphi\in \rmC^1_c(\DST)$, and  we get $\dive \btheta=0$.
%
%
%
%
%
 \\
 \textbf{Claim $4$:} \emph{we have}
 \begin{equation}
\label{repre-nu-perp-bareta2} 
\nnu_\flat^{\perp} \prec \nnu^{\perp}
\,.
\end{equation}
Let us define 
$\eeta_\flat^0:= 
\etaflat{\mres} D_0$,
$\boldsymbol\vartheta=\frxx' \eeta_\flat^0=
(\bvv{\circ}\fre ) \eeta_\flat^0$; let us consider
the Borel function 
\begin{displaymath}
	\lambda(s,\sfyy):=
	\begin{cases}
		0&\text{if }(s,\sfyy)\in \Theta,\\
		1&\text{otherwise.}
	\end{cases}
\end{displaymath}
and the measure $\zzeta=\lambda \boldsymbol\vartheta$. The representation formula 
\eqref{e:new-representation} yields
$\nnu^\perp=
\fre_\sharp \boldsymbol\vartheta$,
whereas 
\eqref{e:difference-2} yields
$\nnu_\flat^\perp =\fre_\sharp \zzeta$.
Since $\zzeta\prec \boldsymbol\vartheta$
and $\boldsymbol\vartheta$ satisfies 
\eqref{eq:fiber2} (w.r.t.~the measure 
$\alpha:=\etaflat^0$ and 
the map $ \pushm:=\fre $), we infer from Lemma \ref{le:strict-conv}  ahead that 
$\nnu_\flat^\perp\prec \nnu^\perp.$
%
%
 \\
 \textbf{Conclusion of the proof:}
 Since $\nnu$ is minimal, we deduce
 that $\nnu_\flat^\perp=\nnu^\perp$; since
 $\|\bvv\|\ge 1/k$ $\etaflat^0$-a.e., we 
have $\lambda \equiv 1$ a.e.~in $\Theta$, i.e.~$\etaflat(\Theta)=0$.
 This implies the for $\etaflat$-a.e.~$\sfyy$
 $\sigmasel_{\mathrm{l}}(\sfyy)=\sigmasel_{\mathrm{u}}(\sfyy)$,
 a contradiction.

\end{proof}
The next result provides another property for any measure~$\eeta
	\in \sP( 
	\Lipplus1\III\DST)$ representing the solutions  to  the continuity equation, 
	 when the pair~$(\mu, \nnu)$ satisfies  condition~\eqref{UEST}
	 (in which case,~$\nnu$ is minimal, cf.\ Theorem \ref{th:1}).   In this situation, we show that 
	the measure $\eeta$ is concentrated on curves~$\sfyy= (\sft, \sfxx) \in  \Lipplus 1 \III {\R^{d+1}} $ 
  given by 
segments on intervals where~$\sft' \equiv 0$.  The proof mimics the contradiction argument
carried out for Theorem \ref{t:injective-curves}.
\begin{theorem}
\label{t:segment-representation}
 Let $(\mu, \nnu) \in \calM^{+} (\DST ) \times \calM(\DST   ; \R^{d} )$ be a  minimal 
 $\Puno$-solution of the continuity equation~\eqref{continuity-equation} with initial datum~$\mu_{0} \in \mathcal{P}_{1}(\R^{d})$, in the sense of 
  Definition~\ref{def:reduced}. 
 Suppose that $(\mu, \nnu)$ comply with \eqref{UEST}.  
 \par
Let $\eeta \in \mathcal{P}(\Lipplus1\III\DST)$ 
satisfy the representation formulae~\eqref{e.15} and~\eqref{e.17}.
Then, 
$\eeta$-almost every  curve $\sfyy \in \Lipplus1\III\DST$
 enjoys the following property:
\begin{equation}
\label{e:segment-x}
\begin{gathered}
\text{if~$s_{1} < s_{2} \in [0, +\infty)$ are such that $\sft' \equiv 0$ in~$(s_{1}, s_{2})$, then}
\\
\sfxx(s) = \sfxx(s_{1}) + (s - s_{1}) \frac{\sfxx(s_{2}) - \sfxx(s_{1}) }{\| \sfxx(s_{2}) - \sfxx(s_{1}) \|} \qquad \text{for all } s \in [s_1,s_2].  
\end{gathered}
\end{equation}
\end{theorem}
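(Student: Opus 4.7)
The plan is to argue by contradiction, adapting the cut-and-modify strategy of Theorem~\ref{t:injective-curves} but extracting the contradiction from condition~\eqref{UEST} rather than from the minimality of $\nnu^{\perp}$ alone. The crucial extra input is that, subtracting the left- and right-continuous versions of~\eqref{UEST} at any jump time $\tau$ of $\mu$, one obtains
\[
|\nnu|(\{\tau\}\times\R^{d}) = W_{1}(\mu_{\tau}^{-},\mu_{\tau}^{+}),
\]
and, since the slice $\nnu\mres(\{\tau\}\times\R^{d})$ has $x$-divergence $\mu_{\tau}^{-}-\mu_{\tau}^{+}$, by Kantorovich--Rubinstein duality the right-hand side is the minimum total variation among vector measures on $\R^{d}$ with this divergence. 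Hence any competitor $(\mu,\widetilde\nnu)$ solving the continuity equation with the same $\mu$ automatically satisfies $|\widetilde\nnu|(\{\tau\}\times\R^{d}) \geq |\nnu|(\{\tau\}\times\R^{d})$ at every jump $\tau$ of $\mu$.

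Assume now that $\eeta(\Lambda)>0$ for some $\Lambda\subseteq\Lipplus{1}{\III}{\DST}$ on which~\eqref{e:segment-x} fails. A measurable selection argument, structurally parallel to the one used with $\sigmasel_{\mathrm{l}},\sigmasel_{\mathrm{u}}$ in the proof of Theorem~\ref{t:injective-curves}, produces $h\in\mathbb{N}$, a compact $\Lambda'\subseteq\Lambda$ with $\eeta(\Lambda')>0$, and Borel maps $s_{1},s_{2}\colon\Lambda'\to\III$ such that for every $\sfyy\in\Lambda'$ the function $\sft$ is constant on $(s_{1}(\sfyy),s_{2}(\sfyy))$, with value $t_{0}(\sfyy)$, and the deficit
\[
\Delta(\sfyy):=(s_{2}(\sfyy)-s_{1}(\sfyy))-\|\sfxx(s_{2}(\sfyy))-\sfxx(s_{1}(\sfyy))\|
\]
satisfies $\Delta(\sfyy)\geq 1/h$. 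I would then define a Borel cut-and-straighten map $\T\colon\Lipplus{1}{\III}{\DST}\to\ALip{\III}{\DST}$ by $\T(\sfyy)=\sfyy$ off $\Lambda'$ and, for $\sfyy\in\Lambda'$, by replacing $\sfyy|_{[s_{1}(\sfyy),s_{2}(\sfyy)]}$ with the unit-speed straight segment from $\sfxx(s_{1}(\sfyy))$ to $\sfxx(s_{2}(\sfyy))$, kept in the slice $\sft\equiv t_{0}(\sfyy)$ and of length $L(\sfyy):=s_{2}(\sfyy)-s_{1}(\sfyy)-\Delta(\sfyy)$, and then translating the tail $\sfyy|_{[s_{2}(\sfyy),+\infty)}$ backwards by $\Delta(\sfyy)$. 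Setting $\eeta_{\flat}:=\T_{\sharp}\eeta$ and defining $(\mu_{\flat},\nnu_{\flat})$ from $\eeta_{\flat}$ via~\eqref{e.15}, Theorem~\ref{t.1}(1) yields that $(\mu_{\flat},\nnu_{\flat})$ solves the continuity equation; moreover $\mu_{\flat}=\mu$ by the same bookkeeping as in~\eqref{e:difference-nu}, since $\sft'\equiv 0$ on both the original and the replaced portion of each curve.

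To close the loop I would fix a jump time $\tau$ of $\mu$, set $\Lambda'(\tau):=\{\sfyy\in\Lambda':t_{0}(\sfyy)=\tau\}$, and observe from the explicit form of $\T$ that for every $\sfyy$ the sojourn $\mathcal{L}^{1}(\sft_{\T(\sfyy)}^{-1}(\{\tau\}))$ equals $\mathcal{L}^{1}(\sft_{\sfyy}^{-1}(\{\tau\}))$ decreased by $\Delta(\sfyy)$ if $\sfyy\in\Lambda'(\tau)$, and is unchanged otherwise. Combining this identity with the inequality~\eqref{one-sided-est} applied to $\eeta_{\flat}$ (noting $\|\fry'\|\equiv 1$ since $\T(\sfyy)\in\ALip{\III}{\DST}$) and with the representation $|\nnu|(\{\tau\}\times\R^{d})=\int\mathcal{L}^{1}(\sft_{\sfyy}^{-1}(\{\tau\}))\,d\eeta$ from~\eqref{e.17}, one obtains
\[
|\nnu_{\flat}|(\{\tau\}\times\R^{d}) \leq |\nnu|(\{\tau\}\times\R^{d})-\int_{\Lambda'(\tau)}\Delta(\sfyy)\,d\eeta(\sfyy).
\]
The lower bound $|\nnu_{\flat}|(\{\tau\}\times\R^{d})\geq|\nnu|(\{\tau\}\times\R^{d})$ from the opening paragraph then forces $\eeta(\Lambda'(\tau))=0$. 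Summing over the countable jump set of $\mu$, which contains every $\tau$ for which $\Lambda'(\tau)$ can carry positive $\eeta$-mass (since at a non-jump $\tau$ one has $\int\mathcal{L}^{1}(\sft_{\sfyy}^{-1}(\{\tau\}))\,d\eeta=|\nnu|(\{\tau\}\times\R^{d})=0$), yields $\eeta(\Lambda')=0$, contradicting $\eeta(\Lambda')>0$.

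The main obstacle will be the Borel selection of $s_{1},s_{2}$ on $\Lambda'$ and the Borel measurability of $\T$; these parallel, but slightly extend, the analogous selections in Theorem~\ref{t:injective-curves}. A secondary subtle point is that the chord-equals-arc identity $\|\sfxx(s_{2})-\sfxx(s_{1})\|=s_{2}-s_{1}$ extracted above, on every sub-interval of $\sft$-constancy, produces the sharper collinearity formula~\eqref{e:segment-x} only under strict convexity of the ambient norm, which appears to be implicit in the stated formula.
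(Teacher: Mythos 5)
Your proposal has a genuine gap in its concluding step, even though the cut-and-straighten construction and the use of condition \eqref{UEST} parallel the paper's own strategy.

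The problematic step is the final summation. You partition $\Lambda'$ as $\Lambda' = \bigcup_{\tau}\Lambda'(\tau)$ with $\Lambda'(\tau) = \{\sfyy\in\Lambda': t_0(\sfyy)=\tau\}$, and you show $\eeta(\Lambda'(\tau))=0$ for each individual $\tau$ (at jump times via the Kantorovich duality observation, and at non-jump times via the sojourn formula $\int\mathcal{L}^1(\sft^{-1}(\{\tau\}))\,\dd\eeta=|\nnu|(\{\tau\}\times\R^d)=0$). But this is a \emph{disjoint union over an uncountable set of times}, and knowing $\eeta(\Lambda'(\tau))=0$ for every $\tau$ does not imply $\eeta(\Lambda')=0$: the pushforward $(t_0)_\sharp(\eeta\mres\Lambda')$ could perfectly well be a non-atomic measure of positive mass, exactly as Lebesgue measure on $[0,1]$ has no atoms. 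Your phrase ``summing over the countable jump set\ldots yields $\eeta(\Lambda')=0$'' silently assumes $(t_0)_\sharp(\eeta\mres\Lambda')$ is purely atomic and concentrated on the jumps of $\mu$, but this is not automatic. When the singular flux $\nnu^\perp$ has a non-trivial Cantor part in time, the ``bad'' constancy intervals can be spread over a time set on which $t\mapsto|\nnu|(\{t\}\times\R^d)$ vanishes identically, and the slice-by-slice argument has nothing to contribute there.

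The paper's proof sidesteps this entirely by never restricting attention to individual time slices: it compares $|\nnu|$ and $|\nnu_\flat|$ over whole half-open intervals $[0,T)\times\R^d$. In Claim~3 it aggregates the length savings over all modified curves whose modification starts before $T$, obtaining $|\nnu_\flat|([0,T)\times\R^d) \leq |\nnu|([0,T)\times\R^d) - \tfrac{1}{k}\,\eeta(\{\sfyy\in\Lambda':\sft(\selbis(\sfyy))<T\})$, and this aggregate reduction is strictly positive as soon as the set under $\eeta$ is, regardless of how the corresponding times $t_0(\sfyy)$ are distributed. The contradiction then comes from pairing this strict inequality with \eqref{UEST} and the lower bound \eqref{estimates-w-Var} from Theorem~\ref{th:1}. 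To repair your argument you would need to move from the slices $\{\tau\}\times\R^d$ to intervals $[0,T)\times\R^d$, which brings you essentially back to the paper's line of reasoning.

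Your final remark on strict convexity is correct: the collinearity formula \eqref{e:segment-x} asserts more than ``chord equals arc,'' and the step in the paper's proof that turns ``not a parametrized segment'' into $\|\sfxx(s_1)-\sfxx(s_2)\|<s_2-s_1$ does rely on the norm being strictly convex, a standing assumption implicit in the minimality framework of the paper.
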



\begin{proof}[Proof of Theorem~\ref{t:segment-representation}]
Assume by contradiction that the thesis is false. Then,  there exist
$S>0$ and 
$\Lambda \subseteq   \Lipplus{1}\III\DST$ 
 such that $\eeta (\Lambda) >0$ and for every $\sfyy= (\sft, \sfxx) \in \Lambda$ there exist $s_{1}<s_{2} \in [0, S]$  such that $\sft(s_{1}) = \sft(s_{2})$ and the the restriction of~$\sfxx$ to the interval $ [s_{1}, s_{2}]$ is not of the form~\eqref{e:segment-x}.
 In particular, since  $\sfyy$ is $1$-Lipschitz continuous, 
 we have that 
 $\| \sfxx(s_{1}) - \sfxx(s_{2})\| < s_{2} - s_{1}$.  Arguing similarly as in  the proof of Theorem~\ref{t:injective-curves},  for   $\sfyy \in   \Lipplus{1}\III\DST$ 
  and $s \in [0, S]$  we set 
\begin{equation*}
 \rho(s, \sfyy)  := \sup \, \{ \sigma \in [0, S]: \,   \sft(\sigma) = \sft(s)  \}\,, \qquad   \rflat(s, \sfyy) :=  \rho(s, \sfyy) - s\,. 
\end{equation*}
Then, the maps $(s, \sfyy) \mapsto \rho(s, \sfyy)$ and $(s, \sfyy) \mapsto  \rflat(s, \sfyy) $ are upper semicontinuous. 

Let us show that also the map 
\begin{equation}
\label{e:upp-semicont}
(s, \sfyy) \mapsto  \rflat(s, \sfyy) - \| \sfxx (s) - \sfxx( \rho(s, \sfyy)) \|
\end{equation}
is upper semicontinuous. Let $s_{n} \to s$ and $\sfyy_{n} \to \sfyy$ uniformly on compact  subsets  of~$[0, +\infty)$, and assume that $(s_{n})_n, \, s \in [0, S]$. 
We need to show that 
\begin{equation}
\label{usc-bis}
\limsup_{n\to\infty} \left(  \rflat(s_n, \sfyy_n)  - \| \sfxx_n (s_n) - \sfxx_n( \rho (s_n,\sfyy_n)) \| \right)  \leq  \rflat(s, \sfyy) - \| \sfxx (s) - \sfxx( \rho(s, \sfyy)) \|\,.
\end{equation} 
 Up to a not relabeled subsequence, we may assume that
\begin{align*}
& \overline{s}  = \lim_{n\to \infty} \,  \rho (s_n,\sfyy_n) = \limsup_{n \to \infty} \,  \rho (s_n,\sfyy_n) \leq \rho(s, \sfyy)\,,\\
&\limsup_{n \to \infty}  \|  \sfxx_{n} (s_{n}) - \sfxx_{n} ( \rho (s_n,\sfyy_n)) \|=  \lim_{n\to \infty} \|  \sfxx_{n} (s_{n}) - \sfxx_{n} ( \rho (s_n,\sfyy_n)) \| = \| \sfxx(s) - \sfxx(\overline{s})\| \,.
\end{align*}
Hence, we deduce that
\begin{equation}
\label{e:s-overline}
(\overline{s} - s) - \| \sfxx(s) - \sfxx(\overline{s})\| = \lim_{n\to\infty} \big(  \rflat(s_{n},\sfyy_{n}) -  \|  \sfxx_{n} (s_{n}) - \sfxx_{n} ( \rho (s_n,\sfyy_n)) \| \big)\,.
\end{equation}
If $\overline{s} = \rho(s,\sfyy)$, equality~\eqref{e:s-overline} proves the upper semicontinuity. If $\overline{s} < \rho(s,\sfyy)$, since
  $\sfyy\in    \Lipplus{1}\III\DST$ 
  and $\sft' \equiv 0$ in $(\overline{s}, \rho(s,\sfyy))$, we have that 
\begin{equation}
\label{added-label}
\| \sfxx(\overline{s}) - \sfxx( \rho(s,\sfyy))\| \leq  \rho(s,\sfyy) - \overline{s}.
\end{equation}
 Therefore, we deduce from~\eqref{e:s-overline}  and \eqref{added-label} 
  that
\begin{align*}
\rflat(s,\sfyy) - \| \sfxx(s) - \sfxx( \rho(s,\sfyy)) \|&  \geq (\rho(s,\sfyy) - \overline{s}) + (\overline{s} - s) - \| \sfxx(s) - \sfxx(\overline{s}) \| - \| \sfxx(\overline{s}) - \sfxx( \rho(s,\sfyy))\|
\\
&
\geq (\overline{s} - s) - \| \sfxx(s) - \sfxx(\overline{s}) \| 
\\
&
= \limsup_{n \to \infty} \big( \rflat(s_{n},\sfyy_{n}) - \| \sfxx_{n} (s_{n}) - \sfxx_{n} (\rho (s_{n},\sfyy_{n}) ) \|\big),
\end{align*}
 whence \eqref{usc-bis}.

Since $\eeta(\Lambda)>0$,  there exists $k \in \mathbb{N}$ such that the set 
\begin{displaymath}
\Lambda' := \Big\{ \sfyy \in    \Lipplus{1}\III\DST   \,: \   \max_{s \in [0, S]}  \big( \rflat(s,\sfyy) -  \| \sfxx (s) - \sfxx( \rho(s, \sfyy)) \| \big) \geq  \frac{1}{k}  \Big\}
\end{displaymath}
is Borel measurable and satisfies $\eeta(\Lambda') >0$. We define the multifunction $\Delta \colon \Lambda' \to 2^{[0, S]}$ by $\Delta (\sfyy) := \{ s \in [0, S] : \, \rflat (s,\sfyy) - \| \sfxx(s) - \sfxx(\rho(s, \sfyy) \|  \geq \frac{1}{k}\}$. Since the map~\eqref{e:upp-semicont} is upper semicontinuous, the multifunction~$\Delta$ is upper semicontinuous. Thanks  to~\cite[Theorem~III.6]{Castaing-Valadier77}  we find a Borel measurable selection of~$\Delta$, namely, a Borel function $\selbis \colon \Lambda' \to [0, S]$ such that $\selbis(\sfyy) \in \Delta(\sfyy)$ for every $\sfyy \in \Lambda'$. By construction, we have that
\begin{equation}
\label{3rd-added-label-th6.3}
\rflat(\selbis(\sfyy),\sfyy) - \| \sfxx(\selbis(\sfyy)) - \sfxx( \rho(\selbis(\sfyy),\sfyy))\|  \geq  \frac{1}{k}\,, 
\end{equation}
and the function $\sfyy \mapsto (\selbis(\sfyy) , \rho(\selbis(\sfyy),\sfyy))$ is Borel measurable. Moreover, for every $\sfyy \in \Lambda'$ we have
 $\sft ( \selbis(\sfyy)) = \sft (\rho (\selbis(\sfyy),\sfyy))$ and $\sft'(s) \equiv0 $ for every $ s$ in the interval  $(\selbis(\sfyy), \rho (\selbis(\sfyy),\sfyy))$. Hence, 
\begin{equation}
\label{e:equality-2}
(\selbis(\sfyy), \rho (\selbis(\sfyy),\sfyy)) \subseteq   D_{c}[\sfyy] \subseteq D_{0}[\sfyy]\,.
\end{equation}
For $\sfyy \in \Lambda'$ we further define 
\[
 \selteta (\sfyy) := \selbis(\sfyy) + \| \sfxx(\selbis(\sfyy)) - \sfxx(\rho (\selbis(\sfyy),\sfyy) )\|
 \]
  and $\sfxx_{\sfyy} \colon [\selbis(\sfyy) , \selteta(\sfyy) ] \to \R^{d}$ as the segment
\begin{equation}
\label{another-added-label-6}
\sfxx_{\sfyy} (s) := \sfxx(\selbis(\sfyy)) + (s - \selbis(\sfyy)) \frac{  \sfxx(\rho (\selbis(\sfyy),\sfyy) ) - \sfxx(\selbis(\sfyy))}{\|  \sfxx(\rho (\selbis(\sfyy),\sfyy) ) - \sfxx(\selbis(\sfyy))\|} \qquad \text{for $s \in  [\selbis(\sfyy) , \selteta(\sfyy) ] $}\,.
\end{equation}

We introduce the function   $\G \colon   \Lipplus{1}\III\DST \to   \Lipplus{1}\III\DST $   defined by $\G(\sfyy): = \sfyy$ for $\sfyy \notin \Lambda'$ and, for $\sfyy \in \Lambda'$, we set 
\begin{displaymath}
\G(\sfyy) (s) := \left\{ \begin{array}{lll}
\sfyy(s) & \text{for $s < \selbis(\sfyy)$}\,,\\[1mm]
\sfzz(s) & \text{for $s \in [\selbis(\sfyy), \selteta(\sfyy) ]$}\,,\\[1mm]
\sfyy(s - \selteta(\sfyy) +  \rho (\selbis(\sfyy),\sfyy) ) & \text{for $s >\selteta(\sfyy) $}
\end{array} \right. \qquad \text{with } \sfzz(s) = (\sft(\selbis(\sfyy))) , \sfxx_{\sfyy}(s) )  
\end{displaymath}
for $\sfyy \in \Lambda'$. By construction, the map $\GG$ is Borel measurable, so that the push-forward  $\etaflat := \GG_{\sharp } \eeta$   belongs to
$\mathcal{P}(  \Lipplus{1}\III\DST)$. 
 %
\par
 After these preparations, we are in a position to carry  out  the proof by contradiction, based on the  properties of the pair
\[
 \mu_{\flat} := \fre_{\sharp } (\frt'  \mathcal{L}^{1} \otimes \etaflat), \qquad \nnu_\flat := \fre_{\sharp } (\frxx'  \mathcal{L}^{1} \otimes \etaflat) 
\]
stated in the following Claims $1,\, 2, $
 and $3$. 
 \\
 \textbf{Claim $1$:} 
 \emph{we have that}
 \begin{equation}
 \label{claim1-th6.3}
 \mu = \mu_\flat\,.
 \end{equation}
This follows from the definition of~$\GG$ (in particular, the fact that $\GG$ is the identity in $    \Lipplus{1}\III\DST \setminus \Lambda'$ and modifies curves in~$\Lambda'$ only where~$\sft' \equiv 0$, cf.\ \eqref{e:equality-2}), also taking into account the representation of $\mu$ provided by Proposition 
\ref{c:2}.
 \\
 \textbf{Claim $2$:} 
 \emph{the pair $(\mu, \nnu_\flat)$ solves the continuity equation~\eqref{continuity-equation} with initial datum~$\mu_{0} \in \mathcal{P}_{1}(\R^{d})$, in the sense of Definition~\ref{def:solCE}.}
For this, it is enough to show that $\dive (\nnu - \nnu_\flat) = 0$ in~$\DST $. Since $\GG$ is the identity map in~$  \Lipplus{1}\III\DST  \setminus \Lambda'$ and modifies a curve~$\sfyy \in \Lambda'$ only on~$(\selbis(\sfyy) , \rho(\selbis(\sfyy),\sfyy))$,  for every $\varphi \in {\rm C}_{\rm c} (\DST )$ we have that
\begin{align*}
\int_{\III } \int_{\R^{d}} \rmD \varphi (t, x) \, \di  (\nnu - \nnu_\flat) (t, x) & = \int_{\Lambda'} \int_{\selbis(\sfyy)}^{\rho(\selbis(\sfyy),\sfyy)} \!\!\!\! \rmD \varphi (  \sft  (\selbis(\sfyy)), \sfxx(s)) {\, \cdot\,} \sfxx'(s) \, \di s \, \di \eeta(\sfyy) 
\\
&
\qquad - \int_{\Lambda'} \int_{\selbis(\sfyy)}^{\selteta(\sfyy) }\!\!\! \rmD \varphi(\sft(\selbis(\sfyy)) , \sfxx_{\sfyy} (s)) {\, \cdot\,}\frac{\sfxx(\rho(\selbis(\sfyy),\sfyy)) 
- \sfxx(\selbis(\sfyy))}{\| \sfxx(\rho(\selbis(\sfyy),\sfyy)) - \sfxx(\selbis(\sfyy))\|} \, \di s \, \di \eeta(\sfyy)
\\
&
\stackrel{(\star)}{=}
\int_{\Lambda'} \big( \varphi(\sft(\selbis(\sfyy)) , \sfxx(\rho (\selbis(\sfyy),\sfyy))) - \varphi(\sft(\selbis(\sfyy)) , \sfxx( \selbis(\sfyy))) \big) \, \di \eeta(\sfyy) 
\\
&
\qquad - \int_{\Lambda'} \big( \varphi(\sft(\selbis(\sfyy)) , \sfxx_{\sfyy}(\selteta(\sfyy)))  - \varphi(\sft(\selbis(\sfyy)) , \sfxx_{\sfyy}( \selbis(\sfyy))) \big) \, \di \eeta(\sfyy) = 0
\end{align*}
by definition of~$\sfxx_{\sfyy}$ for $\sfyy \in \Lambda'$, and with $(\star)$ due to the chain rule and to \eqref{e:equality-2}.
 Hence, $\dive ( \nnu - \nnu_\flat) = 0$ as desired.
\par
Since $\eeta(\Lambda')>0$ and $\Lambda' = \bigcup_{T >0} \{ \sfyy \in \Lambda': \, \sft(\selbis(\sfyy)) < T\}$ with
\begin{displaymath}
\{ \sfyy \in \Lambda': \, \sft(\selbis(\sfyy)) < T_{1}\} \subseteq \{ \sfyy \in \Lambda': \, \sft(\selbis(\sfyy)) < T_{2}\} \qquad \text{if $T_{1} \leq T_{2}$}\,,
\end{displaymath} 
we find $T \in [0, +\infty)$ such that 
\begin{equation}
\label{e:positive-eta}
\eeta( \{ \sfyy \in \Lambda': \, \sft(\selbis(\sfyy)) < T\}) >0\,.
\end{equation}
 \textbf{Claim $3$:} 
\emph{we have that} 
\begin{equation}
\label{e:T-positive-eta}
| \nnu| ([0, T) \times \R^{d}) > | \nnu_\flat| ([0, T) \times \R^{d})\,.
\end{equation} 
Indeed, recalling the representation~$ \nnu = \fre_{\sharp } ( \frxx' \eeta_{\mathcal{L}})$ and
the fact 
 that the curves~$\sfyy \in \spt (\eeta)$ solve the Cauchy problem~\eqref{Cauchy-gamma} with velocity field $(\tau, \bvv)$ independent of~$s$, we rewrite the left-hand side of~\eqref{e:T-positive-eta} as
\begin{equation}
\label{e:|nu|}
\begin{aligned}
| \nnu | ([0, T) \times \R^{d}) & = \int_{\Lipplusname{1}} \int_{\{s \in \III \,: \, \sft(s) < T \}} \| \bvv  (\sft(s) , \sfxx(s)) \| \, \di s \,\di \eeta(\sfyy)
\\
&
=  \int_{\Lipplusname{1}\backslash\Lambda'} \int_{\{s \in  \III \, : \, \sft(s) < T\} } \| \bvv  (\sft(s) , \sfxx(s)) \| \, \di s \,\di \eeta(\sfyy)  
\\
&
\qquad + \int_{\Lambda'} \int_{\{s \in [0, \selbis(\sfyy)) : \, \sft(s) < T\}} \| \bvv  (\sft(s) , \sfxx(s)) \| \, \di s \,\di \eeta(\sfyy)
\\
&
\qquad +  \int_{\Lambda'} \int_{\{s \in [ \selbis(\sfyy), \rho  (\selbis(\sfyy),\sfyy) ] : \, \sft(s) < T\}} \| \bvv  (\sft(s) , \sfxx(s)) \| \, \di s \, \di \eeta(\sfyy)
\\
&
\qquad + \int_{\Lambda'} \int_{\{s \in ( \rho(\selbis(\sfyy),\sfyy) , +\infty) : \, \sft(s) < T\}} \| \bvv  (\sft(s) , \sfxx(s)) \| \, \di s \, \di \eeta(\sfyy)\,.
\end{aligned}
\end{equation}
Since the  pair~$(\tau, \bvv)$   fulfills $\| ( \tau, \bvv)(t, x)\|  \equiv 1$ $| (\mu, \nnu)|$-almost everywhere~in~$\DST $, 
$\eeta$-a.e.~curve  $\sfyy \in   \Lipplus{1}\III\DST$    satisfies 
\[
\| \sfyy'(s)\| = \| (\sft'(s), \sfxx'(s)) \| =  \|(\tau(\sfyy(s)), \bvv(\sfyy(s)))\| = 1 \qquad \text{for a.a.~$s \in \III  $.}
\]
 In particular, since $\sft' \equiv 0$ in $(\selbis(\sfyy), \rho(\selbis(\sfyy),\sfyy))$ for $\eeta$-a.a.~$\sfyy \in \Lambda'$, it must be $\| \bvv(\sfyy(s))\| \equiv 1$ for a.a.~$s \in (\selbis(\sfyy), \rho(\selbis(\sfyy),\sfyy))$, for $\eeta$-a.e.~$\sfyy \in \Lambda'$.  Moreover, if $\sft(\selbis(\sfyy)) <T$, then $\sft(s) <T$ for every 
 $s \in [\selbis(\sfyy), \rho(\selbis(\sfyy),\sfyy)]$. Thus, by construction of~$\GG$ and by~\eqref{e:positive-eta} we may continue in~\eqref{e:|nu|} with
\begin{align*}
| \nnu | ([0, T) \times \R^{d}) & \geq \int_{\Lipplusname{1}\backslash\Lambda'} \int_{\{s \in  \III\,  : \, \sft(s) < T\} } \| \bvv  (\GG(\sfyy)(s)) \| \, \di s \,\di \eeta(\sfyy)
\\
&
\qquad + \int_{\Lambda'} \int_{\{s \in [0, \selbis(\sfyy)) : \, \sft(s) < T\}} \| \bvv  (\GG(\sfyy)(s)) \| \, \di s \,\di \eeta(\sfyy)\nonumber
\\
&
\qquad + \int_{\Lambda'} \int_{\selbis(\sfyy)}^{\selteta(\sfyy)} \| \sfxx'_{\sfyy} (s) \| \, \di s \, \di \eeta(\sfyy) + \frac{\eeta( \{ \sfyy \in \Lambda': \, \sft(\selbis(\sfyy)) < T\})}{k} \nonumber
\\
&
\qquad + \int_{\Lambda'} \int_{\{s \in ( \selteta(\sfyy) , +\infty) : \, \sft(s- \selteta(\sfyy) + \rho(\selbis(\sfyy),\sfyy))) < T\}} \| \bvv  (\GG(\sfyy)(s)) \| \, \di s \, \di \eeta(\sfyy)\,. \nonumber
\\
&
\geq | \nnu_\flat| ([0, T) \times \R^{d}) + \frac{\eeta( \{ \sfyy \in \Lambda': \, \sft(\selbis(\sfyy)) \leq T\})}{k} > | \nnu_\flat| ([0, T) \times \R^{d})\,, \nonumber
\end{align*}
 where we have also used that
$ \rho(\selbis(\sfyy),\sfyy) - \selteta(\sfyy) \geq \frac1k$ due to \eqref{3rd-added-label-th6.3}, as well as \eqref{e:positive-eta}.
\\
\textbf{Conclusion of the proof:} 
 From   the assumed  \eqref{UEST}   and~\eqref{e:T-positive-eta} we further deduce that
\begin{equation}
\label{e:last-ineq}
| \nnu_\flat| ([0, T) \times \R^{d}) < {\rm Var}_{W_{1}} (\mu, [0, T])
\end{equation}
for every $T \in  \III $ such that~\eqref{e:positive-eta} holds. Since $(\mu, \nnu_\flat)$ solves the continuity equation~\eqref{continuity-equation}, with initial datum~$\mu_{0} \in \mathcal{P}_{1}(\R^{d})$, in the sense of Definition~\ref{def:solCE}, \eqref{e:last-ineq} contradicts~\eqref{estimates-w-Var} of Theorem~\ref{th:1}.  Hence, the assertion with which we started the proof is false. This concludes the proof of the theorem.
\end{proof}

\section{ Superposition by $\BV $ curves}
\label{s:6}
 The superposition principle obtained in Theorem \ref{t.1}
offers a probabilistic representation of a solution $(\mu,\nnu)$ to the continuity equation
in terms of a measure $\eeta$ concentrated on curves $\sfyy= \sfyy(s) =(\sft(s),\sfxx(s))$  in the augmented space 
$\R^{d+1} = \R \times \R^d$.
%
We may in fact think of them as `parametrized' trajectories, and accordingly regard   $s$ as an `artificial' time variable.
\par
With the main result of this section, Theorem \ref{th:BV-representation}  ahead, 
we provide
an \emph{alternative} probabilistic representation involving
a probability measure $\neweta$   on the space of curves with locally bounded variation,  that are functions of the process time $t$.

 The bridge between the superposition principles of  Theorem \ref{t.1} and Theorem \ref{th:BV-representation}  will be provided by a 
suitable class of curves that  represent   $\BV$ curves  \emph{augmented} by their transition at jumps. The next section revolves around them. 



\subsection{Augmented reparametrized $\BV$ curves}
\label{ss:6.1}
\paragraph{\bf Preliminaries.} 
We denote by $\cZ$ the set $  \III  \times [0, 1]$ endowed with the \emph{dictionary} order relation:
\begin{align*}
(t_{1}, r_{1}), (t_{2}, r_{2}) \in \cZ:\, (t_{1}, r_{1})  \llcurly (t_{2}, r_{2}) \ \Leftrightarrow \ t_{1} < t_{2} \text{ or } (t_{1} = t_{2} \text{ and } r_{1} < r_{2})\,.
\end{align*}
 We can define on  $\cZ$ the 
\emph{order topology}
(cf., e.g., \cite[II.14]{Munkres}):  a basis for such a topology is 
the collection of all the open intervals 
$(z_1,z_2):=\{z\in \cZ:z_1\llcurly z\llcurly z_2\}$
and all the intervals of the form
$[(0,0),z_2):=\{z\in \cZ:z\llcurly z_2\}$.
Notice that, while with this topology $\cZ$ is neither separable nor metrizable, it 
satisfies the first axiom of countability. 
\par  We use the symbol  $\rightharpoondown$ 
 for  the associated notion of convergence. Observe that, 
 for any $t\in  \III $
we have that 
\[
\forall\, (r_n)_n \subset [0,1]\, : \qquad 
(t_n, r_n)   \rightharpoondown 
\begin{cases}
(t,0)  & \text{if } {t_n<t
\text{ and } t_n \to t,}
\\
(t,1)  & \text{if } 
{t_n>t
\text{ and }t_n \to t.}
\end{cases}
\] 
In turn, for any $(t_n, r_n)_n, (t,r) \in \cZ $, 
\[
(t_n, r_n)  \rightharpoondown  (t,r) \ \Longrightarrow \
 \text{ for $n$ big enough, we have }
 \begin{cases}
t_n \equiv t & \text{if } r \in (0,1),
\\
t_n \leq t & \text{if } r =0,
\\
t_n \geq t  & \text{if } r =1. 
\end{cases}
\]
\par
Therefore, let  $\mathsf{v} \colon \cZ \to \R^d$  be a continuous curve. Necessarily, 
\begin{equation}
\label{properties-continuous-curves}
\begin{aligned}
&
\forall\,  t\in [0,+\infty)\,:  &&
\text{the curve $[0,1]\ni r \mapsto \mathsf{v}(t,r)$ is continuous}
\\
&  \forall\,  t\in [0,+\infty)  \ 
\forall (r_n)_n \subset [0,1]\,: &&
 \mathsf{v}(t_n,r_n)  \to 
 \ \begin{cases}
 \mathsf{v}(t,0)  & \text{if } 
  { t_n<t
\text{ and } t_n \to t,}
 \\
  \mathsf{v}(t,1)  & \text{if }
   {t_n>t
\text{ and } t_n \to t.}
 \end{cases}
 \end{aligned}
 \end{equation}
\paragraph{\bf  $\ARBV$ curves.} 
We are now in a position to introduce the class of curves on $\cZ$ we shall  employ  to `bridge' the Lipschitz continuous trajectories $\sfyy$ to their
$\BV$ 
counterpart. 
\begin{definition}
We call \emph{augmented  $\BV$} curve any   $\bvc \colon \cZ \to \R^{d} $ such that 
\begin{enumerate}
\item $\bvc$ is continuous in~$\cZ$;
\item $[0,1] \ni r\mapsto \bvc (t, r)$ is Lipschitz continuous  for all  $t \in [0, +\infty)$;
\item $[0,1] \ni r\mapsto  \| \partial_{r} \bvc (t, r) \|$ is constant  for all  $t \in [0, +\infty)$;
\item for all $T \in (0,+\infty)$ we have that 
\[
\sup_{\mathcal{P} \text{ partition of $ [ 0 , T ] \times [ 0 , 1 ] $}} \sum_{ ( t_{k} , r_{k} ) \in
 \mathcal{P} } \| \bvc (t_{k}, r_{k} ) {-} \bvc ( t_{k-1} , r_{k-1} ) \| <+\infty \,,
  \]
   where the partition $\mathcal{P}$ is constructed using the order relation $\llcurly$ in $\mathcal{Z}$. 
\end{enumerate}
We denote by $\ARBV(\cZ ; \R^{d})$ the set of all such curves.  For $t \in [0, +\infty)$, we further denote by $\ell_{\bvc} (t)$ the length of the curve $r \mapsto \bvc (t, r)$. 
\end{definition}
%
 Loosely speaking,  a curve in  $\ARBV(\cZ ; \R^{d})$  may be interpreted as an \emph{augmented} version of a curve $u \in \mathrm{BV}_{\mathrm{loc}}(\III;\R^d)$,
to which we attach Lipschitz continuous transition curves at the  jump points.
We may indeed associate with any  $u\in  \mathrm{BV}_{\mathrm{loc}}(\III;\R^d)$
(which is, in particular, \emph{regulated}, with left and right limits $u^-(t)$ and $u^+(t)$ at each $t\in  \III $), with \emph{jump set} $ \mathrm{J}_u
$, 
a curve 
$\bvc \in \ARBV(\cZ ; \R^{d})$ by 
setting 
\[
\begin{aligned}
&
\text{if }  t \in \III {\setminus} \mathrm{J}_u && 
\bvc(t,r) \equiv 
u(t)  \text{ for all } r \in [0,1],
 \\
&
\text{if }  t \in  \mathrm{J}_u &&  
\bvc(t,r) := 
\begin{cases} 
  u^-(t) & \text{if } r=0
\\
 \text{a transition curve 
  with constant velocity joining $  u^-(t)$ to $  u^+(t)$} & \text{if } r \in (0,1)
 \\
  u^+(t) & \text{if } r=1
\end{cases}
\,.
\end{aligned}
\]
Because of \eqref{properties-continuous-curves},  the resulting $\bvc$ is indeed continuous on $\cZ$, even when originating from a curve $u$ with  jumps. 
Notice that, at any $t\in \mathrm{J}_u$ the transition curve $r\mapsto \bvc(t,r)$ has constant speed on $[0,1]$,
equal to its length $\ell_{\bvc}(t)$ (observe that 
this property 
 may be obtained through \emph{reparametrization}).
  Moreover, we remark that, for all $r_1, r_2 \in [0,1]$ the mappings $t\mapsto \bvc(t,r_1)$ and  $t\mapsto \bvc(t,r_2)$ coincide a.e.\ in $\III$
 (namely, outside $ \mathrm{J}_u
$). Hence, the mappings $t\mapsto  \bvc(t,r)$ share the same distributional derivative $\partial_t \bvc$, i.e.\ $\partial_t \bvc(t,r) = \partial_t \bvc (t,0)$ for all $r\in [0,1]$. 
\par
 Conversely, we may consider the mapping 
\begin{equation}
\label{BV-skeleton}
\BVS\colon \ARBV(\cZ; \R^{d}) \to \BVloc (\III ; \R^{d}), \quad \bvc \mapsto \BVSm_{\bvc}  \quad \text{ where }  \BVSm_{\bvc}(t): =  \bvc(t, 0)\,.
\end{equation}
 Loosely speaking, $\BVSm_{\bvc}$ is the ``$\BV$ skeleton'' of $\bvc$. 
 We notice that $\BVSm_{\bvc}$ is left-continuous, i.e.,  at each $t\in (0,+\infty)$
   its left limit $\BVSm^{-}_{\bvc} (t)$  coincides with $ \BVSm_{\bvc} (t) =\bvc(t, 0)$, while its right limit  $\BVSm_{\bvc}^{+} (t) $ is $  \bvc(t, 1)$.

\paragraph{\bf Augmented reparametrized $\BV$ curves and Lipschitz trajectories.}
For $\bvc \in \ARBV(\cZ ; \R^{d})$, $t \in [0, +\infty)$, and $r \in [0, 1]$ we define
\begin{align*}
L_{\bvc}^{-} (t) & := \sup_{\mathcal{P} \text{ partition of $ [ 0 , t) \times [ 0 , 1 ] $}} 
\sum_{ ( t_{k} , r_{k} ) \in \mathcal{P} }  \| (t_{k}, \bvc (t_{k}, r_{k} )) - (t_{k-1}, \bvc (t_{k-1}, r_{k-1} )) \|\,,  
\\
L_{\bvc}^{+} (t) & := \sup_{\mathcal{P} \text{ partition of $ [ 0 , t] \times [ 0 , 1 ] $}} 
\sum_{ ( t_{k} , r_{k} ) \in \mathcal{P} }  \| (t_{k}, \bvc (t_{k}, r_{k} )) - (t_{k-1}, \bvc (t_{k-1}, r_{k-1} )) \|\,,  
\\
L_{\bvc} (t, r) & := L^{-}_{\bvc} (t) + r  \ell_{\bvc}(t)   \,,
\end{align*}
 (recall that  $\| \partial_r \bvc(t,r)\| \equiv  \ell_{\bvc}(t) $ for all $r\in [0,1]$).   For later use, we further 
 introduce the set
 \begin{equation}
 \label{jump-Stefano}
  \mathfrak{J}_{\bvc} := \{ t \in \III: \,    \|\partial_r  \bvc(t,\cdot)\|  \equiv  \ell_{\bvc} (t) \neq0\}. 
  \end{equation}
\par
 With each curve $\bvc \in \ARBV(\cZ ; \R^{d})$ we may associate a  trajectory    $\sfyy \in \Lipplus 1 \III{\R^{d+1}}$ as follows. For $s \in  \III $ we define
\begin{align}
\label{e:t(s)}
\sft(s) & := \inf \{ t \in  \III  : \, L^{+}_{\bvc} (t) > s \} \,,
\\
\label{e:r(s)}
\sfr(s) & := \begin{cases}
\displaystyle  \frac{s - L^{-}_{\bvc} ( \sft ( s ) )}{ L^{+}_{\bvc} ( \sft ( s ) ) - L^{-}_{\bvc} ( \sft ( s ) )}  & \text{if $L^{+}_{\bvc} (\sft(s)) \neq L^{-} (\sft(s))$}\,,\\[3mm]
\displaystyle 0 & \text{otherwise.}
\end{cases}
\end{align}
In particular, notice that $L^{-}_{\bvc} (\sft(s)) \leq s \leq L^{+}_{u} (\sft(s))$ for every $s \in  \III $. We define the curve $ \sfyy $ by
\begin{equation}
\label{yy4u}
\sfyy(s):= (\sft(s) , \bvc (\sft(s), \sfr(s))),\qquad s \in  \III .
\end{equation}
 Then, $\sfyy$ is $1$-Lipschitz continuous: for $s_{1} < s_{2}$ it holds
\begin{align*}
\| \sfyy(s_{2}) - \sfyy(s_{2})\| & = \big\| \big(\sft(s_{2}), \bvc (\sft(s_{2}) , \sfr(s_{2})) \big) -  \big( \sft(s_{1}),  \bvc (\sft(s_{1}) , \sfr(s_{1}) ) \big) \big \| 
\\
&
\leq L_{\bvc } (\sft(s_{2}), \sfr(s_{2})) - L_{\bvc} (\sft(s_{1}), \sfr(s_{1})) \leq s_{2} - s_{1}\,.
\end{align*}
 Moreover, notice that   $\|\sfyy'(s)\| \equiv  1$ for a.e.~$s \in \III$.   We denote by $\mathscr{T}\colon \ARBV(\cZ ; \R^{d})\to   \ALip \III{\R^{d+1}} $   the map that associates 
with any $\bvc \in  \ARBV(\cZ ; \R^{d}) $ the curve 
$\sfyy \in  \ALip \III{\R^{d+1}} $   from \eqref{yy4u}. 

We also introduce a map $\mathscr{S}\colon \Lipplus 1 \III{\R^{d+1}} \to   \ARBV(\cZ ; \R^{d})$ as follows. For  $\sfyy = (\sft, \sfxx) \in  \Lipplus 1 \III{\R^{d+1}}$  we set
\begin{align}
\label{e:sgamma}
\sfs_{\sfyy}^{-}(t) & := \sup\, \{ s \in  \III  : \, \sft(s) <t\}  && \text{for every $t \in  \III $}\,,\\
\label{e:sgamma-2}
\sfs_{\sfyy}^{+}(t) & := \inf\, \{ s \in   \III  : \, \sft(s) >t\} && \text{for every $t \in   \III $}\,.
\end{align}
Then, $\sfs^{\pm}_{\sfyy} \colon   \III  \to   \III $ satisfy $\sft(\sfs^{\pm}_{\sfyy}(t)) = t$ for every $t \in   \III $, and 
 \begin{align}
\label{e:ineq-sfs}
\sfs^{-}_{\sfyy}(\sft(s)) \leq s \leq \sfs^{+} (\sft(s)) \qquad \text{for every $s \in  \III $.}
\end{align} 
Moreover, if  $\sft'(s) >0$ at some $s \in   \III $, then $\sfs^{\pm}_{\sfyy}(\sft(s)) = s$. Indeed, if $\sfs^{-}_{\sfyy}(\sft(s)) <s$ or $\sfs^{+} (\sft(s)) >s$, then we would have $\sft(\sigma) = \sft(s)$ for $\sigma \in (\sfs^{-}_{\sfyy} (\sft(s)) , \sfs^{+}_{\sfyy} (\sft(s)))$, which would contradict the assumption $\sft'(s) >0$.
Since $\sfyy$ is $1$-Lipschitz continuous and~$\sft$ is monotone non-decreasing with $\sft(s) \to +\infty$ for $s \to +\infty$, we have that
the functions
$\sfs^{\pm}_{\sfyy}$ are monotone non-decreasing, $\sfs^{\pm}_{\sfyy} (t) \to +\infty$ as $t \to +\infty$. 
We define $\mathscr{S} (\sfyy) \in  \ARBV(\cZ ; \R^{d})$ via
\begin{equation}
	\label{e:function S}
\mathscr{S} (\sfyy) (t, r) := \sfxx \big ( \sfs^{-}_{\sfyy} (t) + r ( \sfs^{+}_{\sfyy} (t) -   \sfs^{-}_{\sfyy} (t) ) \big) \,.
\end{equation}
 We will  prove in Appendix \ref{another appendix}  the following.

\begin{lemma}
\label{l:mapsST}
For every $\sfyy \in  \ALip \III {\R^{d+1}}$   and every $\bvc \in  \ARBV(\cZ ; \R^{d}) $ we have
\begin{align}
\label{inverse-of-another}
 \mathscr{T} (\mathscr{S} (\sfyy))  = \sfyy\,, \qquad   \mathscr{S} (\mathscr{T} (\bvc ))  = \bvc \,.
 \end{align}
 \end{lemma}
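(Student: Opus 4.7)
The plan is to introduce, for each $\bvc \in \ARBV(\cZ;\R^d)$ and $\sfyy=(\sft,\sfxx)\in \ALip(\III;\R^{d+1})$, the auxiliary `arc-length' functions $L^\pm_\bvc$ and the `level-set endpoints' $\sfs^\pm_\sfyy$, and to show that they are mutual inverses in the following sharp sense:
\begin{equation*}
    \sfyy=\mathscr T(\bvc)\ \Longrightarrow\ \sfs^\pm_\sfyy(t)=L^\pm_\bvc(t),\qquad
    \bvc=\mathscr S(\sfyy)\ \Longrightarrow\ L^\pm_\bvc(t)=\sfs^\pm_\sfyy(t),\qquad\forall\,t\in \III.
\end{equation*}
Once these duality formulae are established, both identities in \eqref{inverse-of-another} will reduce to an immediate matching of definitions.

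For the first duality, assume $\bvc=\mathscr S(\sfyy)$ with $\sfyy\in\ALip(\III;\R^{d+1})$. Since $\|\sfyy'\|\equiv 1$ and $\sft$ is constant on $[\sfs^-_\sfyy(t),\sfs^+_\sfyy(t)]$, necessarily $\|\sfxx'\|\equiv 1$ a.e.~on the latter interval, so the curve $r\mapsto \bvc(t,r)$ has length $\sfs^+_\sfyy(t)-\sfs^-_\sfyy(t)=\ell_\bvc(t)$. A telescoping computation on the total variation over $[0,t)\times[0,1]$ (resp.~$[0,t]\times[0,1]$), combined with the fact that $\mathscr S(\sfyy)$ traces the image of $\sfxx$ at unit speed in the $[0,1]$ variable on jump intervals, yields $L^\pm_\bvc(t)=\sfs^\pm_\sfyy(t)$. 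For the second duality, assume $\sfyy=\mathscr T(\bvc)$. From \eqref{e:t(s)} one has $\sft(s)<t$ iff there exists $\tau<t$ with $L^+_\bvc(\tau)>s$, i.e.~iff $L^-_\bvc(t)>s$ (exploiting $L^-_\bvc(t)=\sup_{\tau<t}L^+_\bvc(\tau)$, which follows from the monotonicity of $L^+_\bvc$). Substituting in \eqref{e:sgamma}--\eqref{e:sgamma-2} gives $\sfs^-_\sfyy(t)=L^-_\bvc(t)$ and, analogously, $\sfs^+_\sfyy(t)=L^+_\bvc(t)$.

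The conclusion then proceeds by direct verification. For $\mathscr T(\mathscr S(\sfyy))=\sfyy$, let $\bvc=\mathscr S(\sfyy)$ and set $(\tilde \sft,\tilde \sfxx)=\mathscr T(\bvc)$. By the first duality, $\tilde \sft(s)=\inf\{t:\sfs^+_\sfyy(t)>s\}=\sft(s)$, using the continuity and monotonicity of $\sft$. Plugging the formula for $\sfr(s)$ in \eqref{e:r(s)} into \eqref{yy4u} and using the definition \eqref{e:function S} of $\mathscr S(\sfyy)$, the argument of $\sfxx$ simplifies algebraically to $s$, giving $\tilde \sfxx(s)=\sfxx(s)$; the degenerate case $\sfs^-_\sfyy(\sft(s))=\sfs^+_\sfyy(\sft(s))$ is handled via \eqref{e:ineq-sfs} which forces the shared value to equal $s$. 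For $\mathscr S(\mathscr T(\bvc))=\bvc$, let $\sfyy=\mathscr T(\bvc)$ and fix $(t,r)\in\cZ$. The second duality gives $\sfs^\pm_\sfyy(t)=L^\pm_\bvc(t)$, so by \eqref{e:function S} we must evaluate $\sfxx$ at $s_0:=L^-_\bvc(t)+r\ell_\bvc(t)$. A short check using \eqref{e:t(s)}--\eqref{e:r(s)} shows $\sft(s_0)=t$ and $\sfr(s_0)=r$ (when $\ell_\bvc(t)>0$; otherwise $s_0=L^-_\bvc(t)=L^+_\bvc(t)$ and the point is trivial), so \eqref{yy4u} yields $\sfxx(s_0)=\bvc(t,r)$.

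The main technical obstacle will be the careful bookkeeping of one-sided limits at the (countably many) points where $L^+_\bvc$ jumps, equivalently, where $\sft$ is constant on a nondegenerate interval. In particular, proving that $L^-_\bvc(t)=\sup_{\tau<t}L^+_\bvc(\tau)$ and, dually, that $\{s:\sft(s)=t\}=[\sfs^-_\sfyy(t),\sfs^+_\sfyy(t)]$ requires invoking the continuity of $\bvc$ in the order topology on $\cZ$ (cf.~\eqref{properties-continuous-curves}) and the constant-speed property (3) in the definition of $\ARBV$. All other steps are essentially algebraic manipulations.
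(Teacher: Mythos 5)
Your proposal is correct and follows essentially the same route as the paper's proof: establish the two "duality" identities $\sfs^\pm_\sfyy(t)=L^\pm_\bvc(t)$ (one for $\sfyy=\mathscr{T}(\bvc)$, one for $\bvc=\mathscr{S}(\sfyy)$, the latter resting on $\|\sfyy'\|\equiv 1$), and then unwind the definitions of $\mathscr{T}$ and $\mathscr{S}$ using \eqref{e:t(s)}--\eqref{e:r(s)}, \eqref{yy4u} and \eqref{e:function S}. The only discrepancy is a cosmetic mislabeling (you call the $\bvc=\mathscr{S}(\sfyy)$ case the "first duality" after listing it second), which does not affect the argument.
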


Recall that  $ \Lipplus 1 \III{\R^{d+1}}$ is endowed  with the following distance, which metrizes the uniform convergence on compact subintervals of~$  \III $ 
(see Appendix \ref{old:appD}):
 \begin{equation}
 \label{D-recalled-later}
D (\sfyy_{1}, \sfyy_{2}) := \sum_{n =1}^{\infty} 2^{-n} \sup_{s \in [0, n]} \big( \min\{ \| \sfyy_{1}(s) - \sfyy_{2} (s)\| \ , \ 1\} \big) \qquad \text{for $\sfyy_{1}, \sfyy_{2} \in  \Lipplus 1 \III{\R^{d+1}}$.}
\end{equation}
We further define the distance $D_{\ARBV}$ on  $ \ARBV(\cZ ; \R^{d})$ via
\begin{displaymath}
D_{\ARBV}(\bvc_{1}, \bvc_{2}) : = D( \mathscr{T} (\bvc_{1}) , \mathscr{T} (\bvc_{2}))\,.
\end{displaymath}
Then,   the map 
$\mathscr{T}\colon    \ARBV(\cZ ; \R^{d}) \to \Lipplus 1 \III{\R^{d+1}}  $  is trivially continuous. 
Likewise,
the restriction of  
$\mathscr{S}
$ to  $\ALip \III {\R^{d+1}}$ is continuous with values in $ \ARBV(\cZ ; \R^{d})$.   

\subsection{Bridging the  probabilistic representations, from Lipschitz to $\ARBV $ curves}
\label{ss:6.2}
 In order to obtain our superposition principle by $\ARBV$ curves, 
we need to  first  revisit the probabilistic representation  for solutions $(\mu,\nnu)$ of the continuity equation 
guaranteed by Theorem \ref{t.1}.   Recall \eqref{e.17-a} and \eqref{e.17}, 
 (which in fact   involved the minimal pair $(\mu,   \bar{\nnu} )$
 associated with $(\mu,   \nnu )$), 
namely 
\begin{equation}
\label{e:17-bis}
  \mu= \fre_{\sharp }\big(\frt' \, \teeta\big), \qquad 
   \bar{ \nnu} = \fre_{\sharp }\big( \frxx'\,  \,\teeta\big), \qquad |(\mu,\bar\nnu)| =   \fre_{\sharp } ( \norm{ \fry' }  \teeta)  
=\fre_{\sharp } \teeta\,,
\end{equation}
 (where
$\fre \colon\III\times  \rmC(\III;\R^{d+1}) \to \R^{d+1} $ is the evaluation mapping, and
the Borel  maps $ \frt'$ and $\frxx'$  are defined on 
$\III\times \Lip(\III;\R^{d+1})$ by 
$\frt'(s,\sfyy): = \sft'(s)$,  $\frxx'(s,\sfyy):=  \sfxx'(s)$).  
The above representation brings into play a measure 
 $\eeta \in \sP (\Lipplus 1 \III{\R^{d+1}})$   supported on curves
  $\sfyy  \in   \ALip \III{\R^{d+1}}$  
  solving the Cauchy problem~\eqref{better-Cauchy}.
  \par
   Let us introduce a general construction that provides an equivalent formulation of \eqref{e:17-bis}. 
  Firstly, observe that every 
   $\sfyy \in \Lipplus 1 \III{\R^{d+1}}$  induces the vector measure 
  \begin{equation}
  \label{construction-par}
  \begin{aligned}
  & \omega_{\sfyy}: = \sfyy_{\sharp }((\sft',\sfxx') \mathcal{L}^1)   \in  \mathcal{M}_{\loc}( \R^{d+1}_{+}  ; \R^{d+1})  
  \\
  &
   \text{i.e., }  \quad
   \langle \omega_{\sfyy}, \varphi \rangle  = \int_{\III} \varphi(\sft(s), \sfxx(s)) \cdot (\sft'(s),\sfxx'(s)) \dd s  \text{ for all } \varphi=(\varphi_0,\bvarphi) \in \Cc ( \R^{d+1}_{+}  ;\R^{d+1})\,.
   \end{aligned}
  \end{equation}
Namely, $\omega_\sfyy$ is   the integration measure along the curve $\sfyy$. 
  Now, for a given probability measure  $\llambda \in \sP (\Lipplus 1 \III{\R^{d+1}})$  fulfilling the integrability conditions \eqref{eq:condition2},  
  we may consider the measure 
  \[
 \Upomega^{\llambda}: = \int_{ \Lipplusname 1} \omega_{\sfyy} \, \dd \llambda(\sfyy) \in \mathcal{M}_{\loc} (  \R^{d+1}_{+}; \R^{d+1}) \,.
  \]
  \par
  Let now $\llambda$ be the measure  $\eeta \in \sP (\Lipplus 1 \III{\R^{d+1}})$ supported on  
  $\ALip  \III{\R^{d+1}}$ and 
   providing the representation formulae \eqref{e:17-bis}.  It follows from such representation that  
  for  $\eeta$-almost every 
  $\sfyy \in  \Lipplus 1 \III{\R^{d+1}}$ we have $ | \omega_{\sfyy}| =  \sfyy_{\sharp }(\| \sfyy'\|  \mathcal{L}^1)$ and 
\[
| \Upomega^{\eeta}| = \int_{\Lipplusname 1} |\omega_{\sfyy}| \, \dd \eeta(\sfyy)\,.
\]
 Hence,    \eqref{e:17-bis} reformulates in compact form as
\begin{equation}
\label{concise-reformulation}
(\mu,\nnu) =\Upomega^{\eeta}, \qquad |(\mu,\nnu)|  = |\Upomega^{\eeta}|= \int_{\Lipplusname 1} |\omega_{\sfyy}| \, \dd \eeta(\sfyy).
\end{equation} 
This observation is at the core of the representation provided by Theorem \ref{th:BV-representation} ahead. 
\par
 Indeed,  with any  $\bvc \in \ARBV(\cZ;\R^d)$ we associate  the measure $\vartheta_{\bvc} \in  \mathcal{M}_{\loc} (\R^{d+1}_{+}; \R^{d+1})$  defined by 
\begin{displaymath}
\vartheta_{\bvc} := \omega_{ \mathscr{T} (\bvc)} \quad \text{i.e., } \ 
\int_{\R^{d+1}} \varphi ( t, x) \, \di \vartheta_{\bvc} (t, x) = \int_{\III} \varphi (\sfyy (s)) \cdot \sfyy'(s) \, \di s 
\text{ with $\sfyy= \mathscr{T} (\bvc)$, } 
\end{displaymath}
for 
$\varphi= (\varphi_{0}, \boldsymbol{\varphi}) \in \Cc (\R^{d+1}_{+};\R^{d+1})$. By construction, we have that
 for every $\varphi \in {\rm C}_{\rm c} (   \R^{d+1}_{+} ; \R^{d+1})$
 the map $\bvc \mapsto \int_{\R^{d+1}} \varphi \, \di \vartheta_{\bvc}$ is continuous with respect to the convergence in  $\ARBV (\cZ; \R^{d})$. 
  Hence, for $A \subseteq   \R^{d+1}_{+} $ open we infer that $\bvc \mapsto \vartheta_{\bvc} (A)$ is Borel. The following result   allows us to express the measure~$\Upomega^{\eeta}$ 
in terms of a probability measure $\neweta$ on  curves in $\ARBV  (\cZ; \R^{d}).$ 
\begin{lemma}
\label{l:switch}
 For every $\llambda \in \mathcal{P}(\Lipplus 1 \III { \R^{d+1}})$  concentrated in 
 $\ALip \III {\R^{d+1}}$,   let $\widehat\llambda: = \mathscr{S}_{\sharp }\llambda$. Then, 
\begin{enumerate}
\item
$\widehat\llambda$ is a Borel probability measure over $\ARBV (\cZ; \R^{d})$;
\item
 if $\widehat\llambda$ satisfies the integrability condition \eqref{eq:condition2}, then 
\begin{equation}
\label{e:Psi-Omega}
\Uptheta^{\widehat{\llambda}}:= \int_{\ARBV} \vartheta_{\bvc}\, \di \widehat{\llambda}(\bvc) \text{ is a measure in } \mathcal{M}_{\loc} (  \R^{d+1}_{+}; \R^{d+1})\,.
\end{equation}
\item Let $\llambda = \eeta $  fulfill \eqref{e:17-bis}.  Then,
\begin{equation}
\label{final-claim-lemma}
\Uptheta^{\neweta} =\Upomega^{\eeta} = (\mu,\nnu) \quad \text{and} \quad |\Uptheta^{\neweta}| =  \int_{\ARBV}| \vartheta_{\bvc}|\, \di \neweta(\bvc)=|  (\mu,\nnu) |\,.
 \end{equation}
 \end{enumerate}
 \end{lemma}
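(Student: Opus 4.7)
For Part (1), the argument is essentially by fiat: since $\llambda$ is concentrated on the Borel set $\ALip\III{\R^{d+1}}$ and the restriction of $\mathscr{S}$ to $\ALip\III{\R^{d+1}}$ was already shown to be continuous with values in $\ARBV(\cZ;\R^d)$ (equipped with the distance $D_\ARBV$), the push-forward $\widehat\llambda=\mathscr{S}_\sharp\llambda$ is well-defined and yields a Borel probability measure on $\ARBV(\cZ;\R^d)$.

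For Part (2), I would check that the assignment $\varphi\mapsto\int_\ARBV\langle\vartheta_\bvc,\varphi\rangle\,\di\widehat\llambda(\bvc)$ is a continuous linear functional on $\rmC_{\rm c}(\R_+^{d+1};\R^{d+1})$. Borel-measurability of the integrand in $\bvc$ is immediate from the continuity of $\bvc\mapsto\langle\vartheta_\bvc,\varphi\rangle$ noted right before the statement. For local finiteness, fix a compact $K=[0,T]\times\overline{B_R(0)}\subset\R_+^{d+1}$; since $\mathscr{T}(\bvc)\in\ALip\III{\R^{d+1}}$ has unit speed a.e., one has
\begin{equation*}
|\vartheta_\bvc|(K)=|\omega_{\mathscr{T}(\bvc)}|(K)=\mathscr{T}(\bvc)_\sharp(\|\mathscr{T}(\bvc)'\|\mathcal L^1)(K)\le \mathcal L^1\bigl(\mathscr{T}(\bvc)^{-1}(K)\bigr).
\end{equation*}
Performing the change of variables $\bvc=\mathscr{S}(\sfyy)$ and using $\mathscr{T}\circ\mathscr{S}=\id$ from Lemma~\ref{l:mapsST}, the right-hand side becomes $\mathcal L^1(\sfyy^{-1}(K))\le\teeta(\Domain T\cap\{\|\sfxx\|\le R\})$. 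This quantity is $\llambda$-integrable precisely by the integrability condition~\eqref{eq:condition2}, which reads (recalling $\|\fry'\|\equiv 1$ on $\ALip\III{\R^{d+1}}$) as $\teeta(\Domain T)<\infty$. Hence $\Uptheta^{\widehat\llambda}\in\mathcal M_{\loc}(\R_+^{d+1};\R^{d+1})$ by the Riesz representation theorem.

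For Part (3), the argument is a direct application of the pushforward change-of-variables and the identity $\mathscr{T}\circ\mathscr{S}=\id$ on $\ALip\III{\R^{d+1}}$. For every $\varphi\in\rmC_{\rm c}(\R_+^{d+1};\R^{d+1})$ one computes
\begin{equation*}
\bigl\langle\Uptheta^{\neweta},\varphi\bigr\rangle=\int_\ARBV\langle\omega_{\mathscr{T}(\bvc)},\varphi\rangle\,\di(\mathscr{S}_\sharp\eeta)(\bvc)=\int_{\Lipplusname{1}}\langle\omega_{\mathscr{T}(\mathscr{S}(\sfyy))},\varphi\rangle\,\di\eeta(\sfyy)=\int_{\Lipplusname{1}}\langle\omega_\sfyy,\varphi\rangle\,\di\eeta(\sfyy)=\bigl\langle\Upomega^\eeta,\varphi\bigr\rangle,
\end{equation*}
which, together with~\eqref{concise-reformulation}, gives $\Uptheta^{\neweta}=\Upomega^\eeta=(\mu,\nnu)$. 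The same pushforward identity applied to the scalar integrand $|\omega_\sfyy|$ yields $\int_\ARBV|\vartheta_\bvc|\,\di\neweta=\int_{\Lipplusname{1}}|\omega_\sfyy|\,\di\eeta=|(\mu,\nnu)|$, where the last equality is the second identity in~\eqref{concise-reformulation}. Combining this with the general lower-semicontinuity inequality $|\Uptheta^{\neweta}|\le\int|\vartheta_\bvc|\,\di\neweta$ and with $\Uptheta^{\neweta}=(\mu,\nnu)$, we conclude $|\Uptheta^{\neweta}|=|(\mu,\nnu)|=\int|\vartheta_\bvc|\,\di\neweta$.

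The only mildly delicate point is verifying that $\mathscr{S}$ maps $\ALip\III{\R^{d+1}}$ Borel-measurably into $\ARBV(\cZ;\R^d)$ and that the integration against $\widehat\llambda$ commutes with the change of variables as displayed; both, however, reduce to the continuity of $\mathscr{S}$ on $\ALip\III{\R^{d+1}}$ and to the identity $\mathscr{T}\circ\mathscr{S}=\id$ already established in Lemma~\ref{l:mapsST}, so no genuine obstacle arises.
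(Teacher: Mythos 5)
Your proof is correct and follows essentially the same route as the paper's (which is extremely terse on Parts~(2) and~(3), so your version is more detailed but not a genuinely different argument). One small imprecision: in Part~(2) you say that \eqref{eq:condition2} ``reads'' as $\teeta(\Domain T)<\infty$ after recalling $\|\fry'\|\equiv1$ on $\ALip\III{\R^{d+1}}$, but \eqref{eq:condition2} only bounds $\int_{\Domain T}\|\frxx'\|\,\dd\teeta$; you also need the identity $\int_{\Domain T}\frt'\,\dd\teeta=T$ (noted in the paper just before Theorem~\ref{p:superposition}) to deduce $\int_{\Domain T}\|\fry'\|\,\dd\teeta\le T+\int_{\Domain T}\|\frxx'\|\,\dd\teeta<\infty$, which then equals $\teeta(\Domain T)$ on $\ALip$.
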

\begin{proof}
 Since the map $\mathscr{S}$ is continuous  on  $\ALip \III {\R^{d+1}}$,    we have that 
  $\widehat\llambda= \mathscr{S}_{\sharp} \llambda$  is a Borel probability measure over $\ARBV (\cZ; \R^{d})$.

Let $\llambda$ additionally satisfy ~\eqref{eq:condition2}. 
To show that $\Uptheta^{\widehat\llambda} \in \mathcal{M}^{+}_{\loc} ( \R^{d+1}_{+};; \R^{d+1})$, it is enough to notice that for every $T \in [0, +\infty)$ 
\begin{displaymath}
\Uptheta^{\widehat\llambda} ([0, T] \times \R^{d}) = \int_{\Domain{T}} \| \sfyy' (s)\| \, \di s \, \di \widehat\llambda(\sfyy) <+\infty
\end{displaymath}
(where $\Domain T$ is from \eqref{eq:domainT}). 

 When  $\llambda = \eeta$ fulfills  \eqref{e:17-bis},  \eqref{final-claim-lemma} obviously follows from \eqref{concise-reformulation}.  
\end{proof}

In the upcoming Proposition \ref{p:identify-theta-alpha}, we give an alternative formula for $\vartheta_{\bvc}$. 
It brings into play
\begin{itemize}
\item[-] the mapping $\BVS \colon \ARBV(\cZ;\R^d) \to \BVloc(\III;\R^d) $  from \eqref{BV-skeleton} that associates  with each $\bvc \in  \ARBV(\cZ;\R^d) $  its $\BV$ 
 `skeleton' $\BVSm_{\bvc}(t):= \bvc(t,0)$,
\item[-] as well as the jump transitions $[0,1] \ni r  \mapsto \bvc(t,r) $ at the discontinuity points $t$ of $\BVSm_{\bvc}$. 
\end{itemize}
To prepare the statement of   Proposition \ref{p:identify-theta-alpha},  let  us recall that 
%
%
 the distributional derivative $(\BVSm_{\bvc})'_{\mathrm{d}} $ is a Radon vector measure on $\III$ that  can be decomposed into the sum of three mutually singular measures
\[
(\BVSm_{\bvc})'_{\mathrm{d}} = (\BVSm_{\bvc})'_{\mathcal{L}^1} +  (\BVSm_{\bvc})'_{\mathrm{C}} +  (\BVSm_{\bvc})'_{\mathrm{J}}. 
\]
Now,
\begin{itemize}
\item[-]
 $(\BVSm_{\bvc})'_{\mathcal{L}^1} $  is  the absolutely continuous part with respect to $\mathcal{L}^1$, is  given by $(\BVSm_{\bvc})'_{\mathcal{L}^1}  = \BVSm_{\bvc}'  \mathcal{L}^1$
    (with $\BVSm_{\bvc}'$ the a.e.-defined pointwise derivative of $\BVSm_{\bvc}$).
 In turn,  since the a.e.\ defined derivative
  of $t\mapsto \partial_t \bvc (t,r)$ does not, in fact, depend on $r$, we have that $ \BVSm_{\bvc}' = \partial_t \bvc (\cdot,0) =  \partial_t \bvc (\cdot,r)$ $ \mathcal{L}^1$-a.e.\ in 
  $\III$ for every $r\in [0,1]$. Therefore, hereafter we will use the more evocative notation
  \begin{equation}
  \label{partialL}
    \partial^{\mathrm{L}}_t \bvc \quad \text{in place of} \quad  \BVSm_{\bvc}' \,.
    \end{equation}
    \item[-] $ (\BVSm_{\bvc})'_{\mathrm{C}}$ is the so-called Cantor part of $\BVSm_{\bvc}$, still satisfying 
$(\BVSm_{\bvc})'_{\mathrm{C}}(\{ t\}) =0$ for all $t\in [0,+\infty)$. In accordance with \eqref{partialL}, we will write 
  \begin{equation}
  \label{partialC}
    \partial^{\mathrm{C}}_t \bvc \quad \text{in place of} \quad   (\BVSm_{\bvc})'_{\mathrm{C}}\,.
    \end{equation}
 \item[-] $ (\BVSm_{\bvc})'_{\mathrm{J}}$ is a discrete measure concentrated on the
 (at most countable)
   jump set  of $\BVSm_{\bvc}$, i.e.\
\[
\mathrm{J}_{\BVSm_{\bvc}}: = \{ t \in \III\, : \ \BVSm_{\bvc}^{+} (t) \neq    
 \BVSm_{\bvc}^{-} (t)    \} = \{ t \in \III\, : \ \bvc(t, 0) \neq \bvc (t, 1)\}  \subseteq \mathfrak{J}_{\bvc}.
\]
\end{itemize} 
Observe that
$  \partial^{\mathrm{L}}_t \bvc   \mathcal{L}^1 + \partial^{\mathrm{C}}_t \bvc $  does not charge  $\mathfrak{J}_{\bvc}$ 
(it is indeed known 
 as the diffuse part of $(\BVSm_{\bvc})'_{\mathrm{d}}$). 
 Clearly, it  is concentrated on  
\begin{equation}
\label{complement-of-jump}
C_{\BVSm_{\bvc}}:= \III \setminus \mathrm{J}_{\BVSm_{\bvc}}. 
\end{equation}
 We further denote by 
\begin{equation}
\label{complement-of-transition}
C_{\bvc} := \III \setminus \mathfrak{J}_{\bvc}
\end{equation}
and notice that $C_{\bvc} \subseteq C_{\BVSm_{\bvc}}$ and $| \partial^{\mathrm{L}}_t \bvc   \mathcal{L}^1 + \partial^{\mathrm{C}}_t \bvc| (C_{\BVSm_{\bvc}} \setminus C_{\bvc}) =0$.  While $  \partial^{\mathrm{L}}_t \bvc   \mathcal{L}^1 $ and $ \partial^{\mathrm{C}}_t \bvc $  will be encompassed in the alternative representation of $\vartheta_{\bvc}$, 
the jump contribution will feature, in place of  $(\BVSm_{\bvc})'_{\mathrm{J}} $,  the measures 
\begin{equation}
\label{construction-u-2}
\begin{aligned}
&
 \boldsymbol{j}_{t, \bvc}  \in  \mathcal{M}_{\loc} (  \R^{d+1}_{+}; \R^{d+1}),  \ t \in \mathfrak{J}_{\BVSm_{\bvc}},  \text{ given by } 
 \\
 &
\left\langle \boldsymbol{j}_{t, \bvc} , \boldsymbol{\varphi} \right\rangle := \int_{0}^{1} \boldsymbol{\varphi} (t, \bvc (t,r) ) {\cdot} \partial_{r} \bvc (t, r) \, \di r 
\text{ for all $\boldsymbol{\varphi} \in \Cc(\DST ; \R^{d})$.}
\end{aligned}
\end{equation} 
\par 
Finally, for fixed $\bvc \in  \ARBV (\cZ; \R^{d}) $, we 
  introduce the map (where the  gothic letter $G$ stands for 'graph')  
\[
\bvcgr_{\bvc} \colon \III \to \R^{d+1}_{+}, \quad \bvcgr_{\bvc}( t): =  (t, \BVSm_{\bvc} (t)) = (t, \bvc (t, 0))\,,
\]
and observe that  
it is Borel measurable, 
since
may rewrite it as the composition $\gamma_{2} \circ \gamma_{1}$ of the following  functions:
  \begin{align*}
  \gamma_{1} & \colon \III  \to  \III \times \III \qquad  \gamma_{1} (t) := ( t,  \sfs_{\mathscr{T} (\bvc)}^{-} (t) )  \,,\\
  \gamma_{2} & \colon \III \times \III \to  \R^{d+1}_{+} \qquad \gamma_{2} (t, \sigma ) = (t, \pi   ( \mathscr{T}(\bvc) ( \sigma)) ) \,,
  \end{align*}
  where $\pi \colon \R^{d+1}_{+} \to \R^{d}$ is the projection $\pi (t, x) := x$. Then, $\gamma_{2}$ is continuous, while $\gamma_{1}$ is Borel measurable, as the second component is lower-semicontinuous.
In fact, $\bvcgr_{\bvc}$ will  come into play in the representation formula \eqref{teta-equals-alpha}   for $\tetatrue{\bvc} $,
where  $\tetatrue{\bvc} $ results from the sum of three contributions involving the 
\emph{absolutely continuous}, the 
\emph{Cantor} and the 
\emph{jump} parts of the distributional derivative $(\BVSm_{\bvc})'_{\mathrm{d}}$;  the corresponding mappings 
$\mathscr{A}$, $\mathscr{C}$, and $\mathscr{J}$, cf.\ \eqref{bvc2measures}, 
will be shown to  be Borel   measurable.
\par
 Let us emphasize that, the `jump contribution' to $\tetatrue{\bvc} $ features 
 the jump transitions $r
\mapsto \bvc(t,r)$  at the  transition times $t \in \mathfrak{J}_{\bvc}$, i.e., those times $t$ such that $\ell_{\bvc} (t) = \| \partial_{r} \bvc (t, r) \| \neq 0$,  by means of the measures $ \boldsymbol{j}_{t, \bvc} $ of  \eqref{construction-u-2}.  We notice that we cannot, in principle, only consider the jump points $t \in \mathrm{J}_{\BVSm_{\bvc}}$ in the jump contribution $ \boldsymbol{j}_{t, \bvc}$ of~$\vartheta_{\bvc}$, as it may happen that $\bvc (t, 0) = \bvc (t, 1)$ but a transition occurs. 
Nevertheless, 
 in the representation result of Theorem~\ref{th:BV-representation}  there will come into play curves $\bvc$ such that 
$\mathfrak{J}_{\bvc}$ may be replaced by $\mathrm{J}_{\BVSm_{\bvc}}$. 
  \begin{proposition}
  \label{p:identify-theta-alpha}
  For any  $\bvc \in  \ARBV (\cZ; \R^{d})$, the measure  $\tetatrue{\bvc} $ is given by 
  \begin{align}
&
  \label{teta-equals-alpha}
\tetatrue{\bvc} : = (\bvcgr_{\bvc})_{\sharp}  \left(   (1, \partial^{\mathrm{L}}_t \bvc) \mathcal{L}^1  + (0,  \partial^{\mathrm{C}}_t \bvc)   \right) 
+  \sum_{t \in \mathfrak{J}_{\bvc}} \delta_t {\otimes}  \boldsymbol{j}_{t, \bvc}\,.
\end{align}
The above measures can be expressed in terms of $\sfyy =  \mathscr{T} (\bvc)$  as 
  \begin{subequations}
  \label{e:631}
 \begin{align}
   \label{e:616}
 \left\langle  (\bvcgr_{\bvc})_{\sharp} \big((1, \partial^{\mathrm{L}}_t \bvc) \mathcal{L}^1  + (0,  \partial^{\mathrm{C}}_t \bvc)  \big) ,  \varphi \right\rangle &  =
 \int_{ \mathscr{C}_{\sfyy} } \varphi_{0} (\sfyy(s) ) \, \sft'(s) \, \di s+
  \int_{\mathscr{C}_{\sfyy}} \bvarphi( \sfyy(s) ) \cdot \sfxx'(s)\, \di s \,,
 \\
 \label{e:615}
  \left\langle \boldsymbol{j}_{t, \bvc} , \boldsymbol{\varphi} \right\rangle
  & = 
   \int_{\sfs_{\sfyy}^{-}(t)}^{\sfs_{\sfyy}^{+}(t)}  \bvarphi (\sfyy(s) ) \cdot  \sfxx'(s) \, \di s
 \end{align}
 \end{subequations}
 for all $\varphi = (\varphi_0,\bvarphi) 
 \in \Cc(\DST ; \R^{d+1})$, 
 where  $\mathscr{C}_{\sfyy}: =  \sft^{-1} (C_{\bvc})$ and $C_{\bvc}$ is from \eqref{complement-of-transition}. 
 \par
 Finally, the mappings 
 \begin{equation}
 \label{bvc2measures}
 \begin{cases}
 \mathscr{A}\colon \ARBV (\cZ; \R^{d}) \to   \mathcal{M} ( \III \times \R^{d}; \R^{d+1}),  & \bvc \mapsto  (\bvcgr_{\bvc})_{\sharp} (1, \partial^{\mathrm{L}}_t \bvc) \mathcal{L}^1 
 \\
 \mathscr{C}\colon \ARBV (\cZ; \R^{d}) \to   \mathcal{M} ( \III \times \R^{d}; \R^{d+1}),  & \bvc \mapsto   (\bvcgr_{\bvc})_{\sharp}  (0,  \partial^{\mathrm{C}}_t \bvc) 
 \\
  \mathscr{J}\colon \ARBV (\cZ; \R^{d}) \to   \mathcal{M} ( \III \times \R^{d}; \R^{d+1}),  & \bvc \mapsto   \sum_{t \in \mathfrak{J}_{\bvc}}  \delta_t {\otimes}  \boldsymbol{j}_{t, \bvc}
 \end{cases} \qquad \text{are Borel}.
 \end{equation}
  \end{proposition}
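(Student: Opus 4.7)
The plan is to compute $\vartheta_{\bvc}=\sfyy_\sharp((\sft',\sfxx')\mathcal L^1)$ with $\sfyy=\mathscr T(\bvc)=(\sft,\sfxx)$ by decomposing $\III$ into the continuous part $\mathscr{C}_{\sfyy}$ and the (at most countable) family of jump intervals. By the very definitions \eqref{e:t(s)}--\eqref{e:r(s)} and Lemma~\ref{l:mapsST}, one has the disjoint decomposition (up to a null set) $\III=\mathscr{C}_{\sfyy}\sqcup\bigsqcup_{t\in\mathfrak{J}_{\bvc}}(\sfs_{\sfyy}^{-}(t),\sfs_{\sfyy}^{+}(t))$, where each interval $(\sfs_{\sfyy}^{-}(t),\sfs_{\sfyy}^{+}(t))$ has length $\ell_{\bvc}(t)$, $\sft\equiv t$ on it (so $\sft'\equiv0$ and only $\bvarphi$ contributes), and $\sfxx(s)=\bvc(t,\sfr(s))$ with $\sfr(s)=(s-\sfs_{\sfyy}^{-}(t))/\ell_{\bvc}(t)$ an affine parametrization of $[0,1]$. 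The transition contribution to $\vartheta_{\bvc}$ from $t\in\mathfrak{J}_{\bvc}$ is then obtained by the linear change of variable $r=\sfr(s)$, using $\sfxx'(s)=\partial_r\bvc(t,\sfr(s))/\ell_{\bvc}(t)$: this directly yields \eqref{e:615}, and its sum over $t\in\mathfrak{J}_{\bvc}$ reproduces the piece $\sum_{t\in\mathfrak{J}_{\bvc}}\delta_t\otimes \boldsymbol{j}_{t,\bvc}$ of \eqref{teta-equals-alpha}.

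For the continuous contribution on $\mathscr{C}_{\sfyy}$, one first observes that $\sft|_{\mathscr{C}_{\sfyy}}$ must be strictly increasing (a flat interval would force its value to lie in $\mathfrak{J}_{\bvc}$, contradicting its membership in $C_{\bvc}$), so up to null sets it is a Lipschitz Borel bijection onto $C_{\bvc}$, and $\sfyy(s)=\bvcgr_{\bvc}(\sft(s))$ on $\mathscr{C}_{\sfyy}$ because $\bvc(t,\cdot)$ is constant for $t\in C_{\bvc}$. The Lipschitz change of variable $t=\sft(s)$, together with the $\mathcal L^1$-negligibility of $\mathfrak{J}_{\bvc}$, then immediately gives the $\varphi_{0}$-part of \eqref{e:616}. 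For the vector part, the key observation is that both sides of the target identity are Radon vector measures supported on the graph $\bvcgr_{\bvc}(C_{\bvc})$; therefore the projection $\pi^{0}_{\sharp}$ with $\pi^{0}(t,x)=t$ acts \emph{bijectively} between vector measures supported on that graph and vector measures on $C_{\bvc}$, so it is enough to establish
\[
\sft_{\sharp}\bigl(\sfxx'\,\mathcal L^{1}\mres\mathscr{C}_{\sfyy}\bigr)=\bigl(\partial^{\mathrm L}_{t}\bvc\,\mathcal L^{1}+\partial^{\mathrm C}_{t}\bvc\bigr)\mres C_{\bvc}\qquad\text{in }\calM(\III;\R^{d}).
\]

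I would verify this identity componentwise against arbitrary $\psi\in \rmC^{1}_{\rm c}((0,+\infty))$: integration by parts on each connected component $(\sfs_{\sfyy}^{+}(t_{k}),\sfs_{\sfyy}^{-}(t_{k+1}))$ of $\mathscr{C}_{\sfyy}$ (with $t_{k}<t_{k+1}$ consecutive transition points in $\mathfrak{J}_\bvc$) contributes the boundary term $\psi(t_{k+1})\BVSm_{\bvc}^{-}(t_{k+1})-\psi(t_{k})\BVSm_{\bvc}^{+}(t_{k})$, since $\sfxx(\sfs_{\sfyy}^{-}(t))=\BVSm_{\bvc}^{-}(t)$ and $\sfxx(\sfs_{\sfyy}^{+}(t))=\BVSm_{\bvc}^{+}(t)$ by construction of $\mathscr T$; telescoping the boundary contributions yields $-\sum_{t\in\mathfrak{J}_{\bvc}}\psi(t)\bigl(\BVSm_{\bvc}^{+}(t)-\BVSm_{\bvc}^{-}(t)\bigr)$, while the bulk term reduces to $-\int_{C_{\bvc}}\psi'(t)\,\BVSm_{\bvc}(t)\,dt$ by the same change of variable. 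Comparing with the standard BV integration-by-parts formula $\int\psi\,d(\BVSm_{\bvc})'_{\mathrm d}=-\int\psi'\,\BVSm_{\bvc}\,dt$, and subtracting the discrete jump measure $(\BVSm_{\bvc})'_{\mathrm J}$ from $(\BVSm_{\bvc})'_{\mathrm d}$, then gives exactly the required equality. Reducing to time-projections sidesteps a BV chain rule for the composition $\BVSm_{\bvc}\circ\sft$, which would be delicate because of the possibly nontrivial Cantor part of $\BVSm_{\bvc}$: this is the main technical obstacle of the proof.

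Finally, Borel measurability of $\mathscr A,\mathscr C,\mathscr J\colon\ARBV(\cZ;\R^{d})\to\calM(\III\times\R^{d};\R^{d+1})$ follows by testing against a countable dense family of $\varphi\in\rmC_{\rm c}(\III\times\R^{d};\R^{d+1})$. Since $\bvc\mapsto\mathscr T(\bvc)=(\sft,\sfxx)$ is continuous, the endpoint maps $\bvc\mapsto\sfs_{\sfyy}^{\pm}(\cdot)$ are semicontinuous hence Borel, and $\mathfrak{J}_{\bvc}$ admits a Borel enumeration obtained from the Borel function $t\mapsto\ell_{\bvc}(t)$; the integrals in \eqref{e:631} then depend Borel-measurably on $\bvc$ for every fixed $\varphi$. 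The separation of $\mathscr A$ from $\mathscr C$ within the diffuse contribution is standard, relying on the fact that the Lebesgue decomposition of a measurably parametrized family of real BV measures is itself Borel.
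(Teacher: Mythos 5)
Your decomposition of $\III$ into $\mathscr C_\sfyy$ and the family of transition intervals $(\sfs^-_\sfyy(t),\sfs^+_\sfyy(t))$, $t\in\mathfrak J_\bvc$, is the same starting point as the paper, and the treatment of the jump contribution (change of variable $r=\sfr(s)$ giving \eqref{e:615}) matches. The reduction to time-projections for the vector part of \eqref{e:616} — writing the left side as $(\bvcgr_\bvc)_\sharp\big(\sft_\sharp(\sfxx'\,\mathcal L^1\mres\mathscr C_\sfyy)\big)$ and observing that $(\bvcgr_\bvc)_\sharp$ is injective on measures concentrated on $C_\bvc$ — is a clean observation and is a legitimate route.

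However, the step you flag yourself as the ``main technical obstacle'' is where the proposal genuinely breaks. You propose to verify
\[
\sft_\sharp\big(\sfxx'\,\mathcal L^1\mres\mathscr C_\sfyy\big)
=(\partial^{\mathrm L}_t\bvc\,\mathcal L^1+\partial^{\mathrm C}_t\bvc)\mres C_\bvc
\]
by integrating by parts ``on each connected component $(\sfs^+_\sfyy(t_k),\sfs^-_\sfyy(t_{k+1}))$ of $\mathscr C_\sfyy$'' and telescoping the boundary terms over ``consecutive transition points $t_k<t_{k+1}$ in $\mathfrak J_\bvc$''. This implicitly assumes that $\mathfrak J_\bvc$ is a discrete (locally finite) subset of $\III$ so that the notion of consecutive jump times makes sense and $\mathscr C_\sfyy$ is a locally finite disjoint union of open intervals. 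For a general $\bvc\in\ARBV(\cZ;\R^d)$ the set $\mathfrak J_\bvc$ is only countable and can be dense (e.g.\ $\mathbb Q\cap[0,T]$ with summable jump lengths), in which case $C_\bvc$ has empty interior, $\mathscr C_\sfyy$ is nowhere dense, its connected components are singletons, and the telescoping argument has no meaning. This is exactly the case where the Cantor part is relevant, so one cannot dismiss it. The paper avoids this pitfall by quoting the BV reparametrization formula of \cite{MielkeRossiSavare12a} (Prop.\ 6.11), which yields \eqref{e:upphi2} in full generality, and then extends it from $t$-dependent test functions to $\varphi(t,\BVSm_\bvc(t))$ by a mollification argument. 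If you want a self-contained proof, you would need to reproduce the argument behind that formula, which deals with dense jump sets via approximation by truncated jump sets; a naive integration by parts over ``consecutive'' jumps does not.

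The measurability part of your proposal is also substantially under-argued. Claiming that ``$\mathfrak J_\bvc$ admits a Borel enumeration obtained from the Borel function $t\mapsto\ell_\bvc(t)$'' and that ``the integrals in \eqref{e:631} then depend Borel-measurably on $\bvc$'' does not address the real difficulty: under the metric $D_{\ARBV}$, jump times can merge, split, or drift in a limit, so nothing like a continuous (or even obviously Borel) enumeration of $\mathfrak J_\bvc$ exists. The paper devotes a separate lemma (Lemma~\ref{l:Borel-jump}) to $\mathscr J$, building the functional $\mathcal A$ as a decreasing limit in $\zeta\downarrow0$ of truncated sums $\mathcal A^{T,S}_\zeta$ (over jumps with $|\sfs^+_\sfyy(t)-\sfs^-_\sfyy(t)|\ge\zeta$, whose cardinality is a priori bounded by $S/\zeta+1$), each of which is upper semicontinuous; and it also handles $\mathscr A$ via a two-parameter semicontinuous approximation of $\boldsymbol 1_{\{\sft'=0\}}$. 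Your appeal to ``the Lebesgue decomposition of a measurably parametrized family of BV measures being Borel'' is a nontrivial fact that would itself need a proof, and it is not the route the paper takes: the paper establishes Borel measurability of $\mathscr A$ and $\mathscr J$ separately and then obtains $\mathscr C$ as their difference from the (continuous) total functional.

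In short: the first two displayed identities and the general architecture are in the right spirit, but the integration-by-parts step fails for dense jump sets, and the measurability claims for $\mathscr A$, $\mathscr C$, $\mathscr J$ require much more careful semicontinuity/truncation arguments than the proposal provides.
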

  We postpone the \emph{proof} to Appendix \ref{app:meas}.

\subsection{The ${\rm BV}$ probabilistic representation} 
We are now in a position to state the `BV version' to Theorem \ref{t.1}.
In the same way as  the latter result,   Theorem \ref{th:BV-representation}  also provides information on the 
curves $\bvc \in \ARBV (\cZ ; \R^{d})$
 on which the  representation measure $\neweta$ is concentrated. 
 Recall that in  Theorem \ref{t.1}   the superposition principle involved trajectories solving the characteristic system. Now,
 relations \eqref{proxy-Cauchy} ahead,
 which involve $ \partial^{\mathrm{L}}_t \bvc$ \eqref{partialL},  $ \partial^{\mathrm{C}}_t \bvc$ \eqref{partialC}, and $\partial_r \bvc(t,\cdot)$ 
at each $t\in    \mathfrak{J}_{\bvc} = \mathrm{J}_{\BVSm_{\bvc}}$, 
 may be interpreted as a counterpart to the Cauchy problem \eqref{Cauchy-gamma}.
\par
  Without loss of generality,  in what follows we will identify a given solution to the continuity equation with an induced minimal pair, and thus suppose minimality straight away.  
\begin{theorem}
\label{th:BV-representation}
 Let $(\mu,\nnu)\in \calM^+(\DST ) \times \calM( \DST ; \R^{d})$ be a  minimal  $\Puno$-solution  to 
the continuity equation in the sense of  Definition~\ref{def:reduced}, with initial condition~$\mu_{0} \in \mathcal{P}_{1}(\R^{d})$. 
 Let $(\tau, \bvv)$ be bounded Borel vector field representing the density of $(\mu, \nnu)$ w.r.t.~$|(\mu, \nnu)|$.
\par
 Then, there exists a Borel probability measure~$\neweta$ on~$\ARBV (\cZ ; \R^{d})$ that  provides the  probabilistic representation
\begin{equation}
\label{alternative-prob-repre}
   (\mu,\nnu) =\Uptheta^{\neweta}, \qquad |(\mu,\nnu)|  =|\Uptheta^{\neweta}|\,, 
\end{equation}
with $\Uptheta^{\neweta}$ defined as in~\eqref{e:Psi-Omega}.  The measure $\neweta$ is concentrated on curves $\bvc \in \ARBV (\cZ ; \R^{d})$ fulfilling $\bvc(0) \in    \spt(\mu_0)$   and
\begin{subequations}
\label{proxy-Cauchy}
\begin{align}
&
\label{proxy-Cauchy-1}
  \partial^{\mathrm{L}}_t \bvc(t)  = \frac{\bvv(t,\bvc(t, r )))}{\tau(t,\bvc(t, r )))} = \frac{\dd \nnu^a}{\dd\mu}(t,\bvc(t, r )) && \text{for $\mathcal{L}^1$-a.a.\ } t \in  \III  \text{ and $\mathcal{L}^{1}$-a.a.\ } r \in [0, 1],
\\
&
\label{proxy-Cauchy-2}
  \partial^{\mathrm{C}}_t \bvc
= \bvv(t,\bvc(t, r ))    |  \partial^{\mathrm{C}}_t \bvc  |  = \frac{\dd \nnu^\perp}{\dd|\nnu^\perp| }(t,\bvc(t, r))      |  \partial^{\mathrm{C}}_t \bvc  |
  &&   \text{for    $ |\partial^{\mathrm{C}}_t \bvc|  $-a.a.\ } t \in  \III  \text{ and $\mathcal{L}^{1}$-a.a.\ }r \in [0, 1],
\\
& 
\label{proxy-Cauchy-3}
 \partial_{r} \bvc (t, r)  = \frac{\dd \nnu^\perp}{\dd|\nnu^\perp| }\left(t,  \bvc (t, r) \right) \| \partial_{r} \bvc(t, r)  \| && \text{for a.a.\ } r \in [0,1] \text{ and all } t \in  \mathrm{J}_{\BVSm_{\bvc}}\,,
\end{align}
with $\nnu = \nnu^a+\nnu^\perp$ the Lebesgue decomposition of $\nnu$ into 
$\nnu^a \ll \mu$ and $\nnu^\perp \perp \mu$. 
\end{subequations}
\end{theorem}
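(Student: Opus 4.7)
The strategy is to transfer the parametrized representation of Theorem~\ref{t.1} into a $\BV$-valued representation via the bridging map $\mathscr{S}\colon \ALip\III{\R^{d+1}} \to \ARBV(\cZ;\R^d)$ introduced in Section~\ref{ss:6.1}. First I would apply Theorem~\ref{t.1}(2) with the canonical choice $\varrho := |(\mu,\nnu)|$; since $(\mu,\nnu)$ is minimal, by Remark~\ref{rem:barnu-nu} we may take $(\mu,\bar\nnu)=(\mu,\nnu)$. This produces a Borel probability measure $\eeta$ on $\ALip\III{\R^{d+1}}$, concentrated on curves $\sfyy=(\sft,\sfxx)$ satisfying $\sfyy(0)=(0,x)$ with $x\in\spt(\mu_0)$, solving the autonomous Cauchy problem $\dot\sfyy(s)=(\tau(\sfyy(s)),\bvv(\sfyy(s)))$ with $\|\sfyy'(s)\|=1$ for a.a.~$s$, and producing the identifications $\mu=\fre_\sharp(\frt'\teeta)$, $\nnu=\fre_\sharp(\frxx'\teeta)$, $|(\mu,\nnu)|=\fre_\sharp\teeta$. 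Setting $\widehat\eeta := \mathscr{S}_\sharp\eeta$, Lemma~\ref{l:switch}(1) and the integrability inherited from $\eeta$ via $\mathscr{T}\circ\mathscr{S}=\mathrm{id}$ (Lemma~\ref{l:mapsST}) ensure that $\widehat\eeta$ is a Borel probability measure on $\ARBV(\cZ;\R^d)$ meeting the assumptions of Lemma~\ref{l:switch}(3), which then delivers the representation~\eqref{alternative-prob-repre}.

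For the characteristic relations, by Lemma~\ref{l:mapsST} it suffices to verify that, for $\eeta$-a.e.~$\sfyy$, the curve $\bvc:=\mathscr{S}(\sfyy)$ satisfies~\eqref{proxy-Cauchy-1}--\eqref{proxy-Cauchy-3}. Using the explicit formula
\begin{displaymath}
\bvc(t,r)=\sfxx\bigl(\sfs_\sfyy^-(t)+r(\sfs_\sfyy^+(t)-\sfs_\sfyy^-(t))\bigr),
\end{displaymath}
I partition the $s$-line into the (at most countable) family of maximal \emph{stall intervals} of $\sft$ and their complement. Thanks to Theorem~\ref{t:injective-curves}, $\eeta$-a.e.~$\sfyy$ is injective, which rules out degenerate stalls where $\sfxx(s^-)=\sfxx(s^+)$ and guarantees $\mathfrak{J}_\bvc=\mathrm{J}_{\BVSm_\bvc}$. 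On a stall interval $[s^-,s^+]$ corresponding to $t\in\mathrm{J}_{\BVSm_\bvc}$ the identities $\sft'\equiv 0$ and $\|\sfyy'\|=1$ force $\tau(\sfyy)=0$ and $\|\bvv(\sfyy)\|=1$ a.e.~there; hence $\partial_r\bvc(t,r)=(s^+{-}s^-)\sfxx'(s^-{+}r(s^+{-}s^-))=(s^+{-}s^-)\bvv(t,\bvc(t,r))$, yielding $\ell_\bvc(t)=s^+{-}s^-$ and~\eqref{proxy-Cauchy-3} upon noting that $\dd\nnu^\perp/\dd|\nnu^\perp|$ coincides with $\bvv$ on the $\mu$-singular set $\{\tau=0\}$, by the decomposition formulae~\eqref{e:new-representation} of Proposition~\ref{c:2}. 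Outside the stall intervals, a standard chain rule shows that for $\mathcal{L}^1$-a.a.~$t$ in the image of $\{\sft'>0\}$ one has $\sfs_\sfyy^-(t)=\sfs_\sfyy^+(t)=:s$ with $\partial_t^{\mathrm L}\bvc(t)=\sfxx'(s)/\sft'(s)=\bvv(\sfyy(s))/\tau(\sfyy(s))$, which equals $\dd\nnu^a/\dd\mu(t,\bvc(t,r))$ by uniqueness of the Radon--Nikod\'ym density deduced from $\mu=\fre_\sharp(\frt'\teeta)$ and $\nnu^a=\fre_\sharp(\frxx'\teeta\mres D_+)$ (Proposition~\ref{c:2}); this proves~\eqref{proxy-Cauchy-1}.

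The main obstacle is the Cantor relation~\eqref{proxy-Cauchy-2}, for it arises from the non-stall region where $\sft'$ vanishes, a set that can be of positive $\mathcal{L}^1$-measure in $s$ while producing no interval of constancy for $\sft$; consequently $\bvc(\cdot,0)=\sfxx\circ\sfs_\sfyy^-$ can admit a genuine Cantor part whose density is not amenable to a direct chain rule. To handle it, I would invoke the Borel decomposition $\vartheta_\bvc=\mathscr{A}(\bvc)+\mathscr{C}(\bvc)+\mathscr{J}(\bvc)$ of Proposition~\ref{p:identify-theta-alpha}, integrate against $\widehat\eeta$, and observe that the three contributions are mutually singular in time (respectively absolutely continuous, Cantor-type, and atomic). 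The uniqueness of the Radon--Nikod\'ym decomposition of $(\mu,\nnu)$ in time, combined with Proposition~\ref{c:2} and the minimality of $\nnu^\perp$, then identifies $\int\mathscr{C}(\bvc)\,\dd\widehat\eeta$ with the Cantor-in-time piece of $\nnu^\perp$. A Lebesgue differentiation argument along the graphs $\bvcgr_\bvc$ forces, for $\widehat\eeta$-a.e.~$\bvc$ and $|\partial_t^{\mathrm C}\bvc|$-a.a.~$t$, the direction of $\partial_t^{\mathrm C}\bvc$ to coincide with $(\dd\nnu^\perp/\dd|\nnu^\perp|)(t,\bvc(t,r))$; since $\bvc(t,\cdot)$ is constant for $t\notin\mathfrak{J}_\bvc$ and $|\partial_t^{\mathrm C}\bvc|$ does not charge $\mathfrak{J}_\bvc$, the value is automatically independent of $r\in[0,1]$, completing the verification of~\eqref{proxy-Cauchy-2}.
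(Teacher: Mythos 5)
Your Step~1 (constructing $\widehat\eeta = \mathscr{S}_\sharp\eeta$ and invoking Lemma~\ref{l:switch}) is identical to the paper's. For the characteristic relations your route partly diverges and has a genuine gap. You correctly flag the Cantor relation~\eqref{proxy-Cauchy-2} as the crux, but the resolution you sketch — identify $\int\mathscr{C}(\bvc)\,\dd\widehat\eeta$ with the Cantor-in-time part of $\nnu^\perp$ and then argue by a ``Lebesgue differentiation along the graphs $\bvcgr_\bvc$'' — does not actually close the gap. Knowing that the \emph{average} of the vector measures $\mathscr{C}(\bvc)$ agrees with a measure whose polar density is $\bvv$ does not, by itself, force each individual $\mathscr{C}(\bvc)$ to have polar density $\bvv\circ\bvcgr_\bvc$; cancellations in the superposition are a priori possible, and a pointwise Lebesgue differentiation on the image does not rule them out. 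What the paper uses to upgrade the averaged identity to a $\neweta$-a.e.\ pointwise one is precisely Proposition~\ref{prop:measure-theor}: the second half of~\eqref{alternative-prob-repre}, namely $|\Uptheta^{\neweta}| = \int_{\ARBV}|\vartheta_\bvc|\,\dd\neweta(\bvc)$, says exactly that no mass is lost in the superposition, and Proposition~\ref{prop:measure-theor} turns that into $\vartheta_\bvc = (\tau,\bvv)|\vartheta_\bvc|$ for $\neweta$-a.a.~$\bvc$. Once this holds, all three relations~\eqref{proxy-Cauchy-1}--\eqref{proxy-Cauchy-3} drop out simultaneously by restricting to the supports of the mutually singular pieces $\mathscr{A}(\bvc)$, $\mathscr{C}(\bvc)$, $\mathscr{J}(\bvc)$ furnished by Proposition~\ref{p:identify-theta-alpha}. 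You have all the ingredients (you even invoke minimality, which is what makes~\eqref{alternative-prob-repre} hold with $\bar\nnu=\nnu$), but you never actually apply the abstract lemma that converts the total-variation equality into curvewise information.

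On the other hand, your direct chain-rule arguments for the absolutely continuous and jump parts are essentially sound and are more concrete than the paper's uniform treatment. Where $\sft'>0$, the Cauchy system gives $\sfxx'/\sft'=\bvv/\tau$ along $\eeta$-a.e.~curve, and $\bvv/\tau=\dd\nnu^a/\dd\mu$ holds directly from the polar decomposition of $(\mu,\nnu)$ on the set $\{\tau>0\}$ (not from the superposition formulae, which is a mild overstatement in your write-up). On stall intervals one likewise gets~\eqref{proxy-Cauchy-3} by direct computation, with Theorem~\ref{t:injective-curves} guaranteeing $\mathfrak{J}_\bvc=\mathrm{J}_{\BVSm_\bvc}$ as you state. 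These two pieces \emph{can} be handled without Proposition~\ref{prop:measure-theor}, but the Cantor piece cannot, and since the paper's argument handles all three at once with no extra cost, it is cleaner to use it throughout.
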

\par

As a consequence of Theorem~\ref{th:BV-representation} and of the Borel measurability in~\eqref{bvc2measures} we can rewrite the representation  formulae \eqref{alternative-prob-repre} and \eqref{proxy-Cauchy} as follows. 

\begin{corollary}\label{c:unnamed}
Under the assumptions of Theorem~\ref{th:BV-representation}, the representation formulae \eqref{alternative-prob-repre} and \eqref{proxy-Cauchy} rephrase as 
\begin{subequations}
\label{true-time-representation}
\begin{align}
 \label{e:representation-mu}
& \iint_{\DST } \varphi_0(t, x) \, \di \mu (t, x)  =  \int_{\ARBV}  \int_{\III}  \varphi_{0} (t,  \BVSm_\bvc (t)  ) \, \di t \, \di \neweta(\bvc) \quad\text{for all } \varphi_{0} \in \Cc (\DST ),
\intertext{while  for $\nnu^a$ and $\nnu^\perp$  we have}
&
\label{e:representation-nu}
\begin{aligned}
&
\iint_{\DST } \boldsymbol{\varphi}(t, x) \, \di \nnu^a (t, x)   = \int_{\ARBV}
\int_{\III} \boldsymbol{\varphi}(t, \BVSm_{\bvc}(t))  \frac{\bvv(t,  \BVSm_{\bvc}(t))}{\tau(t,  \BVSm_{\bvc}(t) ))} \, \dd t  \, 
 \di \neweta (\bvc)\,, 
\\
&
\begin{aligned}
\iint_{\DST } \boldsymbol{\varphi}(t, x) \, \di \nnu^\perp (t, x ) 
 & = 
\int_{\ARBV} \int_{\III} \boldsymbol{\varphi}(t, \BVSm_{\bvc}(t)  )  \bvv(t,  \BVSm_{\bvc}(t)  ))  \, \dd | \partial_{t} ^{C} \bvc|(t)  \, 
 \di \neweta (\bvc) 
\\
&  \quad +
\int_{\ARBV}   \sum_{t \in \mathrm{J}_{\BVSm_{\bvc}}}  \int_{0}^{1}    \boldsymbol{\varphi} (t, \bvc (t,r) ) {\cdot} \partial_{r} \bvc (t, r) 
 \, \di r \, \di \neweta(\bvc)\,
\end{aligned}
\end{aligned}
\end{align}
for every $\boldsymbol{\varphi} \in {\rm C}_{\rm c} (\DST ; \R^{d})$\,. 
\end{subequations} 

\par
 Finally,   the left and right representatives $\mu_t^-=\mu_t$ and $\mu_t^+$ of $\mu$  fulfill
\begin{equation}
\label{pm-limits}
\begin{cases}
\displaystyle
\int_{\R^d} \psi(x) \dd \mu_t(x) =  \int_{\R^d} \psi(x) \dd \mu_t^-(x) =
 \int_{\ARBV} \psi( \BVSm_{\bvc} (t))  \, \di \neweta(\bvc)
 \smallskip
 \\
 \displaystyle
  \int_{\R^d} \psi(x) \dd  \mu_t^+(x)  
 =  \int_{\ARBV} \psi(\BVSm_{\bvc}^{+} (t))  \, \di \neweta(\bvc) 
\end{cases}
\qquad \text{for all } \psi \in \Cc(\R^d)\,.
\end{equation} 
\end{corollary}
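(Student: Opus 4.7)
My strategy is to combine the probabilistic representation \eqref{alternative-prob-repre} guaranteed by Theorem~\ref{th:BV-representation} with the explicit description of~$\vartheta_{\bvc}$ given by Proposition~\ref{p:identify-theta-alpha}, and then to substitute into the resulting expressions the pointwise relations \eqref{proxy-Cauchy-1}--\eqref{proxy-Cauchy-3} satisfied $\neweta$-a.e. The Borel measurability of the maps $\mathscr{A},\mathscr{C},\mathscr{J}$ from~\eqref{bvc2measures} ensures that
\[
 \Uptheta^{\neweta}=\int_{\ARBV}\!\mathscr{A}(\bvc)\,\di\neweta(\bvc)
 +\int_{\ARBV}\!\mathscr{C}(\bvc)\,\di\neweta(\bvc)
 +\int_{\ARBV}\!\mathscr{J}(\bvc)\,\di\neweta(\bvc),
\]
so that Fubini can be invoked to exchange the $\neweta$-integration with the space-time duality pairings.

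For the representation \eqref{e:representation-mu} of $\mu$ I would first note that, by Lemma~\ref{l.1}, $\pi^0_\sharp\mu=\mathcal{L}^1$, while the time-projections of $\mathscr{C}(\bvc)$ and $\mathscr{J}(\bvc)$ carry a zero scalar first component ($(0,\partial^{\mathrm{C}}_t\bvc)$ and the purely spatial $\boldsymbol{j}_{t,\bvc}$, respectively). Hence the whole scalar contribution to $\mu$ comes from $\mathscr{A}(\bvc)=(\bvcgr_\bvc)_\sharp\big((1,\partial^{\mathrm{L}}_t\bvc)\mathcal{L}^1\big)$, and the change-of-variable formula along $\bvcgr_\bvc(t)=(t,\BVSm_\bvc(t))$ yields~\eqref{e:representation-mu}. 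The absolutely continuous part $\nnu^a$ is then read off the spatial components of $\mathscr{A}(\bvc)$ after substituting \eqref{proxy-Cauchy-1}, which identifies $\partial^{\mathrm{L}}_t\bvc(t)$ with $(\bvv/\tau)(t,\BVSm_\bvc(t))$ for $\mathcal{L}^1$-a.e.\ $t$ and $\neweta$-a.e.\ $\bvc$. The remaining pieces $\mathscr{C}(\bvc)+\mathscr{J}(\bvc)$ are mutually singular with $\mu$ (the first being concentrated on an $\mathcal{L}^1$-negligible time set, the second on a countable one) and therefore represent~$\nnu^\perp$; inserting \eqref{proxy-Cauchy-2} into $\mathscr{C}(\bvc)$ gives the Cantor line of the $\nnu^\perp$ formula, while the very definition~\eqref{construction-u-2} of $\boldsymbol{j}_{t,\bvc}$ produces the jump line.

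At this point one must justify the swap between the index set $\mathfrak{J}_\bvc$ appearing in \eqref{teta-equals-alpha} and the smaller set $\mathrm{J}_{\BVSm_\bvc}$ written in \eqref{e:representation-nu}. I expect this to be the main technical obstacle: the argument is that, for $\neweta$-a.e.\ $\bvc$, condition~\eqref{proxy-Cauchy-3} forces the transition curve $r\mapsto\bvc(t,r)$ at any $t\in\mathfrak{J}_\bvc$ to move with a fixed nonzero direction $\tfrac{\di\nnu^\perp}{\di|\nnu^\perp|}(t,\bvc(t,r))$, so $\bvc(t,0)=\bvc(t,1)$ together with $\ell_\bvc(t)>0$ would contradict minimality of $\nnu^\perp$ (any such closed loop could be removed, yielding a strict submeasure of $\nnu^\perp$ with the same divergence, against Definition~\ref{d:minimal}); hence $\mathfrak{J}_\bvc\setminus\mathrm{J}_{\BVSm_\bvc}$ is $\neweta$-negligible as far as the jump measure is concerned.

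Finally, the identities \eqref{pm-limits} follow by specializing~\eqref{e:representation-mu} and the disintegration $\mu=\mathcal{L}^1\otimes\mu_t$. Since $t\mapsto\BVSm_\bvc(t)$ is left-continuous by construction of the $\BV$ skeleton, $t\mapsto\int\psi(\BVSm_\bvc(t))\,\di\neweta(\bvc)$ provides a left-continuous representative of $t\mapsto\int\psi\,\di\mu_t$, which by Lemma~\ref{l.2} must coincide with $\mu_t^-$; this gives the first line of~\eqref{pm-limits}. For the right representative I would test~\eqref{e.5.1-2} against $\psi\in\Cc^1(\R^d)$ and compute the jump $\mu_t^+(\psi)-\mu_t^-(\psi)$ using the jump contribution to $\nnu^\perp$ just derived, namely $\int_{\ARBV}\int_0^1 \rmD\psi(\bvc(t,r))\cdot\partial_r\bvc(t,r)\,\di r\,\di\neweta(\bvc)=\int_{\ARBV}\big(\psi(\bvc(t,1))-\psi(\bvc(t,0))\big)\di\neweta(\bvc)$, which together with the formula for $\mu_t^-$ yields the second line of~\eqref{pm-limits} by a density argument in $\psi$.
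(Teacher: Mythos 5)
Your plan for \eqref{e:representation-mu} and \eqref{e:representation-nu} is essentially what the paper does: the representation formulae are read off by unpacking $\Uptheta^{\neweta}$ via Proposition~\ref{p:identify-theta-alpha} and substituting the $\neweta$-a.e.\ relations \eqref{proxy-Cauchy-1}--\eqref{proxy-Cauchy-3}; the paper in fact dismisses this step in one sentence (``a consequence of Theorem~\ref{th:BV-representation}''). Two small remarks. First, the swap between $\mathfrak{J}_{\bvc}$ and $\mathrm{J}_{\BVSm_{\bvc}}$ that you flag as ``the main technical obstacle'' is not an obstacle at this stage: the identification $\mathrm{J}_{\BVSm_{\bvc}}=\mathfrak{J}_{\bvc}$ for $\neweta$-a.e.\ $\bvc$ is already established in the proof of Theorem~\ref{th:BV-representation} (see \eqref{KEY-BV-PROOF}) as a consequence of Theorem~\ref{t:injective-curves}; your ad hoc loop-removal argument is essentially a re-derivation of that theorem, and you should simply cite it. Second, your assertion that $\int(\mathscr{C}(\bvc)+\mathscr{J}(\bvc))\,\di\neweta$ is $\mu$-singular because each $\mathscr{C}(\bvc)$ (resp.\ $\mathscr{J}(\bvc)$) lives on an $\mathcal{L}^1$-negligible (resp.\ countable) time set is too hasty: averaging a family of singular measures over $\neweta$ need not preserve singularity. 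The clean route, which you also indicate, is simply to observe that \eqref{proxy-Cauchy-1} identifies the $\mathscr{A}$-contribution as $\frac{\dd\nnu^a}{\dd\mu}\mu=\nnu^a$, and since the total is $\nnu$ the rest must be $\nnu^\perp$; the singularity claim then follows rather than being needed as input.

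For \eqref{pm-limits} you and the paper diverge. The paper obtains both lines by a mollification argument: it tests \eqref{e:representation-mu} against $\varphi_\eps(r,x)=\eta_\eps(r)\psi(x)$ with $\eta_\eps$ a normalized bump concentrated just to the left (resp.\ right) of $t$, and passes to the limit $\eps\downarrow0$ using left- (resp.\ right-) continuity of $\BVSm_{\bvc}$ and $\mu_r$. Your argument for the first line — that $t\mapsto\int\psi(\BVSm_{\bvc}(t))\,\di\neweta(\bvc)$ is a left-continuous representative of $t\mapsto\int\psi\,\di\mu_t$, hence equals $\mu^-_t(\psi)$ by uniqueness of the left-continuous representative from Lemma~\ref{l.2} — is a correct shortcut, provided you note the dominated-convergence step giving left-continuity of the $\neweta$-average. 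Your argument for the second line, testing \eqref{e.5.1-2} against $\psi\in\Cc^1(\R^d)$ and computing the jump increment $\mu^+_t(\psi)-\mu^-_t(\psi)$ via $\int_{\ARBV}\big(\psi(\bvc(t,1))-\psi(\bvc(t,0))\big)\,\di\neweta(\bvc)$ followed by a density argument, is also correct and arguably more informative than the paper's bump-function repetition, since it makes the role of the jump flux explicit. Both routes are valid; yours trades mollification estimates for the already-proved identity \eqref{e.5.1-2}.
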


 In the proof of Theorem \ref{th:BV-representation} we will also resort to 
 some 
 measure-theoretic results in Appendix  \ref{s:app-B}.

\begin{proof}[Proof of Theorem~\ref{th:BV-representation}]
 We divide the proof in three steps, proving~\eqref{alternative-prob-repre}--\eqref{pm-limits} separately. 

\STEP{1: proof of  \eqref{alternative-prob-repre}.}
Since
$\nnu$ fulfills the minimality condition 
 by Theorem \ref{t.1} there exists $\eeta \in \mathcal{P}(\Lipplus 1 \III { \R^{d+1}} ) $   concentrated on $\ALip \III {\R^{d+1}}$ and  such that the representation formulae \eqref{concise-reformulation}
 hold. 
  In  view of 
  Lemma \ref{l:switch}, we conclude that the Borel measure $\neweta:= \mathscr{S}_{\sharp} \eeta$ fulfills \eqref{alternative-prob-repre}. 
  
 \STEP{2: proof of  \eqref{proxy-Cauchy}.}
From \eqref{alternative-prob-repre}  we gather  in particular that  $|\Uptheta^{\neweta}| =  \int_{\ARBV}| \vartheta_{\bvc}|\, \di \neweta(\bvc)$. 
 Therefore, 
 we are in a position to apply Proposition \ref{prop:measure-theor}, thus concluding that 
\[
\tetatrue{\bvc} = \boldsymbol{\mathsf{f}} |\tetatrue{\bvc}| \qquad \text{for } \neweta\text{-a.a. } \bvc \in   \ARBV(\cZ;\R^d) 
 \qquad \text{with }  \boldsymbol{\mathsf{f}} = \frac{\dd \Uptheta^{\neweta}}{\dd|\Uptheta^{\neweta}|} = \frac{\dd(\mu,\nnu)}{\dd |(\mu,\nnu)|} = (\tau,\bvv)\,.
\]
Combining this with \eqref{teta-equals-alpha},  we thus obtain
 \begin{equation}
 \label{KEY-BV-PROOF}
 \mathscr{A}(\bvc)+  \mathscr{C}(\bvc)+  \mathscr{J}(\bvc) = \tetatrue{\bvc}  =  (\tau,\bvv) |\tetatrue{\bvc}|  \quad 
 \text{with } 
\begin{cases}
 \mathscr{A}(\bvc) =  (\bvcgr_{\bvc})_{\sharp}      (1, \partial^{\mathrm{L}}_t \bvc) \mathcal{L}^1,  
 \\
 \mathscr{C}(\bvc) =   (\bvcgr_{\bvc})_{\sharp}      (0,  \partial^{\mathrm{C}}_t \bvc),  
 \\
 \mathscr{J}(\bvc) =     \sum_{t \in \mathrm{J}_{\BVSm_{\bvc}}}  \delta_t {\otimes}  \boldsymbol{j}_{t, \bvc} . 
\end{cases}
 \end{equation}
 Notice that in the last equality in~\eqref{KEY-BV-PROOF} we have used that $\mathrm{J}_{\BVSm_{\bvc}} = \mathfrak{J}_{\bvc}$, which is a consequence of Theorem~\ref{t:injective-curves}. Namely, a transition at time $t$ may occur if and only if $\bvc (t, 0) \neq \bvc (t, 1)$, since $\eeta$ is supported on injective curves in $\ALip \III {\R^{d+1}}$.   We shall obtain   \eqref{proxy-Cauchy} by restricting \eqref{KEY-BV-PROOF} to the support of each of the 
  three mutually singular contributions $ \mathscr{A}(\bvc)$, $ \mathscr{C}(\bvc)$, $ \mathscr{J}(\bvc)$. 
\par
Indeed, 
restricting \eqref{KEY-BV-PROOF} to $\spt(\mathscr{A}(\bvc))$ we infer
\begin{align*}
 (1, \partial^{\mathrm{L}}_t \bvc(t) ) 
 & = (\tau(t, \BVSm_{\bvc} (t)), \bvv(t, \BVSm_{\bvc} (t)) ) | (1, 
\partial^{\mathrm{L}}_t \bvc(t)  ) | 
\\
&
= (\tau(t, \bvc (t, r) ), \bvv(t, \bvc (t, r)) ) | (1,  \partial^{\mathrm{L}}_t \bvc(t)  ) | \qquad \text{for }  \mathcal{L}^1\text{-a.a.\ } t \in \III \text{ and every $r \in [0, 1]$}\,.
\end{align*}
Hence,  for $\mathcal{L}^1$-a.a.~$t \in \III $  and every $r \in [0, 1]$ we have
\[
\tau(t, \bvc(t, r)) =  \frac{1}{| (1, \partial^{\mathrm{L}}_t \bvc(t)  ) |}, \qquad \bvv(t, \bvc(t, r))= \frac{ \partial^{\mathrm{L}}_t \bvc(t)  }{ | (1, \partial^{\mathrm{L}}_t \bvc(t)   ) |}.
\]
 Ultimately, we deduce \eqref{proxy-Cauchy-1}.
 \par
 Analogously, restricting \eqref{KEY-BV-PROOF} to $\spt(\mathscr{C}(\bvc))$ we obtain
 \[
(0,  \partial^{\mathrm{C}}_t \bvc ) = (\tau(\cdot, \BVSm_{\bvc} (\cdot)), \bvv(t, \BVSm_{\bvc}(\cdot)) ) | (0, \partial^{\mathrm{C}}_t \bvc  ) | \qquad  | \partial^{\mathrm{C}}_t \bvc  |\text{-a.e.~in } \III \,.
\]
Therefore,  we obtain 
\[
\begin{aligned}
\tau(t, \bvc(t, r)) \equiv 0, \qquad
 \bvv(t, \bvc(t, r))  = \frac{\dd  \partial^{\mathrm{C}}_t \bvc }{\dd |   \partial^{\mathrm{C}}_t \bvc |}(t)  \qquad   \text{for } |   \partial^{\mathrm{C}}_t \bvc  |\text{-a.a.\ } t \in \III \text{ and every $r \in [0, 1]$}.
 \end{aligned}
\]
Observing that  $\spt (\partial^{\mathrm{C}}_t \bvc )$ coincides with the image set 
   $ \sft( D_0[\sfyy] {\setminus} D_c[\sfyy] )$
   %
and recalling the representation formula \eqref{e:new-representation} for $\nnu^\perp$, we deduce that 
$ \bvv(t,\bvc(t, r))  = \tfrac{\dd \nnu^\perp}{ \dd |\nnu^\perp|} (t)$ for $|\partial^{\mathrm{C}}_t \bvc |$-a.a.\ $t\in \III$ and every $r \in [0, 1]$, and  \eqref{proxy-Cauchy-2} ensues. 
\par
Finally,  restricting \eqref{KEY-BV-PROOF} to $\spt(\mathscr{J}(\bvc))$
we deduce that for $t \in \mathrm{J}_{\BVSm_{\bvc}}$ and $r \in [0, 1]$
\begin{align}
\left(0,  \partial_{r} \bvc (t, r) \right) &  = 
(\tau (t, \bvc(t, r) )  ,\bvv (t, \bvc(t, r) ) )  \|  \partial_{r} \bvc (t, r) \|  =  (\tau (t, \bvc(t, r) )  ,\bvv (t, \bvc(t, r) ) )   \ell_{\bvc}(t)\,,
\end{align}
 with  $\ell_{\bvc}(t)$ the length of the curve connecting $\bvc(t,0) $ to $\bvc(t,1)$. 
Hence, at every $t \in   \mathrm{J}_{\BVSm_{\bvc}} $ and $\mathcal{L}^{1}$-a.a.~$r \in [0, 1]$ there holds 
\[
\tau (t,  \bvc (t, r))  \equiv 0, \qquad 
\bvv ( t ,  \bvc (t, r) )   =  \frac{ \partial_{r} \bvc(t, r)}{\| \partial_{r} \bvc(t, r)\|} =  \frac{\dd \nnu^\perp}{\dd |\nnu^\perp|} ( t, \bvc(t, r)) \,,
\]
whence \eqref{proxy-Cauchy-3}.  
\end{proof}

  We conclude with the proof of Corollary~\ref{c:unnamed}. 

\begin{proof}[Proof of Corollary~\ref{c:unnamed}]
 The representation formulae in \eqref{true-time-representation} are a consequence of Theorem~\ref{th:BV-representation}.  Let us plug in \eqref{e:representation-mu} the test function $\varphi_\eps (r ,x) = \eta_\eps(r) \psi(x)$, with $\psi \in \Cc(\R^d)$ and $\eta_\eps \in \Cc(\III)$, $0<\eps\ll 1$,  such that  
\[
\begin{cases}
\spt(\eta_\eps) \subset (t-2\eps-\eps^2, t-\eps+\eps^2),
\\
\eta_\eps(r) \equiv \frac1\eps = \max_{[t-2\eps-\eps^2, t-\eps+\eps^2]}\eta_\eps  \ \text{ for all } r \in [t-2\eps,t-\eps],
\end{cases}
\quad \text{for any fixed } t\in (0,+\infty)\,.
\]
On the one hand, we have 
\[
\begin{aligned}
 \iint_{\DST }  & \varphi_\eps(r, x) \, \di \mu_{r} ( x) \, \di r  
\\
&
=  \iint_{(t-2\eps-\eps^2, t-2 \eps) {\times} \R^{d}}  \eta_\eps(r) \psi( x) \, \di \mu_{r} ( x) \, \di r
+ \frac1\eps  \iint_{( t-2 \eps, t-\eps){\times}\R^d}  \psi(x)  
 \, \di \mu_{r} ( x) \, \di r  
 \\
 &
  \qquad +  \iint_{[t-\eps, t- \eps+\eps^2) {\times} \R^{d}} \eta_\eps(r) \psi( x) \, \di \mu_{r}( x) \, \di r   \doteq I_{1,\eps}+I_{2,\eps}+I_{3,\eps}\,.
\end{aligned}
\]
We observe that 
\begin{align}
&
\nonumber
\left|  I_{1,\eps} \right| \leq \frac1\eps \| \psi\|_{\infty} \eps^2 = \eps \| \psi\|_{\infty} \longrightarrow 0 \text{ as } \eps \down 0,
\intertext{and with analogous calculations we have $I_{3,\eps} \to 0$, while}
&
I_{2,\eps} = \frac1\eps \int_{t-2\eps}^{t-\eps} \int_{\R^d}\psi(x) \, \dd \mu_r(x)\,  \dd r \longrightarrow   \int_{\R^d}\psi(x) \, \di \mu_t^- (x)=  \int_{\R^d}\psi(x) \, \di \mu_t (x)
\nonumber
\end{align}
%
%
where we have used the assumed left-continuity of $t\mapsto \mu_t$.
On the other hand,
\[
\begin{aligned}
 \int_{\ARBV}  \int_0^{+\infty} &   \eta_{\eps} (r) \psi( \BVSm_{\bvc}(r) ) \, \di r \, \di \neweta(\bvc) 
 \\
 & =  \int_{\ARBV} \int_{t-2\eps-\eps^2}^{t-2\eps} \eta_\eps(r) \psi(\BVSm_{\bvc}(r)) \, \dd r 
 \, \di \neweta(\bvc)  + \frac1\eps \int_{\ARBV} \int_{t-2\eps}^{t-\eps} \psi(\BVSm_{\bvc}(r)) \, \dd r  \, \di \neweta(\bvc)  
  \RRR 
  \\
  &
  \qquad + \int_{\ARBV} \int_{t-\eps}^{t-\eps+\eps^2} \eta_{\eps}(r)  \psi(\BVSm_{\bvc}(r)) \, \dd r  \, \di \neweta(\bvc)  
   \\
   & \doteq I_{4,\eps}+I_{5,\eps}+I_{6,\eps}\,.
 \end{aligned}
\]
Arguing in the same way as in the above lines we conclude that 
$I_{4,\eps} \to 0$ and $I_{6,\eps} \to 0$ as $\eps\down 0$ 
while (recall that $\BVSm_{\bvc}$ is assumed to be left-continuous)
\[
\lim_{\eps \down 0} \frac1\eps \int_{\ARBV} \int_{t-2\eps}^{t-\eps} \psi(\BVSm_{\bvc}(r)) \, \dd r  \, \di \neweta(\bvc)  
= \int_{\ARBV} \psi( \BVSm_{\bvc} (t))  \, \di \neweta(\bvc)\,.
\]
We thus have the first of \eqref{pm-limits}. 
The very same argument yields  the second of \eqref{pm-limits}.
\par
This concludes the proof.
 \end{proof}

\section{Examples}
\label{s:examples}
In this final section we illustrate our results, and discuss our assumptions, in the context of some examples.
 In what follows we will often work with time-dependent measures
$ (\llp t{p_0}{p_1})_{t\in \III} $ on $\R^h$
 given by the linear combination of two Dirac masses at  $p_0,\, p_1 \in \R^h,$ i.e.
\begin{equation}
\label{general-notation}
\llp t{p_0}{p_1}: =  \max\{1{-}t, 0 \} \delta_{p_0} + \min\{t, 1\} \delta_{p_1}= 
 \left\{ \begin{array}{ll}
t \delta_{p_1}+(1{-}t) \delta_{p_0} &  t\in [0,1]\,,\\
\delta_{p_{1}} & t \in (1, +\infty)\,.
\end{array}
\right.
\end{equation}

\begin{example}
\label{ex:Dirac-delta}
{\sl
We consider the curve  $ (\mu_t)_{t\in \III} $ of probability measures   on $\R$ 
\[
\mu_t: = \llp t{x_0}{x_1} \text{ for some $x_0 <x_1 \in \R$},
\] 
so that  the corresponding  measure on  the time-space cylinder $ \III   \times \R$ is 
$\mu = \mathcal{L}^1  \otimes \mu_t$. Let 
 $\nnu$ be the measure on  $\III {\times}\R$,  concentrated on   $[0,1]{\times}[x_0,x_1]$,   given by 
\[
\nnu :=   ( \mathcal{L}^1 \mres  [0,1] ) \otimes  ( \mathcal{L}^1 \mres  [x_{0}, x_{1}])  
\]
(although in this spatially one-dimensional case  $\nnu$ is  a scalar measure, we will stick to the 
vectorial notation used throughout the paper for better reference). For simplicity of notation, let us set $d_{0}:= x_{1} - x_{0}>0$. The  pair $(\mu,\nnu)$ fulfills the continuity equation on $(0,  +\infty )\times \R$, with initial datum $\mu_0=\delta_{x_0}$,  in the sense of Definition  \ref{def:solCE} (cf.\ Remark 
\ref{rmk:0,T}), since for every $\varphi \in \Cc^1(  \III   \times \R)$ there holds
\[
\begin{aligned}
\iint_{\III{\times}\R} \partial_{t} \varphi (t, x) \, \di \mu(t, x)  
& 
= \int_0^{ 1 } ((1{-}t) \partial_t \varphi(t,x_0){+} t \partial_t \varphi(t,x_1)) \, \dd t + \RRR \int_{1}^{+\infty} \partial_{t} \varphi(t, x_{1}) \, \di t 
\\
& = 
- \int_0^{1 } \left( \varphi(t,x_1){-}\varphi(t,x_0) \right) \, \dd t + \left[ (1{-}t) \varphi(t,x_0){+}t \varphi(t,x_1)\right]_0^1 \RRR - \varphi(1, x_{1}) 
\\
& 
= -\int_0^1 \int_{x_0}^{x_1} \partial_x \varphi(t,x) \, \dd x \, \dd t - \varphi(0,x_0) 
\\
&
 = - \iint_{\III{\times}\R}  \partial_x \varphi(t,x) \, \dd \nnu(t,x) -  \int_{\R} \varphi(0, x) \, \di \delta_{x_0}(x)\,.
\end{aligned}
\]
\par
In order to illustrate  the superposition principle from Theorem \ref{t.1}, let us 
 consider the fields $\tau\colon   \III   \times \R \to \R$, $\bvv \colon  \III  \times \R \to \R$   associated with the pair
  $(\mu,\nnu)$ via
   \eqref{strict-convexity-consequence-bis}. Since 
    the measures $\mu$ and $\nnu$ are mutually singular,  we have that
  \[
  |(\mu,\nnu)| = \mathcal{L}^1{\big |_{[0,1]}} \otimes (\mu_t {+} \RRR \mathcal{L}^1{\big|_{[x_{0}, x_{1}]}}) + \mathcal{L}^{1}{\big|_{(1, +\infty)}} \otimes \mu_{t}\,, 
  \]
  so that \RRR
  \begin{align*}
  \tau(t,x) & = \frac{\dd \mu}{\dd  |(\mu,\nnu)| } (t,x)= \begin{cases}
  1 & \text{if } t \in [0, 1) \text{ and } x\in \{x_0,x_1\}\,,
  \\
  1 & \text{if } t\in [1, +\infty) \text{ and } x = x_{1}\,,
  \\
  0 & \text{otherwise,}
  \end{cases}
  \\[2mm]
  \bvv(t,x) & = \frac{\dd \nnu}{ \dd  |(\mu,\nnu)| } (t,x)= \begin{cases}
  1 & \text{if } t \in [0, 1) \text{ and } x\in [x_0,x_1]\,,
  \\
  0 & \text{otherwise,}
  \end{cases}
  \end{align*} 
for   $\mathcal{L}^1$-almost all  $t\in (0, \RRR + \infty)$. 
  The measure $\eeta$ 
  involved in the representation  \eqref{e.15}  of $(\mu,\nnu)$
  is concentrated on  the curves $ \sfyy $ solving the 
  Cauchy system
 \begin{equation}
\label{Cauchy-delta}
\begin{cases}
  \dot{\sfyy} (s)  = (\tau(\sfyy(s)), \bvv(\sfyy(s))),  \  s \in \III, 
\\
\sfyy(0) =(0,x), \  x\in\supp(\mu_{0}) = \{x_0\}\,.
\end{cases} 
\end{equation} 
Now, for every $\bar{t} \in [0,1]$  the curves $\sfyy^{\bar t} \colon  \III   \to \R^2$ defined by 
\[
 \sfyy^{\bar t}(s) : =   (s,x_0) \chi_{[0,\bar{t}]}(s) + \bigg( \bar{t}, \frac{\bar{t} + d_{0} - s}{d_{0} } \, x_{0} + \frac{s - \bar{t}}{ d_{0}} \, x_{1} \bigg) \chi_{[\bar{t},\bar{t}+ d_{0} ]}(s) + (s - d_{0} ,x_1) \chi_{ [\bar{t}+d_{0},+\infty)  }(s) 
\]
solve \eqref{Cauchy-delta}.
 Loosely speaking, each curve $ \sfyy^{\bar t} $ can decide to move time till $\bar t$, then it `fills in' the jump from~$x_0$ to~$x_1$, and then moves time again.
Let us now consider the mapping
$\Upsilon \colon [0,1]\to   \Lipplus 1 \III {\R^{2}} $ that with each $\bar t \in [0,1]$ associates the curve $ \sfyy^{\bar t} $, and let us consider the probability measure on $ \Lipplus 1 \III {\R^{2}}  $  defined by 
\begin{equation}
\label{eeta-delta}
\eeta : = \Upsilon_{\sharp }(  \mathcal{L}^1 \mres [0,1]),  \qquad  \eeta_{\mathcal{L}} := \mathcal{L}^{1} \otimes \eeta\,. 
\end{equation}
We will now check that $\eeta$ provides the probabilistic representation \eqref{e.15} of the pair $(\mu,\nnu)$. 
Indeed, for every  $\phisc \in \rmC_{\rm b}(\R^2) $   
there holds
\[
\begin{aligned}
\langle  \fre_{\sharp }( \frt'  \eeta_{\mathcal{L}}), \phisc \rangle 
&= 
\int_{\Lipplusname 1} \int_{\III} \phisc(  \sfyy(s)) \sft'(s) \, \dd s \, \dd \eeta(\sfyy) 
\\
 & \stackrel{(1)}{=} \int_0^1 \int_{\III}\phisc(\sfyy^{\bar t}(s)) \tau (\sfyy^{\bar t}(s)) \, \dd s \, \dd \bar{t}
 \\
 & \stackrel{(2)}{=}  \int_0^1 \left( \int_0^{\bar t} \phisc(s,x_0) \, \dd s + \int_{\bar {t}+ d_{0}}^{ + \infty} \phisc (s - d_{0}  , x_1) \, \dd s\right)  \, \dd \bar{t}
\\
&= \int_0^1 \left(  \int_0^{\bar t} \phisc(s,x_0) \, \dd s + \int_{\bar {t}}^{+\infty} \phisc (s,x_1) \, \dd s \right)  \, \dd \bar{t}
\\
& 
\stackrel{(3)}{=}  \int_0^1 \int_{s}^1 \phisc(s,x_0) \, \dd \bar{t} \, \dd s  +  \int_0^{+\infty} \int_0^{\min\{s, 1\}}\phisc (s,x_1) \, \dd \bar t \, \dd s 
\\
&
= \int_0^1 (1{-}s)   \phisc(s,x_0) \, \dd s + \int_0^1 s \phisc (s,x_1) \, \dd s +  \int_{1}^{+\infty} \phisc(s, x_{1}) \, \di s 
\\
& 
=\int_{\III} \int_{\R} \phisc(s,x) \, \dd \mu_s(x) \, \dd s,
\end{aligned}
\]
where (1) follows from \eqref{Cauchy-delta}, (2) from the fact that $\tau(t,\cdot) \equiv 0$ on $\R \setminus \{x_0,x_1\}$, and (3) from the Fubini theorem.
Analogously, we easily check that,  for $\eeta$ given by \eqref{eeta-delta} there holds
$\nnu=  \fre_{\sharp }(  \frxx' \eeta_{ \mathcal{L}})$. 
}
\end{example}

In our  next example the measures $\mu$ and $\nnu$ are again mutually singular and the minimality  of $\nnu^{\perp}$  does not hold. In this case, the pair
$(\mu,\nnu)$  lacks a probabilistic representation. 
\begin{example}
\label{ex:no-exist}
{\sl
Let $\mathbf{x}_0=(0,0)$, $\Lambda$ be the unitary circle centered at $\mathbf{x}_0$ with tangent vector $\mathbf{t}_\Lambda$,  and
\[
\begin{cases}
\mu :=   \mathcal{L}^1  \otimes \mu_t \text{ with }  \mu_t = \delta_{\mathbf{x}_0} 
\\
\nnu := \delta_{t_0} \otimes \mathbf{t}_\Lambda  \mathcal{H}^1\mres \Lambda  
\end{cases}
\]
with any $t_0 \in  \III$. The pair $(\mu,\nnu)$ solves the continuity equation with initial datum $\mu_0=\delta_{\mathbf{x}_0}$, since for any  $\varphi \in \Cc^1( \III{\times}\R^2)$ there holds
\begin{equation}
\label{CP-ex:no-exist}
\begin{aligned}
\iint_{\III{\times} \R^2} \partial_{t} \varphi (t, x) \, \di \mu(t, x) &  =\int_{\III} \partial_t \varphi(t,\mathbf{x}_0) \dd t = 
- \varphi(0,\mathbf{x}_0) 
\\
&
= -  \iint_{\III{\times} \R^2}  \nabla \varphi  (t, x)  \cdot \mathbf{t}_\Lambda   (t, x)  \dd   (\mathcal{H}^{1}\mres \Lambda)  (x) \dd t - \varphi(0,\mathbf{x}_0) \,.
\end{aligned}
\end{equation}
Since the measures $\mu$ and $\nnu$ are mutually singular (and, in particular,
$\supp(\mu) \cap \supp (\nnu)=\emptyset$),
we have that
 $|(\mu,\nnu)|=\mu+ \RRR | \nnu | = \mathcal{L}^{1} \otimes \delta_{\mathbf{x}_{0}} + \delta_{t_{0}} \otimes  (\mathcal{H}^{1}\mres \Lambda) $ and 
  \begin{equation}
  \label{tau-v-6.2}
  \begin{aligned}
  &
  \tau(t,x) = \frac{\dd \mu}{\dd  |(\mu,\nnu)| } (t,x)= \begin{cases}
  1 & \text{if } x = \mathbf{x}_0, \, 
  \\
  0 & \text{otherwise}
  \end{cases} \quad \text{for }\mathcal{L}^1\text{-a.a. } t\in  \III,
  \\
  &
  \bvv(t,x) = \frac{\dd \nnu}{ \dd  |(\mu,\nnu)| } (t,x)= \begin{cases}
  \mathbf{t}_\Lambda(x) & \text{if } x\in \Lambda, \, t=t_0,
  \\
  0 & \text{otherwise}\,.
  \end{cases}
  \end{aligned}
  \end{equation}
  Now, any probabilistic representation of the pair $(\mu,\nnu)$ would involve a measure $\eeta$ supported on the solutions of the Cauchy problem
  \begin{equation}
\label{Cauchy-no-exist}
\begin{cases}
  \dot{\sfyy} (s)   = (\tau(\sfyy(s)), \bvv(\sfyy(s) )) ,  \  s \in  \III\,, 
\\
 \sfyy(0)  =(0,x), \  x\in\supp(\mu_{0}) = \{\mathbf{x}_0\}\,.
\end{cases} 
\end{equation}
 However, it is immediate to check that 
the solution to \eqref{Cauchy-no-exist}  is given by $ \sfyy(s)  = (s,\mathbf{x}_0)$ for all $s\in  \III $ so that, in particular, 
$\bvv(\sfyy(s)) \equiv 0$ for all $s\in  \III $  and $\sfyy$ never intersects $\spt (\nnu)$. Hence, no probability measure supported on the solution trajectories 
of \eqref{Cauchy-no-exist} could represent the measure $\nnu$  in the sense of Theorem~\ref{t.1}. 
}
\end{example}
A  modification of Example \ref{ex:no-exist} provides a situation in which the measures $(\mu,\nnu)$ are still  mutually singular and  the minimality of $\nnu^{\perp}$  
 does not hold, but it is still possible to provide a probabilistic representation.  Hence,  minimality  is not a necessary condition for the validity of the representation from Theorem \ref{t.1}. 
 \begin{example}
 \label{ex:3-variant}
 {\sl
 Let $\mathbf{x}_1=(1,0)$ and 
\[
\mu =  \mathcal{L}^1 \otimes \delta_{\mathbf{x}_1}, \qquad 
\nnu = \delta_{t_0} \otimes \mathbf{t}_\Lambda  \mathcal{H}^1 \mres \Lambda  \,.
\]
  with the   same notation as in Example \ref{ex:no-exist}. 
 The very same calculations as  in \eqref{CP-ex:no-exist} show that the pair~$(\mu,\nnu) $ solves the continuity equation with initial datum $\mu_0 =  \delta_{\mathbf{x}_1}$. 
 In this case, 
   \begin{equation}
   \label{ex:6.3}
  \tau(t,x) = \frac{\dd \mu}{\dd  |(\mu,\nnu)| } (t,x)= \begin{cases}
  1 & \text{if } x = \mathbf{x}_1,
  \\
  0 & \text{otherwise}
  \end{cases}
   \quad \text{for }\mathcal{L}^1\text{-a.e. } t\in  \III \,, 
  \end{equation}
  and $\bvv$ is as in \eqref{tau-v-6.2}. 
  Let us now examine the Cauchy problem \eqref{Cauchy-no-exist}. Its solution is provided by the curve \RRR $\sfyy^{t_0}\colon  \III  \to  \R^3$  defined by
  \begin{equation}
  \label{expression-bargamma}
   \sfyy^{t_0} (s) : = \begin{cases}
  (s, \mathbf{x}_1) & \text{if } 0 \leq  s <t_0 \,,
  \\
  (t_0, r(s-t_0)) & \text{if } t_0\leq s \leq t_0+2\pi\,,
  \\
  (s-2\pi, \mathbf{x}_1) & \text{if }  s > t_0+2\pi\,,
  \end{cases}
  \end{equation}
    where
  $r\colon [0,2\pi]\to \R^2$ is the arclength parametrization of $\Lambda$, 
   $r(\tau) := (\cos(\tau),\sin(\tau))$. 
   The measure 
   \[
   \eeta: = \delta_{\sfyy^{t_0}} \in \Prob(  \Lipplus 1 \III {\R^{3}})
   \]
  gives the probabilistic representation of  the pair $(\mu,\nnu)$. Indeed,   for every $\phisc \in \rmC(\R^3) $ we have 
  (with the notation $ \sfyy^{t_0}  = (\bar \sft,\bar\sfxx)$)
  \begin{align}
  &
  \begin{aligned}
 \langle   \fre_{\sharp }(\frt'  \eeta_{\mathcal{L}}), \phisc \rangle 
 & = 
 \int_{\III} \phisc(\sfyy^{t_0}(s)) \bar\sft'(s) \, \dd s 
 \\
 & 
 \stackrel{\eqref{expression-bargamma}}{=}
 \int_0^{t_0}  \phisc(s,\mathbf{x}_1) \dd s +  \int_{t_0{+}2\pi}^{+\infty}  \phisc(s{-}2\pi,\mathbf{x}_1) \dd s 
 \\
 & =  \int_0^{t_0}  \phisc(s,\mathbf{x}_1) \dd s +  \int_{t_0}^{+\infty}  \phisc(s,\mathbf{x}_1) \dd s = \langle \mu,\phisc\rangle\,,
 \end{aligned}
\nonumber
\intertext{and  for all $\phive \in \Cc(\R^3;\R^2)$ there holds}
&
\nonumber
\begin{aligned}
 \langle   \fre_{\sharp }(\frxx' \eeta_{\mathcal{L}}),  \phive  \rangle 
 & = 
 \int_{\III} \phive( \sfyy^{t_0}(s) ) \cdot \bar\sfxx'(s)  \, \dd s 
 \\
 & 
 \stackrel{\eqref{expression-bargamma}}{=}
 \int^{t_0{+}2\pi}_{t_0}  \phive(t_0, r(s-t_0)) \cdot  r'(s{-}t_0)\dd s  = \int_0^{2\pi} \phive(t_0,r(s)) \cdot  r'(s) \dd s = 
 \langle \nnu,\phive\rangle\,.
 \end{aligned}
  \end{align}
  \par
  Another instance of a pair $(\mu,\nnu)$ for which  the minimality of $\nnu^{\bot}$  
 does not hold, but  a probabilistic representation still exists, is offered by 
 a variant of the above example in which $\nnu$ is diffuse in time, i.e.\
\[
\mu =   \mathcal{L}^1 \otimes \delta_{\mathbf{x}_1}, \qquad 
\nnu =   \mathcal{L}^1  \otimes \mathbf{t}_\Lambda  \mathcal{H}^1 \mres \Lambda  \,.
\]
In this case, $\tau$ is still given by \eqref{ex:6.3} while
\[
  \bvv(t,x) = \begin{cases}
  \mathbf{t}_\Lambda(x) & \text{if } x\in \Lambda\setminus\{\mathbf{x}_1\}
  \\
  0 & \text{otherwise}
  \end{cases}  \quad \text{for }\mathcal{L}^1\text{-a.a. } t\in  \III \,, 
\]
so that $\tau^2 + |\bvv|^2 \equiv 1$ $|(\mu,\nnu)|$-a.e.\ in $ \III  \times \R^2$. The solutions to the Cauchy problem 
\eqref{Cauchy-no-exist} are  provided by the family of curves
 $ \sfyy^{\bar t} \colon \III   \to\R^3$, $\bar t\in [0,1]$
 defined by \eqref{expression-bargamma} (with $\bar t $ in place of $t_0$). 
  Let us consider the measure  
$\eeta : = \Upsilon_{\sharp }( \mathcal{L}^1 \mres  [0,1])$, where  in this case 
the mapping
$\Upsilon \colon [0,1]\to   \Lipplus 1 \III {\R^{2}}  $  associates 
 with each $ \bar t \in [0,1]$ the curve  $\sfyy^{\bar t}$.  A straightforward adaptation of the above calculations show that $\eeta$ represents $(\mu,\nnu)$ in the sense of \eqref{e.15}. 
}
 \end{example}
 The following example shows that the representation provided by  Theorem.\ \ref{t.1}  is not
in general  stable for weak$^*$ convergence. 
\begin{example}
\label{ex:4}
{\sl
Recall the notation  $\mathbf{x}_0 =(0,0)$ and  $\mathbf{x}_1 =(1,0)$ from Examples \ref{ex:no-exist} and \ref{ex:3-variant}. 
For every $n\geq 1$ consider the probability measures  on $(0,1)\times\R^2$ 
\[
\mu^n=  \mathcal{L}^1  \otimes \left( \left( 1{-}\tfrac 1n\right) \delta_{\mathbf{x}_0} {+} \tfrac1n \delta_{\mathbf{x}_1}\right)
\]
and $\nnu = \delta_{t_0} \otimes \mathbf{t}_\Lambda  \mathcal{H}^1 \mres  \Lambda  $. In this case as well, the minimality condition is not satisfied, but for each $n\geq 1$ the pair $(\mu^n,\nnu)$ admits a probabilistic representation. Indeed, the associated fields $(\tau_n,\bvv_n)$ are 
\[
\tau_n (t,x) = \begin{cases}
 1  &\text{if } x= \mathbf{x}_0, \mathbf{x}_{1}\,, 
\\
0 & \text{otherwise}
\end{cases}
\quad \mathcal{L}^1\text{-a.a.\ in }  \III \,, 
\]
while $\bvv_n \equiv \bvv$ with $\bvv$  as in \eqref{tau-v-6.2}. The Cauchy problem \eqref{Cauchy-no-exist}  featuring the fields  
$(\tau_n,\bvv) $ is solved by the curves
\[
 \sfyy_{0} (s)  = (s ,\mathbf{x}_0) \qquad \text{for all } s \in \R
\]
and
\RRR \[
\sfyy_{1} ^n(s) = \begin{cases}
\left( s , \mathbf{x}_1\right) & \text{if } s<t_0\,,
\\
(t_0, r (s{-}t_0) ) &\text{if } s \in [t_0,t_0+2\pi n]\,,
\\
\left( s -2\pi n , \mathbf{x}_1\right) & \text{if } s\in [t_0+2\pi n, +\infty]\,.
\end{cases}
\]
We then consider the probability measure
 \[
 \eeta^n =\left( 1{-}\frac1n \right) \delta_{\gamma_0} +\frac1n \delta_{\gamma_1^n}\,.
 \]
  For every  $\phisc \in \mathrm{C}_{\mathrm{c}} (  \III  {\times} \R^{2})$  we have that
 \begin{align*}
 \left\langle    \fre_{\sharp }(\frt' \eeta^{n}_{\mathcal{L}}), \phisc \right\rangle & = \frac{1}{n} \int_{0}^{t_{0}} \phisc(s, \mathbf{x}_{1}) \, \di s + \frac{1}{n} \int_{t_{0} + 2\pi n}^{+\infty} \phisc(s - 2\pi n, \mathbf{x}_{1}) \, \di s + \bigg( 1 - \frac{1}{n} \bigg) \int_{0}^{+\infty} \phisc (s, \mathbf{x}_{0}) \, \di s 
 \\
 &
 = \frac{1}{n} \int_{0}^{+\infty} \phisc(s, \mathbf{x}_{1}) \, \di s+ \bigg( 1 - \frac{1}{n} \bigg) \int_{0}^{+\infty} \phisc(s, \mathbf{x}_{0}) \, \di s = \left\langle \mu^{n}, \phisc \right\rangle.
 \end{align*}
 In a similar way, for $\bvarphi \in \mathrm{C}_{\mathrm{c}} (  \III{\times} \R^{2} ; \R^{2})$ we have that
 \begin{align*}
 \left\langle   \fre_{\sharp }(\frxx' \eeta^{n}_{\mathcal{L}}), \bvarphi \right\rangle & = \frac{1}{n} \int_{t_{0}}^{t_{0} + 2\pi n} \bvarphi(t_{0}, r(s - t_{0}) ) \cdot \mathbf{t}_{\Lambda} (r(s - t_{0})) \, \di s 
 \\
 &
 = \int_{0}^{2\pi} \bvarphi(t_{0}, r(s)) \cdot \mathbf{t}_{\Lambda} (r(s)) \, \di s = \left\langle \nnu , \bvarphi\right\rangle\,.
 \end{align*} 
 
 \par
 Nonetheless, it turns out that, as $n\to \infty$,  
 $\mu^n\weaksto \mu^\infty = \mathcal{L}^1  \otimes  \delta_{\mathbf{x}_0} $ and 
 $\eeta^n \weaksto \eeta^\infty  = \delta_{\sfyy_{0}}$, which represents $\mu^\infty$ but no longer provides a representation for $\nnu^\infty = \nnu$.
}
\end{example}
\begin{example}
\label{ex:6.5}
{\sl
Let $\varrho$ be a regular and injective curve connecting $\mathbf{x}_0$ to $\mathbf{x}_1$, 
$r_{\varrho} \colon [0,L_\varrho] \to \R^2$ be its arclength parametrization and 
$\mathbf{t}_\varrho$ its tangent vector.
Consider the measures 
\[
\begin{aligned}
&
\mu=  \mathcal{L}^1  \otimes \mu_t  \qquad \text{with } 
 \mu_t = \llp t{\mathbf{x}_0}{\mathbf{x}_1}, 
\\
&
\nnu =   ( \mathcal{L}^1\mres[0,1])   \otimes \mathbf{t}_\varrho  \mathcal{H}^1{\big |_{\varrho}} \,.
\end{aligned}
  \]
  Here, $\nnu^\perp = \nnu$ and
   the minimality condition is satisfied,
   cf.\ Example \ref{ex:4minimality}. 
   In order to illustrate the probabilistic representation of the pair $(\mu,\nnu)$, we consider the fields
   defined  for $\mathcal{L}^1$-a.e.~$t\in \III $ 
 by
   \[
  \tau(t,x) = \frac{\dd \mu}{\dd  |(\mu,\nnu)| } (t,x)= \begin{cases}
  1 & \text{if } x \in \{ \mathbf{x}_0, \mathbf{x}_1\},
  \\
  0 & \text{otherwise,}
  \end{cases}
  \quad
  \bvv(t,x) = \frac{\dd \nnu}{ \dd  |(\mu,\nnu)| } (t,x)= \begin{cases}
  \mathbf{t}_\varrho(x) & \text{if } x\in \varrho\,,
  \\
  0 & \text{otherwise.}
  \end{cases}
  \quad  
    \]
    The  family of curves $(\sfyy_{\bar t})_{\bar t \in [0,1]}$  defined by 
      \begin{equation}
  \label{expression-bargamma-new}
  \sfyy_{\bar t}(s): = \begin{cases}
  (s, \mathbf{x}_0) & \text{if } 0 \leq  s <\bar t\,,
  \\
  (\bar{t}, r_{\varrho}(s{-}\bar t)) & \text{if } \bar t\leq s \leq \bar t+L_\varrho\,,
  \\
  (s-L_{\varrho}, \mathbf{x}_1) & \text{if } \bar t+L_\varrho <s <  +\infty 
  \end{cases}
  \end{equation}
  provide the solutions to the Cauchy problem \eqref{Cauchy-no-exist}.
  Let $\Upsilon \colon [0,1]\to   \Lipplus 1 \III { \R^{2}} $  associate with each $\bar t \in [0,1]$
  the corresponding curve $\sfyy_{\bar t}$. It can be easily checked that the measure 
  $\eeta = \Upsilon_{\sharp }(\mathcal{L}^1\mres[0,1]) $ fulfills
  $
  \mu =  \fre_{\sharp }(\frt' \eeta_{\mathcal{L}}) 
  $ and $\nnu =  \fre_{\sharp }(\frxx' \eeta_{\mathcal{L} })$. 
}
\end{example}

 In our last example we consider a solution pair $(\mu,\nnu)$ such that $\nnu \ll \mu$. Therefore, 
in  this  absolutely continuous case  \cite[Theorem~8.2.1]{AGS08}  applies. 
We show that representation from our  Theorem~\ref{t.1} follows from that provided by   
\cite[Theorem~8.2.1]{AGS08} via a reparametrization. Hence, Theorem~\ref{t.1} is consistent with 
the classical result.  
\begin{example}
\label{ex:6.6}
\RRR
{\sl
Let us consider the scalar measures
\begin{displaymath}
\mu = \mathcal{L}^{1} \otimes \bigg(
 \frac12 \llp t{0}{1} + 
 \frac12 \mathcal{L}^{1}{\big|_{(0,1)}\bigg)}\,, \qquad \nnu = \mathcal{L}^{1} \otimes  \frac12\mathcal{L}^{1}{\big|_{(0,1)}}\,.
\end{displaymath}
Then, $\nnu \ll \mu$ and $\nnu = \bww \mu$ with
\begin{displaymath}
\bww(t, x) := \left\{ \begin{array}{ll}
1 & \text{for $t \in  \III $ and $x \in (0,1)$},\\
0 & \text{elsewhere}.
\end{array}
\right.
\end{displaymath}
The representation of Theorem~\ref{t.1}(2) follows, for instance, from~\cite[Theorem~8.2.1]{AGS08} by an arc-length reparametrization, arguing as in~\eqref{tau-eps}. In particular, we may write $\mu =  \fre_{\sharp } (\frt' \eeta_{\mathcal{L}} )$ and $\nnu = \fre_{\sharp }  \eeta_{\mathcal{L} }$, where the measure $\eeta \in \mathcal{P} (  \Lipplus 1 \III {\R^{2}} )$ is supported on the set of curves $ \sfyy \in  \Lipplus 1 \III {\R^{2}} $ solving the Cauchy problem
\begin{displaymath}
\left\{ \begin{array}{ll}
 \dot{\sfyy}(s)  = (\tau, \bvv) (\sfyy(s)) \,,\\
\sfyy (0) = (0, x_{0}) ,\  x_{0} \in [0, 1]\,,
\end{array}\right.
\end{displaymath}
where
\begin{align}\label{e:tt}
\tau(t, x) & = \left\{
\begin{array}{llll}
1 & \text{for $t \in [0,1)$ and $ x \in \{0, 1\}$},\\
\frac{1}{\sqrt{2}} & \text{for $t \in [0,+\infty)$ and $x \in (0,1)$},\\
1 & \text{for $t \in [1, +\infty)$ and $x =1$},\\
0 & \text{elsewhere},
\end{array}
\right.\\
 \bvv(t, x)&  = \left\{
\begin{array}{ll}
\frac{1}{\sqrt{2}} & \text{for $t \in [0, +\infty)$ and $x \in (0,1)$}\,,\\
0 & \text{elsewhere}.
\end{array}
\right. \label{e:vv}
\end{align}
\par
On the other hand, we may write $\mu = \frac12 \mu_{1} + \frac12 \mu_{2}$ with
\begin{displaymath}
\mu_{1} :=  \mathcal{L}^{1} \otimes  \llp t 01  \,,
\qquad \mu_{2} :=  \mathcal{L}^{1} \otimes (  \mathcal{L}^{1} \mres  (0,1))  \,.
\end{displaymath}
The pairs $(\mu_{1}, 2 \nnu)$ and $(\mu_{2}, 0)$ solve the continuity equation in the sense of Definition~\ref{def:solCE} and both admit a representation in the form~\eqref{e.15} satisfying the conditions of Theorem~\ref{t.1}(2). Precisely, we take $\eeta_{1} \in \mathcal{P}_{1}(\Lipplus 1 \III {\R^{2}})$ as in Example~\ref{ex:Dirac-delta} (with the obvious modifications) and write $\mu_{1} = \fre_{\sharp } (\frt' \mathcal{L}^{1} \otimes \eeta_{1})$ and $\nnu = \fre_{\sharp } (\frxx' \mathcal{L}^{1} \otimes \eeta_{1})$. As for $\mu_{2}$, we consider the measure $\eeta_{2} \in   \mathcal{P}_{1}(\Lipplus 1 \III {\R^{2}} )$ of the form $\eeta_{2} = \Upsilon_{\sharp }(\mathcal{L}^1 \mres [ 0,1]  )$ where $\Upsilon \colon (0,1) \to   \mathcal{P}_{1}(\Lipplus 1 \III {\R^{2}})$ is defined as  $\Upsilon (x) := \sfyy_{x}$  with $\sfyy_{x} (s) := (s, x)$ for every $s \in \III$ and every $x \in (0,1)$. Then, it is easy to see that $\mu_{2} = \fre_{\sharp } (\frt' \mathcal{L}^{1} \otimes \eeta_{2})$.

As a consequence, we obtain the alternative representation
\begin{displaymath}
\mu = \fre_{\sharp } \bigg( \frt' \mathcal{L}^{1} {\otimes} \bigg( \frac12 \eeta_{1} + \frac12 \eeta_{2} \bigg) \bigg), \qquad \nnu = \fre_{\sharp } \bigg( \frxx' \mathcal{L}^{1} {\otimes} \bigg( \frac12 \eeta_{1} + \frac12 \eeta_{2} \bigg) \bigg)\,.
\end{displaymath}
However, we notice that this second representation does not fulfill the conditions of Theorem~\ref{t.1}(2). Indeed, the curves contained in~$\spt(\eeta_{1} ) \cup \spt(\eeta_{2})$ do not solve $\sfyy'(s) = (\tau(\sfyy(s)) , \bvv(\sfyy(s)))$ with $\tau, \bvv$ as in~\eqref{e:tt}--\eqref{e:vv}.
 This shows that the superposition of two representations from Theorem~\ref{t.1} does not, in general, yield a representation in the sense  Theorem~\ref{t.1}. 
}
\end{example}

\appendix

\section{Push forward of vector measures}
Let $\|\cdot\|$ be a strictly convex norm on $\R^h$ and let $\|\cdot\|_*$ denote its dual norm. 
The corresponding duality (multivalued) map $J \colon \R^h\twoheadrightarrow \R^h$ is defined by
\begin{displaymath}
	J_1(\boldsymbol x):=\Big\{\boldsymbol y\in \R^h:
	\|\boldsymbol y\|_*\le 1,\ 
	\boldsymbol{y}\cdot\boldsymbol{x}=\|\boldsymbol{x}\|
	\Big\}.
\end{displaymath}
Notice that if $\|\cdot\|$ is strictly convex then for every $\boldsymbol{x}_1,\boldsymbol{x}_2\in \R^h$
\begin{equation}
	\label{eq:injectivity}
	\|\boldsymbol x_1\|=\|\boldsymbol{x}_2\|,\quad\boldsymbol{y}\in J(\boldsymbol{x}_1)\cap J(\boldsymbol{x}_2)
	\quad\Longrightarrow\quad
	\boldsymbol{x}_1= \boldsymbol{x}_2.
\end{equation}
In fact, setting $\bar{\boldsymbol{x}}:=\frac 12\boldsymbol{x}_1+\frac{1}{2}\boldsymbol{x}_2$, we get
\begin{displaymath}
	\|\bar{\boldsymbol x}\|\ge \boldsymbol{y}\cdot \bar{\boldsymbol{x}}=\frac 12 
	\boldsymbol{y}\cdot {\boldsymbol{x}}_1+
	\frac 12	\boldsymbol{y}\cdot {\boldsymbol{x}}_2=
	\frac 12 \|\boldsymbol{x}_1\|+\frac{1}{2}\|\boldsymbol{x}_2\|
\end{displaymath}
which implies $\boldsymbol{x}_1=\boldsymbol{x}_2$ by the strict convexity of the norm.   In the following statement, $X$ and  $Y$  are two locally compact
topological spaces. 

\begin{lemma}
\label{le:strict-conv}
	Let $\boldsymbol{\vartheta}
	\in \calM(X;\R^h)$, let 
	$\pushm \colon X\to Y$ be a $|\boldsymbol{\vartheta}|$-proper map 
	(i.e.~$|\boldsymbol{\vartheta}|(\pushm^{-1}(K))<+\infty$ for every compact subset $K\subset Y$).
	\begin{enumerate}
		\item If there exists $\alpha\in \calMp(X)$ 
	 	and a Borel map 
	 	$\boldsymbol f \colon Y\to \R^h$ such that 
		 \begin{equation}
			\label{eq:fiber2}
			\boldsymbol{\vartheta}=
			(\boldsymbol{f}\circ \pushm)\, \alpha, 
		\end{equation}
		then 
	\begin{equation}
		\label{eq:fiber}
		|\pushm_\sharp \boldsymbol\vartheta|=
		\pushm_\sharp |\boldsymbol{\vartheta}|.
	\end{equation}
	\item 
	If the norm 
		$\|{\cdot}\|$ on $\R^h$ is strictly convex and \eqref{eq:fiber} holds, then
		\eqref{eq:fiber2} holds 
		with respect to $\alpha:=|\boldsymbol{\vartheta}|$ 
		and $\boldsymbol f$ the density of the polar decomposition of $\pushm_\sharp\boldsymbol{\vartheta}$,
		i.e.
		\begin{equation}
			\label{eq:fiber3}
			\pushm_\sharp \boldsymbol{\vartheta}=
			\boldsymbol{f}|\pushm_\sharp \boldsymbol{\vartheta}|,\quad
			\boldsymbol{\vartheta}=
			(\boldsymbol{f}\circ \pushm)\, |\boldsymbol\vartheta|.
		\end{equation}
	\item If \eqref{eq:fiber2} holds and $\zzeta\prec \boldsymbol{\vartheta}$ then
	\begin{equation}
		\label{eq:push-forward}
		\pushm_\sharp \zzeta\prec \pushm_\sharp\boldsymbol{\vartheta}.
	\end{equation}
	\end{enumerate}
\end{lemma}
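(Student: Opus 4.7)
The three parts will be proved in sequence; only part (2) requires the strict convexity of $\|{\cdot}\|$ and uses the key property \eqref{eq:injectivity} of the duality map.

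For part (1), the plan is a straightforward computation. From the hypothesis $\boldsymbol{\vartheta}=(\boldsymbol f\circ\pushm)\,\alpha$ we immediately obtain $|\boldsymbol{\vartheta}|=\|\boldsymbol f\circ\pushm\|\,\alpha$; the scalar function $\|\boldsymbol f\circ\pushm\|$ is a pullback through $\pushm$, so pushing forward yields $\pushm_\sharp|\boldsymbol{\vartheta}|=\|\boldsymbol f\|\,\pushm_\sharp\alpha$. On the other hand, a direct change-of-variable identifies $\pushm_\sharp\boldsymbol{\vartheta}=\boldsymbol f\,\pushm_\sharp\alpha$, whence $|\pushm_\sharp\boldsymbol{\vartheta}|=\|\boldsymbol f\|\,\pushm_\sharp\alpha$ as well. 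This gives \eqref{eq:fiber}.

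Part (2) is the core of the lemma. Let $\boldsymbol g$ denote the polar density of $\boldsymbol{\vartheta}$, so $\boldsymbol{\vartheta}=\boldsymbol g\,|\boldsymbol{\vartheta}|$ with $\|\boldsymbol g\|=1$ $|\boldsymbol{\vartheta}|$-a.e., and let $\boldsymbol f$ be the polar density of $\pushm_\sharp\boldsymbol{\vartheta}$, so $\pushm_\sharp\boldsymbol{\vartheta}=\boldsymbol f\,|\pushm_\sharp\boldsymbol{\vartheta}|$ with $\|\boldsymbol f\|=1$ $|\pushm_\sharp\boldsymbol{\vartheta}|$-a.e. Disintegrating $|\boldsymbol{\vartheta}|$ with respect to $\pushm$ (which is legitimate as $\pushm$ is $|\boldsymbol{\vartheta}|$-proper), we have $|\boldsymbol{\vartheta}|=\int_Y\mu_y\,\dd(\pushm_\sharp|\boldsymbol{\vartheta}|)(y)$ with each $\mu_y$ a probability measure concentrated on $\pushm^{-1}(y)$. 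Testing $\pushm_\sharp\boldsymbol{\vartheta}$ with continuous functions and using the assumed equality $|\pushm_\sharp\boldsymbol{\vartheta}|=\pushm_\sharp|\boldsymbol{\vartheta}|$, we deduce the fibrewise identity
\begin{equation*}
\boldsymbol f(y)=\int_{\pushm^{-1}(y)}\boldsymbol g(x)\,\dd\mu_y(x)\qquad\text{for $|\pushm_\sharp\boldsymbol{\vartheta}|$-a.a.\ $y\in Y$.}
\end{equation*}
The hard step is to conclude from this barycentric identity and $\|\boldsymbol f(y)\|=\|\boldsymbol g(x)\|=1$ that $\boldsymbol g(x)=\boldsymbol f(y)$ for $\mu_y$-a.a.\ $x$. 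To do this I will select $\boldsymbol y^*\in J(\boldsymbol f(y))$ and compute
\begin{equation*}
1=\|\boldsymbol f(y)\|=\boldsymbol y^*\cdot\boldsymbol f(y)=\int \boldsymbol y^*\cdot\boldsymbol g(x)\,\dd\mu_y(x)\le\int\|\boldsymbol g(x)\|\,\dd\mu_y(x)=1,
\end{equation*}
which forces $\boldsymbol y^*\cdot\boldsymbol g(x)=1=\|\boldsymbol g(x)\|$ for $\mu_y$-a.a.\ $x$, i.e.\ $\boldsymbol y^*\in J(\boldsymbol g(x))$. Since $\|\boldsymbol g(x)\|=\|\boldsymbol f(y)\|$, the strict convexity through \eqref{eq:injectivity} forces $\boldsymbol g(x)=\boldsymbol f(y)$, which is exactly \eqref{eq:fiber3}. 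A small measurability check is needed to show that $\boldsymbol y^*$ can be chosen as a Borel selection of $J\circ\boldsymbol f$, but this is standard given the continuity (in fact single-valuedness, on unit vectors, in the strictly convex case) properties of the duality map.

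For part (3), the plan is again computational, building on part (1). Given $\zzeta\prec\boldsymbol{\vartheta}$, write $\zzeta=\lambda\boldsymbol{\vartheta}=\lambda(\boldsymbol f\circ\pushm)\alpha$ with $\lambda\in L^\infty_{|\boldsymbol{\vartheta}|}(X;[0,1])$, and disintegrate $\alpha=\int\alpha_y\,\dd(\pushm_\sharp\alpha)(y)$ with $\alpha_y$ probability measures on $\pushm^{-1}(y)$. A direct computation with test functions yields
\begin{equation*}
\pushm_\sharp\zzeta=\tilde\lambda\,\boldsymbol f\,\pushm_\sharp\alpha=\tilde\lambda\,\pushm_\sharp\boldsymbol{\vartheta},\qquad\tilde\lambda(y):=\int\lambda(x)\,\dd\alpha_y(x)\in[0,1].
\end{equation*}
Since $\tilde\lambda$ is Borel and takes values in $[0,1]$, this exhibits $\pushm_\sharp\zzeta$ as a submeasure of $\pushm_\sharp\boldsymbol{\vartheta}$ in the sense of Definition~\ref{d:submeasure}, proving \eqref{eq:push-forward}. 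The main obstacle throughout is ensuring the measurability of the Borel selections and the disintegration fibres; this is handled by the standing assumption that $\pushm$ is $|\boldsymbol{\vartheta}|$-proper, which guarantees that all the relevant marginals are Radon measures on locally compact spaces.
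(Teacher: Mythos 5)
Your proof is correct overall, and it takes somewhat different routes than the paper's in parts (1) and (3), while part (2) uses the same core idea. In part (1) you identify $|\pushm_\sharp\boldsymbol{\vartheta}|=\|\boldsymbol f\|\,\pushm_\sharp\alpha$ directly from the density formula for the total variation of $\boldsymbol f\,\pushm_\sharp\alpha$; the paper instead only reads off the inequality $|\pushm_\sharp\boldsymbol{\vartheta}|\le \pushm_\sharp|\boldsymbol\vartheta|$ from $\|\boldsymbol f\|\le 1$ and closes the other direction by a Borel selection $\bvarphi$ of $J\circ\boldsymbol f$. Your computation is slightly cleaner. In part (3) you disintegrate $\alpha$ to build the submeasure density $\tilde\lambda(y)=\int\lambda\,\dd\alpha_y$ explicitly; the paper instead switches to the Euclidean norm, applies Claim (1) to $\zzeta$ and to $\boldsymbol\vartheta-\zzeta$ separately to obtain $|\pushm_\sharp\zzeta|+|\pushm_\sharp(\boldsymbol\vartheta-\zzeta)|=|\pushm_\sharp\boldsymbol\vartheta|$, and then concludes via the decomposition characterization of $\prec$ in Lemma~\ref{r:sub}(iii). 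Your version is more constructive, theirs leans on the earlier lemma. In part (2) the key step is identical: feed the barycentric/change-of-variables identity into the duality selection and then invoke the strict-convexity property~\eqref{eq:injectivity} to force the densities to coincide; the paper simply avoids an explicit disintegration and runs the chain of (in)equalities through $\int_K\bvarphi\cdot\dd\pushm_\sharp\boldsymbol\vartheta=\int_{\pushm^{-1}(K)}(\bvarphi\circ\pushm)\cdot\boldsymbol g\,\dd|\boldsymbol\vartheta|$.

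There is one technical point you gloss over in parts (1) and (3): the standing assumption only says $\pushm$ is $|\boldsymbol\vartheta|$-proper, not $\alpha$-proper, so $\pushm_\sharp\alpha$ need not be a well-defined Radon measure, and your formulae $\pushm_\sharp|\boldsymbol\vartheta|=\|\boldsymbol f\|\,\pushm_\sharp\alpha$, $\pushm_\sharp\zzeta=\tilde\lambda\,\boldsymbol f\,\pushm_\sharp\alpha$ could fail to parse. The paper deals with this at the very start of the proof by noting it is not restrictive to assume $\alpha=|\boldsymbol\vartheta|$ after replacing $\boldsymbol f$ by $\boldsymbol f/\|\boldsymbol f\|$; this normalization simultaneously makes $\pushm_\sharp\alpha$ Radon and gives $\|\boldsymbol f\|=1$ a.e. You should insert the same preliminary reduction; once done, the rest of your argument goes through.
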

\begin{proof}
	First of all, we observe that \eqref{eq:fiber2} implies
	a similar identity for $|\boldsymbol\vartheta|$ up to rescaling $\boldsymbol f$ by a suitable positive Borel function, therefore it is not restrictive to assume that 
	$\alpha=|\boldsymbol\vartheta|$ and therefore $\|\boldsymbol{f}\|=1$
	$\pushm_\sharp |\boldsymbol\vartheta|$-a.e.\ in $Y$. 
	
	 In order to prove \textbf{Claim 1}, notice that, by~\eqref{eq:fiber2}, we have that $\pushm_{\sharp} \boldsymbol{\vartheta} = \pushm_{\sharp} (\boldsymbol{f} {\circ} \pushm) | \boldsymbol{\vartheta}| = \boldsymbol{f} \pushm_{\sharp} |\boldsymbol{\vartheta}|$. Since $\| \boldsymbol{f}\| = 1$ $\pushm_\sharp |\boldsymbol\vartheta|$-a.e.\ in $Y$, we immediately deduce that $|\pushm_{\sharp} \boldsymbol{\vartheta}| \leq \pushm_{\sharp}|\boldsymbol{\vartheta}|$. On the other hand, 
	let us select $\bvarphi\in  L^\infty_{\pushm_\sharp |\boldsymbol{\vartheta}|}(Y ;\R^h)  $ so that
	$\bvarphi(y)\in J(\boldsymbol{f}(y))$ for $\pushm_\sharp|\boldsymbol{\vartheta}|$-a.a.\ $y\in Y$; 
	for every $K$ compact in $Y$ 
	we have
		\begin{align}
		\notag
		 |\pushm_{\sharp} \boldsymbol{\vartheta}|(K)&\geq 
		 \int_K  \bvarphi \cdot \dd  \pushm_{\sharp} \boldsymbol{\vartheta} 
		=
		\int_K \bvarphi\cdot \boldsymbol{f}\,\dd \pushm_{\sharp} |\boldsymbol{\vartheta}|
		= \pushm_{\sharp} |\boldsymbol{\vartheta}| (K) \,.
	\end{align}
	This implies that $\pushm_{\sharp} |\boldsymbol{\vartheta}| = |\pushm_{\sharp} \boldsymbol{\vartheta}|$. 
	
	\par
	In order to prove 
	\textbf{Claim 2} (which is also well known, see e.g. \cite[Lemma 2.4]{Ambrosio-Lisini-Savare06}
	for a similar statement)
	we observe that for $\bvarphi$ as above it holds
	\begin{align}
	\label{eq:useful}
	 |\pushm_{\sharp} \boldsymbol{\vartheta}|(K)& = \int_{K} \| \boldsymbol{f} \| \dd |\pushm_{\sharp} \boldsymbol{\vartheta}| = \int_{K} \bvarphi \cdot \boldsymbol{f} \dd |\pushm_{\sharp} \boldsymbol{\vartheta}| = \int_{K} \bvarphi \cdot \dd \pushm_{\sharp} \boldsymbol{\vartheta} 
	\\
	&
	\nonumber = \int_{\pushm^{-1} (K)} \bvarphi {\circ} \pushm \cdot \dd \boldsymbol{\vartheta} = \int_{\pushm^{-1} (K)} \bvarphi {\circ} \pushm \cdot \boldsymbol{g} \dd |\boldsymbol{\vartheta}| \leq  \int_{\pushm^{-1} (K)}\|  \bvarphi {\circ} \pushm\|_{*} \, \| \boldsymbol{g}\| \dd |\boldsymbol{\vartheta}| \leq \pushm_{\sharp} |\boldsymbol{\vartheta}| (K)\,,
	\end{align}
	where $\boldsymbol{\vartheta} = \boldsymbol{g} |\boldsymbol{\vartheta}|$. Then, \eqref{eq:fiber} and \eqref{eq:useful} yield that 
	$(\boldsymbol\varphi{\circ}\pushm)\cdot \boldsymbol{g}=\|\boldsymbol{g}\|$ holds
	$|\boldsymbol{\vartheta}|$-a.e.~on $\pushm^{-1}(K)$ so that $\boldsymbol\varphi(\pushm(x))\in J(\boldsymbol g(x))$ 
	for $|\boldsymbol{\vartheta}|$-a.a.~$x\in \pushm^{-1}(K)$. 
	On the other hand, by construction $\boldsymbol\varphi(\pushm(x))\in J(\boldsymbol f(\pushm(x)))$ 
	so that \eqref{eq:injectivity} yields $\boldsymbol f(\pushm(x))=\boldsymbol{g}(x)$ for 
	$|\boldsymbol{\vartheta}|$-a.e.~$x\in \pushm^{-1}(K)$. Exhausting $Y$ with a countable sequence of compact sets,
	we conclude.
	
	Let us eventually consider \textbf{Claim 3}: we can write $\zzeta=\lambda\boldsymbol{\vartheta}$
	for a Borel map $\lambda$ with values in $[0,1]$. 
	We can select the Euclidean norm and we thus have
	\begin{displaymath}
		|\zzeta|=\lambda \,|\boldsymbol{\vartheta}|,\quad
		\zzeta=\lambda \boldsymbol{\vartheta}=\lambda \boldsymbol{f}{\circ}\pushm\,|\boldsymbol{\vartheta}|=
		\boldsymbol{f}{\circ}\pushm\,|\zzeta|
	\end{displaymath}
	and therefore, by Claim 1, 
	\begin{displaymath}
		\pushm_\sharp |\zzeta|=
		|\pushm_\sharp \zzeta|.
	\end{displaymath}
	Similarly $\pushm_\sharp|\boldsymbol\vartheta-\zzeta|=
	\pushm_\sharp|(1-\lambda)\boldsymbol\vartheta|=|\pushm_\sharp 
	((1-\lambda)\boldsymbol\vartheta)|=
	|\pushm_\sharp 
	(\boldsymbol\vartheta-\zzeta)|$.
	We deduce that 
	\begin{displaymath}
		|\pushm_\sharp \zzeta|+|\pushm_\sharp 
	(\boldsymbol\vartheta-\zzeta)|=
	\pushm_\sharp \big(|\zzeta|+|\boldsymbol\vartheta-\zzeta|\big)=
	\pushm_\sharp |\boldsymbol\vartheta|=
	|	\pushm_\sharp \boldsymbol\vartheta|
	\end{displaymath}
	so that $\pushm_\sharp\zzeta\prec \pushm_\sharp\boldsymbol{\vartheta}$.
\end{proof}

\section{Topological properties of functions spaces}
\label{old:appD}
\noindent Recall that $\rmC(\III;\R^h)$ denotes the space
of $\R^h$-valued continuous paths endowed with the topology of uniform convergence on
compact sets of $\III$.
 \begin{lemma}
 \label{le:C-is-Polish}
The 
metric 
\begin{equation}
\label{metric-4-uniform-compact}
D(\sfyy_{1},\sfyy_{2} ): = \sum_{n=0}^\infty 2^{-n} (\| \sfyy_{1} - \sfyy_{2} \|_{\infty, n}\wedge 1), \qquad \text{with } \| \sfyy_{1} - \sfyy_{2} \|_{\infty, n} = \max_{s\in [0,n]} \norm{ \sfyy_{1} (s) - \sfyy_{2} (s)},
\end{equation}
makes the
topological space  $\rmC(\III;\R^{h})$ 
complete, separable, and induces on  $\rmC(\III;\R^{h})$ 
  the  topology of uniform convergence on compact sets.
In particular $\rmC(\III;\R^h)$ is Polish.
\end{lemma}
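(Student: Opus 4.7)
The plan is to verify the four properties (metric, topology, completeness, separability) in order, reducing each one to the corresponding fact for the Banach spaces $\rmC([0,n];\R^h)$ endowed with the sup norm $\|{\cdot}\|_{\infty,n}$.

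First, I would check that $D$ is indeed a metric on $\rmC(\III;\R^h)$. Each truncated map $d_n(\sfyy_1,\sfyy_2):=\|\sfyy_1-\sfyy_2\|_{\infty,n}\wedge 1$ is a bounded pseudometric on $\rmC(\III;\R^h)$ (the truncation $a\mapsto a\wedge 1$ preserves the triangle inequality since $t\mapsto t\wedge 1$ is subadditive and nondecreasing on $[0,+\infty)$), and the positive series $D=\sum_n 2^{-n}d_n$ is bounded by $2$ and satisfies the triangle inequality term by term. Positive definiteness follows because $D(\sfyy_1,\sfyy_2)=0$ forces $d_n\equiv 0$ for all $n$, hence $\sfyy_1=\sfyy_2$ on every $[0,n]$.

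Second, I would establish that $D$ induces the topology of uniform convergence on compact subsets of $\III$. Since every compact $K\subset\III$ is contained in some $[0,n]$, this topology coincides with the one generated by the seminorms $\|{\cdot}\|_{\infty,n}$, $n\in\mathbb N$. For one direction, if $\sfyy_k\to\sfyy$ uniformly on each $[0,n]$, then for every $\eps>0$ pick $N$ with $\sum_{n>N}2^{-n}<\eps/2$; choosing $k$ large enough that $d_n(\sfyy_k,\sfyy)<\eps/2$ for every $n\le N$ yields $D(\sfyy_k,\sfyy)<\eps$. Conversely, $D(\sfyy_k,\sfyy)\to 0$ forces $d_n(\sfyy_k,\sfyy)\to 0$ for each fixed $n$, and since eventually $d_n(\sfyy_k,\sfyy)<1$ this gives $\|\sfyy_k-\sfyy\|_{\infty,n}\to 0$.

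Third, for completeness: if $(\sfyy_k)_k$ is $D$-Cauchy, then by the inequality $2^{-n}d_n\le D$ the sequence $(\sfyy_k|_{[0,n]})_k$ is Cauchy in $\rmC([0,n];\R^h)$ for every $n$ (again using that once $d_n<1$ it coincides with $\|\cdot\|_{\infty,n}$). Exploiting completeness of each $\rmC([0,n];\R^h)$ and compatibility of the limits on nested intervals, one obtains a continuous limit $\sfyy\colon\III\to\R^h$ with $\sfyy_k\to\sfyy$ uniformly on each $[0,n]$, hence $D(\sfyy_k,\sfyy)\to 0$ by the criterion in the previous paragraph. Finally, for separability, take a countable set $\mathcal D_n\subset\rmC([0,n];\R^h)$ that is dense (e.g.\ polynomials with rational coefficients) and extend each element constantly to $[n,+\infty)$; the countable union $\bigcup_n\mathcal D_n$ thus realized in $\rmC(\III;\R^h)$ is dense, since any $\sfyy\in\rmC(\III;\R^h)$ can be approximated on $[0,n]$ by an element of $\mathcal D_n$ within arbitrary precision in $\|\cdot\|_{\infty,n}$, which bounds the $n$\nobreakdash-th term in $D$ and hence, together with the tail estimate $\sum_{m>n}2^{-m}$, makes $D$ small. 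The three properties together say that $\rmC(\III;\R^h)$ is Polish. The only mildly delicate point is the compatibility of the limits across the exhausting family $([0,n])_n$ in the completeness step, but this is immediate because $\sfyy_k|_{[0,n]}$ restricts to $\sfyy_k|_{[0,m]}$ for $m\le n$.
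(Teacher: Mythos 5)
Your proposal is correct and follows essentially the same route as the paper: completeness is obtained by reducing to the Banach spaces $\rmC([0,n];\R^h)$ (which the paper declares "easy to check" and you simply spell out), and separability is proved exactly as in the paper by taking a countable dense set in each $\rmC([0,n];\R^h)$ and extending its elements constantly on $(n,+\infty)$. The only difference is that you also record, explicitly and correctly, that $D$ is a metric and that it induces the topology of uniform convergence on compacta, facts the paper leaves implicit.
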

\begin{proof}
It is easy to check that $(\rmC(  \III  ;\R^{d+1}),D)$  is complete. It is also separable: indeed, for every $n\geq 1$ the space $\rmC([0,n];\R^{d+1})$  has a countable and dense subset $(\sfyy_{i})_{i\in I_n}$; we then extend each $\gamma_i$ to a function $\tilde{\sfyy}_i \in \rmC(  \III  ;\R^{d+1})$ by setting
$\tilde{\sfyy}_i(x) \equiv \sfyy_{i} (n)$  for $x\in (n,+\infty)$. 
Then,  the set 
$\bigcup_{n \geq 1} (\sfyy_{i})_{i\in I_n}$ is countable and dense in  $(\rmC(  \III  ;\R^{d+1}),D)$.
\end{proof}

\section{Glueing properties}
We establish a useful generalization of the glueing Lemma
\cite[Lemma 5.3.2, 5.3.4]{AGS08}.

\begin{lemma}
	\label{le:glueing}
	Let $N\in \N\cup\{\infty\}$, 
	$\mathcal I(N):=\{1,2,\cdots,N\}$ if  $N\in \N$ and
	$\mathcal I(\infty):=\N$ ($N=\infty$),
	let $X,X^i,Y^j$,  be Polish spaces and let 
	$\mathsf p^i \colon X\to X_i$, 
	$\mathsf R^{i} \colon X^{i}\to Y^i$,
	$\mathsf L^{j+1}\colon X^{j+1}\to Y^j$,  be Borel maps 
	for $i\in \mathcal I(N)$ and $j\in \mathcal I(N{-}1) .$
	We set $\boldsymbol X:=\Pi_{i\in I} X^i$,
	$\boldsymbol X_0:=\big\{\boldsymbol x=(x_i)_{i\in I}\in \boldsymbol X: 
	\mathsf R^i(x_i)=\mathsf L^{i+1}(x_{i+1}), \ i\in \mathcal I(N{-}1)
	\big\}$,
	and we suppose that the  image of the map 
	$\boldsymbol{\mathsf p}:X\to \boldsymbol X$, 
	$\boldsymbol{\mathsf p}(x):=(\mathsf p^i(x))_{i\in \N}$ 
	contains $\boldsymbol X_0.$
	
	If $\mu^i\in \sP(X^i)$, $i\in \mathcal I(N)$, satisfy the compatibility conditions
	\begin{equation}
		\label{eq:compatibility1}
		\mathsf R^i_\sharp\mu^i=\mathsf L^{i+1}_\sharp \mu^{i+1},
		\quad 
		i\in \mathcal I(N{-}1),
	\end{equation}
	then there exists $\mu\in \sP(X)$ such that 
	$\mathsf p^i_\sharp\mu=\mu^i$
	for every $i\in \mathcal I(N).$	
\end{lemma}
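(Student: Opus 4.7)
The strategy is classical: first build a probability measure $\boldsymbol{\mu}\in\sP(\boldsymbol X)$ with $\pi^i_\sharp \boldsymbol{\mu}=\mu^i$ for every $i\in \mathcal I(N)$ and concentrated on $\boldsymbol X_0$, and then lift it to $\sP(X)$ through the map $\boldsymbol{\mathsf p}$.

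\textbf{Step 1: pairwise glueing.} For each $i\in \mathcal I(N{-}1)$, the compatibility condition \eqref{eq:compatibility1} together with the standard disintegration theorem on Polish spaces (see e.g.~\cite[Lemma~5.3.2]{AGS08}) provides a probability measure $\nu^{i,i+1}\in \sP(X^i\times X^{i+1})$ such that
\begin{displaymath}
(\pi^1)_\sharp \nu^{i,i+1}=\mu^i,\quad (\pi^2)_\sharp \nu^{i,i+1}=\mu^{i+1},\quad
\nu^{i,i+1}\Big(\big\{(x_i,x_{i+1}):\mathsf R^i(x_i)=\mathsf L^{i+1}(x_{i+1})\big\}\Big)=1.
\end{displaymath}
Concretely, $\nu^{i,i+1}$ is obtained by disintegrating $\mu^i$ and $\mu^{i+1}$ along $\mathsf R^i$ and $\mathsf L^{i+1}$ respectively, and glueing the two families of conditional measures above the common marginal on $Y^i$.

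\textbf{Step 2: iterative glueing on $\boldsymbol X$.} When $N$ is finite, we proceed inductively: having constructed $\boldsymbol{\mu}_{k}\in \sP(X^1\times\cdots\times X^k)$ with projections $\mu^1,\ldots,\mu^k$ and supported on the compatibility constraints, we apply \cite[Lemma~5.3.4]{AGS08} to $\boldsymbol{\mu}_k$ and $\nu^{k,k+1}$ along the common marginal $\mu^k$, obtaining $\boldsymbol{\mu}_{k+1}\in \sP(X^1\times\cdots\times X^{k+1})$. After $N$ steps we obtain $\boldsymbol{\mu}\in \sP(\boldsymbol X)$ with the required projections. When $N=\infty$, we apply the Kolmogorov extension theorem to the projective system $(\boldsymbol{\mu}_k)_{k\in\N}$ (which is consistent by construction), recalling that the $X^i$ are Polish. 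By construction $\boldsymbol{\mu}(\boldsymbol X_0)=1$.

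\textbf{Step 3: lift through $\boldsymbol{\mathsf p}$.} The Borel map $\boldsymbol{\mathsf p}\colon X\to \boldsymbol X$ has the property that $\boldsymbol{\mathsf p}(X)\supseteq \boldsymbol X_0$. Since $X$ and $\boldsymbol X$ are Polish, the Jankov--von Neumann uniformization theorem (see e.g.~\cite[Theorem~18.1]{Kechris}) yields a universally measurable section $\mathfrak s\colon \boldsymbol X_0\to X$ with $\boldsymbol{\mathsf p}\circ \mathfrak s=\mathrm{id}_{\boldsymbol X_0}$. Since $\boldsymbol{\mu}$ is a Radon measure on the Polish space $\boldsymbol X$ concentrated on $\boldsymbol X_0$, $\mathfrak s$ is $\boldsymbol{\mu}$-measurable and we may define
\begin{displaymath}
\mu:=\mathfrak s_\sharp \boldsymbol{\mu}\in \sP(X).
\end{displaymath}
By the relation $\boldsymbol{\mathsf p}\circ \mathfrak s=\mathrm{id}$ on $\boldsymbol X_0$ we obtain $\boldsymbol{\mathsf p}_\sharp \mu=\boldsymbol{\mu}$, and composing with $\pi^i$ gives $\mathsf p^i_\sharp \mu=\mu^i$ for every $i\in \mathcal I(N)$.

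\textbf{Main difficulty.} The non-trivial step is the combination of Steps~2 and~3 in the case $N=\infty$: one must check that Kolmogorov's extension produces a Radon measure on the countable product $\boldsymbol X$ still concentrated on $\boldsymbol X_0$, and then ensure that the universally measurable section of the Borel map $\boldsymbol{\mathsf p}$ can indeed be integrated against $\boldsymbol{\mu}$. Both issues are resolved by the Polish assumption, which guarantees inner regularity of $\boldsymbol{\mu}$ and applicability of the Jankov--von Neumann uniformization theorem.
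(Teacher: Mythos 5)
Your proposal is correct and follows the same three-step strategy as the paper (pairwise glueing, countable product, lift through the section map), but differs in the technical tools at two places, both of which are worth noting. For Step~1, the paper does not disintegrate directly: it forms the lifted plans $\hat\mu^{i\to}:=(\mathsf i_{X^i},\mathsf R^i)_\sharp\mu^i$ and $\hat\mu^{i\leftarrow}:=(\mathsf i_{X^{i+1}},\mathsf L^{i+1})_\sharp\mu^{i+1}$, glues them via the standard two-plan Lemma~5.3.2 of \cite{AGS08} into $\beta^i\in\sP(X^i\times X^{i+1}\times Y^i)$, and uses the vanishing of $\int d_i(\mathsf R^i(x_i),\mathsf L^{i+1}(x_{i+1}))\,\dd\beta^i$ (a metric on $Y^i$ with values in $[0,1]$) to conclude that the projection $\alpha^i$ lands on the compatibility set; your disintegration argument produces the same $\nu^{i,i+1}$ and is an equivalent phrasing. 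For Step~2 in the case $N=\infty$, the paper applies the countable glueing Lemma~5.3.4 of \cite{AGS08} directly to the family $(\alpha^i)_i$, whereas you iterate the finite glueing and then invoke Kolmogorov's extension theorem on the consistent projective system — this is a valid alternative, the Polishness of the $X^i$ ensuring the extension is a Radon probability on $\boldsymbol X$; note however that for the inductive step it is the two-plan glueing (5.3.2, not 5.3.4) that you are actually invoking. For Step~3 the paper merely asserts that $\mu$ with $\boldsymbol{\mathsf p}_\sharp\mu=\boldsymbol\alpha$ exists; your explicit appeal to the Jankov--von Neumann uniformization theorem (the graph of $\boldsymbol{\mathsf p}$ over $\boldsymbol X_0$ is Borel, so a universally measurable section $\mathfrak s$ with domain $\boldsymbol X_0\cap\boldsymbol{\mathsf p}(X)=\boldsymbol X_0$ exists, and $\mu:=\mathfrak s_\sharp\boldsymbol\mu$ is a well-defined Borel probability since $\mathfrak s$ is $\boldsymbol\mu$-measurable) supplies the detail the paper leaves implicit and is a genuine improvement in rigor.
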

\begin{proof}
We consider the case $N=\infty$, $\mathcal I(N)=\mathcal I(N{-}1)=\N$; 
the argument in the finite case is even simpler.

	Let $d_i$ be a metric inducing the topology of $Y^i$ taking values in $[0,1]$.
	Let us set $\nu^i:= \mathsf R^i_\sharp \mu^i=\mathsf L^{i+1}_\sharp \mu^{i+1} \in \sP(Y^i)$, 
	$\hat\mu^{i\to}:=(\mathsf i_{X^i},\mathsf R^i)_\sharp \mu^i\in 
	\Gamma(\mu^i,\nu)\subset \sP(X^i\times Y)$,
	$\hat\mu^{i\leftarrow}:=(\mathsf i_{X^{i+1}},\mathsf L^{i+1})_\sharp \mu^{i+1}\in 
	\Gamma(\mu^{i+1},\nu)\subset \sP(X^{i+1}\times Y)$.
	Notice that 
	\begin{equation}
		\label{eq:metric-id}
		\int_{X^i\times Y}d_i(\mathsf R^i(x_i),y)\,\dd \hat 	\mu^{i\to}(x_i,y)=0,\quad
		\int_{X^{i+1}\times Y}d_i(\mathsf L^{i+1}(x_{i+1}),y)\,\dd \hat 	\mu^{i\leftarrow}(x_{i+1},y)=0,\quad
		i\in \N.
	\end{equation}
	By the standard glueing Lemma (see e.g. \cite[Lemma 5.3.2]{AGS08})
	there exist $\beta^i\in \sP(X^i\times X^{i+1}\times Y^i)$ 
	such that 
	\begin{displaymath}
		\pi^{i\to}_\sharp \beta^i=\hat\mu^{i\to},\quad 
		\pi^{i\leftarrow}_\sharp \beta^i=\hat\mu^{i\leftarrow},\quad 
		\text{where}\quad 
		\pi^{i\to}(x_i,x_{i+1},y):=(x_i,y),\quad 
		\pi^{i\leftarrow} (x_i,x_{i+1},y):=(x_{i+1},y).
	\end{displaymath}
	In particular, using \eqref{eq:metric-id} we deduce 
	\begin{align*}
		\int d_i(\mathsf R^i(x_i),\mathsf L^{i+1}(x_{i+1}))\,\dd\beta^i
		&\le 
			\int d_i(\mathsf R^i(x_i),y)\,\dd\beta^i
+
		\int d_i(y,\mathsf L^{i+1}(x_{i+1}))\,\dd\beta^i
				\\&=
				\int d_i(\mathsf R^i(x_i),y)\,\dd\hat \mu^{i\to}
					+
				\int d_i(\mathsf L^{i+1}(x_{i+1}),y)\,\dd\hat\mu^{i\leftarrow}
				=0,
	\end{align*}
	so that $\alpha^{i}:=\pi^{i}_\sharp \gamma$
	(where $\pi^{i}(x_i,x_{i+1},y)=(x_i,x_{i+1})$)
	is concentrated on $\{(x_i,x_{i+1})\in X^i\times X^{i+1}:
	\mathsf R^i(x_i)=\mathsf L^{i+1}(x_{i+1})\}.$
	
	We can then use the glueing Lemma \cite[5.3.4]{AGS08} to find
	a probability measure $\boldsymbol \alpha\in \sP(\boldsymbol X)$
	such that $\boldsymbol \pi^{i}_\sharp \boldsymbol\alpha=\alpha^i$
	for every $i\in \N$, where $\boldsymbol \pi^i(\boldsymbol x)=(x_i,x_{i+1}).$
	Clearly, $\boldsymbol\alpha$ is concentrated on $\boldsymbol X_{0}$;
	since the image of $\boldsymbol{\mathsf p}$ contains $\boldsymbol X_{0} $,
	we can find $\mu\in \sP(X)$ such that 
	$\boldsymbol{\mathsf p}_\sharp \mu=\boldsymbol \alpha$,
	so that $\mathsf p^i_\sharp \mu=\mu^i.$
\end{proof}
%

\section{A measurability result}
\label{app:meas-R}
  In this section we prove the measurability of the mapping 
$\Rep$ from \eqref{def:Rep}, 
 coming into play in the proof of Lemma \ref{le:rescaling}. 
In the proof we shall resort to the following functional version of the \emph{monotone class theorem}, which we 
record here 
(in a shortened and simplified version, adapted to our usage) for the reader's convenience.
We refer to
 \cite[Thm.\ 2.12.9]{Bogachev07} for the general statement.
\begin{theorem}[Functional monotone class theorem]
Let $\boldsymbol{H}$  be a class of real functions on a set $O \subset \R^k$
containing $f\equiv 1$. Suppose that $\boldsymbol{H}$ is closed with respect to the formation of uniform and monotone limits  and that $f \equiv 1 \in \boldsymbol{H}$.
Let  $\boldsymbol{H}_0\subset  \boldsymbol{H}$ be a subclass closed with respect to multiplication (i.e., $fg \in \boldsymbol{H}_{0}$ for every $f, g \in \boldsymbol{H}_{0}$).
\par
Then, $\boldsymbol{H}$ contains all bounded functions measurable with respect to the $
\sigma$-algebra generated by $\boldsymbol{H}_0$. 
\end{theorem}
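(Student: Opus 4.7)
The target result is a version of the Dynkin--Doob functional monotone class theorem; my plan follows the standard $\pi$-$\lambda$ strategy, transferring the multiplicative structure of $\boldsymbol{H}_0$ into $\sigma$-algebra control on indicator functions inside $\boldsymbol{H}$.

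\textbf{Step 1 (functional calculus on elements of $\boldsymbol{H}_0$).} For a bounded $f\in\boldsymbol{H}_0$ and every continuous bounded $\phi\colon\R\to\R$, I would first show that $\phi\circ f\in\boldsymbol{H}$. Since $\boldsymbol{H}_0$ is closed under multiplication and $1\in\boldsymbol{H}_0\subseteq\boldsymbol{H}$, every real polynomial in $f$ lies in $\boldsymbol{H}$ (exploiting the implicit vector-space structure on $\boldsymbol{H}$ that accompanies the statement, as in the cited \cite[Thm.~2.12.9]{Bogachev07}); the Weierstrass approximation theorem on a compact interval containing the range of $f$, together with uniform-limit closure of $\boldsymbol{H}$, then yields $\phi\circ f\in\boldsymbol{H}$. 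Approximating $\chi_{(-\infty,c]}$ by a bounded decreasing sequence of continuous functions and invoking monotone-limit closure, I would conclude that $\chi_{\{f\le c\}}\in\boldsymbol{H}$ for every $f\in\boldsymbol{H}_0$ and $c\in\R$.

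\textbf{Step 2 (a Dynkin system of sets).} Define $\mathcal{L}:=\{A\subset O:\chi_A\in\boldsymbol{H}\}$. The plan is to show that $\mathcal{L}$ is a $\sigma$-algebra, via verifying the Dynkin properties: $O\in\mathcal{L}$ because $\chi_O=1\in\boldsymbol{H}$; closure under proper differences follows from $\chi_{B\setminus A}=\chi_B-\chi_A$ (linearity of $\boldsymbol{H}$); and closure under countable increasing unions is immediate from monotone-limit closure. Step 1 places in $\mathcal{L}$ every set of the form $\{f\le c\}$ with $f\in\boldsymbol{H}_0$; the multiplicative closure of $\boldsymbol{H}_0$ turns the collection of finite intersections $\{f_1\le c_1,\dots,f_n\le c_n\}$ into a $\pi$-system already contained in $\mathcal{L}$. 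Dynkin's $\pi$-$\lambda$ theorem then gives $\sigma(\boldsymbol{H}_0)\subseteq\mathcal{L}$.

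\textbf{Step 3 (approximation by simple functions).} Any bounded $\sigma(\boldsymbol{H}_0)$-measurable function $g$ is the uniform limit of simple functions $s_n=\sum_{i=1}^{N_n}c_i^{(n)}\chi_{A_i^{(n)}}$ with $A_i^{(n)}\in\sigma(\boldsymbol{H}_0)\subseteq\mathcal{L}$. By Step~2 each $\chi_{A_i^{(n)}}$ lies in $\boldsymbol{H}$; by linearity so does each $s_n$; and uniform-limit closure of $\boldsymbol{H}$ yields $g\in\boldsymbol{H}$, which is the conclusion.

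The main obstacle, and the delicate point to make precise before anything else, lies in Step~1: the statement as recorded is somewhat telegraphic and the proof genuinely needs the vector-space structure of $\boldsymbol{H}$ (to form polynomials of elements of $\boldsymbol{H}_0$, differences of indicators, and linear combinations of simple functions). Once the hypotheses are rephrased so that $\boldsymbol{H}$ is at least a vector space closed under uniform and monotone bounded limits containing $1$, with $\boldsymbol{H}_0\subseteq\boldsymbol{H}$ multiplicatively closed, the Weierstrass approximation and the $\pi$-$\lambda$ step proceed by standard monotone-class manipulations and no further difficulty arises.
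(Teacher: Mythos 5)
The paper does not prove this statement: it records it ``in a shortened and simplified version'' and refers to \cite[Thm.~2.12.9]{Bogachev07} for the general result, so there is no in-paper argument to compare against. Your proof is the standard Dynkin $\pi$-$\lambda$ route, which is exactly how the cited theorem is proved, and your observation that the statement as recorded is missing the linear-space structure on $\boldsymbol H$ is correct and worth flagging -- without it, neither the polynomial step nor the difference $\chi_B-\chi_A$ nor the simple-function approximation makes sense, so the hypothesis must be read back in from the Bogachev source.

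There is, however, one genuine gap in Step~2. You assert that ``the multiplicative closure of $\boldsymbol H_0$ turns the collection of finite intersections $\{f_1\le c_1,\dots,f_n\le c_n\}$ into a $\pi$-system already contained in $\mathcal L$'', but Step~1 only puts the \emph{single} indicators $\chi_{\{f\le c\}}$ into $\boldsymbol H$, and $\boldsymbol H$ itself is not closed under multiplication, so $\chi_{\{f_1\le c_1\}}\chi_{\{f_2\le c_2\}}\in\boldsymbol H$ does not follow from what you have shown. The repair is to upgrade Step~1 to a multivariable statement: for bounded $f_1,\dots,f_n\in\boldsymbol H_0$ and bounded continuous $\phi_1,\dots,\phi_n$, each $\phi_i$ is a uniform limit of polynomials $p_i^{(k)}$ on the range of $f_i$, the product $\prod_i p_i^{(k)}(f_i)$ is a finite linear combination of monomials $f_1^{a_1}\cdots f_n^{a_n}\in\boldsymbol H_0$ and hence lies in $\boldsymbol H$ by linearity, and passing first to the uniform limit in $k$ and then to a monotone limit in $\phi_i\downarrow\chi_{(-\infty,c_i]}$ yields $\chi_{\{f_1\le c_1\}\cap\dots\cap\{f_n\le c_n\}}\in\boldsymbol H$. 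With that correction the $\pi$-$\lambda$ step closes. A secondary loose end: Step~1 is stated for bounded $f\in\boldsymbol H_0$ while Step~2 quantifies over all of $\boldsymbol H_0$; this is consistent if, as in Bogachev, $\boldsymbol H$ consists of bounded functions (so $\boldsymbol H_0\subset\boldsymbol H$ is automatically bounded), but note that the paper's own application in Lemma~\ref{le:u-measurable} takes $\boldsymbol H_0$ to be a class of unbounded continuous functions, so the informal statement and the intended use are not entirely aligned and your proof silently depends on resolving this in favour of boundedness.
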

We will then prove the following result.
\begin{lemma}
\label{le:u-measurable}
The mapping  $\Rep \colon \Lipplus k\III\Rdpu\to\Lipplus{\widehat k}\III\Rdpu
$ from 
\eqref{def:Rep}
 is Borel. 
\end{lemma}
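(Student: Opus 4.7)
The plan is to exploit the pointwise description $\Rep(\sfyy)(r)=\sfyy(\ell_\sfyy(r))$ and to reduce the Borel measurability of $\Rep\colon\Lipplus k\III\Rdpu\to\Lipplus{\widehat k}\III\Rdpu$ to the Borel measurability of each scalar evaluation $\sfyy\mapsto\Rep(\sfyy)(r)$, $r\in\III$. Indeed, by Lemma~\ref{le:C-is-Polish} the target is a Polish subspace of $\rmC(\III;\Rdpu)$ and every element of it is $\widehat k$-Lipschitz; consequently uniform convergence on compact subintervals is equivalent (via Arzel\`a--Ascoli equicontinuity) to pointwise convergence at all rational times, and the Borel $\sigma$-algebra of $\Lipplus{\widehat k}\III\Rdpu$ is generated by the countable family of evaluation maps at $r\in\Q\cap\III$. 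It therefore suffices to prove that $\sfyy\mapsto\Rep(\sfyy)(r)$ is Borel for each such $r$.

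The key step, in which the only nontrivial measurability issue arises, is the Borel measurability of the auxiliary map
\begin{equation*}
\sfyy\mapsto\Theta_\sfyy(s)=\int_0^s\frac{\dd\rho}{\theta(\sfyy(\rho))}
\quad\text{for each fixed }s\ge 0.
\end{equation*}
Since $\theta$ is merely Borel, I would invoke the functional monotone class theorem. Let $\boldsymbol H$ denote the class of bounded Borel functions $g\colon\DST\to\R$ such that the map $\sfyy\mapsto\int_0^s g(\sfyy(\rho))\,\dd\rho$ is a Borel function of $\sfyy\in\Lipplus k\III\Rdpu$ for every $s\ge 0$, and let $\boldsymbol H_0\subset\boldsymbol H$ be the subclass of bounded continuous functions. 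Then $\boldsymbol H_0$ is closed under multiplication and contains the constant~$1$; every $g\in\boldsymbol H_0$ lies in $\boldsymbol H$ (the integrand depends continuously on $\sfyy$ uniformly on $[0,s]$, so dominated convergence transfers continuity, hence Borel measurability, to the integral), and $\boldsymbol H$ is closed under bounded monotone and uniform limits by the monotone and dominated convergence theorems. Since $\boldsymbol H_0$ generates the Borel $\sigma$-algebra of $\DST$, the functional monotone class theorem yields that $\boldsymbol H$ contains every bounded Borel function; applying this to $g=1/\theta$ (which is bounded thanks to the assumption $c^{-1}\le\theta\le c$ in~\eqref{labelliamotheta}) gives the desired Borel measurability of $\sfyy\mapsto\Theta_\sfyy(s)$.

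Finally, for every $\sfyy$ the map $\Theta_\sfyy\colon\III\to\III$ is a strictly increasing continuous bijection (its derivative lies in $[c^{-1},c]$ as noted in~\eqref{eq:inverse}), so for every $r,s\in\III$ one has $\ell_\sfyy(r)\le s\iff\Theta_\sfyy(s)\ge r$. Hence $\{\sfyy:\ell_\sfyy(r)\le s\}$ is Borel for each pair $(r,s)$, and $\sfyy\mapsto\ell_\sfyy(r)$ is Borel for each $r\in\III$. Combining this with the joint continuity of the evaluation map $\fre\colon\III\times\rmC(\III;\Rdpu)\to\Rdpu$ from~\eqref{eval-map}, the composite $\sfyy\mapsto\Rep(\sfyy)(r)=\fre(\ell_\sfyy(r),\sfyy)$ is Borel, and the Borel measurability of $\Rep$ follows from the reduction in the first paragraph. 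The main obstacle lies precisely in the second step: bridging the merely Borel regularity of the coefficient $\theta$ with the otherwise continuous structure of the construction; the functional monotone class theorem is the natural tool to do so, after which everything reduces to the monotonicity of $\Theta_\sfyy$ and the continuity of evaluation.
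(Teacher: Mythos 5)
Your proof is correct, and it takes a genuinely different route from the paper's. Both proofs isolate the same core difficulty — the coefficient $\theta$ is merely Borel, so one needs the functional monotone class theorem to establish measurability of the time-change — and both apply it in essentially the same way (bounded continuous functions as the multiplicative subclass, closure under monotone and uniform limits verified via convergence theorems). Where you diverge is in everything surrounding that step. The paper works throughout with function-space-valued maps: it proves $\sfyy\mapsto 1/\theta(\sfyy(\cdot))$ is Borel into $L^1_{\loc}(\III)$, composes with the continuous primitive operator $A$ to get $\sfyy\mapsto\Theta_\sfyy$ Borel into $\rmC(\III)$, and then proves — as a separate nontrivial step — that the inversion operator $g\mapsto g^{-1}$ is \emph{continuous} on the bi-Lipschitz class $\mathrm{biLip}_{c,c^{-1}}(\III)$, which gives $\sfyy\mapsto\ell_\sfyy$ Borel, and finally composes. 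You instead reduce at the outset to scalar evaluation maps $\sfyy\mapsto\Rep(\sfyy)(r)$, $r\in\Q\cap\III$, justified by the equi-Lipschitz structure of the target (pointwise convergence on a dense set is equivalent to local uniform convergence, so the rational evaluations generate the Borel $\sigma$-algebra of $\Lipplus{\widehat k}\III\Rdpu$). With the reduction in place, you prove Borel measurability of the scalar $\sfyy\mapsto\Theta_\sfyy(s)$ for each fixed $s$ via the monotone class theorem, and then — in place of the paper's continuity-of-inversion lemma — you use the elementary monotonicity equivalence $\{\ell_\sfyy(r)\le s\}=\{\Theta_\sfyy(s)\ge r\}$ to transfer measurability to $\sfyy\mapsto\ell_\sfyy(r)$, and conclude by composing with the jointly continuous evaluation map. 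The trade is favourable: your reduction to rational evaluations costs a short (standard) argument about equi-Lipschitz families, but it buys you a complete bypass of the paper's inversion-continuity step, replacing it with a one-line order-theoretic observation. Both routes are sound; yours is arguably the more economical once the reduction to evaluations is accepted as routine.
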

\begin{proof}
 Let $\theta$ be a Borel measurable function $\theta\colon \R^{d+1} \to (0, +\infty)$ such that $ c^{-1} \leq \theta \leq c$ for some $c \in [1, +\infty)$ (cf.\ \eqref{labelliamotheta}). For every function $\zeta \colon \R^{d+1} \to (0, +\infty)$ such that $\zeta \geq c_{\zeta}>0$ we consider the mapping 
\[
\boldsymbol{\mathsf{F}}_\zeta \colon  \Lipplus k\III\Rdpu\to L^{1}_{\loc} (\III), \qquad \sfyy \mapsto \frac1{\zeta(\sfyy)}\,.
\]
 We consider $ L^{1}_{\loc} (\III)$ endowed with the (Fr\'echet, hence metrizable) topology that induces  the $L^1$ convergence on the compact subsets of $\III$, 
 whereas we recall that 
 $\Lipplus k\III\Rdpu$ is with the (metrizable) topology of the convergence on compact subsets. 
 We claim that for every  $\zeta$ as above we have
 \begin{equation}
 \label{FBorel}
 \boldsymbol{\mathsf{F}}_\zeta  \text{ is Borel.}
 \end{equation}
   To show this, we introduce the class $\boldsymbol{H}$ as 
\[
\zeta \in 
\boldsymbol{H} \quad
\Longleftrightarrow \quad 
 \begin{cases}
  \zeta\colon \Rdpu \to (0,+\infty)  \text{ is Borel}, 
  \\
     \exists\, c_{\zeta}>0 \ \ \zeta \geq c_{\zeta}  \text{ in } \Rdpu, 
    \\
\boldsymbol{\mathsf{F}}_\zeta \colon  \Lipplus k\III\Rdpu\to  L^{1}_{\loc} (\III) \text{ is Borel}. 
\end{cases}
\]
Now, we clearly have that $\zeta \equiv 1$ belongs to $\boldsymbol{H}$ and that  $\boldsymbol{H}$ is closed w.r.t.\ monotone limits of uniformly bounded sequences. 
 Moreover,  it is immediate to check that $\boldsymbol{H}$ contains the  set
 \begin{displaymath}
 \boldsymbol{H}_0 := \{ \zeta \in {\rm C}(\R^{d+1}): \, \exists \, c>0 \ \ \zeta \geq c \text{ in } \Rdpu\}\,.
 \end{displaymath}
  Furthermore, $\boldsymbol{H}_{0}$ is closed with respect to multiplication. Hence, by the monotone class theorem the family $\boldsymbol{H}$ contains all positive Borel functions bounded away from $0$. In particular, $\theta \in \boldsymbol{H}$ and \eqref{FBorel} follows. 
 \par
We now consider the mapping 
\[
\boldsymbol{\mathsf{A}}_\theta \colon \Lipplus k\III\Rdpu\to  {\rm C}(\III), 
 \qquad \boldsymbol{\mathsf{A}}_\theta(\sfyy)(t):= \Theta_{\sfyy}(t) = \int_0^t \frac 1{\theta(\sfyy(r))} \dd r =  \int_0^t \boldsymbol{\mathsf{F}}_\theta(\sfyy(r)) \dd r\,.
\]
 In particular, we notice that for every $\sfyy \in \Lipplus k\III\Rdpu$ we have
\begin{displaymath}
\boldsymbol{\mathsf{A}}_\theta (\sfyy) \in \mathrm{biLip}_{c,c^{-1}}(\III): = \bigg\{ g: \III \to  [0,+\infty) \, : \ g(0)=0, \  g \text{ is bi-Lipschitz, with  } \ \  \frac1{c} \leq g' \leq c \text{ in } \III \bigg\},
\end{displaymath}
where $c \geq 1$ the constant from  \eqref{labelliamotheta}. Recalling that  ${\rm C}(\III)$ is endowed with the (metrizable) topology that induced the uniform convergence on compact sets, we have that the map $\boldsymbol{\mathsf{A}}_\theta$ is the composition  of the Borel mapping $\boldsymbol{\mathsf{F}}_\theta$ with the  function 
\[
A\colon  L^{1}_{\loc} (\III)  \to {\rm C} (\III), \qquad f \mapsto A(f) \text{ with } A(f)(t):=  \int_0^t f(r) \dd r\,.
\]
Since $A$ is continuous, we  have that 
 $\boldsymbol{\mathsf{A}}_\theta$ is Borel.
\par
 Finally,  we show that the mapping 
 \[
 \boldsymbol{\mathsf{L}}_\theta \colon \Lipplus k\III\Rdpu\to  {\rm C} (\III) \,,  \qquad \sfyy \mapsto \ell_{\sfyy}= \Theta_{\sfyy}^{-1}\,,
 \]
 is Borel measurable. We notice that $ \boldsymbol{\mathsf{L}}_\theta (\sfyy)$ is well defined, as $\boldsymbol{\mathsf{A}}_\theta (\sfyy)$ is invertible for every $\sfyy \in  \Lipplus k\III\Rdpu$.  Moreover, $ \boldsymbol{\mathsf{L}}_\theta (\sfyy)$ is the composition of $\boldsymbol{\mathsf{A}}_\theta$ with the 
 inversion operator $\boldsymbol{\mathsf{I}}\colon  \mathrm{biLip}_{c,c^{-1}}(\III) \to  \mathrm{biLip}_{c,c^{-1}}(\III)$. Now,   $\boldsymbol{\mathsf{I}}$ is  continuous: 
 indeed, let  $(g_n)_n, g \in  \mathrm{biLip}_{c,c^{-1}}(\III)  $ with $g_n \to g $ uniformly con compact subsets of~$\III$.  Consider $g^{-1}, (g_n^{-1})_n \subset \mathrm{biLip}_{c,c^{-1}}(\III)$. Taking into account that 
 $(g_n^{-1})_n$ is bounded in $ L^\infty_{\mathrm{loc}}(\III)$ with 
 $ \frac1{c} \leq (g_n^{-1})' \leq c$, 
 in order to check that  $g_n^{-1} \to g^{-1}$ on compact subsets of~$\III$ 
 it is sufficient to show that $g_n^{-1} \to g^{-1}$   pointwise in $\III$.  Hence, let  $r \in \III$ and  
  $s_n := g_n^{-1}(r) $, i.e.\
  $g_n(s_{n}) = r$.  Since $(s_n)_n$ is bounded, it admits a subsequence
 $(s_{n_k})_k$
  converging to some  $s^*$. Recalling that $(g_{n})_n$
converges uniformly to $g$ on the compact subsets of $\III$, we gather that   $r= g_{n_k}(s_{n_k}) \to g(s^*)$. Hence, $s^*= g^{-1}(r)$.  As  the limit 
does not depend on the extracted subsequence, then we have that the \emph{whole} sequence $(s_n=g_n^{-1}(r))_n$ converges to 
$g^{-1}(r)$. 
Therefore, $\boldsymbol{\mathsf{L}}_\theta = \boldsymbol{\mathsf{I}}{\circ} \boldsymbol{\mathsf{A}}_\theta$ is the composition of a Borel and of a continuous mapping: a fortiori, it is Borel.
\par
Ultimately, the map 
 $
 \Rep \colon \Lipplus k\III\Rdpu\to\Lipplus{\widehat k}\III\Rdpu$ defined by 
$ \sfyy \mapsto \sfyy\circ\ell_{\sfyy} = \sfyy\circ\boldsymbol{\mathsf{L}}_\theta(\sfyy)$  is  Borel. 
\end{proof}

\section{Proof of Lemma~\ref{l:mapsST}}
\label{another appendix}

We divide the proof in 2 steps, proving the two equalities in~\eqref{inverse-of-another} separately.

\STEP {1: $\mathscr{S} (\mathscr{T}(\bvc)) = \bvc$ for $\bvc \in \ARBV(\cZ ; \R^{d})$.}  Let us denote by $\sfyy = (\sft, \sfxx) = \mathscr{T} (\bvc)$, constructed according to \eqref{e:t(s)}--\eqref{yy4u}. Then, we have that for $t \in \III$
 \begin{align*}
 \sfs_{\sfyy}^{-}(t) & := \sup\, \{ s \in  \III  : \, \sft(s) <t\} =  \sup\, \{ s \in  \III  : \, \inf\{ \tau \in \III:\, L^{+}_{\bvc} (\tau) >s\} <t\}\,.
 \end{align*}
 For every $s < L^{-}_{\bvc} (t)$ we have that $\inf\{ \tau \in \III:\, L^{+}_{\bvc} (\tau) >s\} < t$, since $L^{+}_{\bvc} (\tau) < L^{-}_{\bvc} (t)$ for $\tau < t$ and $L^{+}_{\bvc} (\tau) \to L^{-}_{\bvc} (t)$ as $\tau \nearrow t$. Hence, $ L^{-}_{\bvc} (t) \leq \sfs_{\sfyy}^{-}(t) $. On the other hand, for $s > L^{-}_{\bvc} (t)$ we get that 
 \begin{align*}
 \inf\{ \tau \in \III: \,  L^{+}_{\bvc} (\tau) >s\} \geq  \inf\{ \tau \in \III: \,  L^{+}_{\bvc} (\tau) > L^{-}_{\bvc} (t) \} \geq t\,.
 \end{align*}
 Hence, $L^{-}_{\bvc} (t) = \sfs_{\sfyy}^{-}(t) $. With a similar argument we infer $L^{+}_{\bvc} (t) = \sfs_{\sfyy}^{+}(t) $. 
 
 Recalling~\eqref{e:function S}, we write for $(t, r) \in \cZ$
 \begin{align*}
 \mathscr{S} (\sfyy) (t, r)&  = \mathscr{S} (\mathscr{T} (\bvc) ) ( t , r ) = \bvc \Big ( \sft \big( \sfs^{-}_{\sfyy} (t) + r (\sfs^{+}_{\sfyy} (t) - \sfs_{\sfyy}^{-} (t)) \big) , \sfr \big( \sfs^{-}_{\sfyy} (t) + r (\sfs^{+}_{\sfyy} (t) - \sfs_{\sfyy}^{-} (t) ) \big) \Big)  
 \\
 &
 = \bvc \Big( t, \sfr \big( L^{-}_{\bvc} (t) + r (L^{+}_{\bvc} (t) - L_{\bvc}^{-} (t) ) \big) \Big) = \bvc (t, r)\,,
 \end{align*}
 where we have used that $\sft(s) = t$ for every $s \in [\sfs^{-}_{\sfyy} (t) , \sfs^{+}_{\sfyy} (t)]$ and the definition of $\sfr$ in~\eqref{e:r(s)}.
 
\STEP{2: $\mathscr{T} (\mathscr{S} (\sfyy) ) = \sfyy$ for $\sfyy = (\sft, \sfxx)  \in  \ALip  \III {\R^{d+1}}$.}  For $(t, r) \in \cZ$ we notice that
\begin{align}
\label{e:alternative-S}
\mathscr{S} (\sfyy) (t, r) & = \sfxx( \sfs_{\sfyy}^{-} (t) + r (\sfs^{+}_{\sfyy} (t) - \sfs^{-}_{\sfyy} (t))) \,.
\end{align}
To shorten the notation, we set $\bvc = \mathscr{S}(\sfyy)$ and $(\sft_{\bvc}, \sfxx_{\bvc}) = \mathscr{T} (\mathscr{S}(\sfyy))$. Since $\| \sfyy'\| = 1$ a.e.~in~$\III$, it holds
\begin{align}
\label{e:LLL}
L^{\pm}_{\bvc} (t) & = \int_{0}^{\sfs^{\pm}_{\sfyy} (t)} \| \sfyy ' (s) \| \, \di s = \sfs^{\pm}_{\sfyy} (t) \qquad \text{for $t \in \III$,}\\
L_{\bvc} (t, r) & = \sfs_{\sfyy}^{-} (t) + r ( \sfs^{+}_{\sfyy} (t) - \sfs^{-}_{\sfyy} (t)) \qquad \text{for $(t, r) \in \cZ$.} \label{e:LLL2}
\end{align}
By definition of $\mathscr{T}$ and by the characterization of $L^{\pm}_{\bvc}$ above we have that for $s \in \III$
\begin{align*}
\sft_{\bvc} (s) = \inf\, \{ t \in \III: \, s^{+}_{\sfyy} (t) >s\}\,.
\end{align*}
In particular, it is immediate to see that $\sft ( s) \geq \sft_{\bvc} (s)$. By contradiction, if $\sft(s) > \sft_{\bvc} (s)$, then it must be $\sfs_{\sfyy}^{+} (t) >s$ for every $t \in (\sft_{\bvc} (s) , \sft(s))$, which implies $\sfs_{\sfyy}^{-} (\sft(s)) >s$, whence \eqref{e:ineq-sfs}. Thus, $\sft = \sft_{\bvc}$ in~$\III$.

We now consider the second component $\sfxx_{\bvc}$ of $\mathscr{T} (\mathscr{S} (\sfyy))$. We recall that, in view of~\eqref{e:r(s)} and~\eqref{e:LLL}, it holds for $s \in \III$
\begin{align}
\label{e:alternative-r}
\sfr(s) & =   \frac{s - \sfs^{-}_{\sfyy} (\sft(s))}{ \sfs^{+}_{\sfyy} (\sft(s)) - \sfs^{-}_{\sfyy} (\sft(s))} \qquad \text{if $\sfs^{+}_{\sfyy} (\sft(s)) \neq \sfs^{-}_{\sfyy} (\sft(s))$,}\\[3mm]
\sfxx_{\bvc} (s) &  =  \bvc ( \sft_{\bvc} (s) , \sfr(s)) = \bvc (\sft(s) , \sfr(s)) 
\\
  & =   \sfxx (\sfs^{-}_{\sfyy} (\sft(s)) ) + (\sfs^{+}_{\sfyy} (\sft(s)) - \sfs^{-}_{\sfyy} (\sft(s))) \int_{0}^{\sfr(s)}  \sfxx'(\sfs^{-}_{\sfyy} (\sft(s) ) + \ell (\sfs^{+}_{\sfyy} (\sft(s) ) - \sfs^{-}_{\sfyy} (\sft(s) ) ) ) \, \di \ell\,.\nonumber
\end{align}
Hence, if $\sfs^{+}_{\sfyy} (\sft(s)) = \sfs^{-}_{\sfyy} (\sft(s))$, from~\eqref{e:ineq-sfs} we immediately conclude that $\sfxx_{\bvc} (s) = \sfxx (s)$. If $\sfs^{+}_{\sfyy} (\sft(s)) > \sfs^{-}_{\sfyy} (\sft(s))$,~\eqref{e:alternative-S} and~\eqref{e:alternative-r} yield $\sfxx_{\bvc} (s) = \sfxx(s)$. Hence, $\mathscr{T} (\mathscr{S} (\sfyy) ) = \sfyy$.



 \section{Proof of Proposition \ref{p:identify-theta-alpha}}
 \label{app:meas}
 \noindent
$\vartriangleright\, $ \eqref{teta-equals-alpha} \& \eqref{e:631}. 
  We will prove the representation formula \eqref{teta-equals-alpha} for 
$\tetatrue{\bvc}$
  by showing \eqref{e:631}. To do so, we need to relate the curve $\BVSm_{\bvc}$ to the trajectory $\mathcal{T}(\bvc) = \sfyy =  (\sft, \sfxx)$. 
  In fact,
  \[
  \BVSm_{\bvc}(t) = \bvc(t,0) = \sfxx( \sfs_{\sfyy}^{-}(t)) \qquad  \text{for all } t \in \III\,.
  \]
  Moreover, $ \mathfrak{J}_{\bvc}= \mathrm{J}_{\sfs_{\sfyy}^{\pm}}$, and 
   $\mathscr{C}_{\sfyy} := \sft^{-1} (C_{\bvc}) = \sft^{-1}(\III {\setminus} \mathfrak{J}_{\bvc}) $ 
   is the set where $\sft(\cdot)$ is injective. In particular, $\III \setminus \mathscr{C}_{\sfyy}$ is union of the intervals $[\sfs_{\sfyy}^{-} (t) , \sfs_{\sfyy}^{+} (t)]$. 
   \par
   Now, 
for every $\boldsymbol{\varphi} \in \mathrm{C}_{\mathrm{c}} (\R^{d+1}_{+}; \R^{d})$ we have
  \begin{align}
  \label{e:upphi-jumps}
\langle  \sum_{t \in \mathfrak{J}_{\bvc}}  \delta_t {\otimes}  \boldsymbol{j}_{t, \bvc}, \bvarphi \rangle = 
  \sum_{t \in \mathfrak{J}_{\bvc}}  \int_{0}^{1} \boldsymbol{\varphi} (t, \bvc (t,r) ) \cdot \partial_{r} \bvc (t, r) \, \di r & =   \sum_{t \in \mathfrak{J}_{\bvc}}  \int_{\sfs_{\sfyy}^{-} (t)}^{\sfs_{\sfyy}^{+} (t)} \boldsymbol{\varphi} (t, \bvc (t, r(s) ) ) \cdot \sfxx'(s) \, \di s 
  \\
  &
  =  \sum_{t \in \mathfrak{J}_{\bvc}} \int_{\sfs_{\sfyy}^{-} (t)}^{\sfs_{\sfyy}^{+} (t)} \boldsymbol{\varphi} (\sft (s) , \bvc (\sft (s) , r(s) ) ) \cdot \sfxx'(s) \, \di s \nonumber
  \\
  &
  = \sum_{t \in \mathfrak{J}_{\bvc}}  \int_{\sfs_{\sfyy}^{-} (t)}^{\sfs_{\sfyy}^{+} (t)} \boldsymbol{\varphi} (\sfyy (s)  ) \cdot \sfxx'(s) \, \di s \,,\nonumber
  \end{align}
  where, in the last equality, we have used the fact that $\sft(s) \equiv t$ for every $s \in  [\sfs_{\sfyy}^{-} (t), \sfs_{\sfyy}^{+} (t) ]$. Hence, \eqref{e:615} follows. 
  \par

In order to show   \eqref{e:616}, we start by recalling  that   for every $\zeta \in \mathrm{C}_{\mathrm{c}} (\III)$ and $\boldsymbol{\zeta} \in \mathrm{C}_{\mathrm{c}} (\III;  \R^{d})$ we further have (see, e.g.,~\cite[Proposition~6.11]{MielkeRossiSavare12a})
  \begin{align}
  \label{e:upphi1}
 & 
 \int_{ \III } \zeta ( t ) \, \di t   = \int_{ \mathscr{C}_{\sfyy} } \zeta ( \sft( s ) ) \, \sft'(s) \, \di s \,,
 \\
  &
   \label{e:upphi2} \int_{ C_{ \bvc}}  \boldsymbol{\zeta} (t) \, \di ( ( \BVSm_{\bvc} )'_{\mathcal{L}^{1}} + ( \BVSm_{\bvc} )'_{\mathrm{C}}) (t)  =  \int_{ C_{ \BVSm_{\bvc} }}  \boldsymbol{\zeta} (t) \, \di ( ( \BVSm_{\bvc} )'_{\mathcal{L}^{1}} + ( \BVSm_{\bvc} )'_{\mathrm{C}}) (t)  = \int_{\mathscr{C}_{\sfyy}}    \boldsymbol{\zeta} (\sft(s) ) \cdot \sfxx'(s) \, \di s 
  \end{align}
   (in \eqref{e:upphi2} and in what follows in this proof, we use the more compact notation $ \BVSm_{\bvc}'$,  $( \BVSm_{\bvc} )'_{\mathcal{L}^{1}}$, and $ ( \BVSm_{\bvc} )'_{\mathrm{C}}$, in place of 
  \eqref{partialL} and \eqref{partialC}). 
For $\varphi_{0} \in \mathrm{C}_{\mathrm{c}} (\R^{d+1}_{+}) $ and $\boldsymbol{\varphi} \in \mathrm{C}_{\mathrm{c}} (\R^{d+1}_{+}; \R^{d})$, we test~\eqref{e:upphi1} and~\eqref{e:upphi2} with $\zeta_{\eps} := \varphi_{0} (\cdot, \BVSm_{\bvc} (\cdot)) * \rho_{\eps}$ and $\boldsymbol{\zeta}_{\eps} := \boldsymbol{\varphi} (\cdot, \BVSm_{\bvc} (\cdot)) * \rho_{\eps}$, for a mollifier $\rho_{\eps}$ supported in $[0, \eps]$. Since $\zeta_{\eps}(t) \to   \varphi_{0} (t, \BVSm_{\bvc} (t))$ and $\boldsymbol{\zeta}_{\eps}(t) \to  \boldsymbol{\varphi} (t, \BVSm_{\bvc} (t))$ for every $t \in  \III $, we infer that
    \begin{align}
  \label{e:upphi3}
 & 
 \int_{ \III } \varphi_{ 0 } ( t, \BVSm_{\bvc} (t) ) \, \di t   = \int_{ \mathscr{C}_{\sfyy} } \varphi_{0} ( \sft( s ), \BVSm_{\bvc} (\sft(s)) ) \, \sft'(s) \, \di s \,,
 \\
  &
   \label{e:upphi4} \int_{ C_{ \BVSm_{\bvc } }}  \boldsymbol{\varphi} (t, \BVSm_{\bvc} (t) ) \, \di ( ( \BVSm_{\bvc} )'_{\mathcal{L}^{1}} + ( \BVSm_{\bvc} )'_{C}) (t)  = \int_{\mathscr{C}_{\sfyy}}    \boldsymbol{\varphi} (\sft(s) , \BVSm_{\bvc} (\sft(s)) ) \cdot \sfxx'(s) \, \di s \,.
  \end{align}
  Since $\BVSm_{\bvc} (\sft(s) ) = \bvc(\sft(s), 0)$ and, for $s \in \mathscr{C}_{\sfyy}$, $\bvc ( \sft(s), 0) = \bvc(\sft(s), r)$ for every $r \in [0, 1]$, we rewrite \eqref{e:upphi3}--\eqref{e:upphi4} as
    \begin{align}
  \label{e:upphi5}
 & 
 \int_{ \III } \varphi_{ 0 } ( t, \BVSm_{\bvc} (t) ) \, \di t  =  \int_{ \mathscr{C}_{\sfyy} } \varphi_{0} ( \sfyy(s)  ) \, \sft'(s) \, \di s \,,
 \\
  &
   \label{e:upphi6} \int_{ C_{ \BVSm_{\bvc } }}  \boldsymbol{\varphi} (t, \BVSm_{\bvc} (t) ) \, \di ( ( \BVSm_{\bvc} )'_{\mathcal{L}^{1}} + ( \BVSm_{\bvc} )'_{C}) (t)  = \int_{\mathscr{C}_{\sfyy}}    \boldsymbol{\varphi} ( \sfyy (s) ) \cdot \sfxx'(s) \, \di s \,,
  \end{align}
  whence \eqref{e:616}. 
  \par
  Combining~\eqref{e:upphi5}--\eqref{e:upphi6} with~\eqref{e:upphi-jumps} we conclude  
    that for all $\varphi=(\varphi_0,\bvarphi) \in \Cc ( \R^{d+1}_{+}  ;\R^{d+1})$
  \[
  \langle \omega_{\sfyy}, \varphi \rangle  =\big \langle (\bvcgr_{\bvc})_{\sharp } \left( (1,  \BVSm_{\bvc}' ) \mathcal{L}^1  + (0,  (\BVSm_{\bvc})'_{\mathrm{C}})  \right) 
+ \sum_{t \in \mathfrak{J}_{\bvc}}  \delta_t {\otimes}  \boldsymbol{j}_{t, \bvc}, \varphi  \big \rangle\,,
  \]
  and \eqref{teta-equals-alpha} follows. 
  \medskip
  
  \noindent
  $\vartriangleright  \eqref{bvc2measures}: $ We will show the measurability of $\mathscr{A}$, $\mathscr{C}$, and $\mathscr{J}$ 
  by proving that  the 
  following mappings
  \[
  \begin{cases}
\ARBV(\cZ;\R^d) \ni  \bvc \mapsto  \langle \mathscr{A}(\bvc), \varphi \rangle
  \\
\ARBV(\cZ;\R^d) \ni  \bvc  \mapsto \langle \mathscr{J}(\bvc), \bvarphi \rangle
  \\
\ARBV(\cZ;\R^d) \ni   \bvc  \mapsto \langle \mathscr{C}(\bvc), \bvarphi \rangle
  \end{cases}
  \quad\text{ are Borel for every test function } 
  \varphi=(\varphi_0,\bvarphi) \in \Cc ( \R^{d+1}_{+} ;\R^{d+1})\,.
  \]
  In turn, this will be shown via 
   the representation formulae \eqref{e:631}. 
   We start by observing that the mappings
   \begin{equation}
   \label{I0-I}
   \begin{cases}
   \Lipplus 1 \III{\R^{d+1}}  \ni   \sfyy \mapsto \mathrm{I}_0(\sfyy) : =  \int_{\III} \varphi_0(\sfyy(s)) \sft'(s) \, \dd s
  \\
   \Lipplus 1 \III{\R^{d+1}} \ni   \sfyy \mapsto  \mathrm{I}(\sfyy) : = \int_{\III} \bvarphi(\sfyy(s)) {\,\cdot\, } \sfxx'(s) \, \dd s
  \end{cases} \qquad \text{are } Borel.
   \end{equation}
Indeed, they are continuous with respect to the topology of uniform convergence on compact sets of $
  \III$ induced by the metric $D$ from \eqref{metric-4-uniform-compact}: to check this, it  suffices to take $(
\sfyy_n)_n$, $\sfyy 
\in   \Lipplus 1 \III{\R^{d+1}}  $ with $D(\sfyy_n,\sfyy) \to 0$ as $n\to\infty$, and and observe that, since  $\|( \sft_n' , \sfxx_n')\|  \leq 1 $  a.e.\ in $\III$, we may suppose (up to 
a not relabeled subsequence), that $\sft_n'\weaksto \sft'$ and $\sfxx_n'\weaksto \sfxx$ in $L^\infty(\III) $. Then, the convergences
 \begin{align*}
\lim_{n\to \infty} \int_{\III}\varphi_0(\sfyy_n(s)) \sft_n'(s) \, \dd s & = \int_{\III} \varphi_0(\sfyy(s)) \sft'(s) \, \dd s\,,
\\
 \lim_{n\to \infty}\int_{\III} \bvarphi(\sfyy_n(s)) {\, \cdot \, } \sfxx_n'(s) \, \dd s &  =  \int_{\III} \bvarphi(\sfyy(s)) {\, \cdot \,} \sfxx'(s) \, \dd s
 \end{align*}
 follow by dominated convergence.  Now, recalling \eqref{e:upphi3} and \eqref{e:upphi5}, we conclude that
 the mapping
 \begin{equation}
 \label{Borel1}
\ARBV(\cZ;\R^d) \ni \bvc  \mapsto \mathrm{I}_0( \mathscr{T}(\bvc))  =  \int_{ \III } \varphi_{ 0 } ( t, \BVSm_{\bvc} (t) ) \, \di t     \qquad \text{is Borel},
\end{equation}
  as it  is  given  by the composition of two Borel mappings.
  \par
  In turn, we observe that  for all $\sfyy \in  \Lipplus 1 \III{\R^{d+1}} $ we have 
  \[
    \mathrm{I}(\sfyy) =   \mathrm{I}_{\mathrm{cont}}(\sfyy) +  \mathrm{I}_{\mathrm{sing}}(\sfyy)  \qquad \text{with } 
    \begin{cases}
      \mathrm{I}_{\mathrm{cont}}(\sfyy) =\int_{\III \cap \{ \sft' > 0\}}   \bvarphi (\sfyy (s)) \cdot \sfxx'(s) \, \di s,
      \\
     \mathrm{I}_{\mathrm{sing}}(\sfyy)= \int_{\III \cap \{ \sft' = 0\}}  \bvarphi (\sfyy (s)) \cdot \sfxx'(s) \, \di s.
    \end{cases}\,.
  \]
Indeed, the pedices $\mathrm{cont}$ and $\mathrm{sing}$ refer to the fact that $\mathrm{I}_{\mathrm{cont}}$
and  $ \mathrm{I}_{\mathrm{sing}}$
 represent the contributions to $\tetatrue{\bvc}$ involving the measures $ (\BVSm_{\bvc})'_{\mathcal{L}^1}$
 and $  (\BVSm_{\bvc})'_{\mathrm{C}} +  (\BVSm_{\bvc})'_{\mathrm{J}}$, respectively, as 
\begin{equation}
\label{continuousVSsing}
\begin{cases}
\mathrm{I}_{\mathrm{cont}}(\sfyy) = \int_{\III} \bvarphi (t, \BVSm_{\bvc} (t)) \cdot  \BVSm_{\bvc}' (t) \, \di t,
\\
\mathrm{I}_{\mathrm{sing}}(\sfyy) =  \int_{\III}  \boldsymbol{\varphi} (t, \BVSm_{\bvc} (t) ) \, \di  ( \BVSm_{\bvc} )'_{\mathrm{C}} (t) +   \langle  \sum_{t \in \mathfrak{J}_{\bvc}} \delta_t {\otimes}  \boldsymbol{j}_{t, \bvc},\bvarphi \rangle.
\end{cases}
\end{equation}
Now, we claim that  the mapping
\begin{equation}
\label{measu-Icont}
     \ALip \III{\R^{d+1}}     \ni   \sfyy \mapsto     \mathrm{I}_{\mathrm{sing}}(\sfyy)  = \int_{\III}  \boldsymbol{1}_{\{ s\in \III\, : \ \sft'(s)=0\}}(r)     \bvarphi (\sfyy (r)) \cdot \sfxx'(r) \, \di r
   \qquad \text{is Borel.}
\end{equation}
 Since we can write $ \mathrm{I}_{\mathrm{sing}}(\sfyy) = \mathrm{I}^{+}_{\mathrm{sing}}(\sfyy) - \mathrm{I}^{-}_{\mathrm{sing}}(\sfyy)$ with 
\begin{displaymath}
\mathrm{I}^{\pm}_{\mathrm{sing}}(\sfyy) :=  \int_{\III}  \boldsymbol{1}_{\{ s\in \III\, : \ \sft'(s)=0\}}(r) (  \bvarphi (\sfyy (r)) \cdot \sfxx'(r) ) _{\pm}\, \di r  \quad \text{with }  \begin{cases}
   r_+ = \max\{ r, 0\},
   \\
   r_-=  \max \{ -r, 0\}\,,
   \end{cases}
\end{displaymath}
it is enough to prove that $\sfyy \mapsto \mathrm{I}^{\pm}_{\mathrm{sing}}(\sfyy)$ are Borel measurable. We proceed with the proof for $\mathrm{I}^{+}_{\mathrm{sing}}$. The very same argument applies to $\mathrm{I}^{-}_{\mathrm{sing}}$. 

 For $n, k \in \mathbb{N}\setminus \{0\}$ we consider the map
\begin{displaymath}
  \ALip \III{\R^{d+1}}    \ni   \sfyy \mapsto     J_{n, k}(\sfyy)  = \int_{\III} \boldsymbol{1}_{S_{n, k} (\sft) }(r)    ( \bvarphi (\sfyy (r)) \cdot \sfxx'(r))_{+} \, \di r\,,
\end{displaymath}
where $\boldsymbol{1}_{S_{n, k}}$   is the characteristic function of the  set $S_{n, k} (\sft) := \{ s\in \III\, : \ k ( \sft(s + \frac{1}{k}) - \sft(s)) \leq \frac{1}{n}\}$. Then, $ J_{ n, k}$ is Borel measurable, as it is upper semicontinuous with respect to the uniform convergence on compact subsets of~$\III$. Notice that here we are also using  that,
 whenever   $\sfyy_{m}, \sfyy \in \ALip  \III {\R^{d+1}}$   are such that $D(\sfyy_{m}, \sfyy) \to 0$ as $m \to \infty$, then we also have 
 $\sfyy'_{m} \to \sfyy'$ in $L^{p}_{\loc} (\III; \R^{d+1})$ for every $1
 \leq p <+\infty$. 

The Borel measurability of $J_{n, k}$ implies that also the maps
\begin{align*}
  \ALip \III{\R^{d+1}}   \ni   \sfyy \mapsto     \liminf_{k\to \infty}J_{n, k}(\sfyy) \,, \qquad   \ALip  \III{\R^{d+1}}   \ni   \sfyy \mapsto     \limsup_{k\to \infty}J_{n, k}(\sfyy)
  \end{align*}
  are Borel measurable for every $n \in \mathbb{N}$. By Fatou lemma, we further notice that for every $\sfyy \in   \ALip \III{\R^{d+1}}$  it holds
  \begin{align}
  \label{e:limk-J}
\mathrm{I}^{+}_{\mathrm{sing}}(\sfyy) &  \leq \int_{\III} \liminf_{k\to \infty}  \boldsymbol{1}_{S_{n, k} (\sft) }(r)    ( \bvarphi (\sfyy (r)) \cdot \sfxx'(r))_{+} \, \di r \leq \liminf_{k\to \infty}J_{n, k}(\sfyy) 
\\
&
\leq  \limsup_{k\to \infty}J_{n, k}(\sfyy) \leq \int_{\III} \limsup_{k\to \infty} \boldsymbol{1}_{S_{n, k} (\sft) }(r)    ( \bvarphi (\sfyy (r)) \cdot \sfxx'(r))_{+} \, \di r \nonumber
\\
&
\leq \int_{\III} \boldsymbol{1}_{\{ s \in \III : \, \sft'(s) \leq \frac{1}{n}\} }(r)    ( \bvarphi (\sfyy (r)) \cdot \sfxx'(r))_{+} \, \di r \,.\nonumber
  \end{align}
Taking  the limit as $n \to \infty$ in the chain of inequalities~\eqref{e:limk-J}, we infer that
 \begin{displaymath}
 \mathrm{I}^{+}_{\mathrm{sing}}(\sfyy) = \lim_{n\to \infty} \liminf_{k\to\infty} J_{n, k} (\sfyy)\,.
 \end{displaymath}
 This implies that $\mathrm{I}^{+}_{\mathrm{sing}}$ is a Borel map. This concludes the proof of~\eqref{measu-Icont}.

Combining \eqref{I0-I} and~\eqref{measu-Icont} we deduce that also $\mathrm{I}_{\mathrm{cont}}$ is Borel measurable. Thus, in view of \eqref{continuousVSsing} we have that 
 \begin{equation}
 \label{Borel2}
\ARBV(\cZ;\R^d) \ni \bvc  \mapsto     \mathrm{I}_{\mathrm{cont}}( \mathscr{T}(\bvc)) =  \int_{\III} \bvarphi (t, \BVSm_{\bvc} (t)) \cdot  \BVSm_{\bvc}' (t) \, \di t    \qquad \text{is Borel}.
\end{equation}
From \eqref{Borel1} and \eqref{Borel2} we then have that the mapping 
\[
\ARBV(\cZ;\R^d) \ni \bvc  \mapsto   \langle\mathscr{A}(\bvc),\varphi\rangle  \text{ is Borel for every } \varphi \in  \Cc ( \R^{d+1}_{+} ;\R^{d+1})\,.
\]
\par
Now,  Lemma \ref{l:Borel-jump} 
ahead ensures that the mapping  $\ARBV(\cZ;\R^d) \ni  \bvc  \mapsto \langle \mathscr{J}(\bvc), \bvarphi \rangle, 
$ is Borel for all test functions $\bvarphi$. Ultimately, 
\[
\ARBV(\cZ;\R^d) \ni \bvc  \mapsto  \mathrm{I}_{\mathrm{cont}}( \mathscr{T}(\bvc)) -  \langle \mathscr{J}(\bvc), \bvarphi \rangle  =\langle \mathscr{C}(\bvc), \bvarphi \rangle 
\qquad \text{is Borel for all } \bvarphi \in  \Cc ( \R^{d+1}_{+}  ;\R^{d})\,.
\]
We have thus proven  \eqref{bvc2measures}.
  \QED

  The last result of this section addresses the measurability of the mapping $\mathscr{J}$. 
   \begin{lemma}
  \label{l:Borel-jump}
  For all $\bvarphi \in  \Cc (  \R^{d+1}_{+} ;\R^{d})$ we consider the mapping 
  \begin{equation}
   \label{Borel3}
   \mathrm{I}_{\mathrm{jump}} \colon    \ALip  \III{\R^{d+1}} \to \R   \qquad 
    \sfyy \mapsto \sum_{t \in \mathrm{L}(\sfyy)} \int_{\sfs_{\sfyy}^{-} (t)}^{\sfs_{\sfyy}^{+} (t)} \boldsymbol{\varphi} (\sfyy (s)  ) \cdot \sfxx'(s) \, \di s
  \end{equation}
  with the short-hand notation  $L _{\sfyy}:=\{ s \in \III  : \, \sfs^{+}_{\sfyy} (s) - \sfs^{-}_{\sfyy} (s) >0\}$. 
  Then, 
  \[
  \forall\, \bvarphi \in   \Cc (\III{\times}\R^{d};\R^{d}) \qquad \bvc \mapsto \mathrm{I}_{\mathrm{jump}} (\mathscr{T}(\bvc)) =  \langle \mathscr{J}(\bvc), \bvarphi \rangle  \quad \text{is Borel}.
  \]
  \end{lemma}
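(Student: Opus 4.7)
Since $\mathscr{T}\colon \ARBV(\cZ;\R^d) \to \ALip(\III;\R^{d+1})$ is continuous, it is enough to prove that $\mathrm{I}_{\mathrm{jump}}\colon \ALip(\III;\R^{d+1})\to \R$ is Borel. The plan is to rewrite $\mathrm{I}_{\mathrm{jump}}$ as a single integral over the ``plateau set'' of $\sft$, and then to approximate this set in a way compatible with the topology of $\ALip$.

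First, I would observe that $L_\sfyy$ is at most countable and that the open intervals $\bigl(\sfs^-_\sfyy(t),\sfs^+_\sfyy(t)\bigr)$, $t\in L_\sfyy$, are precisely the maximal open intervals on which $\sft$ is constant. Hence, setting
\[
D_c[\sfyy]:=\bigl\{s\in\III:\sft\text{ is constant on a neighbourhood of }s\bigr\},
\]
we have $D_c[\sfyy]=\bigcup_{t\in L_\sfyy}\bigl(\sfs^-_\sfyy(t),\sfs^+_\sfyy(t)\bigr)$ and therefore
\[
\mathrm{I}_{\mathrm{jump}}(\sfyy)=\int_{D_c[\sfyy]}\boldsymbol\varphi(\sfyy(s))\cdot\sfxx'(s)\,\di s.
\]
Writing the integrand as $(\boldsymbol\varphi(\sfyy(s))\cdot\sfxx'(s))_+-(\boldsymbol\varphi(\sfyy(s))\cdot\sfxx'(s))_-$, it suffices to prove that each of the two positive functionals $\sfyy\mapsto \int_{D_c[\sfyy]}f_\pm(\sfyy,s)\,\di s$ is Borel.

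Second, I would introduce, for $n\ge 1$, the closed set
\[
C_n[\sfyy]:=\bigl\{s\ge 1/n:\sft(s+1/n)=\sft((s-1/n)_+)\bigr\}\subseteq D_c[\sfyy],
\]
and check that $C_n[\sfyy]\nearrow D_c[\sfyy]$ up to a Lebesgue-null set (indeed, any $s\in D_c[\sfyy]$ belongs to $C_n[\sfyy]$ once $1/n$ is smaller than the half-length of a plateau neighbourhood of $s$). Since $f_\pm(\sfyy,\cdot)$ is bounded by $\|\boldsymbol\varphi\|_\infty$ and is supported in $[0,\sfs_\sfyy^+(T_0)]$ (with $T_0$ any bound on the $t$-support of $\boldsymbol\varphi$), so it is in $L^1(\III)$, monotone convergence reduces the problem to the Borel measurability of $\sfyy\mapsto \int_{C_n[\sfyy]}f_\pm(\sfyy,s)\,\di s$ for each fixed $n$.

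Third, to handle the indicator of the equality set $C_n[\sfyy]$, I would approximate $\mathbf{1}_{\{r=0\}}$ from above by continuous cutoffs $\phi_k(r):=\max(1-kr,0)$, and introduce a spatial cutoff $\chi_M\in\rmC_c^0(\III;[0,1])$ with $\chi_M\equiv 1$ on $[0,M]$. Setting
\[
\Psi_{n,k,M}^\pm(\sfyy):=\int_{\III}\chi_M(s)\,\phi_k\!\bigl(\sft(s+\tfrac1n)-\sft((s-\tfrac1n)_+)\bigr)\,f_\pm(\sfyy,s)\,\di s,
\]
the \textbf{key step} is to verify that $\Psi_{n,k,M}^\pm$ is continuous on $\ALip(\III;\R^{d+1})$. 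This is where the arc-length normalisation is essential: as recalled in the proof of~\eqref{measu-Icont}, if $D(\sfyy_m,\sfyy)\to 0$ inside $\ALip$, then $\sfyy_m\to\sfyy$ uniformly on compacts \emph{and} $\sfxx_m'\to\sfxx'$ strongly in $L^1_{\rm loc}$. Uniform continuity of $\phi_k$, uniform convergence of $\sft_m(\cdot\pm 1/n)$, uniform continuity of $\boldsymbol\varphi$ composed with $\sfyy_m$, and the $L^1_{\rm loc}$-continuity of the map $u\mapsto u_\pm$, combine to give $L^1([0,M+1])$-convergence of the whole integrand; hence $\Psi_{n,k,M}^\pm(\sfyy_m)\to \Psi_{n,k,M}^\pm(\sfyy)$. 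Two more monotone/dominated passages—letting $M\uparrow\infty$ (monotone convergence with dominator $\|\boldsymbol\varphi\|_\infty\,\mathbf{1}_{[0,\sfs^+_\sfyy(T_0)]}\in L^1$) and $k\to\infty$ (monotone convergence, since $\phi_k\bigl(\sft(s+1/n)-\sft((s-1/n)_+)\bigr)\searrow \mathbf{1}_{C_n[\sfyy]}(s)$)—and a final $n\to\infty$ monotone limit then deliver $\mathrm{I}_{\mathrm{jump}}^\pm$ as a pointwise limit of Borel functions, hence Borel. The main obstacle is precisely this continuity step for $\Psi_{n,k,M}^\pm$, where one must exploit the rigidity of arc-length parametrisations to upgrade the weak-$*$ convergence of $\sfxx_m'$, automatic on $\Lipplus 1$, to the strong $L^1_{\rm loc}$-convergence needed to pass to the limit inside the positive/negative part.
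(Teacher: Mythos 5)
Your proof is correct, but it takes a genuinely different route from the paper's after the common first step (splitting $\mathrm{I}_{\mathrm{jump}}$ into positive and negative parts). The paper's argument keeps the sum structure $\sum_{t\in L_\sfyy}[A(\sfs^+_\sfyy(t))-A(\sfs^-_\sfyy(t))]$, with $A(r)=\int_0^r(\boldsymbol\varphi(\sfyy)\cdot\sfxx')_+\,\di s$; it first proves that $(t,\sfyy)\mapsto A(\sfs^+_\sfyy(t))$ and $(t,\sfyy)\mapsto A(\sfs^-_\sfyy(t))$ are respectively upper and lower semicontinuous (Claim~1), then truncates the sum by a minimum jump size $\zeta$, a time horizon $T$, and a parameter bound $S$; the truncation caps the number of summands at $S/\zeta+1$, which allows a subsequence-extraction argument proving that each truncated sum is upper semicontinuous (Claim~2), and finally passes to the pointwise limit $T,S\uparrow\infty$, $\zeta\downarrow 0$. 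You instead collapse the sum into a single integral over the plateau set $D_c[\sfyy]$, approximate $D_c[\sfyy]$ from below by the equality sets $\{\sft(s+1/n)=\sft((s-1/n)_+)\}$, smooth the indicator by continuous cutoffs $\phi_k$, and prove that each resulting functional $\Psi^\pm_{n,k,M}$ is \emph{continuous} on $\ALip\III{\R^{d+1}}$. Both routes hinge on the same arc-length rigidity — on $\ALip$, $D$-convergence upgrades the automatic weak-$*$ convergence of $\sfyy'_m$ to strong $L^1_{\mathrm{loc}}$ convergence — but your route avoids the paper's combinatorial counting/subsequence-extraction step entirely, at the cost of stacking three monotone limits ($M$, $k$, $n$) instead of the paper's one. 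Your approach also yields slightly more intermediate structure (the $\Psi^\pm_{n,k,M}$ are continuous, not merely semicontinuous), whereas the paper's truncated functionals $\mathcal A^{T,S}_\zeta$ are only u.s.c.; both are more than enough for Borel measurability. One small bookkeeping point in your write-up: the pointwise limit of $\phi_k(\sft(s+1/n)-\sft((s-1/n)_+))$ is the indicator of the slightly larger set $\widetilde C_n[\sfyy]=\{s:\sft(s+1/n)=\sft((s-1/n)_+)\}\supseteq C_n[\sfyy]$ (no restriction $s\ge 1/n$); this is harmless since $\widetilde C_n\setminus\{0\}\subseteq D_c[\sfyy]$ and $\widetilde C_n\nearrow D_c[\sfyy]$ up to the null set $\{0\}$, so the final limit is unchanged.
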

  \begin{proof}
  First of all, observe that 
  \[
   \mathrm{I}_{\mathrm{jump}} (\sfyy) =    \mathrm{I}_{\mathrm{jump}}^+ (\sfyy)  -  \mathrm{I}_{\mathrm{jump}}^- (\sfyy) 
  \]
  with 
  \[
   \mathrm{I}_{\mathrm{jump}}^{\pm} (\sfyy)  :=   \sum_{t \in \mathrm{L}_\sfyy}  \int_{\sfs_{\sfyy}^{-} (t)}^{\sfs_{\sfyy}^{+} (t)} 
   \left( \boldsymbol{\varphi} (\sfyy (s)  ) \cdot \sfxx'(s) \right)_{\pm} \, \di s \quad \text{with }  \begin{cases}
   r_+ = \max\{ r, 0\},
   \\
   r_-=  \max \{ -r, 0\}\,.
   \end{cases} 
  \]
 Hence, we can show the measurability property for the functions $ \mathrm{I}_{\mathrm{jump}}^{\pm}$. We provide the full proof for~$ \mathrm{I}_{\mathrm{jump}}^{+}$. A similar argument applies to $ \mathrm{I}_{\mathrm{jump}}^{-}$. 
\par
Let us now introduce the continuous function
\[
A\colon \III \to \R^+, \qquad A(r) : = \int_0^r   (\boldsymbol{\varphi} (\sfyy (s)  ) \cdot \sfxx'(s))_{+}   \, \di s,
\]
so that  
\begin{equation}
\label{decomposition-via-integral-function}
\int_{\sfs_{\sfyy}^{-} (t)}^{\sfs_{\sfyy}^{+} (t)} 
   ( \boldsymbol{\varphi} (\sfyy (s)  ) \cdot \sfxx'(s) ) \,   \di s = A(\sfs_{\sfyy}^{+} (t))- A(\sfs_{\sfyy}^{-} (t))
   \end{equation}
We will thus prove that the function
   \begin{equation}
   \label{BorelAinfty}
  \mathcal{A} \colon   \ALip  \III{\R^{d+1}}  \to \R^+
 \qquad 
   \sfyy \mapsto  \sum_{t \in \mathrm{L}(\sfyy)} [ A(\sfs_{\sfyy}^{+} (t)) {-} A(\sfs_{\sfyy}^{-} (t)) ]\qquad \text{is Borel.}
   \end{equation}
   We split the argument for \eqref{BorelAinfty} in the following steps.
   \par
  \textbf{Claim $1$:} 
    \begin{equation}
    \label{Borel-claim1}
    \begin{cases}
  \III\times   \ALip \III{\R^{d+1}}   \ni (t,\sfyy) \mapsto  A(\sfs_{\sfyy}^{+} (t)) & \text{is upper semicontinuous.}
 \\
  \III\times    \ALip  \III{\R^{d+1}}    \ni (t,\sfyy) \mapsto  A(\sfs_{\sfyy}^{-} (t))  &  \text{is lower semicontinuous.}
  \end{cases}
  \end{equation}
As for the first property,  it suffices to observe that  $(t,\sfyy)\mapsto \sfs_{\sfyy}^{+} (t)$ is upper semicontinuous. Hence,  since every $\sfyy \in  \ALip \III{\R^{d+1}}$   satisfies  $\| \sfyy'(s)\| =1$ for a.e.~$s \in \III$, we have that $A\circ\sfs_{\sfyy}^{+}$ is upper semicontinuous. Indeed, the constraint $\| \sfyy'(s)\|=1$ implies that 
\begin{displaymath}
\sfyy \mapsto \int_{0}^{r} ( \boldsymbol{\varphi} (\sfyy (s)  ) \cdot \sfxx'(s) )_{+} \, \di s 
\end{displaymath}
is continuous in  $\ALip \III{\R^{d+1}}$   for every $r \in \III$. This yields the desired upper semicontinuity.
Analogously, the second statement follows from the lower semicontinuity of $(t,\sfyy)\mapsto \sfs_{\sfyy}^{-} (t)$. 
   \par
  \textbf{Claim $2$:} {\sl for every $T>0$, $S>0$, and $\zeta>0$,  the mapping }
    \begin{equation}
    \label{Borel-claim2}
    \begin{gathered}
\mathcal{A}_{\zeta}^{T,S} \colon      \ALip  \III{\R^{d+1}} \to \R^+,   \qquad \sfyy \mapsto
\begin{cases}
 \sum_{t \in \mathrm{L}_{\zeta}^{T,S}(\sfyy)} 
[ A(\sfs_{\sfyy}^{+} (t)) {-} A(\sfs_{\sfyy}^{-} (t)) ] & \text{if }  \sfs_{\sfyy}^{-} (T) \leq S,
\\
0 &   \text{if }   \sfs_{\sfyy}^{-} (T) > S,
\end{cases} 
\\
\text{with } \mathrm{L}_{\zeta}^{T}(\sfyy) := \{ t \in \mathrm{L}(\sfyy)\, : 
\ t \in  [0,T],    \ | \sfs_{\sfyy}^{+} (t) {-} \sfs_{\sfyy}^{-} (t)| \geq \zeta \}
\end{gathered}
\end{equation}
 is upper semicontinuous.   Let us consider $(\sfyy_j)_j, \, \sfyy \in   \ALip \III{\R^{d+1}}$ such that $D(\sfyy_j ,  \sfyy ) \to 0$ and show that
 \begin{displaymath}
 \limsup_{j\to \infty} \,  \mathcal{A}_{\zeta}^{T,S}(\sfyy_j)\leq  \mathcal{A}_{\zeta}^{T,S}(\sfyy)\,.
 \end{displaymath}
Up to a subsequence, we may assume that the limsup is a limit. If $\sfs_{\sfyy_{j}}^{-} (T) > S$ definitely for $j$ large enough, there is nothing to prove. Let us therefore assume that for every $j \in \mathbb{N}$ we have $\sfs_{\sfyy_{j}}^{-} (T) \leq S$. By lower-semicontinuity,  observe that  $\sfs_{\sfyy}^{-} (T) \leq S. $ 
Moreover, 
 we observe that, in correspondence with the sequence $(\sfyy_j)_j$ there exists $N\in \N$ such that 
for every $j \in \N$ there exist at most $N$ times $t_1^j<\ldots < t_N^j \in [0,T]$  such that 
$ | \sfs_{\sfyy_j}^{+} (t_i^j) {-} \sfs_{\sfyy_j}^{-} (t_i^j)| \geq \zeta$ for all $i=1,\ldots, N$. 
In fact,  by definition of $\sfs^{\pm}_{\sfyy_{j}}$, for every $j \in \mathbb{N}$ it holds that
\begin{displaymath}
S \geq \sfs_{\sfyy_{j}}^{-} (T) \geq \sum_{t \in \mathrm{L}_{\zeta}^{T}(\sfyy)\cap  [0, T) } | \sfs^{+}_{\sfyy} (t) - \sfs^{-}_{\sfyy} (t)| \geq \left( \# \big[ \mathrm{L}_{\zeta}^{T}(\sfyy)\cap  [0, T) \big]\right) \zeta \,.\
\end{displaymath} 
This implies that $\# L^{T}_{\zeta} (\sfyy_{j}) \leq \frac{S}{\zeta} + 1$ for every $j \in \mathbb{N}$. 
Then,
\[
\mathcal{A}_{\zeta}^{T,S}(\sfyy_j)  = \sum_{i=1}^N [ A(\sfs_{\sfyy_j}^{+} (t_i^j)) {-} A(\sfs_{\sfyy_j}^{-} (t_i^j)) ]\,.
\]
Now, up to a non-relabeled subsequence we have that there exist $(t_i)_{i=1}^N \subset  [0,T] $ such that 
$t_i^j \to t_i$ as $j\to\infty$.  In particular, it holds $ | \sfs_{\sfyy}^{+} (t_i) {-} \sfs_{\sfyy}^{-} (t_i)| \geq \zeta $ for every $i =1, \ldots, N$, so that $(t_i)_{i=1}^N \subset  \mathrm{L}_{\zeta}^{T}(\sfyy)$. We notice that some of the $t_{i}$'s may coincide. With a slight abuse of notation, we denote by $t_{k}$, for $k =1, \ldots, M\leq N$ the distinct limit points of $t_{i}^{j}$. By \eqref{Borel-claim1} we have that, whenever $t^{j}_{i} \to t_{k}$ as $j \to \infty$, then 
\begin{equation}
\label{e:Borel-claim2100}
 A(\sfs_{\sfyy}^{+} (t_k)) {-} A(\sfs_{\sfyy}^{-} (t_k)) \geq \limsup_{j \to \infty}  A(\sfs_{\sfyy_{j}}^{+} (t_i^{j})) {-} A(\sfs_{\sfyy_{j}}^{-} (t_i^{j}))\,. 
\end{equation}
If we have that $t_{i}^{j}, \ldots, t^{j}_{i+\ell} \to t_{k}$ for some $\ell>0$ and some $k = 1, \ldots, M$, then
\begin{align}
\label{e:Borel-claim2101}
 A(\sfs_{\sfyy}^{+} (t_k)) {-} A(\sfs_{\sfyy}^{-} (t_k)) & \geq \limsup_{j \to \infty}  A(\sfs_{\sfyy_{j}}^{+} (t_{i + \ell}^{j})) {-} A(\sfs_{\sfyy_{j}}^{-} (t_{i}^{j})) 
 \\
 &
 \geq \limsup_{j\to \infty} \sum_{n=0}^{\ell} A(\sfs_{\sfyy_{j}}^{+} (t_{i + n}^{j})) {-} A(\sfs_{\sfyy_{j}}^{-} (t_{i + n}^{j})) \nonumber \,. 
\end{align}
Combining~\eqref{e:Borel-claim2100}--\eqref{e:Borel-claim2101} we conclude that
\[
 \mathcal{A}_{\zeta}^{T,S}(\sfyy)  \geq \sum_{k=1}^M  [ A(\sfs_{\sfyy}^{+} (t_k)) {-} A(\sfs_{\sfyy}^{-} (t_k)) ]
 \geq  \limsup_{j\to\infty}  \mathcal{A}_{\zeta}^{T,S}(\sfyy_j) \,.  
\] 
     \par
  \textbf{Conclusion:} Clearly, we have that 
  \[
  \forall\, \sfyy \in     \ALip \III{\R^{d+1}}    \, : \qquad   \mathcal{A}(\sfyy) = \lim_{T \uparrow \infty,\,  S \uparrow \infty, \,
  \zeta \downarrow 0  }\mathcal{A}_{\zeta}^{T,S}(\sfyy) \,.
  \]
  Then,  $\mathcal{A}$ is the pointwise limit of  Borel mappings. Thus, \eqref{BorelAinfty} follows.
 This  finishes the proof. 
  \end{proof}

\section{Auxiliary measure-theoretic tools}
\label{s:app-B}
Let $\Spx$  be a Polish metric space and 
$\calMb(\Spx;\R^h)$ the space of $\R^h$-valued Borel measures on $\Spx$ with finite total variation, endowed with the weak$^*$ topology.
Let $\Traj$ also be a Polish space, and let  $(\proxyteta{\traj})_{\traj \in \Traj} \subset \calMb(\Spx;\R^h)$  be a Borel family. 
With any given $\proxym \in \Prob(\Traj)$ with 
\[
\int_{\Traj} |\proxyteta{\traj}|(\Spx) \, \dd \proxym(\traj)<+\infty
\]
we may associate the measures
\[
\Proxyteta{\proxym} : =  \int_{\Traj} \proxyteta{\traj} \, \dd \proxym(\traj)  \in \calMb(\Spx;\R^h) \quad \text{and} \quad  \Proxysigma{\proxym} : =  \int_{\Traj} |\proxyteta{\traj}| \, \dd \proxym(\traj)   \in \calM^+ (\Spx)\,.
\]
Clearly,
we have that $|\Proxyteta{\proxym} |  \leq \Proxysigma{\proxym}$.  The following result
provides a useful property of the `generating' 
measures
$(\proxyteta{\traj})_{\traj \in \Traj} \subset \calMb(\Spx;\R^d)$ in the case when the measures 
$|\Proxyteta{\proxym} |$ and $ \Proxysigma{\proxym}$ coincide.
\begin{proposition}
\label{prop:measure-theor}
Let 
$\boldsymbol{\mathsf{f}}: X \to \R^h$ be a Borel density of $\Proxyteta{\proxym} $ w.r.t.\ $\Proxysigma{\proxym} $, 
with $|\boldsymbol{\mathsf{f}}(x) | \leq 1$ for all $x\in \Spx$.
Suppose that $|\Proxyteta{\proxym} |=  \Proxysigma{\proxym}$.
Then
\begin{equation}
\label{thesis-b1}
\proxyteta{\traj} = \boldsymbol{\mathsf{f}} |\proxyteta{\traj}| \qquad \text{for } \proxym \text{-a.e. }  \traj \in \Traj\,.
\end{equation}
\end{proposition}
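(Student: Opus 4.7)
The strategy is to compare two ways of evaluating the duality pairing $\int \boldsymbol{\mathsf{f}}\cdot \di \Proxyteta{\proxym}$ and to exploit a pointwise Cauchy--Schwarz-type inequality forced to be an equality $\Proxysigma{\proxym}$-a.e. First I would fix, for $\proxym$-a.e.\ $\traj$, the polar decomposition $\proxyteta{\traj} = \boldsymbol{g}_{\traj}\,|\proxyteta{\traj}|$ with $|\boldsymbol{g}_{\traj}|=1$ $|\proxyteta{\traj}|$-a.e. A standard selection/measurable disintegration argument (applied to the Borel family $(\proxyteta{\traj})_{\traj\in\Traj}$) ensures that the joint map $(\traj,x)\mapsto \boldsymbol{g}_{\traj}(x)$ can be taken Borel on $\Traj\times \Spx$, so Fubini-type interchanges below are legal.

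Next, from $\Proxyteta{\proxym} = \boldsymbol{\mathsf{f}}\,\Proxysigma{\proxym}$ and the general inequality $|\Proxyteta{\proxym}|\le \Proxysigma{\proxym}$ (with density $|\boldsymbol{\mathsf{f}}|$), the hypothesis $|\Proxyteta{\proxym}|=\Proxysigma{\proxym}$ yields $|\boldsymbol{\mathsf{f}}(x)|=1$ for $\Proxysigma{\proxym}$-a.e.\ $x\in \Spx$. Using this together with the defining relation for $\Proxyteta{\proxym}$, I would compute
\begin{align*}
\Proxysigma{\proxym}(\Spx)
&=\int_{\Spx}|\boldsymbol{\mathsf{f}}(x)|^{2}\,\di\Proxysigma{\proxym}(x)
=\int_{\Spx}\boldsymbol{\mathsf{f}}(x)\cdot\di\Proxyteta{\proxym}(x)\\
&=\int_{\Traj}\int_{\Spx}\boldsymbol{\mathsf{f}}(x)\cdot\boldsymbol{g}_{\traj}(x)\,\di|\proxyteta{\traj}|(x)\,\di\proxym(\traj)
=\int_{\Traj}|\proxyteta{\traj}|(\Spx)\,\di\proxym(\traj),
\end{align*}
where the last equality is just $\Proxysigma{\proxym}(\Spx)=\int|\proxyteta{\traj}|(\Spx)\di\proxym$.

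From $|\boldsymbol{\mathsf{f}}|\le 1$ and $|\boldsymbol{g}_{\traj}|=1$ the Cauchy--Schwarz inequality gives $\boldsymbol{\mathsf{f}}(x)\cdot \boldsymbol{g}_{\traj}(x)\le 1$ pointwise, so the identity above forces
\[
\boldsymbol{\mathsf{f}}(x)\cdot \boldsymbol{g}_{\traj}(x)=1\qquad\text{for $|\proxyteta{\traj}|$-a.a.\ $x$ and $\proxym$-a.a.\ $\traj$.}
\]
The equality case in Cauchy--Schwarz, combined with $|\boldsymbol{g}_{\traj}(x)|=1$, then yields $|\boldsymbol{\mathsf{f}}(x)|=1$ and $\boldsymbol{\mathsf{f}}(x)=\boldsymbol{g}_{\traj}(x)$ on the same a.e.\ set. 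Plugging this back into the polar decomposition gives $\proxyteta{\traj}=\boldsymbol{g}_{\traj}|\proxyteta{\traj}|=\boldsymbol{\mathsf{f}}\,|\proxyteta{\traj}|$ for $\proxym$-a.e.\ $\traj$, which is \eqref{thesis-b1}.

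The only genuine obstacle is the measurability of the joint density $(\traj,x)\mapsto \boldsymbol{g}_{\traj}(x)$ needed to justify the Fubini step (if the underlying norm on $\R^h$ is merely a general norm, one would pass to its Euclidean envelope or invoke Lemma \ref{le:strict-conv}/\ref{r:sub}(iii) to reduce to the strictly convex case, ensuring the equality case in the Cauchy--Schwarz-type inequality is sharp and uniquely determines $\boldsymbol{g}_{\traj}=\boldsymbol{\mathsf{f}}$ a.e.). Once this measurable selection is in place, the rest is an integral identity combined with a pointwise rigidity argument.
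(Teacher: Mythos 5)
Your argument is essentially the paper's proof: both deduce $|\boldsymbol{\mathsf{f}}|=1$ $\Proxysigma{\proxym}$\nobreakdash-a.e., run the same Fubini chain to conclude $\int_{\Spx}\boldsymbol{\mathsf{f}}\cdot\di\proxyteta{\traj}=|\proxyteta{\traj}|(\Spx)$ for $\proxym$-a.e.\ $\traj$, and then invoke a rigidity/extremality argument to identify $\boldsymbol{\mathsf{f}}$ as the polar density of $\proxyteta{\traj}$. The measurability concern you raise about the joint map $(\traj,x)\mapsto \boldsymbol{g}_{\traj}(x)$ is avoidable: the Fubini step only needs $\int_{\Spx}\boldsymbol{\mathsf{f}}\cdot\di\proxyteta{\traj}$, which is Borel in $\traj$ by the Borel-family hypothesis alone, and the paper's characterization (a Borel $\boldsymbol{h}$ with $|\boldsymbol{h}|\le1$ $|\uplambda|$-a.e.\ is the polar density of $\uplambda$ iff $\int\boldsymbol{h}\cdot\di\uplambda=|\uplambda|(\Spx)$) makes the explicit introduction of $\boldsymbol{g}_{\traj}$ unnecessary.
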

\begin{proof}
Observe that, for a given measure $\uplambda \in  \calMb(\Spx;\R^h)$, a Borel function
$\boldsymbol{h}\colon X \to \R^h$ is the density of $\uplambda$ w.r.t.\ $|\uplambda|$ if and only if 
\begin{equation}
\label{general-obs}
|\boldsymbol{h}|\leq 1 \text{ $|\uplambda|$-a.e.\ in $\Spx$}, \quad \text{and} \qquad \int_{\Spx} \boldsymbol{h}(x) \dd \uplambda(x) = 
|\uplambda|(\Spx)\,. 
\end{equation}
\par
Now,
 from $ |\Proxyteta{\proxym}  | =  \Proxysigma{\proxym}$ we have that 
 $|\boldsymbol{\mathsf{f}}(x)| \equiv 1$ for  $\Proxysigma{\proxym} $-a.e.\ $x \in \Spx$.
 Therefore, we have the following chain of identities
 \[
 \begin{aligned}
 \int_{\Traj} |\proxyteta{\traj}| (\Spx) \,\dd  \proxym(\traj) = \int_{\Spx} |\boldsymbol{\mathsf{f}}(x)|^2 \, \dd \Proxysigma{\proxym}(x)
 \stackrel{(1)}= \int_{\Spx} \boldsymbol{\mathsf{f}}(x)  \dd 
 \Proxyteta{\proxym}(x)
 \stackrel{(2)}=  \int_{\Traj}
 \left(  \int_{\Spx}   \boldsymbol{\mathsf{f}}(x)  \,  \dd \proxyteta{\traj}(x) \right) \, \dd \proxym(\traj)
 \end{aligned}
 \]
 where (1) follows from the fact that $ \Proxyteta{\proxym} = \mathsf{f}   \Proxysigma{\proxym}$
 and (2) from the Fubini theorem. 
 Then, we immediately conclude that 
 \[
  |\proxyteta{\traj}| (\Spx) = \int_{\Spx}   \boldsymbol{\mathsf{f}}(x)  \,  \dd \proxyteta{\traj}(x) \qquad    \text{for } \proxym \text{-a.e. }  \traj \in \Traj\,.
 \]
Thus, on account of  \eqref{general-obs} we obtain \eqref{thesis-b1}.

\end{proof}
\medskip

\paragraph{\bf Acknowledgements}
The work of SA was partially funded by the Austrian Science Fund through the project 10.55776/P35359, by the University of Naples Federico II through FRA
Project ``ReSinApas", by the MUR - PRIN 2022 project ``Variational Analysis of Complex Systems in Materials Science, Physics and Biology'', No.
2022HKBF5C, funded by European Union NextGenerationEU, and by  Gruppo Nazionale per l'Analisi Matematica, la Probabilit\`a e le loro
Applicazioni,   through Project 2025: ``DISCOVERIES - Difetti e Interfacce in
Sistemi Continui: un'Ottica Variazionale in Elasticit\`a con Risultati Innovativi ed Efficaci Sviluppi''. SA further acknowledges the kind hospitality of the Technical University of Munich and of the University of Vienna, where part of this work has been carried out.
\par
RR has been partially supported by GNAMPA-INdAM, through Project 2025: ``Variational analysis of some models for materials with defects''. 
\par
GS has been supported by the MIUR-PRIN 202244A7YL project Gradient Flows and Non-Smooth Geometric Structures with Applications to Optimization and Machine Learning,  by the INDAM project E53C23001740001, and by the Institute for Advanced Study of the Technical University of Munich, funded by the German Excellence Initiative.

\bibliographystyle{siam}
\bibliography{ricky_lit.bib}

\end{document}